\documentclass[12pt]{amsart}
\usepackage{amscd,amsthm,amssymb,amsfonts,amsmath,euscript}
\allowdisplaybreaks
\usepackage{nicematrix}
\usepackage{tikz}
\usetikzlibrary{decorations.pathreplacing}

\usepackage{tikz-cd}
\usepackage[shortlabels, inline]{enumitem}
\usepackage{mathrsfs,mathtools}
\usepackage{xcolor}

\usepackage{comment}
\usepackage{extarrows}
\theoremstyle{plain}
\newtheorem{thm}{Theorem}[section]
\newtheorem{lemma}[thm]{Lemma}
\newtheorem{prop}[thm]{Proposition}
\newtheorem{cor}[thm]{Corollary}

\theoremstyle{definition}
\newtheorem{defn}[thm]{Definition}
\newtheorem{eg}[thm]{Example}
\newtheorem{question}[thm]{Question}
\theoremstyle{remark}
\newtheorem{remark}[thm]{Remark}

\newcommand{\nc}{\newcommand}

\def\makeop#1{\expandafter\def\csname#1\endcsname
  {\mathop{\rm #1}\nolimits}\ignorespaces}
\makeop{Hom}   \makeop{End}   \makeop{Aut}   \makeop{Isom}  \makeop{Pic} 
\makeop{Gal}   \makeop{ord}   \makeop{Char}  \makeop{Div}   \makeop{Lie} 
\makeop{PGL}   \makeop{Corr}  \makeop{PSL}   \makeop{sgn}   \makeop{Spf}
\makeop{Spec}  \makeop{Tr}    \makeop{Nr}    \makeop{Fr}    \makeop{disc}
\makeop{Proj}  \makeop{supp}  \makeop{ker}   \makeop{im}    \makeop{dom}
\makeop{coker} \makeop{Stab}  \makeop{SO}    \makeop{SL}    
\makeop{Cl}    \makeop{cond}  \makeop{Br}    \makeop{inv}   \makeop{rank}
\makeop{id}    \makeop{Fil}   \makeop{Frac}  \makeop{GL}    \makeop{SU}
\makeop{Nrd}   \makeop{Sp}    \makeop{Tr}    \makeop{Trd}   \makeop{diag}
\makeop{Res}   \makeop{ind}   \makeop{depth} \makeop{Tr}    \makeop{st}
\makeop{Ad}    \makeop{Int}   \makeop{tr}    \makeop{Sym}   \makeop{can}
\makeop{length}   \makeop{torsion} \makeop{GSp} \makeop{Ker}
\makeop{Adm}   \makeop{Mat}
\makeop{Q-isog}
\makeop{Rad}
\makeop{Ind}

\def\makebb#1{\expandafter\def
  \csname bb#1\endcsname{{\mathbb{#1}}}\ignorespaces}
\def\makebf#1{\expandafter\def\csname bf#1\endcsname{{\bf
      #1}}\ignorespaces} 
\def\makegr#1{\expandafter\def
  \csname gr#1\endcsname{{\mathfrak{#1}}}\ignorespaces}
\def\makescr#1{\expandafter\def
  \csname scr#1\endcsname{{\EuScript{#1}}}\ignorespaces}
\def\makecal#1{\expandafter\def\csname cal#1\endcsname{{\mathcal
      #1}}\ignorespaces} 

\def\doLetters#1{#1A #1B #1C #1D #1E #1F #1G #1H #1I #1J #1K #1L #1M
                 #1N #1O #1P #1Q #1R #1S #1T #1U #1V #1W #1X #1Y #1Z}
\def\doletters#1{#1a #1b #1c #1d #1e #1f #1g #1h #1i #1j #1k #1l #1m
                 #1n #1o #1p #1q #1r #1s #1t #1u #1v #1w #1x #1y #1z}
\doLetters\makebb   \doLetters\makecal  \doLetters\makebf
\doLetters\makescr 
\doletters\makebf   \doLetters\makegr   \doletters\makegr

\normalsize

\makeop{Bl}

\DeclareMathOperator{\fchar}{char}

\def\Spec{{\rm Spec}\,}

\newcommand{\Z}{\mathbb Z}
\newcommand{\Q}{\mathbb Q}
\newcommand{\R}{\mathbb R}

\renewcommand{\O}{\mathbb O} 
\newcommand{\F}{\mathbb F}
\renewcommand{\P}{\mathbb P} 
\renewcommand{\L}{\mathbb L} 
\newcommand{\M}{\mathbb M}    

\DeclareMathOperator{\Nm}{Nm}

\DeclareMathOperator{\xint}{int}

\nc{\embed}{\hookrightarrow}

\newcommand{\ab}{abelian variety }
\newcommand{\abs}{abelian varieties }
\nc{\ol}{\overline}
\nc{\wt}{\widetilde}
\nc{\opp}{\mathrm{opp}}

\makeop{Ram}
\makeop{Rep}

\newcommand{\ModOp}{\mathrm{Mod}\text{-}O_B}
\newcommand{\OpMod}{O_B\text{-}\mathrm{Mod}}
\newcommand{\ulm}{\underline{m}}

\usepackage{color}
\usepackage{marvosym}

\begin{document}
\numberwithin{equation}{section}

\numberwithin{thm}{section} 


\title[Ribet bimodules]{Ribet bimodules and principally polarized superspecial abelian varieties with quaternion action}

 \author{Jiangwei Xue}

\address{(Xue) School of
  Mathematics and Statistics, Wuhan University, Luojiashan, 430072,
  Wuhan, Hubei, P.R. China}

\email{xue\_j@whu.edu.cn}

\author{Xiangning Yang}

\address{(Yang) School of Mathematics and Statistics, Wuhan University, Luojiashan, 430072, Wuhan, Hubei, P.R. China}   

\email{yangxn@whu.edu.cn}

\dedicatory{Dedicated to the memory of Yuri G.~Zarhin, with deepest gratitude and respect}

\date{\today}
\subjclass[2020]{11G10, 11E39, 11R52} 
\keywords{Ribet bimodule, quaternion hermitian form, superspecial abelian variety, principal polarization}

\begin{abstract}
 In an influential paper [K.~Ribet, Bimodules and abelian surfaces, in Algebraic number theory, 359--407, Adv.~Stud.~Pure Math., Vol.~17,  1989]  on the bad reduction of Shimura curves, Ribet studies certain superspecial abelian surfaces over $\overline{\mathbb{F}}_p$ with quaternion multiplication by a maximal order $\mathcal{O}$ in an indefinite quaternion $\mathbb{Q}$-algebra ramified at $p$.  In particular, he classifies the $p$-divisible groups of such $\mathcal{O}$-abelian surfaces by classifying $(\mathcal{O}_p, \mathcal{O}_p)$-bimodules $L_p$ that are free over $\Z_p$ (i.e.~bilattices) under an additional admissible assumption. In this paper, we generalize Ribet's result by removing the admissible assumption and producing a complete classification of $(\mathcal{O}_p, \mathcal{O}_p)$-bilattices $L_p$. Equip the right order $\calO_p$ with the canonical involution, and  suppose additionally that  the left order $\calO_p$ is equipped with an orthogonal involution $*$. We derive the necessary and sufficient condition for the existence of a perfect quaternion hermitian form $\langle~,~\rangle_p:L_p\times L_p\to \calO_p$ on the right $\calO_p$-lattice $L_p$ inducing the given involution $*$ on the left order $\mathcal{O}_p$, and give a complete classification of  such self-dual quaternion hermitian 
   $(\mathcal{O}_p, *, \mathcal{O}_p)$-bilattices $(L_p, \langle~,~\rangle_p)$.  Globally, we apply these classification results to the study of the existence of principal polarizations on  superspecial abelian varieties over $\overline{\mathbb{F}}_p$ equipped with $\mathcal{O}$-action.  
\end{abstract}

\maketitle 
\section{Introduction}
Let $k$ be an algebraically closed field of characteristic $p>0$.  An abelian variety over $k$ is called superspecial (resp.~supersingular) if it is isomorphic (resp.~isogenous) to a product of supersingular elliptic curves over $k$. Very often, supersingular abelian varieties are studied by connecting them with superspecial ones through the minimal isogeny \cite[Lemma~1.8]{li-oort}, so in the current paper we focus on superspecial abelian varieties.

Fix a supersingular elliptic curve $E$ over $k$. It is well known \cite[Theorem~4.2]{waterhouse:thesis} that the endomorphism ring $\calR\coloneqq \End(E)$ is a maximal order in the endomorphism algebra 
$\End^0(E)\coloneqq \End(E)\otimes_\Z \Q$, which is isomorphic to the unique quaternion $\Q$-algebra $D\coloneqq D_{p, \infty}$ ramified exactly at $\{p, \infty\}$.  For this reason, the theory of superspecial abelian varieties is closely tied with quaternion arithmetic.  For example, the Deuring-Eichler correspondence \cite[Theorem~42.3.2]{voight-quat-book} establishes a bijection between the set of isomorphism classes of supersingular elliptic curves over $k$ and the set of isomorphism classes of (invertible) fractional right  $\calR$-ideals. In the higher dimensional case, a famous theorem of Deligne, Ogus and Shioda  \cite[\S 1.6, p.~13]{li-oort} shows that every superspecial abelian $k$-variety of dimension $n\geq 2$  is isomorphic to $E^n$, so in particular, for each fixed $n\geq 2$ there is exactly one isomorphism class of superspecial abelian $k$-varieties of dimension $n$.  Ibukiyama, Katsura and Oort \cite[Theorem~2.10]{Ibukiyama-Katsura-Oort-1986} further establish a bijection between the set of isomorphism classes of principal polarizations on $E^n$ and the set of isometric classes of positive definite self-dual quaternion hermitian $\calR$-lattices of rank $n\geq 2$ (i.e.~quaternion hermitian $\calR$-lattices belonging to the \emph{principal genus $\calL_n(p, 1)$}).  Their result can  be interpreted as a lattice description of superspecial points on the Siegel moduli space $\calA_{n, 1}$. Following this line of pursuit, the theory culminates in the result of Li and Oort \cite[Theorem, \S4.9]{li-oort} who show that the number of irreducible components of the supersingular locus $\calS_{n, 1} \subset \calA_{n, 1}$ is equal to the class number of the principal genus (resp.~non-principal genus) if $n$ is  odd (resp.~even). Recently, lattice descriptions of (not necessarily supersingular) abelian varieties over finite fields were  pursued by Centeleghe and Stix \cite{centeleghe-stix-I, Centeleghe-Stix-II} in a different direction,  whose results generalize a classical result of Deligne~\cite{deligne:ord}.

Naturally, the next objects of study after the Siegel moduli space are PEL-type Shimura varieties, among which Shimura curves parametrizing abelian surfaces with quaternion multiplication are the simplest examples.   
Let $\calO$ be a maximal $\Z$-order in an indefinite quaternion $\Q$-algebra $B$. An $\calO$-abelian scheme over a base scheme $S$ is a pair $(X, \iota)$, where $X$ is an abelian scheme over $S$ and $\iota$ is a monomorphism of rings $\calO\hookrightarrow \End_S(X)$.  For the moment, suppose that the reduced discriminant of $\calO$ is the product of two distinct primes $p, q$, and let $M$ be a positive integer coprime to $pq$. 
 In \cite{Ribet-bimod}, Ribet considers the Shimura curve $\scrC$ over $\Spec(\Z)$ that classifies $\calO$-abelian surfaces satisfying the \emph{Kottwitz determinant condition} \cite[\S2.3]{Yu-Grenoble-2021} (or equivalently the \emph{special condition} as in \cite{MR422290, Ribet-bimod} by \cite[Lemma~5.2]{Yu-Grenoble-2021}) together with a $\Gamma_0(M)$-level structure.  He shows that the singular points of the reduction $\scrC_{\F_p}$ in characteristic $p$ are in (canonical) bijection with the singular points of the standard modular curve $\calX_0(Mpq)_{\F_q}$ in characteristic $q$.
  As the singular points on $\scrC_{\F_p}$ are represented by certain superspecial abelian surfaces $X=E^2$ over $k$  with $\calO$-action, Ribet remarks that ``\emph{to give an action of $\calO$ on $X$ is then to give an $(\calO, \calR)$-bimodule which is $\Z$-free of rank $8$}''. From the point of view of Jordan et al.~\cite{Poonen-et:av}, 
 the bimodule in question is given by $L\coloneqq \Hom(E, X)$. 
An important initial step in Ribet's work is to classify such bimodules locally at $p$, which is achieved under an additional \emph{admissible} assumption in \eqref{eq:adm} below.  The $p$-adic completions $\calO_p\coloneqq \calO\otimes_\Z \Z_p$ and $\calR_p\coloneqq \calR\otimes_\Z \Z_p$ are isomorphic since both are maximal $\Z_p$-orders in the unique quaternion division $\Q_p$-algebra. This allows us to fix an identification of $\calR_p$ with $\calO_p$ and view $L_p\coloneqq L\otimes_\Z \Z_p$ as an $(\calO_p, \calO_p)$-bimodule.

\begin{thm}[{\cite[Theorems~1.2--1.3]{Ribet-bimod}}]\label{thm:Ribet}
    Let $\grP_p$ be the unique maximal two-sided $\calO_p$-ideal with residue field $\F_{p^2}$. Let $L_p$ be an $(\calO_p, \calO_p)$-bimodule which is free of finite rank over $\Z_p$. We say that $L_p$ is admissible if it satisfies the equality 
    \begin{equation}\label{eq:adm}
        \grP_p L_p=L_p \grP_p.  
    \end{equation}
  Assume that this is the case. Then 
  \begin{equation*}
      L_p\simeq \calO_p\oplus\cdots\oplus \calO_p\oplus \grP_p\oplus \cdots\oplus\grP_p,
  \end{equation*}
 where   $\calO_p$ and $\grP_p$ are regarded as bimodules via the natural left- and right-multiplications of $\calO_p$ on itself and on $\grP_p$ respectively. Moreover, two admissible $(\calO_p, \calO_p)$-bimodules $L_p$ and $L_p'$ are isomorphic if and only if their associated $(\F_{p^2}, \F_{p^2})$-bimodules $L_p/(\grP_pL_p)$ and $L_p'/(\grP_pL_p')$ are isomorphic.  
\end{thm}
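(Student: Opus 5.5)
We use the standard structure of the local maximal order: $\calO_p=\Z_{p^2}[\Pi]$, where $\Z_{p^2}$ is the ring of integers of the unramified quadratic extension $\Q_{p^2}$, $\Pi^2=p$, and $\Pi a=\bar a\Pi$ for $a\in\Z_{p^2}$. Then $\grP_p=\Pi\calO_p=\calO_p\Pi$ and $\calO_p/\grP_p=\F_{p^2}$. An $(\calO_p,\calO_p)$-bimodule is the same as a left module over $\Lambda\coloneqq\calO_p\otimes_{\Z_p}\calO_p^{\opp}$. The plan is to analyze $L_p$ first as a $\Z_{p^2}\otimes_{\Z_p}\Z_{p^2}$-module, and then incorporate the two actions of $\Pi$.

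\textbf{Step 1 (eigenspace decomposition).} Since $\Z_{p^2}\otimes_{\Z_p}\Z_{p^2}\simeq\Z_{p^2}\times\Z_{p^2}$ via $a\otimes b\mapsto(ab,a\bar b)$, we get $L_p=L^+\oplus L^-$. Here $L^+$ is the part where left and right multiplication by $\Z_{p^2}$ agree, and $L^-$ is the part where they differ by conjugation. Each summand is a free $\Z_{p^2}$-module, because it is torsion-free and finitely generated over a DVR. Left multiplication by $\Pi$ (call it $\lambda$) and right multiplication by $\Pi$ (call it $\rho$) both interchange $L^+$ and $L^-$, and they commute. We have $\lambda^2=\rho^2=p$, and $\lambda,\rho$ are injective. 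Set $u=\lambda^{-1}\rho$. On $L_p\otimes\Q_p$ this is a $\Z_{p^2}$-linear automorphism preserving $L^\pm\otimes\Q$, and it satisfies $u^2=1$.

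\textbf{Step 2 (admissibility).} Condition \eqref{eq:adm} says $\lambda L_p=\rho L_p$, that is, $u$ preserves $L_p$. Then $L_p$ is a module over the group ring $\Z_{p^2}[u]/(u^2-1)$, compatibly with the grading. For $p$ odd, this splits $L_p$ into its $u=\pm1$ parts. On the $u=1$ part, left and right multiplication by $\Pi$ agree. Choosing a $\Z_{p^2}$-basis of the $+$-component of that part and applying $\Pi$ shows it is a direct sum of copies of $\calO_p$. On the $u=-1$ part the twisted action matches $\grP_p$: conjugation by $\Pi$ acts on $\grP_p=\Pi\calO_p$ with the sign pattern $\lambda=-\rho$ on the $\Pi\Z_{p^2}$-type generators. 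I will need to check this concretely, e.g.\ via $\grP_p\simeq\calO_p$ twisted by the automorphism conjugation by $\Pi$. So that part is a sum of copies of $\grP_p$.

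\textbf{Main obstacle: $p=2$.} Here $u^2=1$ does not diagonalize integrally, so a lattice argument is needed. The alternative I would try first avoids $u$ altogether. Lift a basis of the $(\F_{p^2},\F_{p^2})$-bimodule $\bar L=L_p/\grP_pL_p$, which decomposes into ``diagonal'' and ``conjugate'' lines as in Step 1, and define a map $\calO_p^a\oplus\grP_p^b\to L_p$. The map is surjective mod $\grP_pL_p$, hence surjective by Nakayama, since $\grP_p$ is topologically nilpotent. It is injective by a rank comparison, because both sides have the same $\Z_p$-rank, determined by $\dim\bar L$ together with admissibility.

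Making this argument correct requires knowing that $\bar L$ determines which summand each generator generates. This is exactly where admissibility enters. Since $\grP_pL_p=L_p\grP_p$, $\bar L$ is a genuine $(\F_{p^2},\F_{p^2})$-bimodule and $\grP_pL_p/\grP_p^2L_p$ is its twist. Diagonal generators should yield $\calO_p$ and conjugate ones $\grP_p$, because $\grP_p/\grP_p^2\simeq\F_{p^2}$ carries the conjugate bimodule structure. Finally, the isomorphism criterion follows at once: the multiplicities $a$ and $b$ are read off as the dimensions of the two eigen-lines of $\bar L$.
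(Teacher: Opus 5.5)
The paper does not prove this statement. It quotes it from Ribet and uses it as a black box in the proof of Theorem~\ref{thm:conj-class}, after Lemma~\ref{lem:pro-lift-direct-summ} has split off the projective summands $\mathbb{L}_1,\mathbb{L}_2$. Your argument therefore has to stand on its own, and as written it has a genuine gap.

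\textbf{Step~2 fails because $u$ is not $\Z_{p^2}$-linear.} The map $\lambda$ is right $\Z_{p^2}$-linear, but $\rho(xa)=xa\Pi=x\Pi\bar a$. Hence $u(xa)=u(x)\bar a$, and likewise $u(ax)=\bar a\,u(x)$, so $u$ is $\sigma$-semilinear. Already for $L_p=\calO_p$ you get $u(x)=\Pi^{-1}x\Pi$, which on $L^+=\Z_{p^2}$ is Galois conjugation. Inside the single summand $\calO_p$, the $(+1)$-eigenspace is $\Z_p\oplus\Z_p\Pi$ and the $(-1)$-eigenspace is $c\Z_p\oplus c\Z_p\Pi$, where $\bar c=-c$. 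Neither is a $\Z_{p^2}$-submodule, let alone a sub-bimodule. The ideal $\grP_p$ also carries both signs: $\lambda=\rho$ on $\Pi$, but $\lambda=-\rho$ on $\Pi c$. So Step~2 breaks for every $p$, not only $p=2$. The sign of $u$ does not separate $\calO_p$ from $\grP_p$; what separates them is whether the generator lies in $L^+$ or $L^-$.

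\textbf{Your fallback skips the crucial point.} You never say why lifting a basis of $\bar L$ yields a \emph{bimodule} map $\calO_p^a\oplus\grP_p^b\to L_p$.
\begin{itemize}
\item A bimodule map $\calO_p\to L_p$ with $1\mapsto x$ exists iff $x$ centralizes $\calO_p$, i.e.\ $x\in L^+$ and $\Pi x=x\Pi$.
\item A bimodule map $\grP_p\to L_p$ given by $\Pi\beta\mapsto y\beta$ exists iff $y\in L^-$ and $\Pi y=y\Pi$.
\end{itemize}
An arbitrary lift does not satisfy $\Pi x=x\Pi$; admissibility only gives $\Pi L_p=L_p\Pi$ as sets. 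Producing lifts that commute with $\Pi$ is the real content of the theorem.

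\textbf{The repair uses your $u$ correctly.} Admissibility makes $u$ a $\Z_p$-automorphism of $L_p$ with $u^2=1$, preserving $L^\pm$ and $\sigma$-semilinear. Its fixed vectors in $L^\pm$ are exactly the elements commuting with $\Pi$. Now apply Galois descent for the unramified extension $\Z_{p^2}/\Z_p$. For instance, the twisted group ring satisfies $\Z_{p^2}\rtimes\langle\sigma\rangle\simeq\End_{\Z_p}(\Z_{p^2})\simeq\Mat_2(\Z_p)$, so Morita theory applies. This gives $L^\pm=(L^\pm)^{u}\otimes_{\Z_p}\Z_{p^2}$ uniformly in $p$, so the $p=2$ obstacle disappears.

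You can then choose $u$-fixed elements $x_1,\dots,x_a\in L^+$ and $y_1,\dots,y_b\in L^-$ whose images are $\F_{p^2}$-bases of $L^+/\Pi L^-$ and $L^-/\Pi L^+$, the two isotypic parts of $L_p/\grP_pL_p$. The maps $\alpha\mapsto\alpha x_i$ and $\Pi\beta\mapsto y_j\beta$ are then genuine bimodule homomorphisms. From there your Nakayama-plus-rank argument goes through unchanged, and $a,b$ are read off from $L_p/\grP_pL_p$ exactly as you say.
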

Due to the above theorem, such bimodules over local or global  quaternion orders are called \emph{Ribet bimodules} in the literature \cite{MR2931385}. To emphasize the freeness of $L_p$ over $\Z_p$ and distinguish it from torsion bimodules of the form $L_p/\grP_p L_p$, we shall call it an $(\calO_p, \calO_p)$-\emph{bilattice} instead. The first result of the current paper provides a complete classification of $(\calO_p, \calO_p)$-bilattices without the admissible assumption. When the $\Z_p$-rank of the bilattice is $8$, such a classification was previously obtained jointly by Terakado, Yu and the first named author in \cite{terakado-yu-xue-2026} using a different method. 
Observe that an $(\calO_p, \calO_p)$-bilattice is the same as  a left $\calO_p\otimes_{\Z_p}\calO_p^\opp$-lattice, where $\calO_p^{\opp}$ denotes the opposite ring of $\calO_p$. From the Krull-Schmidt-Azumaya Theorem \cite[Theorem~6.12]{curtis-reiner:1}, to classify left  $\calO_p\otimes_{\Z_p}\calO_p^\opp$-lattices, it is enough to classify all the indecomposable ones.  Clearly, both $\calO_p$ and $\grP_p$ are indecomposable. In \eqref{eq:defn-varphi-1}--\eqref{eq:defn-four-indecomp-latt}, we define two more indecomposable left $\calO_p\otimes_{\Z_p}\calO_p^\opp$-lattices $\L_1, \L_2$ that are of rank $2$ over $\calO_p$. It turns out that this completes the list of indecomposable left $\calO_p\otimes_{\Z_p}\calO_p^\opp$-lattices.

\begin{thm}\label{thm:4-indec-intro}
 The $\Z_p$-order $\calO_p\otimes_{\Z_p}\calO_p^{\opp}$ has finite representation type, and up to isomorphism there are exactly four indecomposable $(\calO_p, \calO_p)$-bilattices: $\calO_p$, $\grP_p$, $\L_1$ and $\L_2$. 
Every $(\calO_p, \calO_p)$-bilattice $L_p$ is isomorphic to a direct sum 
    \begin{equation*}
        \calO_p^{\oplus r_1}\oplus \grP_p^{\oplus r_2}\oplus \L_1^{\oplus t_1}\oplus \L_2^{\oplus t_2}
    \end{equation*}
for a unique quadruple $(r_1, r_2, t_1, t_2)\in\Z_{\geq 0}^4$, which will be called the structural invariant of $L_p$ and denoted by $\ulm(L_p)$.  Moreover, the following are equivalent for any  two  $(\calO_p, \calO_p)$-bilattices $L_p$ and $L_p'$: 
\begin{enumerate}
    \item $L_p$ and $L_p'$ are isomorphic;
    \item their associated $(\F_{p^2}, \calO_p/p\calO_p)$-bimodules $L_p/(\grP_pL_p)$ and $L_p'/(\grP_pL_p')$ are isomorphic;
    \item their associated $(\calO_p/p\calO_p, \F_{p^2})$-bimodules
    $L_p/(L_p\grP_p)$ and $L_p'/(L_p'\grP_p)$ are isomorphic.
\end{enumerate}
\end{thm}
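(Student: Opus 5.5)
We reduce to a problem over the unramified quadratic extension and then to a finite-dimensional linear algebra problem. Let $W=\Z_{p^2}$ be the ring of integers of the unramified quadratic extension of $\Q_p$, and write $\calO_p=W[\Pi]$ with $\Pi^2=p$ and $\Pi a=\sigma(a)\Pi$ for $a\in W$.

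\textbf{Step 1: reduce the left action.} An $(\calO_p,\calO_p)$-bilattice $L_p$ is a right $\calO_p$-lattice, so it is free, say $L_p\simeq\calO_p^n$. The left action is then a ring homomorphism $\calO_p\to \End(L_{p,\calO_p})=\Mat_n(\calO_p)$. Tensoring with $W$ over $\Z_p$, we use
\begin{equation*}
\calO_p\otimes_{\Z_p}W\simeq \begin{pmatrix} W & W\\ pW & W\end{pmatrix},
\end{equation*}
the standard Iwahori order in $\Mat_2(W)$. From this we get
\begin{equation*}
\calO_p\otimes_{\Z_p}\calO_p^\opp\simeq \bigl(\calO_p\otimes W\bigr)\ast \langle \Pi\rangle,
\end{equation*}
which is a hereditary-type order twisted by $\Pi$. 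The cleaner route is as follows. The left $W$-action and the right $W$-action commute, so $L_p$ decomposes as
\begin{equation*}
L_p=L^{(1)}\oplus L^{(\sigma)},
\end{equation*}
according to whether the two $W$-actions agree or differ by $\sigma$. This uses the idempotents of $W\otimes_{\Z_p}W\simeq W\times W$. Left multiplication by $\Pi$ and right multiplication by $\Pi$ each swap the two pieces. The data of $L_p$ is therefore
\begin{equation*}
\bigl(L^{(1)},L^{(\sigma)},\ \alpha=\Pi_{\mathrm{left}},\ \beta=\Pi_{\mathrm{right}}\bigr),
\end{equation*}
where $\alpha$ and $\beta$ are maps between the pieces that are $\sigma$-semilinear in a suitable sense. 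They satisfy $\alpha^2=\beta^2=p$ and $\alpha\beta=\beta\alpha$.

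\textbf{Step 2: normalize to a quiver problem.} Fix a $W$-basis on which the right $\Pi$ acts as $\beta=\begin{pmatrix}0&p\\ 1&0\end{pmatrix}$ in block form; this is possible because $L_p$ is right-free. The left $\Pi$ is then a matrix
\begin{equation*}
\alpha=\begin{pmatrix}0&A\\ B&0\end{pmatrix}
\end{equation*}
with $AB=BA=p$ and a commutation relation with $\beta$ twisted by $\sigma$. Unwinding these conditions, the classification becomes a classification of pairs of $W$-lattice maps
\begin{equation*}
A:L^{(\sigma)}\to L^{(1)},\qquad B:L^{(1)}\to L^{(\sigma)},\qquad AB=p,\ BA=p,
\end{equation*}
up to simultaneous change of bases, subject to a Frobenius-compatibility. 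The elementary divisor theorem for the pair $(A,B)$, where $B=pA^{-1}$, splits off rank-one pieces with $A$ equal to $1$ or $p$; these give $\calO_p$ and $\grP_p$. The remaining rank-two blocks, where the $\sigma$-compatibility prevents diagonalization, give $\L_1$ and $\L_2$. We have to check them against the definitions \eqref{eq:defn-varphi-1}--\eqref{eq:defn-four-indecomp-latt}.

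\textbf{Step 3: indecomposability and uniqueness.} We verify directly that $\L_1$ and $\L_2$ are indecomposable. Their reductions $\L_i/\grP_p\L_i$ are indecomposable $(\F_{p^2},\calO_p/p\calO_p)$-bimodules. Krull--Schmidt--Azumaya \cite[Theorem~6.12]{curtis-reiner:1} applies because $\Z_p$ is complete, and it gives uniqueness of $(r_1,r_2,t_1,t_2)$. This also proves finite representation type.

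\textbf{Step 4: the equivalences.} The implication (1)$\Rightarrow$(2),(3) is trivial. For the converse, we compute the reductions of the four indecomposables as $(\F_{p^2},\calO_p/p\calO_p)$-bimodules. Here $\calO_p/p\calO_p=\F_{p^2}[\Pi]/(\Pi^2)$, so such a bimodule is an $\F_{p^2}$-vector space pair with a nilpotent semilinear operator, classified by Jordan-type data. We then check that these four reductions are pairwise non-isomorphic indecomposables, so the quadruple can be read off from the reduction. Statement (3) follows by the symmetric argument, using the anti-isomorphism $\calO_p\simeq\calO_p^\opp$ given by the canonical involution to swap the two sides.

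\textbf{Main obstacle.} The main difficulty is Step 2: showing that every indecomposable has $\calO_p$-rank at most $2$. This requires carrying out the matrix reduction carefully with the semilinearity twist. My first attempt would be induction on rank. Pick a vector of $L^{(1)}$ where the valuation of $A$ is minimal, and show that it generates a direct summand of rank one or two.
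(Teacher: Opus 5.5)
Your Steps 3 and 4 are fine once you know that $\calO_p$, $\grP_p$, $\L_1$, $\L_2$ exhaust the indecomposables. That completeness, however, is the entire content of the theorem, and Step 2 does not prove it.

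\textbf{The normalization in Step 2 is wrong as stated.} Relative to $L^{(1)}\oplus L^{(\sigma)}$, right multiplication $\beta$ by $\Pi$ has the block form $\begin{pmatrix}0&p\\1&0\end{pmatrix}$ only if $L^{(1)}\Pi=L^{(\sigma)}$. That means the $\sigma$-twisted part of $L/L\Pi$ must vanish, which fails for $\grP_p$, $\L_1$ and $\L_2$. For $\L_1$, write $O_K$ for your $W$ and take the standard basis $e,f$, so $\Pi e=f$ and $\Pi f=pe$. Then $L^{(1)}=eO_K\oplus f\Pi O_K$ and $L^{(\sigma)}=e\Pi O_K\oplus fO_K$, but $L^{(1)}\Pi=e\Pi O_K\oplus pfO_K\subsetneq L^{(\sigma)}$. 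The basis in which $\beta$ really is $\begin{pmatrix}0&p\\1&0\end{pmatrix}$ comes from a right $\calO_p$-basis. It is not adapted to $L^{(1)}\oplus L^{(\sigma)}$, so $\alpha$ is not anti-diagonal there.

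\textbf{The reduction ignores the constraint that makes the problem hard.} An ``elementary divisor theorem for $(A,B)$'' uses arbitrary changes of basis of $L^{(1)}$ and $L^{(\sigma)}$. The permitted changes are only those commuting with $\beta$, i.e.\ automorphisms of $L$ as an $(O_K,\calO_p)$-bimodule. Normalizing $\alpha$ under this much smaller group is exactly the problem, and it is where $\L_1$ and $\L_2$ come from. You defer this to an unspecified induction on rank, so nothing shows that indecomposables have $\calO_p$-rank at most $2$.

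\textbf{What is missing is a splitting mechanism.} The paper supplies it as follows.
\begin{enumerate}
\item The truncation $L/L\Pi$ is a module over the Nakayama algebra $\F_{p^2}[\varepsilon]\otimes_{\F_p}\F_{p^2}$, whose indecomposables are $W_{\id}$, $W_\tau$, $W_{\varphi_1}$, $W_{\varphi_2}$.
\item Lifting idempotents identifies $\L_1,\L_2$ as the indecomposable projective $\calO_p\otimes_{\Z_p}\calO_p^{\opp}$-lattices.
\item This order is Gorenstein, so each $\L_j^*$ is projective. Hence whenever $W_{\varphi_j}$ embeds in $L/L\Pi$, the lifted map $\L_j\to L$ is injective with torsion-free cokernel. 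The dual sequence splits, and therefore so does the original one.
\end{enumerate}
In your language: if $x\in L^{(1)}$ and $\Pi x\notin L\Pi$, then $x\calO_p+\Pi x\calO_p\simeq\L_1$ is a direct summand of $L$, and similarly for $L^{(\sigma)}$ and $\L_2$.

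After peeling off all such summands, $\Pi$ acts by zero on $L/L\Pi$, so $\Pi L\subseteq L\Pi$. Comparing $\F_p$-dimensions of $L/\Pi L$ and $L/L\Pi$ gives $\Pi L=L\Pi$, and Ribet's classification of admissible bilattices finishes. Your Step 2 needs an argument of comparable strength.
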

More generally, a similar result holds when $\calO_p$ is replaced by a maximal order in a division quaternion algebra over a nonarchimedean local field; see Theorem~\ref{thm:conj-class} and Corollary~\ref{cor:one-to-one-corre}.
Ribet observes in \cite{Ribet-bimod} that $\calO_p\otimes_{\Z_p}\calO_p^{\opp}$ is not a hereditary order. As an application of Theorem~\ref{thm:4-indec-intro}, we sketch a proof that $\calO_p\otimes_{\Z_p}\calO_p^{\opp}$ is a Gorenstein order but not a Bass order in Remark~\ref{rem:not-bass}.

We return to the global case and let $\calO$ be  a maximal $\Z$-order in an indefinite quaternion $\Q$-algebra $B$  with $p\mid \disc(\calO)$ (and no additional restrictions on the discriminant otherwise).   
Fix an identification $\calR_p\simeq \calO_p$ so that we can apply the classification result of Theorem~\ref{thm:4-indec-intro} to a global $(\calO, \calR)$-bilattice $L$.
From Lemma~\ref{lem:emb-exist-neces-cond-2}, an $n$-dimensional superspecial abelian variety $X$ over $k$  admits an embedding $\iota: \calO\hookrightarrow \End(X)$ if and only if $n$ is even.  In this case, the structural invariant $\ulm^{(p)}(X, \iota)$ at $p$ is defined as the structural invariant $\ulm^{(p)}(L)\coloneqq \ulm(L_p)=(r_1, r_2, t_1, t_2)$ of its associated Ribet $(\calO, \calR)$-bilattice $L\coloneqq \Hom(E, X)$ at $p$, where $E$ is the fixed supersingular elliptic $k$-curve with $\calR=\End(E)$ as before. An easy rank consideration shows that $r_1+r_2+2t_1+2t_2=n$.  When $n=2$, the number of isomorphism classes of superspecial $\calO$-abelian surfaces $(X, \iota)$ over $k$
has been computed by Terakado, Yu and the first named author in \cite{terakado-yu-xue-2026} using the Eichler trace formula when studying the superspecial points on the Shimura curve $\scrC_{\F_p}$ with full level structure. 
 On the other hand, when $n\geq 4$ and is even,  a similar argument as the proof of the Deligne-Ogus-Shioda theorem shows that for each quadruple $(r_1, r_2, t_1, t_2)\in \Z_{\geq 0}^4$ with  $r_1+r_2+2t_1+2t_2=n$, there is a unique isomorphism class of superspecial  $\calO$-abelian varieties $(X, \iota)$ over $k$ with structural invariant $(r_1, r_2, t_1, t_2)$; see Example~\ref{eg:F}. 
Naturally, one further asks whether there exists a principal polarization on $X$ that is compatible with $\iota$ (in a suitable sense). This  question is answered by our next main result of this paper, which will be explained after we set up the basic background material.  

Previously we have discussed superspecial $\calO$-abelian varieties without bringing  polarization into the picture. For the study of Shimura curves  parameterizing abelian surfaces with quaternion multiplication, very often there is no need to consider polarizations. Indeed, given an $\calO$-abelian surface $(X, \iota)$ over a base scheme $S$, a classical result of Drinfeld  \cite[Proposition~4.3]{MR422290} shows that if $(X, \iota)$  satisfies the Kottwitz determinant condition, then it admits a canonical principal polarization $\lambda: X\to X^\vee$ such that the Rosati involution induced by $\lambda$ keeps $\iota(\calO)$ stable and its restriction defines an involution via $\iota$ on $\calO$   given by $\alpha\mapsto \gamma \bar{\alpha} \gamma^{-1}$ with $\gamma^2=-\disc(\calO)$.  Here
$X^{\vee}$ denotes the dual abelian scheme of $X$, and 
 $\alpha\mapsto \bar{\alpha}$ denotes the canonical involution on the quaternion $\Q$-algebra $B$.

The Shimura curves discussed above are special cases of PEL Shimura varieties of type C. 
Let $F$ be a totally real number field of degree $d\coloneqq [F:\Q]$ with ring of integers $O_F$, and $B$ be a totally indefinite quaternion $F$-algebra.  An involution $*$ on $B$ is said to be \emph{positive} if  $\Tr_{B/\Q}(\alpha\alpha^*)>0$ for every nonzero $\alpha\in B$.  
Fix a positive involution $*$ on $B$ and let $\calO$ be a maximal $O_F$-order in $B$ stable under $*$. A (principally) polarized $\calO$-abelian scheme over a base scheme $S$ is a triple $(X, \lambda, \iota)$, where $(X, \iota)$ is an $\calO$-abelian scheme over $S$, and $\lambda: X\to X^{\vee}$ is a (principal) polarization such that 
\begin{equation}\label{eq:QM-eq-intro}
    \lambda\circ\iota(\alpha^*)=\iota(\alpha)^{\vee}\circ\lambda,\qquad \forall \alpha\in \calO.
\end{equation}
Here $\iota(\alpha)^{\vee}$ denotes the dual morphism of $\iota(\alpha)$.
Equivalently, condition \eqref{eq:QM-eq-intro} requires the positive involution $*$ on $\calO$ to coincide via $\iota$ with the restriction of the Rosati involution induced by $\lambda$, in which case we  say that $\lambda$ is a polarization on $(X, \iota)$ (provided that $*$ is clear from the context). 
Fix a positive integer $m$. We consider the Shimura variety of type C whose  canonical model over $\Z_{(p)}$ can be constructed as the  coarse moduli scheme $\bfM$  parametrizing $2dm$ dimensional principally polarized $\calO$-abelian varieties $(X, \lambda, \iota)$ satisfying the Kottwitz determinant condition (with suitable level structure). In \cite[Theorem~3.6]{terakado-xue-yu:2023}, Terakado, Yu and the first named author determine the necessary and sufficient condition  for the existence of a $2dm$ dimensional principally polarized superspecial $\calO$-abelian variety $(X, \lambda, \iota)$ over $k$ satisfying the Kottwitz determinant condition. In particular, they prove that the superspecial locus of the geometric special fiber $\bfM\otimes k$ is nonempty if and only if its generic fiber $\bfM\otimes \Q$  is nonempty.

Note that both the result of Drinfeld and that of Terakado-Xue-Yu require the  Kottwitz determinant condition. However, 
 in practical applications, it is often necessary to study $\calO$-abelian varieties beyond those satisfying the Kottwitz determinant condition. For example, in Ribet's case, the superspecial $\calO$-abelian surfaces that do not satisfy the  Kottwitz determinant condition parametrize the irreducible components of the Shimura curve $\scrC_{\F_p}$.   Therefore, we shall forgo the Kottwitz determinant condition and consider arbitrary superspecial $\calO$-abelian varieties.  

  \begin{question}\label{que:our-que}
      Given a superspecial $\calO$-abelian variety $(X, \iota)$ over $k$ and a positive involution $*$ on $\calO$, when is $(X, \iota)$ principally polarizable?
  \end{question}

 To study this question, we follow the approaches of Ibukiyama, Katsura and Oort \cite{Ibukiyama-Katsura-Oort-1986}  to reduce the above problem to that of 
 certain self-dual hermitian $(\calO, \calR)$-bilattices, where $\calR\coloneqq \End(E)$ is the endomorphism ring of the fixed supersingular elliptic $k$-curve $E$ as before.    Let $(X, \iota)$ be a  superspecial $\calO$-abelian variety over $k$ of dimension $n$ with associated  $(\calO, \calR)$-bilattice $L\coloneqq\Hom(E, X)$.  By Lemma~\ref{lem:B-D-bimod}, $L$ is necessarily a free right $\calR$-module of rank $n$, with $n$ divisible by $2d$. 
 From \cite[Proposition~2.8]{Ibukiyama-Katsura-Oort-1986} and \cite[\S4]{Ibukiyama-Karemaker-Yu-2025}, each polarization $\lambda: X\to X^{\vee}$ corresponds to a positive definite 
 hermitian form $\langle~,~\rangle: L\times L\to \calR$ as defined in \eqref{eq:herm-form-defn}. Moreover, $\lambda$ is principal if and only if the corresponding pairing $\langle~,~\rangle$ is perfect. The compatibility condition \eqref{eq:QM-eq-intro} is further translated into 
\begin{equation}\label{eq:herm-compat-intro}
    \langle \alpha x, y\rangle=\langle x, \alpha^* y\rangle,\quad\forall x, y\in L,\, \forall \alpha\in \calO.
\end{equation}
Thus we have established the following bijection (see Lemma~\ref{lem:pol-herm-pair})
\begin{equation}\label{eq:bij-pol-hermlatt}
     \left\{\parbox{3.4cm}{principal polarizations on $(X, \iota)$}\right\}\quad \xlongleftrightarrow{1-1}\quad \left\{\parbox{5.6cm}{perfect positive definite hermitian forms $\langle~,~\rangle: L\times L\to \calR$ satisfying \eqref{eq:herm-compat-intro} on $L$}\right\} 
\end{equation}
Henceforth an ordered pair $(L, \langle~,~\rangle)$ consisting of an $(\calO, \calR)$-bilattice $L$ and a non-degenerate hermitian form $\langle~,~\rangle$ satisfying \eqref{eq:herm-compat-intro} will be called a \emph{hermitian $(\calO, *, \calR)$-bilattice} for short. Analogously, at each prime $\ell$ we define \emph{hermitian $(\calO_\ell, *, \calR_\ell)$-bilattices}. Clearly, a global hermitian $(\calO, *, \calR)$-bilattice $(L, \langle~,~\rangle)$ is self-dual if and only if it is self-dual locally at every prime $\ell$ (including $\ell=p$). Since $\calR_\ell\simeq \Mat_2(\Z_\ell)$ when $\ell\neq p$, the classification of self-dual hermitian $(\calO_\ell, *, \calR_\ell)$-bilattices for $\ell\neq p$ is made easy by the Morita equivalence. Thus the main difficulty lies in classifying the self-dual hermitian $(\calO_p, *, \calR_p)$-bilattices locally at $p$.

From the decomposition $O_F\otimes_\Z \Z_p=\oplus_{v\mid p} O_{F_v}$ with $v$ ranging through all places of $F$ above $p$, the  $\Z_p$-order $\calO_p\coloneqq\calO\otimes_{\Z}\Z_p$ decomposes into a direct sum $\oplus_{v\mid p} \calO_v$ of maximal quaternion $O_{F_v}$-orders $\calO_v\coloneqq \calO\otimes_{O_F}O_{F_v}$. Correspondingly, the $(\calO_p, \calR_p)$-bilattice $L_p\coloneqq L\otimes_\Z\Z_p$ splits into a direct sum   
\begin{equation}\label{eq:latt-decomp-at-p}
    L_p=\bigoplus_{v\mid p} L_v, 
\end{equation}
where each $L_v\coloneqq L\otimes_{O_F}O_{F_v}$ is an $(\calO_v, \calR_p)$-bilattice of $\calO_v$-rank $n/d$; see Lemma~\ref{lem:rank-of-summand}. Moreover, if $\langle~,~\rangle_p: L_p\times L_p\to \calR_p$ is a hermitian form on $L_p$ satisfying \eqref{eq:herm-compat-intro}, then the decomposition~\eqref{eq:latt-decomp-at-p} is orthogonal. In particular, $L_p$ is self-dual if and only if every $L_v$ is self-dual for all $v\mid p$. Thus for the local classification, we may treat each $(\calO_v, \calR_p)$-bilattice $L_v$ individually, which can also be viewed as an $(\calO_v, \calR_p\otimes_{\Z_p}O_{F_v})$-bilattice in the natural way.
To apply Theorem~\ref{thm:4-indec-intro} to the  classification of self-dual hermitian $(\calO_p, *, \calR_p)$-bilattices, we further assume that $\calR_p\otimes_{\Z_p}O_{F_v}$ is a maximal order in $D_p\otimes_{\Q_p}F_v$ for all  $v\mid p$, which is equivalent to assuming that the global quaternion $O_F$-order $\calR\otimes_{\Z}O_F$ is maximal in $D\otimes_{\Q}F$. For example, this holds when $p$ splits completely in $F$.
From Lemma~\ref{lem:max-order-under-base-change}, this maximality assumption also implies that the quaternion $F$-algebra $D\otimes_\Q F$ is ramified at all finite places $v$ of $F$ above $p$. By Lemma~\ref{lem:lift-pairing}, every perfect hermitian $(\calO_v, *, \calR_p)$-pairing on $L_v$ lifts to a perfect hermitian $(\calO_v, *, \calR_p\otimes_{\Z_p}O_{F_v})$-pairing, where  $\calO_v$ and $\calR_p\otimes_{\Z_p}O_{F_v}$ share the same central base order $O_{F_v}$.

If $v$ is split in $B$, i.e., $\calO_{v}\simeq\Mat_2(O_{F_v})$, the classification of hermitian $(\calO_v, *, \calR_p\otimes_{\Z_p}O_{F_v})$-bilattices is again made easy by the Morita equivalence, so we focus on the case where $v$ is ramified in $B$. Suppose that this is the case,  and we write  $\ord_{B_v}: B_v^\times \twoheadrightarrow \Z$ for the normalized discrete valuation on $B_v$. 
We fix an identification $\calR_p\otimes_{\Z_p} O_{F_v}\simeq\calO_v$  and view $L_v$ as an $(\calO_v, \calO_v)$-bilattice.
From \cite[\S21]{mumford:av}, the positive involution $*$ on $B$ is given by an element $\gamma\in B^\times$ such that 
\begin{equation}\label{eq:positive-involution-intro}
    \gamma^2\in F_{<0}^\times \quad\text{and}\quad
    \alpha^*=\gamma \ol{\alpha} \gamma^{-1},\quad\forall \alpha\in B, 
\end{equation}
where $F_{<0}^\times$ denotes the subset of totally negative elements of $F$.

\begin{thm}\label{thm:self-dual-local-bilatt-intro}
  Let $v$ be a place of $F$ above $p$ ramified in $B$ as above, and $L_v$ be an $(\calO_v, \calO_v)$-bilattice with structural invariant $(r_1, r_2, t_1, t_2)$. Then there exists a perfect hermitian  $(\calO_v, *, \calO_v)$-pairing $\langle~,~\rangle_v$ on $L_v$ if and only if the  following condition holds
\begin{equation}\label{eq:cond-for-quadruple-intro}
    \begin{dcases*}
        r_1=r_2,  &  if $\ord_{B_{v}}(\gamma)$  is odd;\\
        2|r_1, 2|r_2, \text{ and } t_1=t_2, & if   $\ord_{B_v}(\gamma)$ is even.
    \end{dcases*}
\end{equation}
Moreover, when  condition \eqref{eq:cond-for-quadruple-intro} holds,  up to isometry there is a unique  self-dual hermitian $(\calO_v, *, \calO_v)$-bilattice $(L_v, \langle~,~\rangle_v)$ with structural invariant $(r_1, r_2, t_1, t_2)$.
\end{thm}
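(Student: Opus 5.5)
The plan is to work with the dual bilattice and reduce both existence and uniqueness to the structure of indecomposables from Theorem~\ref{thm:4-indec-intro}. For an $(\calO_v,\calO_v)$-bilattice $L$, set $L^\vee\coloneqq\Hom_{\calO_v}(L,\calO_v)$, the right-$\calO_v$-linear functionals. Via the canonical involution on the right factor, $L^\vee$ is again a right $\calO_v$-lattice. The left $\calO_v$-action on $L^\vee$ is twisted by $*$: we let $\alpha$ act on $f$ by $x\mapsto f(\alpha^* x)$, and then convert this to a left action using $\bar{\ }$ on the values.

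A perfect hermitian $(\calO_v,*,\calO_v)$-pairing on $L$ is then the same thing as an isomorphism of bilattices $\phi\colon L\xrightarrow{\sim} L^{\vee,*}$ that is hermitian, i.e.\ $\phi^\vee=\phi$ under the canonical identification $L\simeq (L^{\vee,*})^{\vee,*}$. Since $\alpha^*=\gamma\bar\alpha\gamma^{-1}$, the twisted dual $L^{\vee,*}$ is the ordinary dual $L^\vee$ (taken with the canonical involution) with its left action conjugated by $\gamma$. Conjugation by $\gamma$ preserves $\calO_v$ and $\grP_v$. It acts on $\calO_v/\grP_v=\F_{q^2}$ trivially if $\ord_{B_v}(\gamma)$ is even, and by the Frobenius if $\ord_{B_v}(\gamma)$ is odd.

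The first step is to compute the duality functor on the four indecomposables. Directly, $\calO_v^\vee\simeq\calO_v$ and $\grP_v^\vee\simeq\grP_v^{-1}\simeq\grP_v$ as bilattices, since $\grP_v^{-1}=\Pi^{-1}\calO_v$ and multiplication by $\Pi$ is a bimodule isomorphism up to twist.

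To handle the twists and the $\L_i$, I will use criterion (2) of Theorem~\ref{thm:4-indec-intro}. I compute the $(\F_{q^2},\calO_v/p)$-bimodule $L/\grP_vL$, or its $\F_{q^2}\otimes\F_{q^2}$-eigenspace decomposition, for each indecomposable, its dual, and its $\gamma$-twist. The expected outcome is the following.
\begin{itemize}
\item The Frobenius twist (odd case) swaps $\calO_v\leftrightarrow\grP_v$ and fixes each $\L_i$.
\item The untwisted dual (even case) fixes $\calO_v$ and $\grP_v$ and swaps $\L_1\leftrightarrow\L_2$.
\end{itemize}
Consequently $L^{\vee,*}$ has invariant $(r_2,r_1,t_1,t_2)$ in the odd case and $(r_1,r_2,t_2,t_1)$ in the even case. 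Since an isomorphism $L\simeq L^{\vee,*}$ is necessary, this yields $r_1=r_2$, respectively $t_1=t_2$.

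The remaining parity conditions $2\mid r_1$ and $2\mid r_2$ in the even case come from the sign of the form. On a self-dual indecomposable $M\in\{\calO_v,\grP_v\}$, any isomorphism $M\to M^{\vee,*}$ is left multiplication by some $c\in\calO_v^\times$ (or $\Pi^{\pm1}$ times a unit) commuting appropriately with $\gamma$. The hermitian condition $\langle x,y\rangle=\overline{\langle y,x\rangle}$ then forces a condition like $\bar c\,\gamma^{-1}=c\gamma^{-1}$ up to sign. Because $\gamma$ is pure quaternion ($\gamma^2\in F$, $\bar\gamma=-\gamma$), this produces a skew rather than hermitian form. This must be checked concretely: the natural candidate pairing $\langle x,y\rangle=\bar x\,\gamma^{-1} y$ (suitably normalized by powers of $\Pi$) is skew-hermitian, so a rank-one piece cannot be perfect hermitian.

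Passing to the reduction modulo the radical, I will show the following. On the isotypic part of type $\calO_v$ (respectively $\grP_v$), a perfect hermitian pairing induces a nondegenerate alternating form on an $\F_{q^2}$- or $\F_q$-space of dimension $r_1$ (respectively $r_2$), which forces evenness. Conversely, hyperbolic planes $M\oplus M$ with $M\in\{\calO_v,\grP_v\}$, and the pairs $\L_1\oplus\L_2$ or $\calO_v\oplus\grP_v$ with the evident pairing between a module and its dual, give explicit perfect hermitian pairings. Orthogonal sums of these realize every invariant satisfying \eqref{eq:cond-for-quadruple-intro}, which proves existence.

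For uniqueness I will follow the standard strategy for hermitian forms over local orders, in the spirit of Quebbemann and Wall. An isometry class is determined by the reduction modulo the radical of the endomorphism-with-involution ring $\End(L)$, together with lifting of isometries via Hensel's lemma, since $p$-adically complete hermitian forms lift uniquely up to isometry. Here $\End(L)$ modulo its radical is a product of matrix algebras over $\F_{q^2}$ or $\F_q$ indexed by the indecomposable types. The induced involution either swaps paired factors (hyperbolic, hence a unique form) or acts on a single factor as a symplectic or unitary type involution, and over finite fields such forms are unique up to isometry in each dimension.

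The main obstacle is this last step: identifying precisely the induced involution on each simple factor of $\End(L)/\Rad$. This is especially delicate for the $\L_i$, whose endomorphism rings are less transparent, and it is also the step that proves the symplectic/parity claim. I would first compute explicitly with the matrix models of $\L_1$ and $\L_2$ from \eqref{eq:defn-varphi-1}--\eqref{eq:defn-four-indecomp-latt}.
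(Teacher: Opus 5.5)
Your necessity argument is the paper's. The twisted dual has invariant $(r_2,r_1,t_1,t_2)$ when $\ord_{B_v}(\gamma)$ is odd and $(r_1,r_2,t_2,t_1)$ when it is even (Lemmas~\ref{lem:dual-latt-odd} and \ref{lem:dual-latt-even}), and this is detected on truncations via Corollary~\ref{cor:one-to-one-corre}. Watch the bookkeeping, though: by Remark~\ref{rem:str-inv-comp-conj}, conjugation by an odd-valuation element by itself swaps $\L_1\leftrightarrow\L_2$ as well as $\O\leftrightarrow\P$. Only its composite with the canonical dual fixes each $\L_i$.

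\textbf{Existence is wrong as stated.} When $\ord_{B_v}(\gamma)$ is odd, the condition is only $r_1=r_2$, with $t_1,t_2$ arbitrary. Your only block for that case is $\calO_v\oplus\grP_v$. The other blocks are module--dual pairs only in the even case, since in the odd case $\calO_v^{\vee,*}\simeq\grP_v$ and $\L_i^{\vee,*}\simeq\L_i$. So your blocks realize only $(r,r,0,0)$, and adding hyperbolic $\L_i\oplus\L_i$ would still give only even $t_i$. What is missing is a perfect pairing on a single copy of $\L_1$, and of $\L_2$. The paper supplies these as $\M_1,\M_2$: writing $\gamma=u\Pi$, the form $\langle x,y\rangle_u=\langle x,uy\rangle$ has Gram matrix $\begin{bmatrix}0&c\\-c&0\end{bmatrix}$ with $c\in O_K^\times$ and $c+c^\sigma=0$.

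\textbf{Uniqueness and parity take a different route, but the proof is not there yet.} You reduce the hermitian category to its semisimple quotient, where isotypic parts separate automatically and forms over $\F_q,\F_{q^2}$ are classified. The paper instead shows by hand that the isotypic blocks are self-dual (Lemmas~\ref{lem:isotropic-compt-odd} and \ref{lem:isotropic-compt-even}), then splits off explicit blocks $\M_1,\dots,\M_6$ by induction. Your route could work, but two things that make it a proof are missing.

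First, the lifting principle you invoke is not a general fact. Solving $(1+x)^*\phi(1+x)=\phi+\delta$ needs either $2\in O_F^\times$ or that every hermitian $\delta$ in the radical be a trace $\eta+\eta^*$ with $\eta$ in the radical. The theorem includes residue characteristic $2$ (for example $p=2$), where lifting fails in general: $x^2+y^2$ and $3x^2+3y^2$ over $\Z_2$ have the same reduction but are not isometric. Here the trace condition does hold, but only because $K/F$ is unramified, so that $\ker\Tr_{K/F}|_{O_K}=(1-\sigma)O_K$. For instance, on $\L_1$ in the odd case the compatible sesquilinear Gram matrices for $\langle~,~\rangle_u$ are $\begin{bmatrix}e\Pi&d\\-d&-\pi e\Pi\end{bmatrix}$ with $d,e\in O_K$. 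Their traces are exactly the hermitian ones, $\begin{bmatrix}0&d_0\\-d_0&0\end{bmatrix}$ with $d_0+d_0^\sigma=0$. You must check this type by type; it is what the paper's footnote rescaling trick and Lemma~\ref{lem:solve-eq} do concretely.

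Second, the computation you defer as the ``main obstacle'' is the heart of the argument.
\begin{itemize}
\item $\End(\L_i)$ is local with residue field $\F_{q^2}$. In the odd case the induced involution on it is $\sigma$, i.e.\ unitary type. Hence every $t_i$ occurs, with a unique form, which would also repair the existence gap once lifting is justified.
\item In the even case, the reduced form on $\overline{\O^{\oplus r_1}}$ is alternating only because the corresponding Gram block is a skew-symmetric matrix over $O_F$, which has zero diagonal since the characteristic is $0$. In residue characteristic $2$, ``skew modulo $\pi$'' alone would not force even rank.
\end{itemize}
Without these verifications, neither the uniqueness statement nor the parity condition $2\mid r_1$, $2\mid r_2$ is established.
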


In Section~\ref{sec:self-dual-local-latt}, we show that every self-dual hermitian $(\calO_v, *, \calO_v)$-bilattice decomposes into an orthogonal direct sum of self-dual sub-bilattices, each with $\calO_v$-rank $2$ or $4$. We write down the complete list of such ``basic" self-dual hermitian $(\calO_v, *, \calO_v)$-bilattices in Theorems~\ref{thm:self-dual-latt-odd} and \ref{thm:self-dual-latt-even}.  In a sense, these theorems follow the same spirit as the classical results of Shimura \cite{Shimura1963-AltHermForms} and Jacobowitz \cite{Jacobowitz-HermForm}, who show that locally every (maximal) quaternion hermitian lattice is an orthogonal direct sum of modular lines and planes. 
 
We return to the study of superspecial $\calO$-abelian varieties.
Keep the assumption that $\calR\otimes_{\Z}O_F$ is a maximal order in $D\otimes_{\Q}F$ as above and let $\Sigma$ be the set of  places of $F$ above $p$ ramified in $B$.  
For each $v\in\Sigma$, we fix an identification $\calR_p\otimes_{\Z_p} O_{F_v}\simeq\calO_v$ once and for all. Given  an $(\calO, \calR)$-bilattice $L$, we  define its structural invariant $\ulm^{(v)}(L)$  at $v$  to  be the structural invariant of the $(\calO_{v}, \calO_{v})$-bilattice $L_{v}$, that is, 
\begin{equation}
    \ulm^{(v)}(L)\coloneqq \ulm(L_{v})=(r_1^{(v)}, r_2^{(v)}, t_1^{(v)},  t_2^{(v)}).
\end{equation}
Since $\rank_{\calO_v}(L_{v})=n/d$, this quadruple is subject to the following constraint:
 \begin{equation}\label{eq:compute-rank-intro}
     r_1^{(v)}+r_2^{(v)}+2t_1^{(v)}+2t_2^{(v)}=n/d.
 \end{equation}  
Similarly, given a superspecial $\calO$-abelian variety $(X, \iota)$, we put $\ulm^{(v)}(X, \iota)\coloneqq \ulm^{(v)}(L)$ with $L\coloneqq \Hom(E, X)$ as before.  From Lemma~\ref{lem:emb-exist-neces-cond-2}, an $n$-dimensional superspecial abelian variety $X$ over $k$ admits an embedding $\iota: \calO\hookrightarrow \End(X)$ if and only if $n$ is divisible by $2d$.

\begin{thm}\label{thm:nec-suff-cond-intro}
Let $*$ be a positive involution on the totally indefinite quaternion $F$-algebra $B$ as in \eqref{eq:positive-involution-intro}. Assume that $\calR\otimes_{\Z}O_F$ is a maximal order in $D\otimes_{\Q}F$. Let $\Sigma$ be the set of places of $F$ above $p$ ramified in $B$, and  $\Xi$ be the set of finite places of $F$ coprime to $p$ and ramified in $B$.  Then the following holds true. 

\begin{enumerate}[label={(\roman*)},  leftmargin=*]
    \item If there exists a principally polarized superspecial $\calO$-abelian variety over $k$ of dimension $n$, then the following condition necessarily holds:
      \begin{equation}\label{eq:cond-for-S-1-intro}
      n/(2d)\text{ is even or }\ord_{B_{w}}(\gamma)\text{ is odd for every }w\in \Xi.
      \end{equation}
    \item Let $n$ be a fixed positive integer divisible by $2d$ and suppose that condition~\eqref{eq:cond-for-S-1-intro} holds. For each $v\in\Sigma$, fix a quadruple
     \begin{equation}\label{eq:str-inv-av}
        (r_1^{(v)}, r_2^{(v)}, t_1^{(v)}, t_2^{(v)})\in \Z_{\geq 0}^4 
   \end{equation} 
   satisfying~\eqref{eq:compute-rank-intro}.
   Then there exists a principally polarized superspecial $\calO$-abelian variety $(X, \lambda, \iota)$ over $k$ of dimension $n$ with $\ulm^{(v)}(X, \iota)=(r_1^{(v)}, r_2^{(v)}, t_1^{(v)}, t_2^{(v)})$ for all $v\in\Sigma$ if and only if the following condition holds:
    \begin{equation} \label{eq:cond-nec-at-p-intro}
     \forall v\in\Sigma, \quad  \begin{dcases*}
        r_1^{(v)}=r_2^{(v)},  &  if $\ord_{B_{v}}(\gamma)$  is odd;\\
        2|r_1^{(v)}, 2|r_2^{(v)}, \text{ and } t_1^{(v)}=t_2^{(v)}, & if   $\ord_{B_{v}}(\gamma)$ is even. 
    \end{dcases*}   
   \end{equation}   
   \item Fix both $n$ and  the quadruples $(r_1^{(v)}, r_2^{(v)}, t_1^{(v)}, t_2^{(v)})$ for all $v\in\Sigma$ as in part (ii) and suppose that both conditions \eqref{eq:cond-for-S-1-intro} and \eqref{eq:cond-nec-at-p-intro} hold. Denote the narrow class number of $F$
    by $h^+(F)$.
    If further $n>2d$, then there are exactly $h^+(F)$  isomorphism classes of (unpolarized) superspecial $\calO$-abelian $k$-varieties $(X', \iota')$ of dimension $n$ with $\ulm^{(v)}(X', \iota')=(r_1^{(v)}, r_2^{(v)}, t_1^{(v)}, t_2^{(v)})$ for all $v\in\Sigma$, among which exactly one of them is principally polarizable. 
\end{enumerate}
\end{thm}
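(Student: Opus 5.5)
The plan is a local-global argument. Via the bijection \eqref{eq:bij-pol-hermlatt}, principally polarized superspecial $\calO$-abelian varieties $(X,\lambda,\iota)$ of dimension $n$ correspond to positive definite self-dual hermitian $(\calO,*,\calR)$-bilattices $L$ of $\calR$-rank $n$. Here $X=L\otimes_{\calR}E$, using that every right $\calR$-lattice of rank $n\ge 2$ is free, or handling $n=2$ directly via Lemma~\ref{lem:B-D-bimod}. Accordingly, all three parts will be phrased as statements about the genus of such lattices.

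\emph{Local existence.} Self-duality is checked prime by prime. For $\ell\ne p$ we have $\calR_\ell\simeq\Mat_2(\Z_\ell)$, and Morita equivalence reduces a hermitian $(\calO_\ell,*,\calR_\ell)$-bilattice to an $\calO_\ell$-module carrying a form $\calO_\ell$-sesquilinear for $*$.
\begin{itemize}
\item At places $w\mid\ell$ split in $B$, everything is a symplectic/hermitian space over $O_{F_w}$, and self-dual ones exist in every admissible rank.
\item At $w\in\Xi$, $B_w$ is division and the relevant object is a self-dual $*$-hermitian $\calO_w$-lattice of $\calO_w$-rank $n/(2d)$. By the local classification of Shimura/Jacobowitz, the hermitian form $x^*\gamma y$-type twist shows that a unimodular line exists iff $\ord_{B_w}(\gamma)$ is odd. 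Otherwise one needs hyperbolic planes, which forces even rank.
\end{itemize}
This gives the necessity in (i), and also local existence under \eqref{eq:cond-for-S-1-intro}.

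At $v\mid p$ split in $B$ the argument is again Morita; at $v\in\Sigma$, Lemma~\ref{lem:lift-pairing} together with Theorem~\ref{thm:self-dual-local-bilatt-intro} gives existence iff \eqref{eq:cond-nec-at-p-intro}, which is the local half of (ii). The decomposition \eqref{eq:latt-decomp-at-p} is orthogonal, so local pieces can be assembled.

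\emph{Local-to-global.} To go from local to global, I would start from a global $B\otimes_F D$-hermitian space $V$, i.e.\ a $D\otimes F$-hermitian space of dimension $n/(2d)$ over the quaternion algebra $B^{\mathrm{op}}\otimes D\cong\Mat_2(D')$. I would show that a positive definite space exists with prescribed localizations. This uses the Hasse principle for quaternion hermitian (or skew-hermitian) forms, plus the fact that positive definite quaternion hermitian spaces over a definite-at-infinity algebra are classified by dimension, with no local invariants except at the split-vs-ramified distinction. Then, inside $V\otimes\Q_\ell$, I choose the local self-dual bilattices found above, which lie in the unique local space, and glue them into a global $L$. Since local lattices determine a global one, this proves the sufficiency in (ii).

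\emph{Counting.} For (iii) I would count unpolarized objects as follows. The bilattices with given local invariants form a single genus, since Theorem~\ref{thm:4-indec-intro} at $v\in\Sigma$ and Morita elsewhere give uniqueness locally. Its classes are counted by $\GL$-type class sets for the commutant, which is an order in $\Mat_{n/2d}(B^{\opp}\otimes D)$. For $n>2d$ strong approximation applies to the reduced-norm-one group, so the classes biject with $F^\times_{+}\backslash\widehat F^\times/\mathrm{Nrd}(\text{local units})$, which is $\Cl^+(F)$ provided the reduced norms of local units are all of $O_{F_v}^\times$. That gives $h^+(F)$ classes. Among them, a principally polarizable one is determined uniquely: if $L$ is self-dual, then $L\mathfrak a$ is self-dual only when $\mathfrak a\bar{\mathfrak a}$ is narrowly principal via the totally positive scalar. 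Comparing through the norm map should show exactly one class.

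I expect the main obstacles to be the global Hasse principle step and the precise surjectivity of local reduced norms at $v\in\Sigma$ for the endomorphism orders of the indecomposables $\L_1,\L_2$.
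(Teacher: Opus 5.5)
Your treatment of (i), the local half of (ii), and the count of $h^+(F)$ classes in (iii) matches the paper. The norm surjectivity you worry about at $v\in\Sigma$ is settled as follows: $\Lambda_v\supseteq O_{K_v}$ for $\L_1,\L_2$, and $\Nm_{K_v/F_v}(O_{K_v}^\times)=O_{F_v}^\times$. One correction: the commutant is an order in $\Mat_{n/2d}(B_3)$, where $B\otimes_F D_F^{\opp}\simeq\Mat_2(B_3)$, not in $\Mat_{n/2d}(B^{\opp}\otimes D)$, which has the wrong degree.

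For sufficiency in (ii) you take a different but workable route. You build the global positive definite space first and rely on uniqueness of the local space at every prime, including $p$. That requires Kneser's $H^1(\Q_\ell,G)=1$ for the simply connected unitary group, which you should cite. The paper instead fixes $M$ with the prescribed invariants and applies weak approximation on the $\Q$-space $\scrH$ of compatible hermitian matrices. This gives $g>0$ with $g\in\GL_n(\calR_p)$, so $M_p$ is already self-dual and no uniqueness at $p$ is needed.

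The genuine gap is the final claim of (iii): that exactly one of the $h^+(F)$ classes is principally polarizable. Your mechanism — twisting $L$ by an $O_F$-ideal $\mathfrak a$ and asking when $\mathfrak a\bar{\mathfrak a}$ is narrowly principal — does not prove this, for three reasons.
\begin{itemize}
\item The classes in the genus are $\hat gL$ with $\hat g\in\widehat A^\times$, distinguished by $\Nrd(\hat g)$. A scalar twist has reduced norm $\mathfrak a^{n/d}$, an even power, so such twists do not reach every class; indeed $L\mathfrak a\simeq L$ whenever $\mathfrak a^{n/d}$ is narrowly principal.
\item A perfect positive definite form on $L'$ need not be a scalar multiple of the given form on $L$.
\item Since $*$ is trivial on $F$, your condition reads ``$\mathfrak a^2$ is narrowly principal''. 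That only says the class is $2$-torsion, which gives no uniqueness when $\Cl^+(F)$ has $2$-torsion.
\end{itemize}

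The missing argument has three steps.
\begin{itemize}
\item[(a)] Use the global uniqueness of positive definite hermitian $(B,*,D)$-bimodules, which your Hasse-principle step provides. A perfect positive definite form on $L'=\hat gL$ then means that $\alpha\hat gL$ is self-dual for the fixed form $\langle~,~\rangle$, for some $\alpha\in A^\times$.
\item[(b)] Let $\dagger$ be the adjoint involution of $\langle~,~\rangle$ on $A$. Self-duality becomes the exact idelic condition $(\alpha\hat g)^\dagger(\alpha\hat g)\in\widehat\Lambda^\times$. Since $\Nrd(x^\dagger)=\Nrd(x)$, this gives $\Nrd(\alpha\hat g)^2\in\widehat O_F^\times$, hence $\Nrd(\alpha\hat g)\in\widehat O_F^\times$ componentwise.
\item[(c)] $\Nrd(\alpha)\in F^\times_{>0}$, because $B_3$ is ramified at every real place.
\end{itemize}
Together these give $\Nrd(\hat g)\in F^\times_{>0}\widehat O_F^\times$, i.e.\ $L'\simeq L$. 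The point is to compare exact local equations after aligning the forms by a global $\alpha$, rather than ideal classes.
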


We will reformulate Theorem~\ref{thm:nec-suff-cond-intro} in the language of global bilattices as in Theorem~\ref{thm:genus-char-2}, and then obtain it as a corollary as in Theorem~\ref{thm:genus-char}.
When $h^+(F)=1$ (e.g.~$F=\Q$) and $n>2d$, Theorem~\ref{thm:nec-suff-cond-intro} provides the necessary and sufficient condition for a given superspecial $\calO$-abelian $k$-variety $(X, \iota)$ of dimension $n$ to be principally polarizable, thus answering Question~\ref{que:our-que} completely in this case; see Corollary~\ref{prop:ext-pol}. In general, the theorem shows that there are subtle global obstructions for the existence of principal polarizations on a given superspecial $\calO$-abelian $k$-variety $(X, \iota)$ in addition to the necessary local conditions listed above.

This paper is organized as follows. In Section~\ref{sec:1}, we obtain a complete classification of $(\calO_v, \calO_v)$-bilattices and 
prove  Theorem~\ref{thm:4-indec-intro}  by combining Ribet's classification Theorem~\ref{thm:Ribet} with the classification of truncated $(\calO_v, \calO_v)$-bimodules. The results are then applied in Section~\ref{sec:2} to the classification of global quaternion bilattices and superspecial $\calO$-abelian varieties. In Section~\ref{sec:self-dual-local-latt}, we  give a complete classification of self-dual hermitian $(\calO_v, *, \calO_v)$-bilattices and prove Theorem~\ref{thm:self-dual-local-bilatt-intro}. To study hermitian forms on global quaternion bilattices in \eqref{eq:bij-pol-hermlatt},  we also treat  self-dual hermitian local bilattices at places where one of the quaternion orders  is split using variants of Morita equivalence techniques.
In Section~\ref{sec:QM}, we apply these classification results to the study of principally polarized superspecial $\calO$-abelian varieties and complete the proof of Theorem~\ref{thm:nec-suff-cond-intro}.

\section{Classification of local Ribet bilattices}
\label{sec:1}
\numberwithin{thm}{subsection} 
Let $F$ be a nonarchimedean local field with ring of integers $O_F$ and residue field $\F_q$, where $q$ is a power of a prime $p$.  
Let $B$ be the unique quaternion division algebra over $F$, and  $O_B$ be the unique maximal $O_F$-order in $B$. 
In this section, we provide a complete classification of $(O_B, O_B)$-bilattices up to isomorphism. 

Throughout this paper we follow the convention of \cite[\S1.1]{MR674652} for bimodules.  More precisely, let $R$ be a commutative ring with unity, and  $A$ and $D$ be two associative $R$-algebras. An $(A, D)$-bimodule $M$ is simultaneously a left $A$-module and a right $D$-module such that
\[
    (ax)d=a(xd),\qquad rx=xr
\]
for all $a\in A$, $d\in D$, $r\in R$ and $x\in M$. Equivalently, a $(A, D)$-bimodule is the same as a left $A\otimes_{R} D^{\opp}$-module, where $D^{\opp}$ is the opposite ring of $D$. Henceforth, we use the two equivalent formulations interchangeably and choose whichever is the most convenient. 
 
By definition,  an $O_F$-\emph{lattice} is a finitely generated torsion-free $O_F$-module. Therefore, an $(O_B, O_B)$-bilattice always means a finitely generated $O_F$-torsion-free $(O_B, O_B)$-bimodule, or  equivalently,  a left $O_B\otimes_{O_F} O_B^{\opp}$-lattice.
Since $B$ is division, every left (or right) $O_B$-lattice is necessarily free over $O_B$. In particular,  given an $(O_B, O_B)$-bilattice $L$, we have 
\[\rank_{\OpMod}(L)=\frac{1}{4}\rank_{O_F}(L)=\rank_{\ModOp} (L). \]
Thus it makes sense to say that ``$L$ has $O_B$-rank $n$'' without specifying ``left'' or ``right'', and we simply write $\rank_{O_B}(L)=n$.
Let $O_B^{\oplus n}$ be the free right $O_B$-module of rank $n$, whose elements are written as column vectors. Then the endomorphism ring $\End_{\ModOp}(O_B^{\oplus n})=\Mat_n(O_B)$ acts on $O_B^{\oplus n}$ from the left, so every embedding $\varphi: O_B\hookrightarrow\Mat_n(O_B)$ of $O_F$-algebras equips $O_B^{\oplus n}$ with a left $O_B$-module structure. Taking into account the natural right $O_B$-module structure on $O_B^{\oplus n}$, such a $\varphi$ further determines an $(O_B, O_B)$-bilattice structure on $O_B^{\oplus n}$, which will be denoted by $(O_B^{\oplus n}, \varphi)$. Conversely,  every $(O_B, O_B)$-bilattice is isomorphic to an $(O_B, O_B)$-bilattice of the form $(O_B^{\oplus n}, \varphi)$ since it is free as a right $O_B$-module. It is routine to check the following lemma; see \cite[\S 1, p.~10]{Ribet-bimod} or \cite[Remark~3.2]{MR2931385}.

\begin{lemma}\label{lem:conj-emb-bimodule}
 For any fixed positive integer $n\in \Z_{>0}$, the assignment $\varphi\mapsto (O_B^{\oplus n}, \varphi)$ induces a bijection between the following two sets:
  \[\left\{\parbox{4cm}{$\GL_n(O_B)$-conjugacy classes of embeddings $O_B\hookrightarrow
\Mat_n(O_B)$}\right\}\quad \xlongleftrightarrow{1-1}\quad \left\{\parbox{3.9cm}{Isomorphism
classes of $(O_B, O_B)$-bilattices of $O_B$-rank $n$}\right\} \]
\end{lemma}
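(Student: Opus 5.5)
The plan is to check that $\varphi\mapsto (O_B^{\oplus n},\varphi)$ is well defined on conjugacy classes, that it is surjective onto isomorphism classes of bilattices of $O_B$-rank $n$, and that it is injective.

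\emph{Well-definedness.} Let $\varphi,\varphi'$ be two embeddings and suppose $\varphi'(a)=g\varphi(a)g^{-1}$ for some $g\in\GL_n(O_B)$ and all $a\in O_B$. Left multiplication by $g$ on column vectors gives a map
\[
O_B^{\oplus n}\to O_B^{\oplus n},\qquad x\mapsto gx .
\]
This map commutes with the right $O_B$-action because matrix multiplication is associative. It is bijective since $g$ is invertible. It intertwines the left actions, since $g(\varphi(a)x)=\varphi'(a)(gx)$. Hence it is an isomorphism of bilattices $(O_B^{\oplus n},\varphi)\simeq(O_B^{\oplus n},\varphi')$.

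\emph{Surjectivity.} Let $L$ be an $(O_B,O_B)$-bilattice of $O_B$-rank $n$. Since $B$ is division, $L$ is free as a right $O_B$-module, as noted above. Choose a right $O_B$-isomorphism $f\colon L\to O_B^{\oplus n}$. For $a\in O_B$, left multiplication by $a$ on $L$ is a right $O_B$-linear endomorphism, so
\[
\varphi(a)\coloneqq f\circ(a\cdot)\circ f^{-1}\in \End_{\ModOp}(O_B^{\oplus n})=\Mat_n(O_B).
\]
The map $\varphi$ is a ring homomorphism. It is $O_F$-linear because the bimodule convention gives $rx=xr$ for $r\in O_F$.

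It is injective for the following reason. If $\varphi(a)=0$, then $aL=0$. But $L\otimes_{O_F}F$ is a nonzero left $B$-module, so $a$ would have to be $0$ in the division algebra $B$, since $L$ is torsion-free.

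By construction $f$ is an isomorphism of bilattices $L\simeq(O_B^{\oplus n},\varphi)$.

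\emph{Injectivity.} Suppose $h\colon(O_B^{\oplus n},\varphi)\to(O_B^{\oplus n},\varphi')$ is an isomorphism of bilattices. Since $h$ is right $O_B$-linear and bijective, it is given by left multiplication by some $g\in\GL_n(O_B)$. Left $O_B$-linearity of $h$ says $g\varphi(a)=\varphi'(a)g$ for all $a$, so $\varphi$ and $\varphi'$ are $\GL_n(O_B)$-conjugate.

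I do not expect any real obstacle. The only point needing care is the identification $\End_{\ModOp}(O_B^{\oplus n})=\Mat_n(O_B)$ acting on the left of column vectors. This holds because a right-linear endomorphism is determined by the images of the standard basis vectors, written as the columns of a matrix.
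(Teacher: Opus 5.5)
Your proposal is correct and takes the same approach as the paper. The paper gives no separate proof: it calls the lemma routine, relies on the preceding remarks that $\End_{\ModOp}(O_B^{\oplus n})=\Mat_n(O_B)$ acts on column vectors from the left and that every bilattice is free as a right $O_B$-module, and cites Ribet and Molina. Your well-definedness, surjectivity (including the check that $\varphi$ is an injective $O_F$-algebra map) and injectivity steps fill in exactly those details.
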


Therefore, the classification of the $\GL_n(O_B)$-conjugacy classes of embeddings $O_B\hookrightarrow\Mat_n(O_B)$ is the same as the classification of the isomorphism classes of $(O_B, O_B)$-bilattices of $O_F$-rank $4n$, or equivalently, the isomorphism classes of left $O_B\otimes_{O_F}O_B^{\opp}$-lattices of $O_F$-rank $4n$. Both of the sets in Lemma~\ref{lem:conj-emb-bimodule} are finite by the Jordan-Zassenhaus Theorem \cite[Theorem~26.4]{reiner:mo}. Moreover, 
according to the Krull-Schmidt-Azumaya Theorem \cite[Theorem~6.12]{curtis-reiner:1}, every left $O_B\otimes_{O_F}O_B^{\opp}$-lattice is  uniquely  a finite direct sum of indecomposable left $O_B\otimes_{O_F}O_B^{\opp}$-lattices,  up to isomorphism and order of occurrence of the summands. To classify the $(O_B, O_B)$-bilattices, it is enough to classify the indecomposable ones.

Next, we write down four  $(O_B, O_B)$-bilattices in terms of suitable embeddings $O_B\hookrightarrow
\Mat_n(O_B)$. The first two of them appeared in Ribet's theorem as recalled in Theorem~\ref{thm:Ribet}, the other two were introduced by Terakado, Yu and the first named author in \cite{terakado-yu-xue-2026}. 
As we shall see in Theorem~\ref{thm:conj-class}, they form  a complete list of indecomposable $(O_B, O_B)$-bilattices up to isomorphism. 
Let $K$ be the unique unramified quadratic extension of $F$ with ring of integers $O_K$, and $\sigma: a\mapsto a^\sigma$ be the unique nontrivial automorphism of $K/F$.
Following \cite[Corollaire~II.1.7]{vigneras}, there exists a uniformizer $\Pi\in O_B$ such that $O_B=O_K\oplus O_K\Pi$ with multiplication rules  
\begin{equation}\label{eq:quat-multi-rul}
\Pi^2=\pi, \qquad \Pi a= a^\sigma \Pi, \quad \forall a\in O_K,  
\end{equation}
where $\pi$ denotes a fixed uniformizer of $F$. Consider the following  embeddings $O_B\hookrightarrow\Mat_n(O_B)$ with $n\leq 2$:
\begin{align}
    \id&:  O_B\hookrightarrow O_B,&\quad &\alpha\mapsto \alpha,& &&\forall &\alpha\in O_B;\label{eq:defn-id}\\
    \tau&: O_B\hookrightarrow O_B,&\quad &\alpha\mapsto \Pi^{-1}\alpha\Pi,& &&\forall &\alpha\in O_B;\label{eq:defn-tau}\\
    \varphi_1&: O_B\hookrightarrow\Mat_2(O_B),&\quad &\Pi\mapsto\begin{bmatrix}
        0 & \pi\\ 1 & 0 
    \end{bmatrix},&\qquad &a\mapsto\begin{bmatrix}
        a & 0\\ 0 & a^\sigma
    \end{bmatrix},&\quad \forall &a\in O_K;\label{eq:defn-varphi-1}\\
    \varphi_2&: O_B\hookrightarrow\Mat_2(O_B),&\quad &\Pi\mapsto\begin{bmatrix}
        0 & \pi\\ 1 & 0 
    \end{bmatrix},&\qquad &a\mapsto\begin{bmatrix}
        a^\sigma & 0\\ 0 & a
    \end{bmatrix},&\quad \forall &a\in O_K. \label{eq:defn-varphi-2}
\end{align}
We denote the corresponding $(O_B, O_B)$-bilattices respectively by
\begin{equation}\label{eq:defn-four-indecomp-latt}
      \mathbb{O}\coloneqq(O_B, \id),\quad \mathbb{P}\coloneqq(O_B, \tau),\quad \mathbb{L}_1\coloneqq(O_B^{\oplus 2},\varphi_1),\quad \mathbb{L}_2\coloneqq(O_B^{\oplus 2}, \varphi_2).
\end{equation}
Observe that $\O$ is just $O_B$ equipped with the natural $(O_B, O_B)$-bilattice structure, while the left multiplication by $\Pi$ defines an $(O_B, O_B)$-bimodule isomorphism from $\P$ to the maximal ideal $\grP\coloneqq\Pi O_B$ of $O_B$. In the argument of this paper, it is often more convenient to use $\mathbb{P}$ than $\grP$, especially in Section~\ref{sec:self-dual-local-latt} for the classification of self-dual hermitian local bilattices.
Given two embeddings $\psi_1: O_B\hookrightarrow \Mat_{n_1}(O_B)$ and
$\psi_2: O_B\hookrightarrow \Mat_{n_2}(O_B)$, we define $\psi_1\oplus
\psi_2$ as the block diagonal embedding
\[\psi_1\oplus
\psi_2: O_B\hookrightarrow \Mat_{n_1+n_2}(O_B), \qquad
  \alpha\mapsto
  \begin{bmatrix}
    \psi_1(\alpha) & \\ & \psi_2(\alpha)
  \end{bmatrix}, \quad \forall \alpha\in O_B. \]
Clearly, this direct sum operation is associative, and we can inductively define the direct sum of an arbitrary finite set of such embeddings. In terms of bilattices, this just says that $(O_B^{\oplus (n_1+n_2)}, \psi_1\oplus\psi_2)=(O_B^{\oplus n_1}, \psi_1)\oplus (O_B^{\oplus n_2},\psi_2)$.

Our main result on the classification of the $\GL_n(O_B)$-conjugacy classes of embeddings $O_B\hookrightarrow\Mat_n(O_B)$ is as follows.

\begin{thm}\label{thm:conj-class}
Every embedding $O_B\hookrightarrow\Mat_n(O_B)$ is $\GL_n(O_B)$-conjugate to 
    \begin{equation}\label{eq:emb-str-inv}
    \id^{\oplus r_1}\oplus \tau^{\oplus r_2}\oplus
  \varphi_1^{\oplus t_1}\oplus \varphi_2^{\oplus t_2}
  \end{equation}
for a unique quadruple $(r_1, r_2, t_1, t_2)\in\Z_{\geq 0}^4$ of nonnegative integers. Equivalently, in terms of lattices, $\{\O, \P, \L_1, \L_2\}$ forms a complete list of indecomposable $(O_B, O_B)$-bilattices up to isomorphism, and every $(O_B, O_B)$-bilattice $L$ is isomorphic to a direct sum 
\begin{equation}\label{eq:bilatt-str-inv}
    \O^{\oplus r_1}\oplus \P^{\oplus r_2}\oplus
  \L_1^{\oplus t_1}\oplus \L_2^{\oplus t_2}\simeq
    O_{B}^{\oplus r_1}\oplus \grP^{\oplus r_2}\oplus
  \L_1^{\oplus t_1}\oplus \L_2^{\oplus t_2}\
\end{equation}
for some  $(r_1, r_2, t_1, t_2)\in\Z_{\geq 0}^4$ uniquely determined by $L$.  The quadruple $(r_1, r_2, t_1, t_2)$ will be called the \emph{structural invariant} of the $(O_B, O_B)$-bilattice $L$ and denoted by $\ulm(L)\coloneqq (r_1, r_2, t_1, t_2)$.
\end{thm}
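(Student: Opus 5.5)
Existence of the decomposition \eqref{eq:bilatt-str-inv} is the first task, and the plan is to reduce it to Ribet's admissible case, Theorem~\ref{thm:Ribet}, plus a finite analysis of truncated bimodules. I will argue via the filtration of $L$ by $L\grP^i$ and $\grP^iL$. First I check that $\O,\P,\L_1,\L_2$ are pairwise non-isomorphic and indecomposable. $\O$ and $\P$ have $O_B$-rank one, hence are indecomposable. They are distinguished by the $(O_K/\pi,O_K/\pi)$-action on $L/L\grP\simeq\F_{q^2}$: it is $a x b=abx$ for $\O$ and $a^\sigma b x$ for $\P$. For $\L_i$, the quotient $L/L\grP$ is two-dimensional over $\F_{q^2}$, with one line of each type. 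Left multiplication by $\Pi$ mixes these lines, because its image $\left[\begin{smallmatrix}0&\pi\\1&0\end{smallmatrix}\right]$ is nonzero mod $\grP$ in the $(2,1)$ entry. Consequently $\grP L\not\subseteq L\grP$, so $\L_i$ is not admissible and cannot split into copies of $\O,\P$. Finally, $\L_1$ and $\L_2$ are told apart by the direction of this mixing: in $\L_1$ the map $\Pi$ sends the $\id$-type line to the $\tau$-type line, and in $\L_2$ it goes the other way.

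For existence, let $L$ be an arbitrary bilattice and look at the finite-length bimodule $\bar L\coloneqq L/L\grP$. This is a left $O_B/\pi O_B$-module, and a right $\F_{q^2}$-vector space. The left action of $\Pi$ is nilpotent with $\Pi^2=0$ on $\bar L$ (since $\pi\in L\grP$), and it is $\sigma$-semilinear with respect to the $O_K$-actions. Splitting $\bar L$ into the eigenparts $\bar L_{\id}\oplus\bar L_\tau$ for the two characters of $\F_{q^2}\otimes\F_{q^2}$, the action of $\Pi$ swaps the two parts. So $\bar L$ amounts to a representation of a quiver with two vertices and arrows in both directions, subject to $\Pi^2=0$. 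Its indecomposables are the two simple objects and the two length-two strings, giving four types that match $\O,\P,\L_1,\L_2$.

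Next I lift a decomposition of $\bar L$ to $L$. Choose a right $O_B$-basis of $L$ adapted to the decomposition of $\bar L$, and use it to write the embedding $\varphi:O_B\to\Mat_n(O_B)$ of Lemma~\ref{lem:conj-emb-bimodule}. I then conjugate by matrices in $\GL_n(O_B)$ congruent to $1$ modulo $\grP$, correcting $\varphi$ $\grP$-adically one step at a time so that it becomes block diagonal with blocks $\id,\tau,\varphi_1,\varphi_2$. Once the string summands $\L_1,\L_2$ have been split off, the complement satisfies $\grP L'=L'\grP$, and Theorem~\ref{thm:Ribet} finishes it.

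\textbf{Main obstacle.} I expect the hardest step to be splitting off $\L_i$. The approach I would try first is to realize $\L_1$ directly as a bimodule: consider its image in $L$ and prove that this image is a direct summand. The key fact would be that $\L_i$ is both projective and injective relative to bilattices. This could come from an $\Ext^1$-vanishing computation, or from the observation that $\L_1\simeq O_B\otimes_{O_K}O_B$-type induced modules, which are projective over $O_B\otimes O_B^\opp$. If that fails, I would fall back on explicit successive approximation on the matrix entries of $\varphi(\Pi)$ and $\varphi(a)$.

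For uniqueness, Krull-Schmidt-Azumaya \cite[Theorem~6.12]{curtis-reiner:1} applies, given that the four indecomposables are pairwise non-isomorphic. A direct alternative is to read off the quadruple $(r_1,r_2,t_1,t_2)$ from dimensions of eigenspaces and of $\Pi$-images in $L/L\grP$.
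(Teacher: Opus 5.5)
Your architecture is the paper's. You classify the truncation $L/L\grP$: the paper does this by noting that $\F_{q^2}[\varepsilon]\otimes_{\F_q}\F_{q^2}$ is a Nakayama algebra, while your two-vertex quiver with both composites zero gives the same four indecomposables $W_{\id},W_\tau,W_{\varphi_1},W_{\varphi_2}$. You then split off copies of $\L_1,\L_2$ until the remainder has truncation a sum of $W_{\id},W_\tau$. This gives $\Pi L'\subseteq L'\Pi$, with equality by comparing indices, so Theorem~\ref{thm:Ribet} applies, and uniqueness comes from Krull--Schmidt--Azumaya. You also identify the crux correctly: $\L_i$ should be projective and injective relative to lattices. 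However, that step is only announced, and as written it has two holes.

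\emph{Relative injectivity is not proved.} Your induced-module remark, $\L_1\simeq O_B\otimes_{O_K}O_B=\Lambda e$ for an idempotent $e$ of $O_K\otimes_{O_F}O_K$, gives projectivity only. The $\mathrm{Ext}^1$ computation is not supplied. The missing idea, which is the paper's input, is that $\Lambda\coloneqq O_B\otimes_{O_F}O_B^{\opp}$ is a Gorenstein order, being a tensor product of maximal (hence Gorenstein) orders. Therefore $\Lambda^*=\Hom_{O_F}(\Lambda,O_F)$ is projective, and so is its direct summand $\L_i^*$. Now take any exact sequence of lattices $0\to\L_i\to L\to N\to 0$. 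Its $O_F$-dual ends in the projective $\L_i^*$, so it splits, and dualizing back splits the original sequence.

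\emph{You need a suitable embedding.} Relative injectivity only applies to an embedding $\L_i\hookrightarrow L$ with $O_F$-torsion-free cokernel, and you do not say where such an embedding comes from. The paper (Lemma~\ref{lem:pro-lift-direct-summ}) lifts the inclusion $W_{\varphi_i}\hookrightarrow L/L\Pi$ along the projective $\L_i$. The lift is injective because it is injective modulo $\Pi$ (Lemma~\ref{lem:lem13}). The cokernel is torsion-free by the following argument. Suppose $y\notin\L_i$ but $y\Pi\in\L_i$. Then $y\Pi$ is nonzero in $\L_i/\L_i\Pi$, while its image in $L/L\Pi$ is zero, contradicting injectivity of the truncated inclusion.

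Once these two ingredients are in place, the $\grP$-adic successive-approximation fallback is unnecessary. Without them, since $\Lambda$ is not hereditary, there is no a priori reason those corrections can be carried out.
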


In particular, the $O_F$-order $O_B\otimes_{O_F}O_B\cong O_B\otimes_{O_F}O_B^{\opp}$ has \emph{finite representation type}.   Recall from Theorem~\ref{thm:Ribet} that a special form of Theorem~\ref{thm:conj-class} was previously obtained by Ribet \cite{Ribet-bimod} under an extra \emph{admissible} condition.  Our proof of Theorem~\ref{thm:conj-class} is based on Ribet's result and the classification of truncated $(O_B, O_B)$-bimodules $L/L \Pi $ to be discussed in Section~\ref{subsec:trunc-bimod}.  See \cite{terakado-yu-xue-2026} for an alternative proof of Theorem~\ref{thm:conj-class} in the case $n=2$.

\begin{remark}\label{rem:str-inv-comp-conj}
    For each $\beta\in B^\times$, the inner automorphism $\xint(\beta): x\mapsto \beta x \beta^{-1}$ of $B$ stabilizes $O_B$. Thus taking the composition of an embedding $\psi: O_B\hookrightarrow \Mat_n(O_B)$ with $\xint(\beta)$ produces another such  embedding $\psi\circ \xint(\beta)$.  Let $\ord_B: B^\times\twoheadrightarrow \Z$ be the normalized discrete valuation on $B$. If $\ord_B(\beta)$ is even (i.e.~$\beta=u \pi^s$ for some $u\in O_B^\times$ and $s\in \Z$), then $\psi\circ \xint(\beta)$ is $\GL_n(O_B)$-conjugate to $\psi$. On the other hand,  the composition with $\xint(\Pi)$ acts as transpositions on $\{\id, \tau\}$ and $\{\varphi_1, \varphi_2\}$ respectively. Therefore, if we write $\ulm(O_B^n, \psi)=(r_1, r_2, t_1, t_2)$, then 
    \begin{equation}
       \ulm\big(O_B^n, \psi\circ\xint(\beta)\big)=\begin{dcases*}
           \ulm(O_B^n, \psi), & if $\ord_B(\beta)$ is even;\\
           (r_2, r_1, t_2, t_1), & if $\ord_B(\beta)$ is odd.
       \end{dcases*} 
    \end{equation}
\end{remark}

\subsection{Classification of truncated $(O_B, O_B)$-bimodules}
\label{subsec:trunc-bimod}
Let $L$ be an $(O_B, O_B)$-bilattice of $O_B$-rank $n$. Since $O_B$ is normalized by its uniformizer $\Pi$, the sublattice  $L\Pi\subset L$ is actually an $(O_B, O_B)$-sub-bimodule. To study isomorphism classes of $(O_B, O_B)$-bilattices, we first take a close look at the \emph{truncated  $(O_B, O_B)$-bimodule}  $\overline{L}\coloneqq L/(L\Pi)$.

 We present $O_B$ as $O_K\oplus O_K\Pi$ with multiplication rules \eqref{eq:quat-multi-rul} as before. 
 Since $O_B/(\Pi)=O_K/\pi O_K=\F_{q^2}$, the quotient $\overline{L}$ is naturally a right $\F_{q^2}$-vector space of dimension $n$. The left $O_B$-module structure on $\overline{L}$ factors through the quotient $O_B\to O_B/\pi O_B$.  If we denote the canonical image of $\Pi$ in $O_B/\pi O_B$ by $\varepsilon$, then $O_B/\pi O_B=\F_{q^2}[\varepsilon]=\F_{q^2}\oplus\F_{q^2}\varepsilon$, whose multiplication rules  are given by
 \begin{equation}\label{eq:emb-11}
   \varepsilon^2=0, \qquad \varepsilon a =a^{q} \varepsilon, \qquad
   \forall  a\in \F_{q^2}. 
 \end{equation}
Therefore, $\overline{L}$ is equipped with an $(\F_{q^2}[\varepsilon], \F_{q^2})$-bimodule structure, or equivalently, a left $\F_{q^2}[\varepsilon]\otimes_{\F_q}\F_{q^2}$-module structure.

We first present a description of the structure of left $\F_{q^2}[\varepsilon]\otimes_{\F_q}\F_{q^2}$-modules in terms of linear algebra. It is routine to derive the following lemma.
\begin{lemma}\label{lem:linear-alg-descrip-trun-bimod}
    Let $W$ be a left $\F_{q^2}[\varepsilon]\otimes_{\F_q}\F_{q^2}$-module and view it as a left $\F_{q^2}\otimes_{\F_q}\F_{q^2}$-module by restriction of scalars. Then the isomorphism
\begin{equation}\label{eq:tens-decomp}
    \F_{q^2}\otimes_{\F_q}\F_{q^2}\xrightarrow{\sim}\F_{q^2}\oplus \F_{q^2},\quad a\otimes b\mapsto (ab, a^{q}b)
\end{equation}
induces a decomposition of $W$ into the direct sum $W^0\oplus W^1$ of two right $\F_{q^2}$-vector spaces such that $\varepsilon W^0\subseteq W^{1}$ and $\varepsilon W^1\subseteq W^0$, where
\begin{equation}\label{eq:defn-W-0-W-1}
W^0\coloneqq \{w\in W\mid aw=wa,\, \forall a\in \F_{q^2}\}, \quad
  W^1\coloneqq \{w\in W\mid aw=wa^{q}, \, \forall a\in \F_{q^2}\}.
   \end{equation}
Conversely, given a right $\F_{q^2}$-vector space $W$ and the following data:
\begin{enumerate}[(i)]
    \item a decomposition $W=W^0\oplus W^1$ of $W$ into the direct sum of two right $\F_{q^2}$-vector spaces,
     \item two right $\F_{q^2}$-linear maps $\varepsilon^{(0)}: W^0\to W^1$ and $\varepsilon^{(1)}:W^1\to W^0$ such that $\varepsilon^{(i)}\varepsilon^{(i+1)}=0$ for all $i\in\Z/2\Z$,
\end{enumerate}
the space $W$ can be endowed with a left $\F_{q^2}[\varepsilon]$-module structure such that it forms an $(\F_{q^2}[\varepsilon], \F_{q^2})$-bimodule by defining
\begin{equation}
        a(w^0, w^1)\coloneqq (w^0a, w^1a^q),\qquad  \varepsilon(w^0, w^1)\coloneqq (\varepsilon^{(1)}w^{1}, \varepsilon^{(0)}w^0), \qquad \forall a\in \F_{q^2}, \forall w^i\in W^i. 
\end{equation}
\end{lemma}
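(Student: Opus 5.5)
The plan is to pass through the decomposition of the commutative semisimple algebra $\F_{q^2}\otimes_{\F_q}\F_{q^2}$ and then write down what $\varepsilon$ does relative to that decomposition. The map \eqref{eq:tens-decomp} is an $\F_q$-algebra isomorphism. It respects multiplication, since $(a\otimes b)(a'\otimes b')\mapsto (aa'bb', a^qa'^qbb')$. Both sides have $\F_q$-dimension $4$, and injectivity follows from Dedekind independence of the characters $\id$ and $x\mapsto x^q$ (equivalently, from $\F_{q^2}/\F_q$ being Galois). Let $e_0,e_1\in\F_{q^2}\otimes_{\F_q}\F_{q^2}$ be the preimages of the idempotents $(1,0)$ and $(0,1)$. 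Then $W=e_0W\oplus e_1W$. Here I use the left $\F_{q^2}\otimes\F_{q^2}$-action $(a\otimes b)w=awb$, which is available because $W$ is an $(\F_{q^2}[\varepsilon],\F_{q^2})$-bimodule with $\F_q$ acting centrally.

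Next I identify the summands with $W^0$ and $W^1$. On $e_0W$ the element $a\otimes 1-1\otimes a$ acts as zero, because its image $(0,a^q-a)$ is killed by $(1,0)$. Hence $aw=wa$ for $w\in e_0W$, so $e_0W\subseteq W^0$. In the same way $a\otimes1-1\otimes a^q$ maps to $(a-a^q,0)$, which gives $e_1W\subseteq W^1$. For the reverse inclusions, take $w\in W^0$ and write $w=e_0w+e_1w$. Then $e_1w\in W^0\cap W^1$, so $e_1w\,a=e_1w\,a^q$ for all $a$. Choosing $a$ with $a\neq a^q$ makes $a-a^q$ invertible, so $e_1w=0$. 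The same argument handles $W^1$. Each $W^i$ is a right $\F_{q^2}$-subspace because the two actions commute.

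For the behaviour of $\varepsilon$, let $w\in W^0$. Using \eqref{eq:emb-11}, $a(\varepsilon w)=\varepsilon a^q w=\varepsilon w a^q$, so $\varepsilon w\in W^1$. Similarly, for $w\in W^1$ we get $a\varepsilon w=\varepsilon a^qw=\varepsilon w a^{q^2}=\varepsilon w a$, so $\varepsilon w\in W^0$. Define $\varepsilon^{(i)}$ as the restriction of $\varepsilon$ to $W^i$. These maps are right $\F_{q^2}$-linear because of the bimodule axiom, and the relation $\varepsilon^2=0$ gives $\varepsilon^{(i+1)}\varepsilon^{(i)}=0$.

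For the converse, start from the data (i) and (ii) and define the actions by the displayed formulas. I then need to check that this gives a left $\F_{q^2}[\varepsilon]$-module commuting with the right $\F_{q^2}$-action and agreeing on $\F_q$. The $\F_{q^2}$-action is a ring action, and it is $\F_q$-compatible because $c^q=c$ for $c\in\F_q$. The relation $\varepsilon^2=0$ follows from (ii). The relation $\varepsilon a=a^q\varepsilon$ is checked componentwise: the first coordinate of $\varepsilon a(w^0,w^1)$ is $\varepsilon^{(1)}(w^1a^q)=(\varepsilon^{(1)}w^1)a^q$, which matches the first coordinate of $a^q\varepsilon(w^0,w^1)$, namely $(\varepsilon^{(1)}w^1)a^q$. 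The second coordinate works out the same way using $a^{q^2}=a$. The presentation \eqref{eq:emb-11} of $\F_{q^2}[\varepsilon]$ then shows that this is a module.

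The only step that needs any care is establishing the isomorphism \eqref{eq:tens-decomp} and the resulting idempotent decomposition. Everything after that is bookkeeping with $a^{q^2}=a$.
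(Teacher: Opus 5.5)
Your proof is correct. The paper calls this lemma routine and gives no argument, and your decomposition via the idempotents of \eqref{eq:tens-decomp} is exactly the natural verification; the same idempotents $(1,0),(0,1)$ reappear in the proof of Proposition~\ref{prop:trun-bimod-2}. The one check you name but do not write out, that the new left action commutes with the right $\F_{q^2}$-action, follows at once from the right $\F_{q^2}$-linearity of $\varepsilon^{(0)}$ and $\varepsilon^{(1)}$ and the commutativity of $\F_{q^2}$.
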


\begin{eg}\label{eg:indecomp-trunc}
Consider the $(O_B, O_B)$-bilattices $\O=(O_B, \id)$, $\P=(O_B, \tau)$, $\L_1=(O_B^{\oplus 2},\varphi_1)$ and $\L_2=(O_B^{\oplus 2}, \varphi_2)$ in \eqref{eq:defn-four-indecomp-latt}. We write down the corresponding truncated $(O_B, O_B)$-bimodules as follows:
    \begin{align*}
       W_{\id}&=\F_{q^2}\oplus 0, & \varepsilon^{(0)}&=0,
       &\varepsilon^{(1)}&=0,\\
              W_{\tau}&=0\oplus \F_{q^2}, & \varepsilon^{(0)}&=0,
       &\varepsilon^{(1)}&=0,\\
       W_{\varphi_1}&=\F_{q^2}\oplus \F_{q^2}, & \varepsilon^{(0)}&=1,
       &\varepsilon^{(1)}&=0,\\
              W_{\varphi_2}&=\F_{q^2}\oplus \F_{q^2}, & \varepsilon^{(0)}&=0,
       &\varepsilon^{(1)}&=1.
     \end{align*}
     In particular, $W_{\varphi_1}$ and $W_{\varphi_2}$ are non-isomorphic.
\end{eg}

In more technical terms, the $\F_{q}$-algebra $\F_{q^2}[\varepsilon]\otimes_{\F_q}\F_{q^2}$ is an example of a \emph{Nakayama algebra}, which we now recall. Let $A$ be a finite dimensional algebra over a field. Following \cite[Chapter~IV, pp.~111--112]{MR1314422}, a left $A$-module $M$ is \emph{uniserial} if there is only one composition series for $M$. The algebra $A$ is called a Nakayama algebra if all indecomposable projective left $A$ and $A^{\opp}$-modules are uniserial.
\begin{prop}\label{prop:trun-bimod-2}
    The $\F_{q}$-algebra $\F_{q^2}[\varepsilon]\otimes_{\F_q}\F_{q^2}$ is a Nakayama algebra. Moreover, the following statements hold.
 \begin{enumerate}[(1)]
     \item Up to isomorphism, $\{W_{\varphi_1}, W_{\varphi_2}\}$ forms a complete list of indecomposable projective left modules over $\F_{q^2}[\varepsilon]\otimes_{\F_q}\F_{q^2}$, and there is an isomorphism of left $\F_{q^2}[\varepsilon]\otimes_{\F_q}\F_{q^2}$-modules:
    \begin{equation}\label{eq:decom-trunc-ring}
        \F_{q^2}[\varepsilon]\otimes_{\F_q}\F_{q^2}\simeq W_{\varphi_1}\oplus W_{\varphi_2}.
    \end{equation}
     \item Up to isomorphism, $\{W_{\id}, W_{\tau}, W_{\varphi_1}, W_{\varphi_2}\}$ forms a complete list of indecomposable left $\F_{q^2}[\varepsilon]\otimes_{\F_q}\F_{q^2}$-modules. Therefore, every finitely generated $(\F_{q^2}[\varepsilon], \F_{q^2})$-bimodule $W$ admits an isomorphism 
\begin{equation}\label{eq:trunc-bimod-decomp}
      W\simeq W_{\id}^{\oplus r_1}\oplus W_{\tau}^{\oplus r_2}\oplus 
                   W_{\varphi_1}^{\oplus t_1}\oplus W_{\varphi_2}^{\oplus t_2}
\end{equation}
for a unique quadruple $(r_1, r_2, t_1, t_2)\in \Z_{\geq 0}^4$.
 \end{enumerate}
\end{prop}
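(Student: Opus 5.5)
Let $A\coloneqq \F_{q^2}[\varepsilon]\otimes_{\F_q}\F_{q^2}$. By Lemma~\ref{lem:linear-alg-descrip-trun-bimod}, a left $A$-module is the same as a $\Z/2\Z$-graded right $\F_{q^2}$-vector space $W=W^0\oplus W^1$ together with linear maps $\varepsilon^{(0)}:W^0\to W^1$ and $\varepsilon^{(1)}:W^1\to W^0$ satisfying $\varepsilon^{(i)}\varepsilon^{(i+1)}=0$. In other words, $A$-modules are representations over $\F_{q^2}$ of the cyclic quiver with two vertices and two arrows, with all paths of length $2$ set to zero. The plan is to check the whole statement in this language.

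First, decompose $A$ itself. The isomorphism \eqref{eq:tens-decomp} provides two orthogonal idempotents $e_0,e_1\in\F_{q^2}\otimes_{\F_q}\F_{q^2}\subset A$ with $e_0+e_1=1$, so $A=Ae_0\oplus Ae_1$. Each summand has $\F_{q^2}$-dimension $2$. Take $Ae_0$: it is spanned by $e_0$ (in degree $0$) and $\varepsilon e_0$ (in degree $1$, since $\varepsilon$ swaps the degrees). The map $\varepsilon^{(0)}$ sends $e_0\mapsto\varepsilon e_0$, which is nonzero, and $\varepsilon^{(1)}=0$ because $\varepsilon^2=0$. Hence $Ae_0\simeq W_{\varphi_1}$, and symmetrically $Ae_1\simeq W_{\varphi_2}$; the labelling is fixed by comparing with Example~\ref{eg:indecomp-trunc}, and otherwise one just swaps the names. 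This proves \eqref{eq:decom-trunc-ring}.

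Next, each of $W_{\varphi_1},W_{\varphi_2}$ has a unique proper nonzero submodule, namely its degree-$1$ (respectively degree-$0$) line, which is the image of $\varepsilon$. So each is uniserial of length $2$, and in particular indecomposable. Every indecomposable projective is a summand of a free module, hence by Krull--Schmidt is one of these two, which gives part~(1). For the Nakayama property we also need the opposite algebra. The same computation applies to right modules, i.e.\ to $A^{\opp}\simeq \F_{q^2}[\varepsilon]^{\opp}\otimes\F_{q^2}$, which has the same shape: the decomposition $A=e_0A\oplus e_1A$ again produces uniserial modules of length $2$.

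Finally, classify all modules, for part~(2). Given $(W^0,W^1,\varepsilon^{(0)},\varepsilon^{(1)})$:
\begin{enumerate}[(a)]
\item Choose a complement $C^0$ of $\ker\varepsilon^{(0)}$ in $W^0$. Then $\varepsilon^{(0)}$ maps $C^0$ isomorphically onto $\operatorname{im}\varepsilon^{(0)}\subseteq\ker\varepsilon^{(1)}$.
\item Do the same for $\varepsilon^{(1)}$, with a complement $C^1$ of $\ker\varepsilon^{(1)}$ in $W^1$.
\item The subspaces $C^0\oplus\varepsilon C^0$ and $C^1\oplus\varepsilon C^1$ are submodules, isomorphic to copies of $W_{\varphi_1}$ and $W_{\varphi_2}$ respectively.
\item Their sum is direct, because $\varepsilon C^0\subseteq\ker\varepsilon^{(1)}$ meets $C^1$ trivially, and symmetrically.
\item These submodules are projective. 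Since $A$ is self-injective (its indecomposable projectives are also the indecomposable injectives, by the same uniserial analysis), they are injective and therefore split off.
\end{enumerate}

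The hardest step is (e), because it relies on self-injectivity. If I prefer to avoid it, I would instead split by hand: choose complements of $\varepsilon C^1$ in $\ker\varepsilon^{(0)}$ and of $\varepsilon C^0$ in $\ker\varepsilon^{(1)}$. Both $\varepsilon$-maps vanish on these complements, so they carry trivial $\varepsilon$-action and form a direct sum of copies of $W_{\id}$ and $W_{\tau}$. A dimension count then shows the whole of $W$ is the direct sum.

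Uniqueness of the multiplicities in \eqref{eq:trunc-bimod-decomp} follows from Krull--Schmidt, or directly from the invariants
\[
t_1=\rank\varepsilon^{(0)},\qquad t_2=\rank\varepsilon^{(1)},\qquad r_1=\dim W^0-t_1-t_2,\qquad r_2=\dim W^1-t_1-t_2.
\]
Since the four modules $W_{\id},W_{\tau},W_{\varphi_1},W_{\varphi_2}$ are pairwise non-isomorphic and indecomposable, this completes the list in part~(2).
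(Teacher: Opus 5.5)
Your proof is correct. For the Nakayama property and part (1) you follow essentially the paper's route. You split $A=Ae_0\oplus Ae_1$ using the idempotents coming from \eqref{eq:tens-decomp}, identify the summands with $W_{\varphi_1},W_{\varphi_2}$, check that each is uniserial of length $2$, and treat right modules the same way. The paper differs only in small ways: it gets indecomposability from $\operatorname{Rad}(A)=(\varepsilon\otimes 1)A$, cites Curtis--Reiner for the complete list of indecomposable projectives (where you use Krull--Schmidt on summands of free modules), and simply observes $A^{\opp}\cong A$.

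The genuine difference is in part (2).
\begin{itemize}
\item The paper obtains the list of indecomposables from the structure theorem for Nakayama algebras: every indecomposable module is uniserial and a quotient of an indecomposable projective \cite[Theorem~2.1, Chapter~VI]{MR1314422}. Uniqueness then comes from Krull--Schmidt.
\item You decompose an arbitrary module directly. You choose complements $C^i$ of $\ker\varepsilon^{(i)}$, then a complement $D^0$ of $\varepsilon C^1$ in $\ker\varepsilon^{(0)}$ and a complement $D^1$ of $\varepsilon C^0$ in $\ker\varepsilon^{(1)}$. This gives $W=(C^0\oplus\varepsilon C^0)\oplus(C^1\oplus\varepsilon C^1)\oplus D^0\oplus D^1$ by construction, and the multiplicities are read off as ranks of $\varepsilon^{(0)},\varepsilon^{(1)}$.
\end{itemize}
Your by-hand version is self-contained. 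It needs neither the Nakayama structure theory nor self-injectivity, and it simultaneously proves Corollary~\ref{cor:find-decomp-for-trunc-bimod}, which the paper only deduces afterwards. The paper's version is shorter and places the result in the standard representation-theoretic framework.

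If you keep the self-injective route (e) as your main argument, it needs one more line. After splitting off the projective part $P$, the complement $Q$ has trivial $\varepsilon$-action, because the ranks of $\varepsilon^{(0)}$ and $\varepsilon^{(1)}$ on $W$ are already attained on $P$. Hence $Q$ is a sum of copies of $W_{\id}$ and $W_{\tau}$. Your justification of self-injectivity in (e) is also only sketched, so the by-hand splitting is the cleaner choice.
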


\begin{proof}
    By definition, to check that $A\coloneqq\F_{q^2}[\varepsilon]\otimes_{\F_q}\F_{q^2}$ is a Nakayama algebra, we should first find all indecomposable projective left $A$ and $A^{\opp}$-modules. We identify $\F_{q^2}\otimes_{\F_q}\F_{q^2}$ with $\F_{q^2}\oplus\F_{q^2}$ via isomorphism \eqref{eq:tens-decomp}. Then we rewrite $A=\F_{q^2}[\varepsilon]\otimes_{\F_q}\F_{q^2}$ as 
\begin{equation*}
    A=\F_{q^2}[\varepsilon]\otimes_{\F_q}\F_{q^2}=(\F_{q^2}\otimes_{\F_q}\F_{q^2})[\varepsilon\otimes 1] 
     =(\F_{q^2}\oplus \F_{q^2})[\varepsilon\otimes 1].
\end{equation*}
Since $(\varepsilon\otimes 1)(a\otimes b)=(a^q\otimes b)(\varepsilon\otimes 1)$ for all $a, b\in \F_{q^2}$, the multiplication rules of $A=(\F_{q^2}\oplus \F_{q^2})[\varepsilon\otimes 1]$ are given by
\begin{equation}\label{eq:mult-rul-trun-order}
    (\varepsilon\otimes 1)(a_0, a_1)=(a_1, a_0)(\varepsilon\otimes1), \quad\forall a_0, a_{1}\in \F_{q^2}.
\end{equation}
Let $e_1\coloneqq (1, 0)$ and $e_2\coloneqq (0, 1)$ be the nontrivial idempotents of $\F_{q^2}\oplus\F_{q^2}$. Then
\begin{equation*}\label{eq:A-decom-into-right-ideal}
    A=Ae_1\oplus Ae_2 
\end{equation*}
is a decomposition of $A$ into a direct sum of its left ideals. From \eqref{eq:mult-rul-trun-order}, it follows that $(\varepsilon\otimes 1)A$ is a nilpotent two-sided ideal. On the other hand, the $\F_{q}$-algebra $A/(\varepsilon\otimes 1)A=\F_{q^2}\oplus\F_{q^2}$ is semisimple, so the Jacobson radical $\Rad(A)=(\varepsilon\otimes 1)A$ by \cite[Proposition~3.3, Chapter~I]{MR1314422}. Then by \cite[Theorem~6.7 (iii)]{curtis-reiner:1} both $Ae_1$ and $Ae_2$ are indecomposable left ideals. Further, we express $Ae_1$ as
\[
          Ae_1=(\F_{q^2}e_1)\oplus (\varepsilon\otimes 1)(\F_{q^2}e_1),
\]
from which it follows that $(Ae_1)^0=\F_{q^2}e_1$ and $(Ae_1)^1=(\varepsilon\otimes 1)(\F_{q^2}e_1)$.
 Clearly, the left multiplication by $\varepsilon\otimes 1$ on $Ae_1$ induces an isomorphism $\varepsilon^{(0)}: (Ae_1)^{0}\to (Ae_1)^{1}$ and a zero map $\varepsilon^{(1)}: (Ae_1)^{1}\to (Ae_1)^{0}$. By the description of $W_{\varphi_1}$ in Example~\ref{eg:indecomp-trunc}, we find that $Ae_1$ is isomorphic to $W_{\varphi_1}$. Likewise,  $Ae_2$ is isomorphic to $W_{\varphi_2}$. In particular, $Ae_1$ and $Ae_2$ are non-isomorphic. Hence, by \cite[Theorem~6.17(i)]{curtis-reiner:1}, $\{W_{\varphi_1}, W_{\varphi_2}\}=\{Ae_1, Ae_2\}$ forms a complete list of indecomposable projective left $\F_{q^2}[\varepsilon]\otimes_{\F_q}\F_{q^2}$-modules up to isomorphism.

Let $U$ be a maximal proper  $(\F_{q^2}[\varepsilon],\F_{q^2})$-sub-bimodule of $W_{\varphi_j}$ for $j\in \{1, 2\}$. From the maximality of $U$ and Nakayama's lemma, we have  $U\supseteq \Rad(A)W_{\varphi_j}=\varepsilon W_{\varphi_j}$.
 On the other hand, from Example~\ref{eg:indecomp-trunc}, $W_{\varphi_j}/\varepsilon W_{\varphi_j}$ has $\F_{q^2}$-dimension $1$, and so $U=\varepsilon W_{\varphi_j}$. Since $\varepsilon W_{\varphi_j}$ also has $\F_{q^2}$-dimension $1$, there can be only one composition series for $W_{\varphi_j}$, namely, 
\[
       W_{\varphi_j}\supsetneq \varepsilon W_{\varphi_j} \supsetneq \varepsilon^2 W_{\varphi_j}=0.
\]
From \eqref{eq:emb-11}, $\F_{q^2}[\varepsilon]^{\opp}\cong \F_{q^2}[\varepsilon]$, and so $A^{\opp}=\F_{q^2}[\varepsilon]^{\opp}\otimes_{\F_q}\F_{q^2}\cong A$. Hence every indecomposable projective left $A^{\opp}$-module also has exactly one composition series. 
Now we have proven that $A=\F_{q^2}[\varepsilon]\otimes_{\F_q}\F_{q^2}$ is a Nakayama $\F_q$-algebra and statement~(1) of the proposition.

According to \cite[Theorem~2.1, Chapter~VI]{MR1314422},  every indecomposable module over a Nakayama algebra is uniserial and is a quotient of an indecomposable projective module. Therefore, every indecomposable left $A$-module is of the form $W_{\varphi_j}/\varepsilon^i W_{\varphi_j}$ for some $1\leq i, j\leq 2$. By Example~\ref{eg:indecomp-trunc}, we have $W_{\varphi_1}/\varepsilon W_{\varphi_1}\simeq W_{\id}$ and $W_{\varphi_2}/\varepsilon W_{\varphi_2}\simeq W_{\tau}$. The proof of statement~(2) of the proposition is now complete by the Krull-Schmidt-Azumaya Theorem \cite[Theorem~6.12]{curtis-reiner:1}.
\end{proof}

Combining Lemma~\ref{lem:linear-alg-descrip-trun-bimod} with Proposition~\ref{prop:trun-bimod-2}, we obtain the following result. 

\begin{cor}\label{cor:find-decomp-for-trunc-bimod}
    Let $W$ be a finitely generated $(\F_{q^2}[\varepsilon], \F_{q^2})$-bimodule. Put
    \begin{equation*}
  m_0\coloneqq \dim_{\F_{q^2}} W^0, \quad  m_1\coloneqq \dim_{\F_{q^2}} W^1, \quad  
  d_0\coloneqq \dim_{\F_{q^2}}(\varepsilon W^0),\quad 
  d_1\coloneqq \dim_{\F_{q^2}} (\varepsilon W^1). 
\end{equation*}
Then we have an isomorphism
\begin{equation*}
W\simeq  W_{\id}^{\oplus (m_0-d_0-d_1)}\oplus W_{\tau}^{\oplus (m_1-d_0-d_1)}\oplus
  W_{\varphi_1}^{\oplus d_0}\oplus W_{\varphi_2}^{\oplus d_1}.
\end{equation*}
\end{cor}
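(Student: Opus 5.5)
By Proposition~\ref{prop:trun-bimod-2}(2), $W$ decomposes as in \eqref{eq:trunc-bimod-decomp} for a unique quadruple $(r_1,r_2,t_1,t_2)$. It therefore suffices to show that
\[
r_1=m_0-d_0-d_1,\qquad r_2=m_1-d_0-d_1,\qquad t_1=d_0,\qquad t_2=d_1 .
\]
To get these, I will compute the four invariants $m_0,m_1,d_0,d_1$ on each indecomposable summand and add them up.

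First I check that the invariants are additive over direct sums. By Lemma~\ref{lem:linear-alg-descrip-trun-bimod}, $W^0$ and $W^1$ are defined intrinsically by \eqref{eq:defn-W-0-W-1}, so they commute with direct sums. Hence $(W\oplus W')^i=W^i\oplus W'^i$ and $\varepsilon(W\oplus W')^i=\varepsilon W^i\oplus\varepsilon W'^i$. Also, $\varepsilon W^i\subseteq W^{i+1}$ is a right $\F_{q^2}$-subspace because $\varepsilon$ acts $\F_{q^2}$-linearly on the right. So $m_0,m_1,d_0,d_1$ are all additive, and each is invariant under isomorphism of bimodules.

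Next I read off the invariants of the four indecomposables from Example~\ref{eg:indecomp-trunc}:
\begin{itemize}
\item $W_{\id}$ has $(m_0,m_1,d_0,d_1)=(1,0,0,0)$;
\item $W_{\tau}$ has $(0,1,0,0)$;
\item $W_{\varphi_1}$ has $(1,1,1,0)$, since $\varepsilon^{(0)}=1$ and $\varepsilon^{(1)}=0$;
\item $W_{\varphi_2}$ has $(1,1,0,1)$.
\end{itemize}
Summing over the decomposition gives
\[
m_0=r_1+t_1+t_2,\qquad m_1=r_2+t_1+t_2,\qquad d_0=t_1,\qquad d_1=t_2 .
\]
Solving this linear system yields exactly the quadruple claimed above.

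The only point that needs care is the claim that $d_0=\dim\varepsilon W^0$ counts only the copies of $W_{\varphi_1}$. This holds because on $W_{\varphi_2}$ the map $\varepsilon^{(0)}$ is zero, so those summands contribute nothing to $\varepsilon W^0$; the same reasoning applies symmetrically to $d_1$. Once additivity is in place, this is immediate, so I expect no real obstacle.
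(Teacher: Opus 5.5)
Your proof is correct and follows the paper's approach. The paper says only that the corollary comes from combining the decomposition in Proposition~\ref{prop:trun-bimod-2}(2) with the description of $W^0$, $W^1$ and $\varepsilon$ in Lemma~\ref{lem:linear-alg-descrip-trun-bimod}; your additivity argument and the invariant table $(1,0,0,0)$, $(0,1,0,0)$, $(1,1,1,0)$, $(1,1,0,1)$ for $W_{\id}, W_{\tau}, W_{\varphi_1}, W_{\varphi_2}$ are exactly the computation it leaves implicit.
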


We conclude this subsection by writing down all indecomposable projective left $O_B\otimes_{O_F}O_B^{\opp}$-lattices, which play a key role in our proof for Theorem~\ref{thm:conj-class}. For this, we need the following lemma, which shows that to examine whether a homomorphism of $(O_B, O_B)$-bilattices is an isomorphism, we may pass to truncated $(O_B, O_B)$-bimodules.
\begin{lemma}\label{lem:lem13}
Let $f: L\to L'$ be a homomorphism of $(O_B, O_B)$-bilattices. If the induced homomorphism $\bar{f}: L/L\Pi \to L'/L'\Pi$ between the truncated $(O_B, O_B)$-bimodules is injective (resp.~surjective), then $f$ itself is injective (resp.~surjective).
\end{lemma}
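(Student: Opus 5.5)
The plan is to reduce both assertions to Nakayama's lemma and a rank count, using that $L$ and $L'$ are free right $O_B$-modules (as $B$ is division) and that $\Pi$ lies in the Jacobson radical of $O_B$. Note that $f$ is in particular right $O_B$-linear, so the left $O_B$-structure plays no role. The truncation $L\mapsto L/L\Pi$ is just reduction modulo the maximal two-sided ideal $\grP=\Pi O_B$ of the right $O_B$-module $L$. Also, $\bar f$ is a right $\F_{q^2}$-linear map between $\F_{q^2}$-vector spaces of dimensions $n=\rank_{O_B}(L)$ and $n'=\rank_{O_B}(L')$.

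\emph{Surjectivity.} Suppose $\bar f$ is surjective. Then $f(L)+L'\Pi=L'$. Since $f(L)$ is a right $O_B$-submodule of the finitely generated right $O_B$-module $L'$, and $L'\Pi=L'\grP$ with $\grP=\Rad(O_B)$, Nakayama's lemma gives $f(L)=L'$.

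\emph{Injectivity.} Suppose $\bar f$ is injective, so $n\le n'$. Choose right $O_B$-bases of $L$ and $L'$, so that $f$ is left multiplication by a matrix $M\in\Mat_{n'\times n}(O_B)$, and $\bar f$ is given by its reduction $\bar M\in \Mat_{n'\times n}(\F_{q^2})$, which has rank $n$.
\begin{itemize}
\item Extend the image of a basis of $\bar L$ to a basis of $\bar L'$. Lift the added vectors to $L'$ and form $g:L\oplus O_B^{\oplus(n'-n)}\to L'$, a right $O_B$-linear map whose reduction is an isomorphism.
\item By the surjectivity argument, $g$ is surjective.
\item $g$ is then a surjection between free $O_B$-modules of the same rank $n'$, hence between free $O_F$-modules of equal finite rank $4n'$. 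It is therefore injective, since a surjective endomorphism-type map of finite free $O_F$-modules of equal rank has torsion-free kernel of rank $0$.
\end{itemize}
Restricting $g$ to $L$ shows that $f$ is injective.

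The only mildly delicate point is the injectivity half, since injectivity does not directly lift along reduction. The completion-to-a-surjection trick handles it, relying on freeness over $O_B$ and the equal-rank argument over $O_F$. Alternatively, one could argue directly that $\ker f\otimes F=0$ by showing an $n\times n$ minor-like reduced norm is a unit, but the rank-counting route avoids noncommutative determinants.
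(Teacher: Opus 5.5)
Your proof is correct. The surjectivity half is exactly the paper's argument via Nakayama's lemma, but for injectivity you take a different route.

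The paper argues directly. If $x\in\ker f$, then $\bar x\in\ker\bar f=0$, so $x=y\Pi$ with $f(y)\Pi=0$, hence $f(y)=0$ because $L'$ is $O_F$-torsion-free. Iterating gives $x\in\bigcap_{i\ge 1}L\Pi^i=\{0\}$. This argument is shorter, and it uses only that $L'$ is torsion-free and $L$ is $\Pi$-adically separated. It needs no freeness over $O_B$ and no rank count.

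Your completion trick costs a little more: freeness, extending a basis of $\bar L'$, and the equal-$O_F$-rank argument. In return it proves a stronger statement. Since $g\colon L\oplus O_B^{\oplus(n'-n)}\to L'$ is an isomorphism of right $O_B$-modules, $f(L)$ is a right $O_B$-direct summand of $L'$. In particular $L'/f(L)$ is $O_F$-torsion-free.

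That is precisely the fact the paper has to check separately, by a second $\Pi$-adic argument, at the end of the proof of Lemma~\ref{lem:pro-lift-direct-summ}. With your version of the lemma, that step would follow immediately. The Gorenstein argument there is still needed to upgrade the right-module splitting to a bimodule splitting.
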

\begin{proof}
    The surjectivity assertion follows immediately from  Nakayama's lemma. Suppose that $\bar{f}$ is injective. 
    If $x\in\ker f$, then $\bar{x}\in\ker\bar{f}$, where $\bar{x}$ denotes the image of $x$  in $L/L\Pi$. By the injectivity of $\bar{f}$ we get  $\bar{x}=0$, that is, $x\in L\Pi$.  
    Write $x=y\Pi$ for some $y\in L$. Then $0=f(x)=f(y)\Pi$, so $f(y)=0$ since $L'$ is torsion-free. Likewise, we have $y=z\Pi$ for some $z\in L$, and $f(z)=0$. Thus $x=z\Pi^2\in  L\Pi^2$. Recursively, we find that $x\in\bigcap_{i=1}^\infty L\Pi^i=\{0\}$, which proves the injectivity of $f$.
\end{proof}
\begin{lemma}\label{lem:pro-latt}
   The bilattices  $\L_1, \L_2$ in \eqref{eq:defn-four-indecomp-latt} form a complete list of indecomposable projective left $O_B\otimes_{O_F}O_B^{\opp}$-modules up to isomorphism.
    Moreover, we have $O_B\otimes_{O_F} O_B^{\opp}\simeq \L_1\oplus \L_2$ as  left $O_B\otimes_{O_F} O_B^{\opp}$-modules.
\end{lemma}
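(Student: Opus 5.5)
The plan is to produce an explicit isomorphism $\Lambda\coloneqq O_B\otimes_{O_F}O_B^{\opp}\simeq \L_1\oplus\L_2$ and then derive the classification of indecomposable projectives from the Krull--Schmidt--Azumaya Theorem.

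\textbf{Step 1: decompose $\Lambda$ by idempotents.} Since $K/F$ is unramified, $O_K\otimes_{O_F}O_K\simeq O_K\oplus O_K$ via $a\otimes b\mapsto(ab, a^\sigma b)$. This lifts \eqref{eq:tens-decomp}. Let $e_1,e_2\in O_K\otimes_{O_F}O_K\subset\Lambda$ be the corresponding idempotents. Then $\Lambda=\Lambda e_1\oplus\Lambda e_2$ as left $\Lambda$-modules, and each $\Lambda e_j$ is projective and $O_F$-free. Counting ranks, $\Lambda$ has $O_F$-rank $16$, so each $\Lambda e_j$ has rank $8$, i.e.\ $O_B$-rank $2$.

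\textbf{Step 2: build maps $\L_j\to\Lambda e_j$.} In the $(O_B,O_B)$-bimodule language, $\Lambda e_1$ is generated by $e_1$, which satisfies $ae_1=e_1a$ for $a\in O_K$. In $\L_1=(O_B^{\oplus 2},\varphi_1)$, the vector $x_1=(1,0)^T$ satisfies $\varphi_1(a)x_1=x_1a$. Consider the bimodule map $\Lambda\to\L_1$, $\lambda\mapsto\lambda x_1$. It kills $\Lambda e_2$, because $e_2$ acts on $x_1$ through $a\otimes b\mapsto a^\sigma b$, and on $x_1$ the action factors through $ab$. So it restricts to a map $f_1:\Lambda e_1\to\L_1$. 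Symmetrically, define $f_2:\Lambda e_2\to\L_2$ using $x_2=(1,0)^T$, on which $\varphi_2(a)x_2=x_2a^\sigma$.

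\textbf{Step 3: show $f_j$ is an isomorphism.} Reduce modulo $\Pi$ on the right and apply Lemma~\ref{lem:lem13}. The quotient $\Lambda e_1/\Lambda e_1\Pi$ is $Ae_1$ for $A=\F_{q^2}[\varepsilon]\otimes\F_{q^2}$. By the proof of Proposition~\ref{prop:trun-bimod-2} this is isomorphic to $W_{\varphi_1}$, and $\bar f_1$ sends $\bar e_1$ to $\bar x_1$.

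Surjectivity of $\bar f_1$ is checked directly. $\bar x_1$ spans $W^0$ of $W_{\varphi_1}$, and $\varepsilon\bar x_1=\overline{(0,1)^T}$ spans $W^1$, since $\varphi_1(\Pi)x_1=(0,1)^T$. Both sides have $\F_{q^2}$-dimension $2$, so $\bar f_1$ is bijective, and Lemma~\ref{lem:lem13} gives that $f_1$ is an isomorphism. The argument for $f_2$ is identical. Hence $\Lambda\simeq\L_1\oplus\L_2$.

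\textbf{Step 4: classify indecomposable projectives.} Each $\L_j$ is indecomposable, because its reduction $W_{\varphi_j}$ is indecomposable and any decomposition of $\L_j$ would reduce to a decomposition of $W_{\varphi_j}$ into nonzero pieces. The two are non-isomorphic because $W_{\varphi_1}\not\simeq W_{\varphi_2}$. Every finitely generated projective $\Lambda$-module is a summand of some $\Lambda^{\oplus m}=\L_1^{\oplus m}\oplus\L_2^{\oplus m}$. By Krull--Schmidt--Azumaya over the complete ring $O_F$, any indecomposable projective is therefore isomorphic to $\L_1$ or $\L_2$.

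\textbf{Main obstacle.} The delicate point is Step 2: confirming that $\lambda\mapsto\lambda x_1$ really factors through $e_1$, with the correct matching of $\sigma$-twists between $e_j$ and $\varphi_j$. If the twists are swapped, I would simply interchange the labels, pairing $e_1$ with $\L_2$ instead.
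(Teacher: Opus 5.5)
Your proof is correct and follows essentially the paper's route: split $\Lambda$ by idempotents lifting those of $\F_{q^2}[\varepsilon]\otimes_{\F_q}\F_{q^2}$, map each summand to $\L_j$, and conclude with Lemma~\ref{lem:lem13}. The differences are minor. You write the lifted idempotents explicitly in $O_K\otimes_{O_F}O_K$ and the lift explicitly as $\lambda\mapsto\lambda x_j$, where the paper invokes idempotent lifting and projectivity abstractly via Curtis--Reiner. You also get completeness of the list from Krull--Schmidt--Azumaya rather than Curtis--Reiner 6.17(i). Finally, your Step~1 inference ``total rank $16$, so each $\Lambda e_j$ has rank $8$'' is a non sequitur as phrased, but the claim is true and your proof never uses it.
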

\begin{proof}
    By Proposition~\ref{prop:trun-bimod-2}, we have written down in \eqref{eq:decom-trunc-ring}  the decomposition  of $\F_{q^2}[\varepsilon]\otimes_{\F_q}\F_{q^2}$ into its indecomposable left ideals. 
Let $\Lambda\coloneqq O_B\otimes_{O_F} O_B^{\opp}$ and $I\coloneqq O_B\otimes (\Pi O_B^{\opp})$. Then $I$ is a two-sided ideal of $\Lambda$ contained in the Jacobson radical of $\Lambda$ with quotient ring
     \[        \Lambda/I=O_B \otimes_{O_F} (O_B^{\opp}/\Pi O_B^{\opp})=O_B\otimes_{O_F}\F_{q^2}=\F_{q^2}[\varepsilon]\otimes_{\F_q} \F_{q^2}.        \]
   According to \cite[Theorems~6.8 and 6.17(i)]{curtis-reiner:1}, there is a decomposition 
   \[
       O_B\otimes_{O_F} O_B^{\opp}=N_1\oplus N_2
   \]
   of $O_B\otimes_{O_F} O_B^{\opp}$
   into its indecomposable left ideals such that $N_j/N_j\Pi\simeq W_{\varphi_j}$ for all $1\leq j\leq 2$, and $\{N_1, N_2\}$ forms a complete list of indecomposable projective left $O_B\otimes_{O_F} O_B^{\opp}$-modules up to isomorphism.
We finish the proof by showing that $N_j\simeq \L_j$ for each $1\leq j\leq 2$. By the definition of $W_{\varphi_j}$ in Example~\ref{eg:indecomp-trunc}, there is an isomorphism $g_j: N_j/ N_j\Pi\to \L_j/ \L_j\Pi$ for each $1\leq j\leq 2$. Since $N_j$ is projective, each $g_j$ lifts to a homomorphism $f_j: N_j\to \L_j$ of $(O_B, O_B)$-bilattices, which is in fact an isomorphism by Lemma~\ref{lem:lem13}.
\end{proof}

\subsection{Classification of $(O_B, O_B)$-bilattices}\label{subsec:emb-conj-cls}
In this subsection, we prove our main Theorem~\ref{thm:conj-class} on the classification of $(O_B, O_B)$-bilattices.
The proof makes use of the fact that $O_B\otimes_{O_F}O_B^{\opp}$ is a \emph{Gorenstein order}. For the convenience of the reader, we review the notion of Gorenstein orders and \emph{Bass orders}; for detailed discussions, see \cite[\S 37]{curtis-reiner:1}, \cite[Chapter~IX]{Roggenkamp-Latt-II}, or \cite{Drozd-Kirichenko-Roiter-1967}. Let $R$ be a Dedekind domain, let $A$ be a finite dimensional separable algebra over the quotient field of $R$, and let $\Lambda$ be an $R$-order in $A$. We call $\Lambda$ a Gorenstein order if every short exact sequence of left $\Lambda$-lattices
\[ 0\to \Lambda \to M \to N \to 0 \]
is split over $\Lambda$. For a left $\Lambda$-lattice $M$, its dual  $M^*$ is defined as  
\[ M^*\coloneqq\Hom_R(M, R), \]
which has a natural right $\Lambda$-lattice structure induced by the left $\Lambda$-lattice structure of $M$.  
In particular, the dual $\Lambda^*$ of $\Lambda$ has a natural $(\Lambda, \Lambda)$-bilattice structure induced by the natural $(\Lambda, \Lambda)$-bilattice structure of $\Lambda$. By \cite[Proposition~37.8]{curtis-reiner:1}, $\Lambda$ is a Gorenstein order if and only if $\Lambda^*$ is projective as a left $\Lambda$-module, or equivalently, as a right $\Lambda$-module.
The $R$-order $\Lambda$ is called a Bass order if every $R$-order in $A$ containing $\Lambda$ (including $\Lambda$ itself) is a Gorenstein order. In particular, Bass orders must be Gorenstein orders.

Classically, Janusz \cite{Janusz-1979-JLMS} first  gave a criterion for the tensor product of two $R$-orders $\Lambda_1\otimes_R\Lambda_2$ to be hereditary, and his work was further extended by Hijikata and Nishida \cite{MR1648352}. Naturally, one may consider similar questions by replacing the adjective ``hereditary" by ``Gorenstein" or ``Bass". The following result seems to be  well known to the experts, but we have failed to locate a better reference. 

\begin{lemma}
 The tensor product $\Lambda_1\otimes_R\Lambda_2$ of two Gorenstein $R$-orders $\Lambda_1, \Lambda_2$ remains Gorenstein.     
\end{lemma}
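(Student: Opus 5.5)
We will use the criterion recalled above from \cite[Proposition~37.8]{curtis-reiner:1}: an $R$-order $\Lambda$ is Gorenstein if and only if $\Lambda^*=\Hom_R(\Lambda,R)$ is projective as a left $\Lambda$-module. Put $\Lambda\coloneqq\Lambda_1\otimes_R\Lambda_2$, an $R$-order in the separable algebra $A_1\otimes_K A_2$, where $K$ is the quotient field of $R$. The plan is to identify $\Lambda^*$ with $\Lambda_1^*\otimes_R\Lambda_2^*$ as $(\Lambda,\Lambda)$-bimodules, and then to show that a tensor product of projective modules is projective over the tensor product order.

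First, since $\Lambda_1$ and $\Lambda_2$ are finitely generated projective $R$-modules ($R$ is Dedekind and the $\Lambda_i$ are torsion-free and finitely generated), the canonical map
\[
\Hom_R(\Lambda_1,R)\otimes_R\Hom_R(\Lambda_2,R)\longrightarrow \Hom_R(\Lambda_1\otimes_R\Lambda_2,R),\qquad f\otimes g\mapsto \big(x\otimes y\mapsto f(x)g(y)\big),
\]
is an isomorphism. This can be checked locally at each prime of $R$, where the $\Lambda_i$ are free and the statement reduces to dual bases. A direct check shows this map is compatible with the left actions of $\Lambda_1\otimes\Lambda_2$, which are given by $((a\otimes b)\cdot(f\otimes g))(x\otimes y)=f(xa)g(yb)$, and likewise with the right actions. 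Hence $\Lambda^*\simeq\Lambda_1^*\otimes_R\Lambda_2^*$ as left $\Lambda$-modules.

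Second, because $\Lambda_i$ is Gorenstein, $\Lambda_i^*$ is a projective left $\Lambda_i$-module, so it is a direct summand of $\Lambda_i^{\oplus m_i}$ for some $m_i$. Tensoring over $R$ shows that $\Lambda_1^*\otimes_R\Lambda_2^*$ is a direct summand of $\Lambda_1^{\oplus m_1}\otimes_R\Lambda_2^{\oplus m_2}\simeq\Lambda^{\oplus m_1m_2}$ as a left $\Lambda$-module. It is therefore projective, and hence $\Lambda$ is Gorenstein.

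The only point requiring care is the first step: confirming that the duality isomorphism respects the module structures with the correct sidedness, given how the paper defines the $\Lambda$-action on $\Lambda^*$. Once that is settled, the remaining steps are formal. If one prefers to avoid the sidedness bookkeeping, an alternative is to work locally at each prime, since both Gorensteinness and projectivity of lattices are local properties, and then run the same argument with $R$ a discrete valuation ring.
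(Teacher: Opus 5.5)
Your proof is correct and follows essentially the same route as the paper: you identify $(\Lambda_1\otimes_R\Lambda_2)^*$ with $\Lambda_1^*\otimes_R\Lambda_2^*$, then observe that a tensor product of direct summands of free modules is a direct summand of $(\Lambda_1\otimes_R\Lambda_2)^{\oplus m_1m_2}$. You also justify the duality isomorphism and check its compatibility with the module structures, which the paper asserts without comment.
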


\begin{proof}
   Since each $(\Lambda_i)^*$ is a projective left $\Lambda_i$-module for  $i\in\{1, 2\}$, there exists a left $\Lambda_i$-module $N_i$ such that there is an isomorphism
$\Lambda_i^*\oplus N_i\simeq (\Lambda_i)^{\oplus s_i} $
of left $\Lambda_i$-modules for some integer $s_i>0$. It follows that 
\[
    (\Lambda_1\otimes_{R}\Lambda_2)^*\simeq \Lambda_1^*\otimes_{R}\Lambda_2^*
\]
is a direct summand of $(\Lambda_1\otimes_{R}\Lambda_2)^{\oplus s_1s_2}$, and hence is a projective left $\Lambda_1\otimes_{R}\Lambda_2$-module. 
\end{proof}

Consequently, $O_B\otimes_{O_F}O_B^{\opp}$ is a Gorenstein order since maximal orders are Gorenstein.

\begin{lemma}\label{lem:pro-lift-direct-summ}
     Let $L$ be an $(O_B, O_B)$-bilattice. If $W_{\varphi_1}$ (resp.~$W_{\varphi_2}$) embeds in $L/L\Pi$, then $L$ has a direct summand isomorphic to $\L_1$ (resp.~$\L_2$).
\end{lemma}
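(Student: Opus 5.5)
We construct a map $\L_j\to L$ lifting the given embedding, and then use the Gorenstein property of $\Lambda\coloneqq O_B\otimes_{O_F}O_B^{\opp}$ to split it off. We treat $j=1$; the case $j=2$ is identical.

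\emph{Step 1: lift the embedding.} Let $\iota: W_{\varphi_1}\hookrightarrow L/L\Pi$ be the given injective homomorphism of $(\F_{q^2}[\varepsilon],\F_{q^2})$-bimodules. By Example~\ref{eg:indecomp-trunc}, $\L_1/\L_1\Pi\simeq W_{\varphi_1}$. Lemma~\ref{lem:pro-latt} says $\L_1$ is a projective left $\Lambda$-module, so the composite
\[\L_1\twoheadrightarrow \L_1/\L_1\Pi\xrightarrow{\iota} L/L\Pi\]
lifts along the surjection $L\to L/L\Pi$. This gives a $\Lambda$-homomorphism $f:\L_1\to L$ with $\bar f=\iota$. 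Since $\bar f$ is injective, Lemma~\ref{lem:lem13} shows $f$ is injective.

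\emph{Step 2: the cokernel is torsion-free.} Put $N\coloneqq L/f(\L_1)$. We need $f(\L_1)\cap L\Pi=f(\L_1)\Pi$. Suppose $x\in\L_1$ with $f(x)\in L\Pi$. Then $\bar f(\bar x)=0$, so $x=y\Pi$ for some $y\in \L_1$. Thus $f(\L_1)\cap L\Pi= f(\L_1\Pi)$.

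Now suppose $z\in L$ and $z\Pi\in f(\L_1)$. Then $z\Pi\in f(\L_1)\cap L\Pi=f(\L_1)\Pi$, so $z\Pi=f(y)\Pi$ for some $y\in\L_1$. Since $L$ is torsion-free, $z=f(y)$. Hence $N$ has no $\Pi$-torsion, and since $\Pi^2=\pi$ it has no $\pi$-torsion either. So $N$ is an $O_F$-lattice, i.e.\ a left $\Lambda$-lattice.

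\emph{Step 3: split the sequence.} We now have a short exact sequence of left $\Lambda$-lattices
\[0\to \L_1\to L\to N\to 0.\]
The Gorenstein property is stated for sequences beginning with $\Lambda$. By Lemma~\ref{lem:pro-latt}, $\Lambda\simeq \L_1\oplus\L_2$, so take the direct sum of our sequence with the trivial sequence $0\to\L_2\to\L_2\to 0\to 0$. This gives
\[0\to\Lambda\to L\oplus \L_2\to N\to 0.\]
Because $O_B\otimes_{O_F}O_B^{\opp}$ is Gorenstein, this sequence splits. A splitting $s:N\to L\oplus\L_2$, composed with the projection onto $L$, yields a section of $L\to N$. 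Therefore the original sequence splits and $L\simeq \L_1\oplus N$.

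The main obstacle is Step 2, the saturation of $f(\L_1)$ in $L$. Without it $N$ would not be a lattice and the Gorenstein property could not be applied. The key point is that injectivity of $\bar f$ controls exactly the intersection $f(\L_1)\cap L\Pi$.
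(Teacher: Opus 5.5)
Your proof is correct and follows the paper's argument. You lift the embedding using projectivity of $\L_1$, get injectivity from Lemma~\ref{lem:lem13}, and show that $L/\L_1$ is $O_F$-torsion-free via injectivity of $\bar f$; your identity $f(\L_1)\cap L\Pi=f(\L_1)\Pi$ is a direct form of the paper's minimal-$s$ contradiction. The only variation is in the splitting step: you apply the defining property of Gorenstein orders to $0\to\Lambda\to L\oplus\L_2\to N\to 0$, whereas the paper dualizes and uses that $\L_1^*$ is a direct summand of the projective right module $\Lambda^*$. Your route reaches the same conclusion slightly more directly.
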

\begin{proof}
    Since $\L_1$ is projective by Lemma~\ref{lem:pro-latt} and $\L_1/ \L_1\Pi\cong W_{\varphi_1}$, an inclusion $\bar{j}: W_{\varphi_1}\hookrightarrow L/ L\Pi$ lifts to a homomorphism $\L_1\to L$, which is actually an embedding $j: \L_1\hookrightarrow L$ by Lemma~\ref{lem:lem13}. For the rest of the proof, we identify $\L_1$ with its image $j(\L_1)$ in $L$. We claim that the lemma would follow from the fact that $L/\L_1$ is $O_F$-torsion-free, that is, $L/\L_1$ is also an $(O_B, O_B)$-bilattice. Indeed, if this is true, then we have an exact sequence of left $O_B\otimes_{O_F} O_B^{\opp}$-lattices
\begin{equation}\label{eq:e13-1}
    0\to \L_1 \xrightarrow{j}  L \to L/\L_1 \to 0,
\end{equation}
which is necessarily split over $O_F$. So applying $\Hom_{O_F}(\cdot, O_F)$ to it yields an exact sequence
\begin{equation}\label{eq:e13-2}
    0\to (L/\L_1)^* \to L^* \xrightarrow{j^*} \L_1^* \to 0.
\end{equation}
Recall that $\Lambda\coloneqq O_B\otimes_{O_F} O_B^{\opp}$ is a Gorenstein order, so 
$\Lambda^*$ is a projective right $\Lambda$-module. By Lemma~\ref{lem:pro-latt},  $\L_1^*$ is a direct summand of $\Lambda^*$, so $\L_1^*$ is also a projective right $\Lambda$-module, and hence the exact sequence \eqref{eq:e13-2} is split over $\Lambda$. Applying $\Hom_{O_F}(\cdot, O_F)$ to \eqref{eq:e13-2}, we recover the original exact sequence \eqref{eq:e13-1}, so that it is also split over $\Lambda$. It follows that $\L_1$ is a direct summand of $L$ and verifies our claim. 

Now it remains to check that $L/\L_1$ is $O_F$-torsion-free. Suppose otherwise. Then there exists $x\in L$ and $l\in\Z_{>0}$ such that $x\notin \L_1$, but $\pi^l x\in \L_1$. 
In view of $\Pi^2=\pi$ in \eqref{eq:quat-multi-rul}, there exists an integer $s\geq 0$ such that $y\coloneqq  x\Pi^s$ does not belong to $\L_1$ but $z\coloneqq  y\Pi$ lies in $\L_1$.
The condition $y\notin \L_1$ implies that the image $\bar{z}$ of $z$ in $\L_1/\L_1\Pi$ is nonzero, so $\bar{j}(\bar{z})$ is nonzero since $\bar{j}$ is an inclusion. On the other hand, $\bar{j}(\bar{z})$ is equal to the image of $j(z)=z=y\Pi$ in $L/L\Pi$, which is clearly zero; this is a contradiction, and proves that $L/\L_1$ is $O_F$-torsion-free.
\end{proof}

\begin{proof}[Proof of Theorem~\ref{thm:conj-class}]
From Lemma~\ref{lem:conj-emb-bimodule}, the classification of $\GL_n(O_B)$-conjugacy classes of embeddings $O_B\hookrightarrow \Mat_n(O_B)$ is equivalent to the classification of isomorphism classes of $(O_B, O_B)$-bilattices, so we focus on the latter in this proof. 
The uniqueness of the decomposition \eqref{eq:bilatt-str-inv} is guaranteed  by the Krull-Schmidt-Azumaya Theorem \cite[Theorem~6.12]{curtis-reiner:1}, so it is enough to prove that every $(O_B, O_B)$-bilattice $L$ is isomorphic to a direct sum of copies of $\O$, $\P$, $\L_1$ and $\L_2$ given by \eqref{eq:defn-four-indecomp-latt}. 
Applying Lemma~\ref{lem:pro-lift-direct-summ} repeatedly if necessary, we may assume that $L/L\Pi$ is isomorphic to a direct sum of copies of $W_{\id}$ and $W_{\tau}$. By the definitions of $W_{\id}$ and $W_{\tau}$ in Example~\ref{eg:indecomp-trunc}, the left multiplication by $\Pi$ induces a zero map on $L/L\Pi$, that is, $\Pi L\subseteq L\Pi$. Since $L/\Pi L$ and $L/ L\Pi$ have the same $\F_q$-dimension, we have the equality $\Pi L=L \Pi$. Then by the classification result of Ribet \cite[Theorem~1.2]{Ribet-bimod}, $L$ is isomorphic to a direct sum of copies of $\O$, $\P$;  see also Theorem~\ref{thm:Ribet}.
\end{proof}

The following corollary generalizes Ribet \cite[Theorem~1.3]{Ribet-bimod} and follows immediately by combining Theorem~\ref{thm:conj-class} with Proposition~\ref{prop:trun-bimod-2} (2).

\begin{cor}\label{cor:one-to-one-corre}
    Taking the right truncation $L/L\Pi$ of an $(O_B, O_B)$-bilattice $L$ induces a one-to-one correspondence between the set of isomorphism classes of $(O_B, O_B)$-bilattices and the set of isomorphism classes of finitely generated $(\F_{q^2}[\varepsilon], \F_{q^2})$-bimodules. 

    By symmetry, the same holds true if the right truncation $L/L\Pi$ and the $(\F_{q^2}[\varepsilon], \F_{q^2})$-bimodules are replaced respectively by the left truncation $L/\Pi L$ and the  $(\F_{q^2}, \F_{q^2}[\varepsilon])$-bimodules.
\end{cor}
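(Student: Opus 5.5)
The plan is to show that the map $L\mapsto L/L\Pi$ on isomorphism classes is both well defined and bijective, using Theorem~\ref{thm:conj-class} on the bilattice side and Proposition~\ref{prop:trun-bimod-2}(2) on the truncated side.

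Well-definedness is immediate. An isomorphism $L\simeq L'$ of $(O_B,O_B)$-bilattices carries $L\Pi$ onto $L'\Pi$, so it induces an isomorphism $L/L\Pi\simeq L'/L'\Pi$ of $(\F_{q^2}[\varepsilon],\F_{q^2})$-bimodules.

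Next I compute the map on the decompositions. Truncation commutes with finite direct sums, since $(L_1\oplus L_2)\Pi=L_1\Pi\oplus L_2\Pi$. Write $L\simeq \O^{\oplus r_1}\oplus\P^{\oplus r_2}\oplus\L_1^{\oplus t_1}\oplus\L_2^{\oplus t_2}$ as in Theorem~\ref{thm:conj-class}. Example~\ref{eg:indecomp-trunc} then gives
\[
L/L\Pi\simeq W_{\id}^{\oplus r_1}\oplus W_{\tau}^{\oplus r_2}\oplus W_{\varphi_1}^{\oplus t_1}\oplus W_{\varphi_2}^{\oplus t_2}.
\]

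For injectivity, suppose $L/L\Pi\simeq L'/L'\Pi$. By the uniqueness of the quadruple in \eqref{eq:trunc-bimod-decomp} (Krull--Schmidt), the two structural invariants coincide. Theorem~\ref{thm:conj-class} then gives $L\simeq L'$.

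For surjectivity, let $W$ be any finitely generated $(\F_{q^2}[\varepsilon],\F_{q^2})$-bimodule. Proposition~\ref{prop:trun-bimod-2}(2) writes $W\simeq W_{\id}^{\oplus r_1}\oplus W_{\tau}^{\oplus r_2}\oplus W_{\varphi_1}^{\oplus t_1}\oplus W_{\varphi_2}^{\oplus t_2}$ for some quadruple. The bilattice $\O^{\oplus r_1}\oplus\P^{\oplus r_2}\oplus\L_1^{\oplus t_1}\oplus\L_2^{\oplus t_2}$ has truncation isomorphic to $W$ by the display above.

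The second statement follows by symmetry: swap the roles of the left and right $O_B$-actions. Concretely, view an $(O_B,O_B)$-bilattice $L$ as an $(O_B^{\opp},O_B^{\opp})$-bilattice with its two actions interchanged. Because $O_B^{\opp}\cong O_B$ via the canonical involution, the same classification applies, and the left truncation $L/\Pi L$ becomes a right truncation.

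The only step needing care is this symmetry identification. One has to check that the canonical involution sends the uniformizer $\Pi$ to $-\Pi$ and so preserves the ideal $\Pi O_B$. Once that is checked, the left truncation of $L$ is exactly the right truncation of the swapped bilattice, and the first part applies verbatim.

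Alternatively, one can avoid the identification and just compute the left truncations of $\O,\P,\L_1,\L_2$ directly. These are four pairwise non-isomorphic indecomposable $(\F_{q^2},\F_{q^2}[\varepsilon])$-bimodules, and the mirror image of Proposition~\ref{prop:trun-bimod-2} says they exhaust the indecomposables. The same argument as above then goes through.
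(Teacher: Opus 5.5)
Your proposal is correct and follows the paper's route. The paper obtains the corollary directly by combining Theorem~\ref{thm:conj-class} with Proposition~\ref{prop:trun-bimod-2}(2), using the truncations in Example~\ref{eg:indecomp-trunc}, and dismisses the left-truncation version as holding ``by symmetry''; your well-definedness check and your explicit side-swap via the canonical involution (with $\overline{\Pi}=-\Pi$) simply spell out what the paper leaves implicit.
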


Now we explain briefly that the $O_F$-order $O_B\otimes_{O_F} O_B\cong O_B\otimes_{O_F} O_B^{\opp}$ is not a Bass order. This fact will not be used elsewhere in this paper, so we leave the details of the proof to the interested reader.  To state an equivalent characterization of Bass orders, we recall the following  notion from \cite[\S 37]{curtis-reiner:1}. 
Let $M, N$ be two left (or right) modules over an order $\Lambda$. We say that $M$ \emph{covers} $N$, and write $M\succ N$, if $N=\sum f(M)$ with $f$  running through all elements of  $\Hom_{\Lambda}(M,N)$ in the summation.
 If $M$ does not cover $N$, we  write $M\nsucc N$. 

\begin{lemma}[{\cite[Lemma~37.14]{curtis-reiner:1}}]
Suppose that $R$ is a Dedekind domain and that $\Lambda$ is 
    an $R$-order in a separable algebra over the quotient field of $R$. Then $\Lambda$ is a Bass order if and only if for all left $\Lambda$-lattices $M$ and $N$, $M\succ N$ implies  $M^*\succ N^*$. 
\end{lemma}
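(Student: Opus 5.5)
The plan is to reduce both directions to the Gorenstein criterion quoted above: $\Gamma$ is Gorenstein if and only if $\Gamma^*$ is projective. I will use it in the following form: \emph{an $R$-order $\Gamma$ is Gorenstein if and only if $\Gamma^*\succ\Gamma$ as right $\Gamma$-modules.}

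For this reformulation, note that covering by a finitely generated module is realized by a surjection $(\Gamma^*)^{\oplus k}\twoheadrightarrow\Gamma$. Such a surjection splits because $\Gamma$ is projective, so $\Gamma$ is a direct summand of $(\Gamma^*)^{\oplus k}$. A local rank count, or applying the duality $(\cdot)^*$, which exchanges $\Gamma$ and $\Gamma^*$ and swaps sides, then gives that $\Gamma^*$ is a direct summand of a free module, hence projective. The converse holds because a projective module of full rank in each simple component covers $\Gamma$. I will also use that for a lattice $X$ the cover relation can be computed inside $A$-modules after extending scalars, and that covering is preserved by restriction of scalars along $\Lambda\subseteq\Gamma$.

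For ($\Leftarrow$), assume the cover condition for $\Lambda$ and let $\Gamma\supseteq\Lambda$ be an overorder. Set $M=\Lambda$ and $N=\Gamma$, viewed as left $\Lambda$-lattices. Since $1\in\Lambda$ and $\Gamma$ is generated over $\Lambda$ by finitely many elements, we have $M\succ N$, so the hypothesis gives $\Lambda^*\succ\Gamma^*$ as right $\Lambda$-lattices. What is actually needed is $\Gamma^*\succ\Gamma$ over $\Gamma$. For this I would apply the hypothesis instead to $M=\Gamma$ and $N=\Gamma x$ for suitable $\Gamma$-lattices, and identify $\Hom_\Lambda(\Gamma^*,\Gamma^*)=\End_{\Gamma}(\Gamma^*)$. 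The point is that $\Gamma^*$ is a $\Gamma$-lattice, and $\Lambda$-homomorphisms between $\Gamma$-lattices that are lattices in the same $A$-module are automatically $\Gamma$-linear. This transfers covering over $\Lambda$ into covering over $\Gamma$.

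For ($\Rightarrow$), assume $\Lambda$ is Bass and let $M\succ N$. Replace $M$ by the order $\Gamma=\mathcal{O}_l(\text{trace ideal})$, i.e.\ the ring of $A$-endomorphisms preserving $N$ that arise from images of $M$. Then $N$ is a $\Gamma$-lattice covered by a $\Gamma$-projective. Since $\Gamma$ is Gorenstein, duals of $\Gamma$-projectives are $\Gamma^*$-sums, which are projective. Hence $N^*$ is a quotient of a sum of copies of $M^*$.

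The main obstacle is the bookkeeping in ($\Rightarrow$): showing that the relevant overorder really makes $N$ a module over which covering by $M$ becomes covering by a projective. I would first try the standard fact that in a Bass order every lattice is a direct sum of ideals of overorders, and then check covering summand by summand. Failing that, I would fall back on citing \cite[Lemma~37.14]{curtis-reiner:1}, as the paper does.
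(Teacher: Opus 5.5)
Your direction ($\Leftarrow$) is essentially fixable, but ($\Rightarrow$) is not proved. That direction is the real content of the lemma, which the paper itself only cites from \cite[Lemma~37.14]{curtis-reiner:1}.

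\textbf{The gap in ($\Rightarrow$).} The order ``$\mathcal{O}_l(\text{trace ideal})$'' is not a well-defined construction as you describe it. The concluding chain also fails. Even if $M$ were projective over some overorder $\Gamma$, dualizing $M^{\oplus k}\twoheadrightarrow N$ only yields an \emph{injection} $N^*\hookrightarrow (M^*)^{\oplus k}$. Projectivity of $M^*$ says nothing about $M^*$ covering $N^*$; for that you would need $M^*$ to be a generator. Moreover, $M$ need not be projective over its left order: for $M=\Lambda\oplus M_1$ with $M_1$ a non-projective lattice, one has $\mathcal{O}_l(M)=\Lambda$.

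A correct first reduction does follow from your own observations. Assume $M$ spans a faithful $A$-module (otherwise pass to the relevant factor of $A$). If $M\succ N$, then $N=\sum f(M)$ is stable under $\Gamma\coloneqq\mathcal{O}_l(M)$, because $\gamma f(x)=f(\gamma x)$. Since $\Lambda$-maps between $\Gamma$-lattices are $\Gamma$-linear, both coverings may be tested over the Bass order $\Gamma$.

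What is still missing is genuine structural input on lattices over Bass orders. For instance, suppose $M$ were a generator over an overorder $\Gamma$ over which $N$ is a lattice. Then $\Gamma$ is a direct summand of some $M^{\oplus k}$, and Gorensteinness of $\Gamma$ gives $M^*\succ\Gamma^*\succ\Gamma\succ N^*$. Producing such structure is exactly where the hypothesis on \emph{all} overorders is used; this is where the structure theory of Bass orders, as in \cite{Drozd-Kirichenko-Roiter-1967}, enters. Your sketch does not address it.

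\textbf{Fixing ($\Leftarrow$).} The pair $(\Lambda,\Gamma)$ only gives $\Lambda^*\succ\Gamma^*$, which is not useful, and ``$N=\Gamma x$'' is not the right test object. Instead take $M={}_\Gamma\Gamma$ and $N=\Hom_R(\Gamma_\Gamma,R)$, the dual of the \emph{right} regular module. This $N$ is a left $\Gamma$-lattice with $N^*\cong\Gamma_\Gamma$.
\begin{enumerate}
\item $M\succ N$ trivially.
\item The hypothesis gives $\Gamma^*\succ\Gamma$ as right $\Lambda$-lattices.
\item By your remark that $\Lambda$-maps between $\Gamma$-lattices are $\Gamma$-linear, this is also covering as right $\Gamma$-lattices. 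The remark holds for any two $\Gamma$-lattices; no common ambient module is needed.
\item Dualizing the split surjection $(\Gamma^*)^{\oplus k}\twoheadrightarrow\Gamma$ shows that $\Hom_R(\Gamma_\Gamma,R)$ is a projective left $\Gamma$-module, i.e.\ $\Gamma$ is Gorenstein.
\end{enumerate}

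\textbf{The converse half of your reformulation.} Your justification is wrong: a projective lattice spanning every simple component need not cover the order. Take $\Gamma=\begin{bmatrix}R&R\\ \pi R&R\end{bmatrix}$ over a DVR $R$ with uniformizer $\pi$. The column $\Gamma e_{11}$ has trace ideal $\begin{bmatrix}R&R\\ \pi R&\pi R\end{bmatrix}\neq\Gamma$. The same example shows that covering is not detected after tensoring with the quotient field of $R$. The correct reason is that for Gorenstein $\Gamma$ the dual $\Gamma^*$ is projective on both sides. Dualizing a splitting of ${}_\Gamma(\Gamma^*)$ off ${}_\Gamma\Gamma^{\oplus m}$ then exhibits $\Gamma_\Gamma$ as a summand of $(\Gamma^*)^{\oplus m}$.
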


\begin{remark}\label{rem:not-bass}
  As an application of Corollary~\ref{cor:one-to-one-corre}, 
it is straightforward  to check that for the four indecomposable $(O_B, O_B)$-bilattices given by \eqref{eq:defn-four-indecomp-latt},  $\O$ and $\P$ are dual to each other, and both $\L_1$ and $\L_2$ are self-dual. Furthermore, one  also checks that $\L_1\succ \O$ and $\L_1\nsucc \P$, from which it follows directly that $ O_B\otimes_{O_F} O_B^{\opp}$ is not a Bass order. 
 Indeed, suppose otherwise. Then $\L_1\succ \O$ implies that $\L_1^*\succ \O^*$, that is, $\L_1\succ \P$, leading to a contradiction.  
\end{remark}

\section{Global Ribet bilattices and superspecial abelian varieties with quaternion action}\label{sec:2}
 As the title suggests, the goal of this section is twofold. In Section~\ref{subsec:glob-latt}, we  classify both the genera of global Ribet bilattices over given maximal quaternion orders and also the isomorphism classes of such bilattices within each genus. Such classifications are then applied  in Section~\ref{subsec:ssp-O-1-ab-var} to the study of (unpolarized) superspecial abelian varieties with quaternion action. 

\subsection{Classification of global $(O_1, O_2)$-bilattices}\label{subsec:glob-latt}
Let $F$ be a global field with ring of integers $O_F$. More explicitly, we fix a finite set $\bfS_\infty$ of places of $F$ including all the archimedean ones and write $O_F$ for the ring of $\bfS_\infty$-integers.  As usual, $\bfS_\infty$ will be called the set of infinite places of $F$, and places outside $\bfS_\infty$ will be called finite places. Let $\widehat{O}_F\coloneqq \prod_{v\notin \bfS_\infty} O_{F_v}$ be the profinite completion of $O_F$ and  $\widehat{F}\coloneqq F\otimes_{O_F}\widehat{O}_F$ be the $\bfS_\infty$-finite adele ring of $F$.

For each $i\in\{1, 2\}$, let $B_i$ be a quaternion $F$-algebra and $O_i$ be a maximal $O_F$-order in $B_i$. By an $(O_1, O_2)$-bilattice, we mean a finitely generated $O_F$-torsion-free $(O_1, O_2)$-bimodule, or equivalently, a left $O_1\otimes_{O_F} O_2^{\opp}$-lattice. Clearly, the ambient space $L\otimes_{O_F} F$  of  an $(O_1, O_2)$-bilattice $L$ is a $(B_1, B_2)$-bimodule, or equivalently, a left $B_1\otimes_{F} B_2^{\opp}$-module. Henceforth we assume that $B_1$ and $B_2$ are \emph{non-isomorphic} unless specified otherwise. Let $B_3$ be the quaternion $F$-algebra which represents the product of $B_1$ and $B_2$ in the Brauer group $\Br(F)$. Then $B_3$ is  division by the non-isomorphism assumption, and we have
\begin{equation}\label{eq:defn-B-3}
     B_1\otimes_{F} B_2^{\opp}\simeq \Mat_2(B_3).
\end{equation}
A left $\Mat_2(B_3)$-module $V$ is of the form $\begin{bmatrix} B_3 \\ B_3 \end{bmatrix}^{\oplus m}$ for some $m\geq 0$. By an easy dimension consideration, we obtain the following lemma, which is a restatement of \cite[Proposition~2.1]{Ribet-bimod}; see also \cite[\S 3]{MR2931385}.

\begin{lemma}\label{lem:emb-exist-neces-cond}
Keep the assumption that $B_1$ and $B_2$ are two non-isomorphic quaternion $F$-algebras. Then any finitely generated $(B_1, B_2)$-bimodule $V$ is necessarily finite free as a left $B_1$-module and also as a right $B_2$-module, and $\rank_{B_1}V=\rank_{B_2}V$ is even. Conversely, for every non-negative even number $n\in 2\Z_{\geq 0}$, there is a unique $(B_1, B_2)$-bimodule of $B_2$-rank $n$ up to isomorphism.
\end{lemma}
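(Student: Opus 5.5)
The plan is to translate the question about $(B_1,B_2)$-bimodules into one about modules over a single simple algebra, using the isomorphism \eqref{eq:defn-B-3}. A finitely generated $(B_1,B_2)$-bimodule $V$ is the same as a finitely generated left $B_1\otimes_F B_2^{\opp}\simeq \Mat_2(B_3)$-module. Since $\Mat_2(B_3)$ is a simple artinian $F$-algebra, it has a unique simple left module up to isomorphism, namely the column space $S\coloneqq \begin{bmatrix} B_3\\ B_3\end{bmatrix}$. Every finitely generated module is semisimple, so $V\simeq S^{\oplus m}$ for a unique $m\geq 0$. In particular, $V$ is determined up to isomorphism by $m$, or equivalently by $\dim_F V=8m$.

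Next I will show freeness. Restricting scalars from $B_1\otimes_F B_2^{\opp}$ to $B_1=B_1\otimes 1$ makes $V$ a left $B_1$-module. Since $B_1$ is a quaternion algebra, hence a central simple algebra, every left $B_1$-module is semisimple, and each of its simple summands is the unique simple $B_1$-module $S_1$. If $B_1$ is division, $S_1=B_1$ and freeness is immediate, with $\rank_{B_1}V=\dim_F V/4=2m$.

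The one point needing care is the split case $B_1\simeq\Mat_2(F)$. Here $S_1=F^2$ has dimension $2$, so freeness requires the multiplicity of $S_1$ to be even. I will handle this by computing the restriction of $S$ to $B_1$ directly. The column module $S$ restricted to $B_1\otimes B_2^{\opp}$ is $S$ itself, and $\dim_F S=8$. If $B_1$ is split, then $S$ is a $B_1$-module of dimension $8$, so it is $S_1^{\oplus 4}\simeq B_1^{\oplus 2}$, which is free. Hence $S^{\oplus m}$ is free of rank $2m$ over $B_1$. Since every $B_1$-module of dimension divisible by $4$ is free, this reduces in all cases to the dimension count $\dim_F V=8m$. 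The symmetric argument for $B_2^{\opp}$ shows that $V$ is right free over $B_2$ of rank $8m/4=2m$. Therefore $\rank_{B_1}V=\rank_{B_2}V=2m$ is even.

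For the converse, given $n=2m$, the module $S^{\oplus m}$ is a $(B_1,B_2)$-bimodule of $B_2$-rank $n$. Uniqueness follows from the first step, because the $B_2$-rank $n$ determines $\dim_F V=4n$, and hence determines $m=n/2$. The only genuine input is the isomorphism \eqref{eq:defn-B-3} with $B_3$ division. Its division property is what guarantees that the simple module has $F$-dimension $8$ rather than $4$, and this is exactly the source of the evenness. Since that isomorphism is already given in the paper, I expect no real obstacle.
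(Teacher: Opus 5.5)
Your proposal is correct and follows essentially the same route as the paper, which likewise identifies $(B_1,B_2)$-bimodules with left $\Mat_2(B_3)$-modules, writes $V\simeq \begin{bmatrix} B_3\\ B_3\end{bmatrix}^{\oplus m}$, and concludes by a dimension count. Your treatment of the split case, using that a module over a quaternion algebra is free exactly when its $F$-dimension is divisible by $4$, simply spells out the ``easy dimension consideration'' that the paper leaves to the reader.
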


\begin{remark}\label{rem:quater-bimod-rank}
      It is easy to see that Lemma~\ref{lem:emb-exist-neces-cond} remains valid if $F$ is assumed to be a local field, in which case one of the $B_i$'s is the unique quaternion division $F$-algebra, and  the other is the split matrix algebra $\Mat_2(F)$. 

  Note that if $B_1$ and $B_2$ are isomorphic quaternion algebras over a global field or local field (with the possibility of both of them being split), then any finitely generated $(B_1, B_2)$-bimodule $V$ is necessarily free as a left $B_1$-module and also as a right $B_2$-module, and $\rank_{B_1}V=\rank_{B_2}V$ (except that the rank is possibly odd in this case).
\end{remark}

We return to the global setting with $B_1\not\simeq B_2$.  Given an $(O_1, O_2)$-bilattice $L$ and a finite place $v$ of $F$, we write $L_v\coloneqq L\otimes_{O_F}O_{F_v}$ for the $v$-adic completion of $L$, which is naturally an $(O_{1, v}, O_{2, v})$-bilattice. 

\begin{defn}\label{defn:genus}
Two $(O_1, O_2)$-bilattices $L, L'$ are said to \emph{belong to the same genus} if they are locally isomorphic everywhere, that is, $L_v'\simeq L_v$ for every finite place $v$ of $F$.
\end{defn}

Let $\Sigma$ be the set of finite places of $F$ that are ramified in both $B_1$ and $B_2$.
For each $v\in\Sigma$, there exists an $O_{F_v}$-isomorphism  $f_v: O_{2, v}\xrightarrow{\simeq} O_{1, v}$ since both are maximal orders in quaternion division $F_{v}$-algebras. This allows us to identify $O_{1, v}$ with $O_{2, v}$ and 
view $L_v$ as an $(O_{2, v}, O_{2, v})$-bilattice via $f_v$ so that its structural invariant $\ulm(L_v)$ is defined as in Theorem~\ref{thm:conj-class}.  Taking the global point of view, we shall call $\ulm(L_v)$ \emph{the structural invariant} of the $(O_1, O_2)$-bilattice $L$ at $v$ with respect to $f_v$ and denote it by 
\begin{equation}\label{eq:defn-str-inv-at-v}
    \ulm^{(v, f_v)}(L)\coloneqq \ulm(L_v).
\end{equation}
If $\ulm^{(v, f_v)}(L)=(r_1, r_2, t_1, t_2)\in\Z_{\geq 0}^4$, then  an easy rank computation shows that  
\begin{equation}\label{eq:rank-comp}
    r_1+r_2+2t_1+2t_2=n,\quad\text{where}\quad n=\rank_{B_2}(L\otimes_{O_F}F).
\end{equation}

\begin{lemma}\label{lem:bilatt-genus}
Fix an identification $f_v: O_{2, v}\xrightarrow{\simeq} O_{1, v}$ for each $v\in \Sigma$.      Two $(O_1, O_2)$-bilattices $L$ and $L'$ belong to the same genus if and only if they have the same $O_F$-rank and $\ulm^{(v, f_v)}(L)=\ulm^{(v, f_v)}(L')$ for all $v\in\Sigma$.
\end{lemma}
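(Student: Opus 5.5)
The proof splits into the "only if" direction, which is immediate, and the "if" direction, which is a place-by-place local check.

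\emph{Only if.} Suppose $L$ and $L'$ are in the same genus. Then $L_v\simeq L'_v$ for every finite $v$, so the $O_F$-ranks agree; this can be read off at any single place. For $v\in\Sigma$, the structural invariant $\ulm^{(v,f_v)}$ is defined through the fixed identification $f_v$. It is therefore an isomorphism invariant of $L_v$, by the uniqueness part of Theorem~\ref{thm:conj-class}, and hence $\ulm^{(v,f_v)}(L)=\ulm^{(v,f_v)}(L')$.

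\emph{If.} Assume the ranks agree and the invariants agree at every $v\in\Sigma$. We must show $L_v\simeq L'_v$ at every finite place $v$. There are three kinds of places.

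\begin{enumerate}
\item[(a)] $v\in\Sigma$. Here $L_v\simeq L'_v$ follows directly from Theorem~\ref{thm:conj-class}, since both decompose as the same direct sum $\O^{\oplus r_1}\oplus\P^{\oplus r_2}\oplus\L_1^{\oplus t_1}\oplus\L_2^{\oplus t_2}$.
\item[(b)] $v$ is split in both $B_1$ and $B_2$. Then $O_{1,v}\otimes O_{2,v}^{\opp}\simeq \Mat_2(O_{F_v})\otimes\Mat_2(O_{F_v})\simeq\Mat_4(O_{F_v})$. By Morita equivalence, lattices over this order correspond to $O_{F_v}$-lattices, which are free. So isomorphism classes are determined by the $O_{F_v}$-rank, which equals the common global $O_F$-rank.
\item[(c)] $v$ is split in exactly one of the two algebras, say $O_{1,v}\simeq\Mat_2(O_{F_v})$ and $O_{2,v}$ the maximal order in the division algebra. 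Then $O_{1,v}\otimes O_{2,v}^{\opp}\simeq\Mat_2(O_{2,v}^{\opp})$. By Morita equivalence, lattices over it correspond to right $O_{2,v}$-lattices. These are free, since $O_{2,v}$ is a maximal order in a division algebra over a local field, so again they are classified by rank. The symmetric case, with $B_2$ split and $B_1$ ramified, is handled the same way.
\end{enumerate}

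One concrete Morita step needs justification: for a maximal order $O_v$ and $\Mat_2(O_{F_v})$, the tensor product $\Mat_2(O_{F_v})\otimes_{O_{F_v}}O_v^{\opp}$ equals $\Mat_2(O_v^{\opp})$, and over $\Mat_n(\Lambda)$ every lattice has the form $(\Lambda\text{-lattice})^{\oplus n}$ as column vectors, via the idempotent $e_{11}$.

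The only genuinely nontrivial input is Theorem~\ref{thm:conj-class} at the places in $\Sigma$. The main point to watch is that the invariant depends on $f_v$. Since the same $f_v$ is used for both $L$ and $L'$, this causes no problem. A different choice of $f_v$ differs by an automorphism of $O_{1,v}$, which is conjugation by some $\beta$, and by Remark~\ref{rem:str-inv-comp-conj} this at most swaps $(r_1,t_1)$ with $(r_2,t_2)$ for both lattices simultaneously.
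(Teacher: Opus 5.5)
Your proof is correct and follows the paper's argument. Theorem~\ref{thm:conj-class} handles the places $v\in\Sigma$, and at every other finite place $O_{1,v}\otimes_{O_{F_v}}O_{2,v}^{\opp}$ is a maximal order, so lattices over it are classified by rank. The paper simply cites \cite[Theorem~18.7]{reiner:mo} for that last step, whereas you unpack it through the explicit Morita equivalence and the freeness of lattices over $O_{F_v}$ and $O_{i,v}$; you also spell out the easy ``only if'' direction, which the paper leaves implicit.
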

 If $\Sigma\neq\emptyset$, then the same-rank assumption can be omitted by \eqref{eq:rank-comp}.
\begin{proof}
  From Theorem~\ref{thm:conj-class}, the equalities $\ulm^{(v, f_v)}(L)=\ulm^{(v, f_v)}(L')$ for all $v\in\Sigma$ guarantee that $L_v\simeq L_v'$ for all $v\in \Sigma$.  It remains to check that $L_v\simeq L_v'$ for every finite place  $v\notin\Sigma$  whenever 
 $L$ and $L'$ have the same $O_F$-rank.
 For such a $v$,  the tensor product $O_{1, v}\otimes_{O_{F_v}}O_{2, v}^{\opp}$ is a maximal order, so the desired isomorphism follows from \cite[Theorem~18.7]{reiner:mo}.
\end{proof}

\begin{remark}\label{rem:distinct-indentify}
 It should be emphasized that for $v\in \Sigma$,  the structural invariant $\ulm^{(v, f_v)}(L)$ depends on the choice of the identification $f_v$. 
 Let $f_v':O_{2, v}\xrightarrow{\simeq} O_{1, v}$ be another identification. Then there exists $\beta_v\in B_{2, v}^\times$ such that $f_v'(x_v)= f_v(\beta_v x_v \beta_v^{-1})$ for all $x_v\in O_{2, v}$.  If we write $\ulm^{(v, f_v)}(L)=(r_1, r_2, t_1, t_2)$ as before, then it follows from Remark~\ref{rem:str-inv-comp-conj} that 
 \[   \ulm^{(v, f_v')}(L)=\begin{dcases*}
           \ulm^{(v, f_v)}(L), & if $\ord_{B_{2,v}}(\beta_v)$ is even;\\
           (r_2, r_1, t_2, t_1), & if $\ord_{B_{2, v}}(\beta_v)$ is odd.
       \end{dcases*} \]
Another way to characterize the parity of $\ord_{B_{2,v}}(\beta_v)$ is as follows. For each $i\in\{1, 2\}$, let $\grP_{i, v}$ be the unique maximal two-sided ideal of $O_{i, v}$. Let $\grp_v$ be the unique maximal ideal of $O_{F_v}$, and $\kappa_v\coloneqq O_{F_v}/\grp_v$ be the finite residue field of $O_{F_v}$. Both  quotient fields $O_{i, v}/\grP_{i, v}$ are quadratic extensions of $\kappa_v$, and $\ord_{B_{2,v}}(\beta_v)$ is even if and only if $f_v$ and $f_v'$ induce the same $\kappa_v$-isomorphism $O_{2, v}/\grP_{2, v}\to O_{1, v}/\grP_{1, v}$.  In summary, the set of  isomorphisms $O_{2, v}\xrightarrow{\simeq} O_{1, v}$ are separated into two equivalence classes depending on the induced $\kappa_v$-isomorphism between the residue fields, and the structural invariant of a global $(O_1, O_2)$-bilattice $L$ at $v\in \Sigma$ is well-defined only  after one such $\kappa_v$-isomorphism $O_{2, v}/\grP_{2, v}\to O_{1, v}/\grP_{1, v}$ has been specified.  In fact, Ribet takes the latter approach  in \cite[\S2]{Ribet-bimod}, where he fixes a quadratic extension $\widetilde{\kappa}_v/\kappa_v$ and defines \emph{an orientation} of $O_i$ at $v\in \Sigma$ relative to $\widetilde{\kappa}_v$ to be a (surjective) homomorphism $O_i\twoheadrightarrow \widetilde{\kappa}_v$. The global study of  Ribet bimodules (op.~cit., p.~12) is then set off by fixing the orientations of $O_1$ and $O_2$ at all finite places $v\in \Sigma$ with respect to prior chosen $\widetilde\kappa_v$'s. As a convention, we often suppress $f_v$ from the notation $\ulm^{(v, f_v)}(L)$  and abbreviate it as $\ulm^{(v)}(L)$ if the identification $f_v:O_{2, v}\xrightarrow{\simeq} O_{1, v}$ is clear from the context. 
\end{remark}

For the remainder of this subsection, we fix an $(O_1, O_2)$-bilattice $L$, and study the isomorphism classes of $(O_1, O_2)$-bilattices in the genus of  $L$. Let $V\coloneqq L\otimes_{O_F}F$ and $n\coloneqq\rank_{B_2} V$, which is necessarily even by Lemma~\ref{lem:emb-exist-neces-cond}. 
Put
\[A\coloneqq\End_{B_1\otimes_F B_2^{\opp}}(V),\quad \text{and} \quad  \Lambda\coloneqq\End_{O_1\otimes_{O_F}O_2^{\opp}}(L).        \]
Then $\Lambda$ is an $O_F$-order in the finite dimensional central simple $F$-algebra $A$. Indeed, we find from~\eqref{eq:defn-B-3} that $V\simeq\begin{bmatrix} B_3 \\ B_3 \end{bmatrix}^{\oplus \frac{n}{2}}$ as a left $\Mat_2(B_3)$-module.
It follows immediately that (see also \cite[Proposition~2.1]{Ribet-bimod})
\begin{equation}\label{eq:compute-C}
    A\simeq \Mat_{n/2}(B_3).
\end{equation} 
Let $\widehat{\Lambda}\coloneqq \Lambda\otimes_{O_F}\widehat{O}_F$ be the profinite completion of $\Lambda$ and $\widehat{A}\coloneqq A\otimes_{F}\widehat{F}$ be the $\bfS_\infty$-finite adele ring of $A$.
By \cite[Theorem~31.18]{curtis-reiner:1}, there is a bijection between the set of isomorphism classes of $(O_1, O_2)$-bilattices in the genus of $L$ and the double coset space $A^\times\backslash \widehat{A}^\times/\widehat{\Lambda}^\times$.  
Let $\Omega\subseteq \bfS_\infty$ be the set of real places of $F$ ramified in $B_3$. In other words, $\Omega$ consists of the real places of $F$ ramified exactly in one of $B_1$ and $B_2$, but not the other. If $n\geq 4$, then $A$ clearly satisfies the Eichler condition~\cite[Definition 34.3]{reiner:mo}. It follows from \cite[Lemma~14]{Yu-CF:Bull-AS} that the reduced norm map
\begin{equation}\label{eq:3.1-nrd}
      \Nrd: A^\times\backslash \widehat{A}^\times/\widehat{\Lambda}^\times \to
           F_{>_{\Omega} 0}^\times\backslash\widehat{F}^\times / \Nrd(\widehat{\Lambda}^\times)
\end{equation} 
is a bijection, where $F_{>_{\Omega} 0}^\times$ denotes the following subgroup of  $F^\times$: 
\begin{equation*}\label{eq:Omega-positive-elements}
    F_{>_{\Omega} 0}^\times\coloneqq \{x\in F^\times \mid w(x)>0\text{ for every } w\in\Omega\}.   
\end{equation*}
    
\begin{prop}\label{prop:isom-cls-latt}
 We have $\Nrd(\widehat{\Lambda}^\times)=\widehat{O}_F^\times$ for all $n\in 2\Z_{>0}$. In particular, if $n\geq 4$, the number of the isomorphism classes of $(O_1, O_2)$-bilattices in the genus of $L$ is equal to the order $h^\Omega(F)$ of the ray class group $F_{>_{\Omega} 0}^\times\backslash \widehat{F}^\times /\widehat{O}_F^\times$.
\end{prop}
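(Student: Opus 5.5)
Since $\widehat{\Lambda}^\times=\prod_v \Lambda_v^\times$ and $\Nrd$ lands in $O_{F_v}^\times$ at each finite place (as $\Lambda_v$ is an order), the equality $\Nrd(\widehat{\Lambda}^\times)=\widehat{O}_F^\times$ is a local statement. It reduces to showing $\Nrd(\Lambda_v^\times)=O_{F_v}^\times$ for every finite place $v$, where $\Lambda_v=\End_{O_{1,v}\otimes O_{2,v}^{\opp}}(L_v)$. The second assertion is then immediate from the bijection \eqref{eq:3.1-nrd} for $n\geq 4$ together with \cite[Theorem~31.18]{curtis-reiner:1}.

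For $v\notin\Sigma$, the order $O_{1,v}\otimes O_{2,v}^{\opp}$ is maximal; in fact it is $\Mat_2(O_{3,v})$ up to Morita equivalence with $O_{3,v}$ maximal in $B_{3,v}$. Hence $L_v\simeq (O_{3,v}^{\oplus 2})^{\oplus n/2}$ (column lattices), and $\Lambda_v\simeq \Mat_{n/2}(O_{3,v}^{\opp})$ is a maximal order. The reduced norm of units of a maximal local order is surjective onto $O_{F_v}^\times$: in the split case use $\diag(u,1,\dots)$, and in the division case use the surjectivity $\Nrd(O_{3,v}^\times)=O_{F_v}^\times$, via the unramified quadratic subring.

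For $v\in\Sigma$, use Theorem~\ref{thm:conj-class} to write $L_v\simeq \O^{r_1}\oplus\P^{r_2}\oplus\L_1^{t_1}\oplus\L_2^{t_2}$. Here $\Lambda_v$ contains $\prod \Aut$ of the summands, embedded block-diagonally, and the reduced norm of a block-diagonal element is the product of the reduced norms of the blocks (viewed inside $A_v$). So it suffices to exhibit, inside a single indecomposable summand $M$ (any one appearing, which exists since $n>0$), automorphisms whose reduced norms exhaust $O_{F_v}^\times$.

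For $M=\O$ or $\P$, the bimodule endomorphisms include right multiplication by the center $O_{F_v}$. In particular they contain the $O_{F_v}$-scalars, whose $\Nrd$ only give squares up to the degree, which is not enough. Instead I will use the more refined fact that $\End(\O)\supseteq O_{F_v}$ sits inside $\End_{B_v\otimes B_v^{\opp}}(B_v)=F_v$. If the summand decomposition is refined inside $A_v$, the relevant local factor of $A_v\otimes F_v$ is $\Mat_{n/2}$ over the split local algebra, since $B_{3,v}$ is split at $v\in\Sigma$. So I compute $\Nrd$ of $u\in O_{F_v}^\times$ acting on a single $\O$ summand: it acts on a piece of dimension $4$ of the $2n$-dimensional natural representation of $\Mat_{n}(F_v)\cong A_v$, giving $\Nrd=u$.

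For $\L_j$, I will exhibit $\End(\L_j)$ explicitly via $\varphi_j$ as the matrices in $\Mat_2(O_B)$ commuting with $\varphi_j(O_B)$. This contains a copy of $O_K$ acting diagonally by $\diag(a,a^\sigma)$-type elements, whose reduced norm is $\Nm_{K/F}(a)$, and $\Nm(O_K^\times)=O_{F_v}^\times$ since $K/F$ is unramified.

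The main obstacle is the normalization of the reduced norm for these local blocks. I will handle it by tensoring with $F_v$, identifying $A_v\simeq\Mat_n(F_v)$ through $V_v$, and computing determinants of the induced $F_v$-linear action on the simple module.
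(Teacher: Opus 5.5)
Your proposal is correct and follows the paper's proof. You reduce to $\Nrd(\Lambda_v^\times)=O_{F_v}^\times$ at each finite $v$, and use that $\Lambda_v\simeq\Mat_{n/2}(\grO_v)$ is maximal for $v\notin\Sigma$. For $v\in\Sigma$ you use Theorem~\ref{thm:conj-class} with $\End(\O)=\End(\P)=O_{F_v}$ and an embedded copy of $O_{K_v}$ in $\End(\L_j)$, and your explicit block-diagonal reduction is exactly what the paper's ``it is enough to check'' leaves implicit.

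Two slips need cleaning up. First, the copy of $O_K$ in $\End(\L_j)$ is the scalar matrices $aI_2$, which commute with $\varphi_j(O_B)\subseteq\Mat_2(O_K)$. It is not $\diag(a,a^\sigma)$, which does not commute with $\varphi_j(\Pi)$. Second, a unit $u$ acting on one $\O$-summand is $\diag(u,1,\dots,1)$ on the $n$-dimensional simple module of $A_v\simeq\Mat_n(F_v)$, so $\Nrd=u$ as you claim. Your count ``a $4$-dimensional piece of a $2n$-dimensional representation'' would instead give $u^2$.
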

\begin{proof}
     We check that $\Nrd(\Lambda_v^\times)=O_{F_v}^\times$ for every finite place $v$ of $F$.

    First, suppose that $v$ is split in either $B_1$ or $B_2$ (or both). Then at least one of the maximal $O_{F_v}$-orders $O_{1, v},O_{2, v}$ is isomorphic to $\Mat_2(O_{F_v})$, which implies that $O_{1, v}\otimes_{O_{F_v}} O_{2, v}^{\opp}\simeq \Mat_2(\grO_v)$ for some maximal order $\grO_v$ in a quaternion algebra over $F_v$. Thus $L_v\simeq\begin{bmatrix}
        \grO_v \\ \grO_v
    \end{bmatrix}^{\oplus \frac{n}{2}}$ as left $\Mat_2(\grO_v)$-modules, and hence $\Lambda_v\simeq\Mat_{n/2}(\grO_v)$ is a maximal order. It follows from \cite[Proposition~45.8]{curtis-reiner:2} that $\Nrd(\Lambda_v^\times)=O_{F_v}^\times$.

    Next suppose that $v$ is ramified in both $B_1$ and $B_2$.  For ease of notation we identify both  $B_{1, v}$ and $B_{2, v}$ with the unique quaternion division $F_v$-algebra  $B_v$,  and in turn identify both $O_{1, v}$ and $O_{2, v}$ with  the unique maximal order $\calO_v$ in $B_v$. Then we have $\Lambda_v=\End_{(\calO_v, \calO_v)}(L_v)$.
    Let $K_v$ be the unique unramified quadratic extension of $F_v$ with ring of integers $O_{K_v}$. As in Section~\ref{sec:1}, we choose a uniformizer $\Pi_v$ of $B_v$ to present $\calO_v$ as $O_{K_v}+O_{K_v}\Pi_v$ with multiplication rules as in \eqref{eq:quat-multi-rul}. Let 
    \[
    \O_v=(\calO_v, \id),\quad \P_v=(\calO_v, \tau),\quad \L_{1, v}=(\calO_v^{\oplus 2}, \varphi_1),\quad \L_{2, v}=(\calO_v^{\oplus 2}, \varphi_2) 
    \]
    be the four indecomposable $(\calO_v, \calO_v)$-bilattices introduced in Section~\ref{sec:1}; see \eqref{eq:defn-four-indecomp-latt}.
    From Theorem~\ref{thm:conj-class}, $L_v$ is isomorphic to a direct sum of copies of $\O_v$, $\P_v$, $\L_{1, v}$ and $\L_{2, v}$, so it is enough to check the equality $\Nrd(\Lambda_v^\times)=O_{F_v}^\times$ when $L_v$ is equal to $\O_v$, $\P_v$, $\L_{1, v}$ and $\L_{2, v}$ respectively. If $L_v=\O_{v}$ or $\P_v$, then $\Lambda_v\simeq O_{F_v}$, whence the desired equality. 
    Now suppose that $L_v=\L_{1, v}$ or $\L_{2, v}$.  Note that the images of both $\varphi_1$ and $\varphi_2$ lie in $\Mat_2(O_{K_v})$ (see \eqref{eq:defn-varphi-1} and \eqref{eq:defn-varphi-2}), so we have an embedding of $O_{F_v}$-algebras
    \[       O_{K_v}\hookrightarrow \Lambda_v=\End_{(\calO_v, \calO_v)}(L_v): a\mapsto
             \left(\begin{bmatrix}\alpha \\ \beta \end{bmatrix}\mapsto 
             \begin{bmatrix} a\alpha \\ a\beta  \end{bmatrix}       \right).
    \]
    This induces an embedding $K_v \hookrightarrow \Lambda_v\otimes F_v=A_v\simeq\Mat_2(F_v)$ of $F_v$-algebras; see \eqref{eq:compute-C}. Therefore, $O_{F_v}^\times\supseteq\Nrd(\Lambda_v^\times)\supseteq\Nm_{K_v/F_v}(O_{K_v}^\times)=O_{F_v}^\times$ as desired since $K_v/F_v$ is an unramified extension. 
\end{proof}

\begin{remark}
    A global $(O_1, O_2)$-bilattice $L$ is called \emph{admissible} if $L_{v}$ is admissible
    for every finite place $v\in \Sigma$ (see Theorem~\ref{thm:Ribet}). If $n=2$, Ribet \cite[Theorem~2.4]{Ribet-bimod} classifies the isomorphism classes of admissible $(O_1, O_2)$-bilattices in terms of certain oriented Eichler orders in the quaternion algebra $B_3$. This is generalized by Molina \cite[Theorem~3.7]{MR2931385} to the case where $O_1$ is an Eichler order and $O_2$ is a maximal order.
\end{remark}

\subsection{Superspecial $\calO$-abelian varieties} 
\label{subsec:ssp-O-1-ab-var}
We give a geometry application of global quaternion bilattices studied in the last subsection. Let $F$ be a number field, and $O_F$ be its ring of integers  (with $\bfS_\infty$ being the set of non-archimedean places of $F$). Let $B$ be a quaternion $F$-algebra, and $\calO$ be a maximal $O_F$-order in $B$. 

Fix a supersingular elliptic curve $E$ over an algebraically closed field $k$ of characteristic $p>0$. Its endomorphism algebra $D\coloneqq\End^{0}(E)=\End(E)\otimes\Q$ is the (unique) quaternion algebra over $\Q$ ramified exactly at $\{p, \infty\}$,  and its endomorphism ring $\calR\coloneqq\End(E)$ is a maximal order in $D$.
Henceforth, we always assume that $B$ is not isomorphic to $D_F\coloneqq D\otimes_{\Q}F$.
\begin{defn}\label{defn:O-1-ab-var}
    An $\calO$-\ab over $k$ is an ordered pair $(X, \iota)$, where $X$ is an \ab over $k$, and $\iota:\calO\hookrightarrow\End(X)$ is an embedding of rings.
\end{defn}
An $\calO$-\ab $(X, \iota)$ over $k$ is said to be superspecial if $X$ itself is superspecial.
Clearly,  the dimension $n$ of a superspecial $\calO$-\ab $(X, \iota)$ over $k$ is at least two since $B$ does not embed into $D$ by our assumption. Then by a theorem of Deligne, Ogus and Shioda \cite[\S 1.6, p.13]{li-oort}, $X$ is isomorphic to $E^n$.  For each superspecial $\calO$-\ab $(X, \iota)$ over $k$, we attach to it its Ribet bilattice 
\begin{equation}\label{eq:defn-ab-var-assoc-bi-latt}
    L\coloneqq\Hom(E, X),
\end{equation}
 which is naturally an $(\calO, \calR)$-bilattice, or equivalently, a left $\calO\otimes_{\Z}\calR^{\opp}$-lattice.  More explicitly, if we fix an isomorphism $X\simeq E^n$ and identify $\End(X)$ with $\Mat_n(\calR)$, then $L$ is identified with the  free right $\calR$-module $\calR^{\oplus n}$ of column vectors, with $\calO$ acting from the left via the embedding $\iota: \calO\hookrightarrow \Mat_n(\calR)$. As usual, we denote the $(\calO, \calR)$-bilattice thus obtained by $(\calR^{\oplus n}, \iota)$.  On the other hand, for any arbitrary $(\calO, \calR)$-bilattice $L'$, its ambient space $L'\otimes_{\Z}\Q$  is a $(B, D)$-bimodule, or equivalently, a left $B\otimes_{\Q}D^{\opp}$-module. This also places a restriction on the $D$-ranks of   $(B, D)$-bimodules as follows.

\begin{lemma}\label{lem:B-D-bimod}
Let  $d\coloneqq[F:\Q]$ be the degree of $F$. 
The following statements are equivalent for a positive integer $n$:
\begin{enumerate}[label=(\roman*)]
   \item there exists a $(B, D)$-bimodule $V$ of $D$-rank $n$;
    \item $(2d)\mid n$;
    \item there exists an embedding $\iota: \calO\hookrightarrow \Mat_n(\calR)$.
\end{enumerate}
  Moreover, suppose that the above equivalence conditions hold for $n$. Then there is a unique $(B, D)$-bimodule $V$ with $\rank_D(V)=n$  up to isomorphism, and every $(\calO, \calR)$-bilattice $L$ with $\rank_D(L\otimes_\Z \Q)=n$ is isomorphic to $(\calR^{\oplus n}, \iota)$ for a suitable embedding $\iota: \calO\hookrightarrow \Mat_n(\calR)$. 
\end{lemma}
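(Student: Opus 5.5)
The equivalence (i)$\Leftrightarrow$(ii) is a dimension count over $\Q$, and (ii)$\Rightarrow$(iii) uses the local classification of bilattices together with Deligne--Ogus--Shioda. Throughout I use $B\not\simeq D_F$. A $(B,D)$-bimodule is the same as a left module over $B\otimes_\Q D^{\opp}\simeq B\otimes_F (F\otimes_\Q D^{\opp})=B\otimes_F D_F^{\opp}$. Here $F$ acts on $V$ through $B$, so $V$ is automatically a $(B,D_F)$-bimodule.

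Since $B\not\simeq D_F$, Lemma~\ref{lem:emb-exist-neces-cond} applies. It gives $B\otimes_F D_F^{\opp}\simeq \Mat_2(B_3)$ with $B_3$ division, and every such bimodule is free over $D_F$ of even rank $m$. Conversely, for each even $m$ there is a unique such bimodule. Because $\dim_\Q D_F=4d$ and $\dim_\Q D=4$, we get $\rank_D V = dm$.

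\textbf{(i)$\Leftrightarrow$(ii) and uniqueness of $V$.} The $D$-ranks that occur are exactly the multiples of $2d$. Uniqueness follows because the simple module $\begin{bmatrix}B_3\\B_3\end{bmatrix}$ is unique up to isomorphism, so $V$ is determined by $m=n/d$.

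\textbf{(iii)$\Rightarrow$(i).} Given $\iota:\calO\hookrightarrow\Mat_n(\calR)$, the space $(\calR^{\oplus n},\iota)\otimes\Q$ is a $(B,D)$-bimodule of $D$-rank $n$.

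\textbf{(ii)$\Rightarrow$(iii) and the last assertion.} Take $V$ of $D$-rank $n$ and choose any full $\Z$-lattice $M\subset V$. Then $L_0:=\calO M\calR$ is an $(\calO,\calR)$-bilattice spanning $V$, so bilattices exist. It remains to show that any bilattice $L$ with $\rank_D(L\otimes\Q)=n$ is free as a right $\calR$-module of rank $n$; then $L\simeq(\calR^{\oplus n},\iota)$ for some $\iota$, which gives the embedding.

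Since $\calR$ is a maximal order in the definite algebra $D$, the right $\calR$-module $L$ is a direct sum of right ideals. Freeness therefore has two parts:
\begin{itemize}
\item local freeness at every prime $\ell$, which is automatic because $\calR_\ell$ is either $\Mat_2(\Z_\ell)$ or the local maximal order in a division algebra, and both are hereditary with locally free lattices of given rank;
\item the triviality of a global class.
\end{itemize}
For the global class, I would use that for $n\ge 2$ a locally free right $\calR$-lattice of rank $n$ is determined by its Steinitz class in the reduced-norm class group $\Q_{>0}^\times\backslash\widehat\Q^\times/\Nrd(\widehat\calR^\times)$, by Eichler's theorem for $\Mat_n(D)$, $n\ge2$. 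This group is trivial since $\Nrd(\widehat\calR^\times)=\widehat\Z^\times$ and $\Q$ has class number one. The condition $n\ge2$ holds because $2d\mid n$.

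Geometrically this is the lattice form of Deligne--Ogus--Shioda, and one could alternatively cite it via $L\leftrightarrow X$ with $X\simeq E^n$. The main obstacle is exactly this freeness step, since it is where the non-Eichler definite algebra $D$ might seem to cause trouble. It is resolved because $n\ge 2$ restores the Eichler condition for $\Mat_n(D)$.
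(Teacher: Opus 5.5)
Your proposal is correct and follows essentially the same route as the paper. (i)$\Leftrightarrow$(ii) comes from Lemma~\ref{lem:emb-exist-neces-cond} applied to $B$ and $D_F$ with the rank count $\rank_D V = d\cdot\rank_{D_F}V$. Freeness of every $(\calO,\calR)$-bilattice as a right $\calR$-module comes from Eichler's theorem that $\Mat_n(\calR)$ has class number one for $n\ge 2$. You additionally make explicit two points the paper leaves implicit: the existence of the bilattice $\calO M\calR$, and the triviality of $\Q_{>0}^\times\backslash\widehat{\Q}^\times/\widehat{\mathbb Z}^\times$. Your opening sentence says the argument uses the local classification of bilattices and Deligne--Ogus--Shioda, but the argument you actually give (correctly) needs neither.
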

\begin{proof}

 The equivalence between (i) and (ii)  follows directly from Lemma~\ref{lem:emb-exist-neces-cond}. Indeed, we have $B\otimes_{\Q}D^{\opp}=B\otimes_F D_F^\opp$ with $D_F\coloneqq D\otimes_\Q F$ as before, so every $(B, D)$-bimodule is canonically a $(B, D_F)$-bimodule, and vice versa. Moreover,  since $B\otimes_F D_F^\opp$ is $F$-central simple, there is at most one left $B\otimes_{\Q}D^{\opp}$-module of given $D$-rank up to isomorphism. 

 Clearly, (iii) implies (i) since the ambient space of the $(\calO, \calR)$-bilattice $(\calR^{\oplus n}, \iota)$ is a $(B, D)$-bimodule of $D$-rank $n$.
 Conversely,  a $(B, D)$-bimodule $V$ of $D$-rank $n$ necessarily contains an $(\calO, \calR)$-bilattice $L$ of full rank. 
 Now  a theorem of Eichler \cite[Theorem~34.9]{reiner:mo} shows that the class number of $\Mat_n(\calR)$ is $1$ whenever $n\geq 2$. It follows that every right $\calR$-lattice $L$ with $\rank_{D}(L\otimes_{\Z}\Q)\geq 2$ is necessarily free as a right $\calR$-module.  If we identify $L$ with 
 $\calR^{\oplus n}$, then the left $\calO$-module structure on $L$ is given by an embedding $\iota: \calO\hookrightarrow \Mat_n(\calR)$. This proves the equivalence between (i) and (iii), and also verifies that 
 every $(\calO, \calR)$-bilattice is isomorphic to an $(\calO, \calR)$-bilattice of the form $(\calR^{\oplus n}, \iota)$.
\end{proof}

The following corollary of Lemma~\ref{lem:B-D-bimod} is a global counterpart of 
Lemma~\ref{lem:conj-emb-bimodule}.
\begin{cor}\label{cor:glo-emb-bilatt}
    For any positive integer $n$ divisible by $2d$, the assignment $\iota\mapsto (\calR^{\oplus n}, \iota)$ induces a bijection between the following two sets:
  \[\left\{\parbox{3.8cm}{$\GL_n(\calR)$-conjugacy classes of embeddings $\calO\hookrightarrow
\Mat_n(\calR)$}\right\}\quad \xlongleftrightarrow{1-1}\quad \left\{\parbox{3.7cm}{Isomorphism
classes of $(\calO, \calR)$-bilattices of $\calR$-rank $n$}\right\} \]
\end{cor}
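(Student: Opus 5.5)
The plan is to mirror the proof of Lemma~\ref{lem:conj-emb-bimodule}, using the last part of Lemma~\ref{lem:B-D-bimod} for freeness. First I check that the assignment $\iota\mapsto(\calR^{\oplus n},\iota)$ is well defined on conjugacy classes. Suppose $\iota'=g\iota g^{-1}$ with $g\in\GL_n(\calR)=\Aut_{\ModOp}(\calR^{\oplus n})$; here $\GL_n(\calR)$ acts on column vectors from the left. Left multiplication by $g$ is then a right $\calR$-linear automorphism of $\calR^{\oplus n}$. It satisfies $g(\iota(\alpha)x)=\iota'(\alpha)(gx)$, so it is an isomorphism of $(\calO,\calR)$-bilattices $(\calR^{\oplus n},\iota)\xrightarrow{\sim}(\calR^{\oplus n},\iota')$. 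Hence the map descends to conjugacy classes. Its target consists of $(\calO,\calR)$-bilattices of $\calR$-rank $n$: the rank is $n$ because the lattice is $\calR^{\oplus n}$, and the bilattice axioms hold because $\iota$ is a ring embedding and matrix multiplication commutes with the right $\calR$-action.

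For injectivity, let $h:(\calR^{\oplus n},\iota)\to(\calR^{\oplus n},\iota')$ be an isomorphism of bilattices. Since $h$ is right $\calR$-linear and bijective, it is given by left multiplication by some matrix $g\in\Mat_n(\calR)$. Its inverse is also right $\calR$-linear, so $g\in\GL_n(\calR)$. Compatibility with the left $\calO$-actions says $g\iota(\alpha)=\iota'(\alpha)g$ for all $\alpha\in\calO$, that is, $\iota'=g\iota g^{-1}$. So the two embeddings are conjugate.

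For surjectivity, let $L$ be any $(\calO,\calR)$-bilattice of $\calR$-rank $n$. Then $\rank_D(L\otimes_\Z\Q)=n$, and the final statement of Lemma~\ref{lem:B-D-bimod} gives $L\simeq(\calR^{\oplus n},\iota)$ for some embedding $\iota$. This applies because $n$ is divisible by $2d$, so the equivalent conditions of that lemma hold; indeed $n\ge 2$, so Eichler's theorem forces $L$ to be free over $\calR$. The only nonroutine input is this freeness, which Lemma~\ref{lem:B-D-bimod} already supplies. The rest is bookkeeping, so I expect no real obstacle beyond keeping track of the left-versus-right action conventions.
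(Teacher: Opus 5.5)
Your proof is correct and takes essentially the same route as the paper. The paper treats this corollary as immediate from the final statement of Lemma~\ref{lem:B-D-bimod}, where freeness over $\calR$ for $n\geq 2$ comes from Eichler's theorem, combined with the same routine matrix bookkeeping as in Lemma~\ref{lem:conj-emb-bimodule}, which is exactly what you carry out.
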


Lemma~\ref{lem:B-D-bimod} and Corollary~\ref{cor:glo-emb-bilatt} translate directly into the following result on superspecial $\calO$-abelian varieties over the algebraically closed field $k$.

\begin{lemma}\label{lem:emb-exist-neces-cond-2}
    (1)   Let $X$ be a superspecial abelian variety over $k$ of dimension $n$. Then there exists an embedding $\iota:\calO\hookrightarrow\End(X)$ if and only if $(2d)|n$.\\
     (2)  Fix a positive integer $n$ divisible by $2d$. 
    The assignment $(X, \iota)\mapsto L\coloneqq\Hom(E, X)$ induces a bijection from the set of isomorphism classes of superspecial $\calO$-\abs over $k$ of dimension $n$ to the set of isomorphism classes of $(\calO, \calR)$-bilattices of $\calR$-rank $n$.
\end{lemma}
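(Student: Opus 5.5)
The plan is to reduce both parts to Lemma~\ref{lem:B-D-bimod} and Corollary~\ref{cor:glo-emb-bilatt} through the Deligne--Ogus--Shioda theorem and the functor $X\mapsto \Hom(E,X)$.

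For (1), observe first that $n\geq 2$ is forced. Indeed, if $n=1$ then $X$ is a supersingular elliptic curve, $\End^0(X)\simeq D$, and an embedding $\calO\hookrightarrow\End(X)$ would give $B\hookrightarrow D$. That is impossible: if $B$ embeds into the quaternion $\Q$-algebra $D$, then $F\subseteq D$ and $[B:\Q]=4d\le 4$, so $d=1$ and $B\simeq D$, contradicting the standing assumption $B\not\simeq D_F$. So assume $n\geq 2$. Then $X\simeq E^n$ by Deligne--Ogus--Shioda, and $\End(X)\simeq\Mat_n(\calR)$. An embedding $\iota:\calO\hookrightarrow\End(X)$ is therefore the same as an embedding $\calO\hookrightarrow\Mat_n(\calR)$, and Lemma~\ref{lem:B-D-bimod}, (iii)$\Leftrightarrow$(ii), says this exists if and only if $(2d)\mid n$. (The case $n=1$ is automatically excluded on the right side as well, since $2d\geq 2$.)

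For (2), fix $n$ divisible by $2d$. Every superspecial $\calO$-abelian variety of dimension $n$ can be written as $(E^n,\iota)$ with $\iota:\calO\hookrightarrow\Mat_n(\calR)$, and two such pairs $(E^n,\iota)$, $(E^n,\iota')$ are isomorphic if and only if there is $g\in\Aut(E^n)=\GL_n(\calR)$ with $\iota'=g\iota g^{-1}$. Hence isomorphism classes of superspecial $\calO$-abelian varieties of dimension $n$ correspond bijectively to $\GL_n(\calR)$-conjugacy classes of embeddings $\calO\hookrightarrow\Mat_n(\calR)$. Composing with the bijection of Corollary~\ref{cor:glo-emb-bilatt} gives a bijection onto isomorphism classes of $(\calO,\calR)$-bilattices of $\calR$-rank $n$.

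It remains to check that this composite is the map $(X,\iota)\mapsto\Hom(E,X)$ of the statement. Under $X\simeq E^n$ we have $\Hom(E,E^n)=\calR^{\oplus n}$ as column vectors, with $\calR=\End(E)$ acting on the right by precomposition and $\calO$ acting on the left through $\iota$ by postcomposition. This is exactly $(\calR^{\oplus n},\iota)$. A different choice of isomorphism $X\simeq E^n$ changes $\iota$ by a $\GL_n(\calR)$-conjugation, so the class of the bilattice is well defined.

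No step is a real obstacle. The only points to watch are the dimension-one case in (1) and the compatibility of the left and right actions (postcomposition versus precomposition) in (2).
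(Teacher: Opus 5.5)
Your proposal is correct and is essentially the paper's argument. The paper says Lemma~\ref{lem:B-D-bimod} and Corollary~\ref{cor:glo-emb-bilatt} ``translate directly'' via the Deligne--Ogus--Shioda identification $X\simeq E^n$, $\End(X)\simeq\Mat_n(\calR)$, and it excludes $n=1$ beforehand by noting that $B$ does not embed into $D$; you have spelled out these translation steps, including the $n=1$ exclusion and the left/right action compatibility, explicitly.
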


In Section~\ref{subsec:glob-latt}, we have studied the classification of $(O_1, O_2)$-bilattices with both $O_i$ being maximal quaternion $O_F$-orders.  The current setting is slightly different since $\calO$ and $\calR$ are quaternion orders over different Dedekind domains (namely, $O_F$ and $\Z$ respectively).  This can be easily remedied by regarding every $(\calO, \calR)$-bilattice as an $(\calO, \calR_F)$-bilattice with $\calR_F\coloneqq \calR\otimes_\Z O_F$.  However, it may well happen that $\calR_F$ ceases to be a \emph{maximal} order after the base change. For example, if $v$ is an unramified place of $F$ above $p$ with uniformizer $\pi_v\in O_{F_v}$ and \emph{even} residue degree, then $\calR\otimes_\Z O_{F_v}$ is 
an Eichler order of level $\pi_v O_{F_v}$ in the split quaternion $F_v$-algebra $D\otimes_\Q F_v\simeq \Mat_2(F_v)$ by \cite[\S2.4]{li-xue-yu:unit-gp}.  Therefore, to proceed further, we need to make additional assumptions on $F$ to ensure that $\calR_F$ is maximal. 

\begin{lemma}\label{lem:max-order-under-base-change}
    The quaternion $O_F$-order $\calR_F= \calR\otimes_{\Z}O_F$ is maximal if and only if every place $v$ of $F$ above $p$ is unramified and has odd  residue degree. In particular, if $p$ splits completely in $F$, then $\calR_F$ is maximal.
\end{lemma}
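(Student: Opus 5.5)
Maximality of an order is a local property, so $\calR_F$ is maximal if and only if $\calR\otimes_\Z O_{F_v}$ is a maximal $O_{F_v}$-order in $D\otimes_\Q F_v$ for every finite place $v$ of $F$. I will therefore check the condition place by place, using the decomposition $\calR\otimes_\Z\Z_\ell\otimes_{\Z_\ell} O_F\otimes\Z_\ell=\bigoplus_{v\mid \ell}\calR_\ell\otimes_{\Z_\ell}O_{F_v}$.

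\emph{Places $v\nmid p$.} Here $\calR_\ell\simeq\Mat_2(\Z_\ell)$, so $\calR_\ell\otimes_{\Z_\ell}O_{F_v}\simeq\Mat_2(O_{F_v})$, which is always maximal. Hence only the places above $p$ matter.

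\emph{Places $v\mid p$.} I will compare reduced discriminants. Write $e=e(v/p)$ and $f=f(v/p)$. The order $\calR_p$ has reduced discriminant $p\Z_p$, so $\calR_p\otimes O_{F_v}$ has reduced discriminant $pO_{F_v}=\grp_v^{e}$. Since $D_p\otimes_{\Q_p}F_v$ splits exactly when $[F_v:\Q_p]=ef$ is even, a maximal order in it has reduced discriminant $\grp_v$ if $ef$ is odd, and $O_{F_v}$ if $ef$ is even. An order is maximal if and only if its reduced discriminant equals that of a maximal order.
\begin{itemize}
\item If $ef$ is odd, maximality holds if and only if $e=1$, that is, $v$ is unramified of odd residue degree.
\item If $ef$ is even, maximality would require $\grp_v^e=O_{F_v}$, which is impossible since $e\ge1$.
\end{itemize}
So $\calR\otimes O_{F_v}$ is maximal exactly when $e=1$ and $f$ is odd.

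The main point to get right is the discriminant criterion: an order is maximal if and only if its reduced discriminant equals that of a maximal order. This holds because the discriminant of a suborder equals the discriminant of the larger order times the square of the index. I also need that the base change of the reduced discriminant is the extended ideal, which follows since the reduced trace form commutes with base change. Alternatively, for $e>1$ one can exhibit non-maximality directly: in the ramified case, $\Pi\otimes 1$ satisfies $(\Pi\otimes1)^2=p=u\pi_v^{e}$, so $\pi_v^{-\lfloor e/2\rfloor}(\Pi\otimes 1)$ is integral but not contained in the order when $e\ge2$.

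The "in particular" clause is immediate: if $p$ splits completely in $F$, then $e=f=1$ at every $v\mid p$.
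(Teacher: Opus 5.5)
Your proof is correct and follows essentially the same route as the paper. Both arguments rest on the criterion that an order is maximal if and only if its reduced discriminant equals that of the algebra: the paper compares $\disc(\calR_F)=pO_F$ globally with the product of the primes ramified in $D_F$, while you make the same comparison place by place, as $\grp_v^{e}$ against $\grp_v$ or $O_{F_v}$.

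One small caveat concerns your optional direct argument for $e>1$. When $ef$ is even, the algebra $D_p\otimes_{\Q_p}F_v$ is split, so an integral element lying outside the order does not by itself show non-maximality. You would also need that $\pi_v^{-\lfloor e/2\rfloor}\Pi$ normalizes the order, so that adjoining it gives a strictly larger order. This does not affect the proof, since your discriminant argument already covers that case.
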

\begin{proof}
    By \cite[Theorem~15.5.5]{voight-quat-book},
    the order $\calR_F$ is maximal if and only if its reduced discriminant $\disc(\calR_F)$ coincides with the reduced discriminant $\disc(D_F)$ of the quaternion $F$-algebra $D_F=D\otimes_\Q F$, which is equal to the product of all prime ideals of $O_F$ that are ramified in $D_F$. On the other hand, we have $\disc(\calR_F)=\disc(\calR)O_F=pO_F$. It follows that $\calR$ is maximal if and only if both of the following conditions hold for every finite place $v$ of $F$ above $p$:
    \begin{enumerate}[label=(\roman*)]
        \item $v$ is unramified; and 
        \item $D\otimes_\Q F_v$ is division.
    \end{enumerate}
 In light of condition (i), condition (ii) is equivalent to the residue degree of $v$ being odd.    
\end{proof}

For the rest of this section, we assume that  $\calR_F$ is maximal.  The set  of finite places of $F$ ramified in both $B$ and $D_F$ is precisely the set $\Sigma$  of places of $F$ above $p$ ramified in $B$.  For each  $v\in \Sigma$, we fix an isomorphism $ \calR\otimes_\Z O_{F_v}\simeq \calO_v$ so that the structural invariant $\ulm^{(v)}(L)$ at $v$ of every $(\calO, \calR_F)$-bilattice $L$ is defined; see Remark~\ref{rem:distinct-indentify}. Given a superspecial $\calO$-abelian $k$-variety $(X, \iota)$, we define its structural invariant 
$\ulm^{(v)}(X, \iota)$ at $v$ to be that of the corresponding    
$(\calO, \calR_F)$-bilattice $L\coloneqq\Hom(E, X)$, that is, 
\begin{equation}\label{eq:defn-str-inv-of-ab-var}
    \ulm^{(v)}(X, \iota)\coloneqq\ulm^{(v)}(L)=\ulm(L_{v}), \qquad \forall v\in \Sigma. 
\end{equation}
If $\ulm^{(v)}(X, \iota)=(r_1^{(v)}, r_2^{(v)}, t_1^{(v)}, t_2^{(v)})\in\Z_{\geq 0}^4$ and $\dim X=n$, then by \eqref{eq:rank-comp} we have 
\begin{equation}\label{eq:rank-comp-1} 
r_1^{(v)}+r_2^{(v)}+2t_1^{(v)}+2t_2^{(v)}=n/d.  
\end{equation}

\begin{eg}\label{eg:F}
Keep  the  notation and assumption as above.  Let $\Omega$ be the set of real places of $F$ split in $B$, and $h^{\Omega}(F)$ be the order of the ray class group $F_{>_{\Omega} 0}^\times\backslash \widehat{F}^\times /\widehat{O}_F^\times$.
    Let $n$ be a positive integer divisible by $2d$.
    For each $v\in \Sigma$, fix  a quadruple 
    \[  (r_1^{(v)}, r_2^{(v)}, t_1^{(v)}, t_2^{(v)})\in\Z_{\geq 0}^4
    \]
    satisfying \eqref{eq:rank-comp-1}. If $n>2d$, then from Lemma~\ref{lem:bilatt-genus} and Proposition~\ref{prop:isom-cls-latt}, there are exactly $h^{\Omega}(F)$  isomorphism classes of $(\calO, \calR)$-bilattices $L$ of $\calR$-rank $n$ (resp.~superspecial $\calO$-abelian $k$-varieties $(X, \iota)$ of dimension $n$) with $\ulm^{(v)}(L)= (r_1^{(v)}, r_2^{(v)}, t_1^{(v)}, t_2^{(v)})$ (resp.~$\ulm^{(v)}(X, \iota)= (r_1^{(v)}, r_2^{(v)}, t_1^{(v)}, t_2^{(v)})$) for all $v\in \Sigma$. In particular, suppose that $F=\Q$ and $p$ is ramified in $B$. Then for each fixed even integer $n\geq 4$ and a quadruple 
$(r_1, r_2, t_1, t_2)\in\Z_{\geq 0}^4$  with $r_1+r_2+2t_1+2t_2=n$,  there is exactly one isomorphism class of superspecial $\calO$-abelian $k$-varieties $(X, \iota)$ with $\ulm^{(p)}(X, \iota)=(r_1, r_2, t_1, t_2)$.
\end{eg}

\section{Self-dual hermitian local quaternion bilattices}\label{sec:self-dual-local-latt}
Let $R$ be a noetherian integral domain with fraction field $F$. Let $B_1$ be a semisimple $F$-algebra and $O_1$ be a maximal $R$-order in $B_1$. Let $B_2$ be a quaternion $F$-algebra and $O_2$ be a maximal $R$-order in $B_2$.  For each $b\in B_2$, we write $\bar{b}$ for the canonical involution of $b$.  Let $*$ be an $F$-linear involution on $B_1$ that stabilizes $O_1$.

By definition, a hermitian form (or pairing) on a finite free right $B_2$-module $V$ is a non-degenerate $F$-bilinear form $\langle~,~\rangle: V\times V\to B_2$ such that
\begin{equation}\label{eq:herm-form-defn}
        \langle x, y\rangle=\overline{\langle y, x\rangle}, \quad \text{and}\qquad
    \langle x\alpha, y\beta\rangle=\overline \alpha\langle x, y\rangle \beta,\quad\forall x,y\in V,\ \forall \alpha,\beta\in B_2.
\end{equation}

\begin{defn}\label{defn:herm-latt}
 (i)   A \emph{hermitian $(B_1, *, B_2)$-bimodule} is an ordered pair $(V, \langle~,~\rangle)$, where $V$ is a $(B_1, B_2)$-bimodule finite free over $B_2$, and $\langle~,~\rangle: V\times V\to B_2$ is a hermitian form on the free right $B_2$-module $V$ such that
    \begin{equation}\label{eq:herm-bimod-eq}
        \langle \alpha x, y\rangle=\langle x, \alpha^* y\rangle,\quad\forall x, y\in V,\text{ }\forall \alpha\in B_1.
    \end{equation}
(ii)    Similarly, a \emph{hermitian $(O_1, *, O_2)$-bilattice} $(L, \langle~,~\rangle)$ is an $(O_1, O_2)$-bilattice $L$ equipped with an $R$-bilinear pairing $\langle~,~\rangle: L\times L\to B_2$ such that the $(B_1, B_2)$-bimodule $L\otimes_{R} F$ together with the canonical $F$-linear extension of $\langle~,~\rangle$  forms a hermitian $(B_1, *, B_2)$-bimodule.\\
(iii)    An \emph{isometry} between two hermitian $(B_1, *, B_2)$-bimodules (resp.~$(O_1, *, O_2)$-bilattices) is an isomorphism between the underlying $(B_1, B_2)$-bimodules (resp.~$(O_1, O_2)$-bilattices) that preserves the hermitian pairings. 
\end{defn}

Let $(V, \langle~,~\rangle)$ be a hermitian $(B_1, *, B_2)$-bimodule. The $B_2$-dual space  $V^{\sharp}\coloneqq\Hom_{B_2}(V, B_2)$ has a canonical $(B_2, B_1)$-bimodule structure induced by the left $B_1$-module structure on $V$ and the left $B_2$-module structure on $B_2$. Using the involutions on $B_1$ and $B_2$, we endow $V^{\sharp}$ with a $(B_1, B_2)$-bimodule structure as follows.
For each $f\in V^{\sharp}$ and $\alpha\in B_1$, $\beta\in B_2$, we define $\alpha f, f\beta\in V^{\sharp}$ respectively by
\begin{equation}\label{eq:L-prime-bim-str}
    (\alpha f)(x)=f(\alpha^* x),\quad \text{and} \quad(f\beta)(x)=\overline{\beta}f(x),\qquad \forall x\in V.
\end{equation}
If $L$ is an $(O_1, O_2)$-bilattice in $V$, then $L^{\sharp}\coloneqq\Hom_{O_2}(L, O_2)$ is an $(O_1,O_2)$-bilattice in $V^{\sharp}$. 
Since $\langle~,~\rangle$ is nondegenerate,  the following map induced by the pairing $\langle~,~\rangle$ defines an isomorphism of $(B_1, B_2)$-bimodules:
\begin{equation}\label{dual-map-rat}
    V\xrightarrow{\simeq} V^{\sharp}=\Hom_{B_2}(V, B_2),\qquad x\mapsto (y\mapsto \langle x, y\rangle).
\end{equation}
Given an $(O_1, O_2)$-bilattice $L$ in $(V, \langle~,~\rangle)$, we define an $(O_1, O_2)$-bilattice $L^{\vee}$ in $V$ by
\begin{equation}\label{eq:defn-of-dual-bilatt}
    L^{\vee}\coloneqq\{x\in V\mid \langle x, L\rangle\subseteq O_2\}.
\end{equation}
Then the isomorphism \eqref{dual-map-rat} restricts to an isomorphism of $(O_1, O_2)$-bilattices as follows: 
\begin{equation}\label{dual-map}
    L^{\vee}\xrightarrow{\simeq}  L^{\sharp}=\Hom_{O_2}(L, O_2),\qquad  x\mapsto (y\mapsto \langle x, y\rangle).  
\end{equation}
 For this reason, we call $L^{\vee}$ the \emph{dual bilattice} of $L$ in $(V, \langle~,~\rangle)$. If $L=L^{\vee}$, we say that $(L, \langle~,~\rangle)$ is \emph{self-dual}, or that the hermitian pairing $\langle~,~\rangle$ is \emph{perfect} on $L$. 
In more down-to-earth language, a hermitian $(O_1, *, O_2)$-bilattice
$(L, \langle~,~\rangle)$ with $L$ free of $O_2$-rank $n$ is self-dual if and only if the Gram matrix of $\langle~,~\rangle$ under a (hence every) right $O_2$-basis of $L$ lies in $\GL_n(O_2)$. Finally, we remark that a self-dual hermitian $(O_1, *, O_2)$-bilattice $(L, \langle~,~\rangle)$ is necessarily $O_2$-valued, that is, $\langle x, y \rangle\in O_2$ for all $x, y\in L$.
 
Now suppose for the moment that $F$ is a global field with ring of integers $R=O_F$ and $B_1$ and $B_2$ are two quaternion $F$-algebras as in Section~\ref{subsec:glob-latt}. Let $(V,\langle~,~\rangle)$ be a hermitian $(B_1, *, B_2)$-bimodule with $*$ an orthogonal involution on $B_1$ \cite[Definition~2.5]{book-of-involution}. As usual,  to classify the isomorphism classes of self-dual $(O_1, O_2)$-bilattices in $(V, \langle~,~\rangle)$, we  first classify their genera. In other words, we classify self-dual hermitian $(O_{1, v}, *, O_{2, v})$-bilattices at every finite place $v$ of $F$.  This would be the main task of the current section. 

For the rest of this section,  let $F$ be a nonarchimedean local field with ring of integers $R=O_F$ and residue field $\F_{q}$. For uniformity, we assume  that  $\fchar(F)\neq 2$ throughout, although this assumption is not needed in Section~\ref{subsec:self-dual-latt-odd}.  Keep the assumption that $*$ is an orthogonal involution on the quaternion $F$-algebra $B_1$ stabilizing $O_1$. Then by \cite[Proposition~2.21]{book-of-involution} there exists $\gamma\in B_1^\times$ such that
\begin{equation}\label{eq:invol-star}
    \gamma+\ol\gamma=0,\quad \text{and} \quad \alpha^*=\gamma\ol{\alpha}\gamma^{-1},\quad\forall\alpha\in B_1.
\end{equation}
Clearly,  $\gamma$ is only determined by $*$ up to multiplication by an element of $F^\times$. Nevertheless, if $B_1$ is division,  the parity of the discrete valuation $\ord_{B_1}(\gamma)$ is uniquely determined by $*$. 
From Remark~\ref{rem:quater-bimod-rank}, every $(O_1, O_2)$-bilattice is  free as a left $O_1$-module and also as a right $O_2$-module.  We separate the local  classification of self-dual hermitian $(O_1, *, O_2)$-bilattices into the following cases: 
\begin{enumerate}[(i)]
    \item both $B_1$ and $B_2$ are division and $\ord_{B_1}(\gamma)$ is odd;
    \item both $B_1$ and $B_2$ are division and $\ord_{B_1}(\gamma)$ is even;
    \item $B_1$ is split, $B_2$ is division;
    \item $B_1$ is division, $B_2$ is split;
    \item both $B_1$ and $B_2$ are split, and the $O_2$-rank of the underlying $(O_1, O_2)$-bilattice  is even.
\end{enumerate}
One reason for the parity assumption on the $O_2$-rank in case (v) stems from global considerations as in Lemma~\ref{lem:emb-exist-neces-cond-2}. It turns out that this condition is also necessary for the non-degeneracy of the hermitian form, in essentially the same way that the dimension of a (non-degenerate) symplectic space is necessarily even; see Section~\ref{subsec:self-dual-latt-split} below. 
We treat cases (i), (ii) and (iii) in Section~\ref{subsec:self-dual-latt-odd}, \ref{subsec:self-dual-latt-even} and \ref{subsec:self-dual-latt-B-1-split} respectively, and cases (iv) and (v) are treated in Section~\ref{subsec:self-dual-latt-split}.

As remarked above, a self-dual hermitian $(O_1, *, O_2)$-bilattice is necessarily $O_2$-valued, so henceforth we  focus exclusively on  $O_2$-valued hermitian $(O_1, *, O_2)$-bilattices.

\subsection{Self-dual hermitian $(O_B, *, O_B)$-bilattices with $\ord_B(\gamma)$ odd}
\label{subsec:self-dual-latt-odd}
In cases (i) and (ii), both $B_1$ and $B_2$ are quaternion division algebras over the nonarchimedean local field $F$, and $O_1$ and $O_2$ are maximal $O_F$-orders in $B_1$ and $B_2$ respectively. For ease of notation, we identify $B_1$ with $B_2$ via a fixed $F$-isomorphism and put $B\coloneqq B_1=B_2$; the maximal orders are identified accordingly, and we put $O_B\coloneqq O_1=O_2$. 
 Let $K$ be the unique unramified quadratic extension of $F$ with ring of integers $O_K$. As in Section~\ref{sec:1}, we choose a suitable uniformizer $\Pi$ of $B$ to present $O_B$ as $O_K\oplus O_K\Pi$ with multiplication rules \eqref{eq:quat-multi-rul}, namely, 
\begin{equation}\label{eq:quat-multi-rul-1}
\Pi^2=\pi, \qquad \Pi a= a^\sigma \Pi, \quad \forall a\in O_K, 
\end{equation}
where $\pi$ is a fixed uniformizer of $F$, and $\sigma$ is the unique non-trivial automorphism of $K/F$.

We treat case (i) in the current section and leave case (ii) to Section~\ref{subsec:self-dual-latt-even}, so assume that $\ord_B(\gamma)$ is odd. In view of $\Pi^2=\pi$ and \eqref{eq:invol-star}, we may assume that $\ord_B(\gamma)=1$, that is, $\gamma=u\Pi$ for some $u\in O_B^\times$. 
We define another involution $'$ on $B$ by sending each $\alpha\in B$ to $\alpha'\coloneqq\Pi\overline{\alpha}\Pi^{-1}$. 
The two involutions $*$ and   $'$ are related by the equation  $\alpha^{*}=u\alpha'u^{-1}$ for all $\alpha\in B$. The condition $\gamma+\ol{\gamma}=0$ implies that  $u'=\Pi\ol{u}\Pi^{-1}=u$, and hence $u^*=uu'u^{-1}=u$. Given a hermitian $(O_B, *, O_B)$-bilattice $(L, \langle~,~\rangle)$, we define a pairing $\langle~,~\rangle_u: L\times L\to O_B$ by the formula
\begin{equation}\label{eq:defn-associ-pairing}
    \langle x, y \rangle_u\coloneqq \langle x, uy \rangle,\quad \forall x,y\in L.
\end{equation}
Then $\langle~,~\rangle_u$ is a hermitian form on the right $O_B$-lattice $L$, and  \eqref{eq:herm-bimod-eq} is equivalent to
\begin{equation}\label{eq:herm-bimod-eq-1}
    \langle \alpha x, y\rangle_u=\langle x, \alpha' y \rangle_u,\quad\forall x, y\in L,\,\forall\alpha\in O_B.
\end{equation}
In other words, $(L, \langle~,~\rangle_u)$ forms a hermitian $(O_B,\,', O_B)$-bilattice. Since $u$ is a unit of $O_B$, $(L, \langle~,~\rangle)$ is self-dual if and only if $(L, \langle~,~\rangle_u)$ is so. Therefore, the classification of (self-dual) hermitian $(O_B, *, O_B)$-bilattices is reduced to that of (self-dual) hermitian $(O_B,\,', O_B)$-bilattices.

We first write down some examples of self-dual hermitian $(O_B,\,*, O_B)$-bilattices of rank two that will serve as building blocks of more general  self-dual bilattices. Recall from \eqref{eq:defn-four-indecomp-latt} the four basic $(O_B, O_B)$-bilattices $\O\coloneqq(O_B, \id)$, $\P\coloneqq(O_B, \tau)$, $\L_1\coloneqq (O_B^{\oplus 2}, \varphi_1)$ and  $\L_2\coloneqq (O_B^{\oplus 2}, \varphi_2)$. Let $\L_3$ be the $(O_B, O_B)$-bilattice
\begin{equation}\label{eq:defn-L-3}
    \L_3\coloneqq \bbO\oplus \bbP=(O_B^{\oplus 2}, \varphi_3),\qquad \text{with}\quad \varphi_3\coloneqq\id\oplus\tau.
\end{equation}
Since $K/F$ is an unramified quadratic extension, there exists  $c\in O_K^\times$ such that $c+c^\sigma=0$.
  For each $1\leq i\leq 3$, let $(\M_i, \langle~,~\rangle_i)$ be the hermitian $(O_B, *, O_B)$-bilattice such that the underlying $(O_B, O_B)$-bilattice $\M_i$ is equal to $\L_i\coloneqq(O_B^{\oplus 2}, \varphi_i)$ and the Gram matrix $C_i$ of the associated pairing $\langle~,~\rangle_{i, u}$ defined by \eqref{eq:defn-associ-pairing} with respect to the standard right $O_B$-basis of $\M_i$ is given as follows
\begin{equation}
    C_1\coloneqq\begin{bmatrix}
        0 & c \\ -c & 0
    \end{bmatrix}, \qquad C_2\coloneqq\begin{bmatrix}
                      0 & c \\ -c & 0
                    \end{bmatrix}, \qquad C_3\coloneqq\begin{bmatrix}
                                            0 & 1 \\ 1 & 0
                                         \end{bmatrix}.
\end{equation}
 It is straightforward to check that each $\langle~,~\rangle_{i, u}$ satisfies \eqref{eq:herm-bimod-eq-1}.
Since these matrices all lie in $\GL_2(O_B)$, every $(\M_i, \langle~,~\rangle_i)$ is self-dual.

\begin{remark}\label{rem:indep-c}
    We point out that for each $i\in\{1, 2\}$, the isometry class of $(\M_i, \langle~,~\rangle_{i, u})$ (hence that of $(\M_i, \langle~,~\rangle_i)$) is independent of the choice of the element $c\in O_K^\times$ satisfying $c+c^\sigma=0$. Let $d$ be another element of $O_K^\times$ with 
    $d+d^\sigma=0$, and $(~,~)_{i, u}$ be the hermitian pairing on $\M_i=\L_i$ whose  Gram matrix  is given by $\begin{bmatrix}
        0 & d \\ -d & 0
    \end{bmatrix}$.  Since $K/F$ is  unramified, the norm map $\Nm_{K/F}: O_K^\times\to O_F^\times$ is surjective. In particular,  there exists $b\in O_K^\times$ such that $b^\sigma b=c/d \in O_F^\times$. 
 It is straightforward to check that the map
    \[
        \M_i\to \M_i,\quad \begin{bmatrix}
            x \\
            y
        \end{bmatrix}\mapsto \begin{bmatrix}
                                   bx  \\
                                   by
                                \end{bmatrix}
    \]
     defines an isometry from  $(\M_i, \langle~,~\rangle_{i, u})$ to $(\M_i, (~,~)_{i, u})$ for each $i\in \{1, 2\}$.
\end{remark}

Henceforth for each $i\in \{1, 2, 3\}$,  we  keep the  pairing $\langle~,~\rangle_{i, u}$ on $\M_i$ fixed, and simply write $\M_i$ for the hermitian $(O_B,\,', O_B)$-bilattice $(\M_i, \langle~,~\rangle_{i, u})$. Let $\boxplus$ denote the orthogonal direct sum of hermitian bilattices.
The main theorem of Section~\ref{subsec:self-dual-latt-odd} is stated as follows. 

\begin{thm}\label{thm:self-dual-latt-odd}
Keep the assumptions that $B$ is a quaternion division $F$-algebra and $\ord_{B}(\gamma)$ is odd.
Let $L$ be an $(O_B, O_B)$-bilattice with structural invariant $(r_1, r_2, t_1, t_2)\in\Z_{\geq 0}^4$, that is, 
   \begin{equation}\label{eq:under-latt}
    L\simeq \O^{\oplus r_1}\oplus \P^{\oplus r_2}\oplus \L_1^{\oplus t_1}\oplus \L_2^{\oplus t_2}.
    \end{equation}
Then there exists a perfect hermitian $(O_B, *, O_B)$-pairing $\langle~,~\rangle$ on $L$ if and only if $r_1=r_2$. In this case, $(L, \langle~,~\rangle_u)$ is isometric to the orthogonal direct sum
    \begin{equation}\label{eq:orth-split-odd}
        \M_1^{\boxplus t_1}\boxplus \M_2^{\boxplus t_2}\boxplus \M_3^{\boxplus r}\text{ with } r\coloneqq r_1=r_2. 
    \end{equation}
In particular, up to isometry, there exists a unique self-dual hermitian $(O_B, *, O_B)$-bilattice with given structural invariant $(r, r, t_1, t_2)$.
\end{thm}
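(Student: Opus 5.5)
We work throughout with the associated pairing $\langle~,~\rangle_u$, so that $(L,\langle~,~\rangle_u)$ is a self-dual hermitian $(O_B,\,',O_B)$-bilattice with $\alpha'=\Pi\bar\alpha\Pi^{-1}$.

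\emph{Necessity.} Self-duality means $L\simeq L^{\vee}\simeq L^{\sharp}$ as $(O_B,O_B)$-bilattices, where $L^\sharp$ carries the twisted bimodule structure \eqref{eq:L-prime-bim-str} built from the involution $'$. We compute $\sharp$ on the four indecomposables. For $\O=(O_B,\id)$, a functional is $f(x)=\beta x$. The left action $(\alpha f)(x)=f(\alpha' x)$ corresponds to $\beta\mapsto \beta\alpha'=\beta\Pi\bar\alpha\Pi^{-1}$. The right action is $f\cdot b\leftrightarrow \bar b\beta$. After applying the canonical involution to identify the underlying right module with $O_B$, the left action becomes conjugation by $\Pi^{\pm1}$, i.e.\ $\tau$. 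Hence $\O^\sharp\simeq\P$, and symmetrically $\P^\sharp\simeq\O$. For $\L_1$ and $\L_2$ we compute the truncations $L^\sharp/L^\sharp\Pi$ and apply Corollary~\ref{cor:one-to-one-corre}. The expectation is that each $\L_i$ is fixed by $\sharp$; this is consistent with the existence of $\M_1,\M_2$, whose perfect Gram matrices $C_1,C_2$ already exhibit $\L_i\simeq\L_i^\sharp$. Given this, Krull--Schmidt (Theorem~\ref{thm:conj-class}) turns $L\simeq L^\sharp$ into $(r_1,r_2,t_1,t_2)=(r_2,r_1,t_1,t_2)$, that is, $r_1=r_2$.

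\emph{Sufficiency.} If $r_1=r_2=r$, the orthogonal sum \eqref{eq:orth-split-odd} is self-dual, since the Gram matrices $C_i$ are invertible. Its underlying bilattice is $\L_1^{t_1}\oplus\L_2^{t_2}\oplus(\O\oplus\P)^r\simeq L$.

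\emph{Uniqueness.} This is the main step; the plan is to split off orthogonal summands inductively.

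First suppose $t_1>0$. By Lemma~\ref{lem:pro-lift-direct-summ} and its proof, we obtain a direct summand $N\simeq\L_1$ of $L$. We then want a $(O_B,O_B)$-homomorphism $g\colon N\to L$ for which the restricted pairing $\langle N,g(N)\rangle_u$ has unit Gram matrix, so that $g(N)$ is nondegenerate unimodular and $L=g(N)\boxplus g(N)^\perp$. The approach is to reduce modulo $\Pi$. The pairing induces a nondegenerate form on $\bar L=L/L\Pi$ compatible with the $W^0\oplus W^1$ decomposition and with $\varepsilon$. Projectivity of $\L_1$ (Lemma~\ref{lem:pro-latt}) then lets us lift a suitable $\varepsilon$-stable pair $(w^0,\varepsilon w^0)$, together with a partner in $\bar L$ pairing with it nondegenerately, to an embedded copy of $\L_1$ whose Gram matrix is invertible modulo $\Pi$, hence invertible.

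Once we have such a unimodular copy $U\simeq\L_1$, we must show that it is isometric to $\M_1$. Since $\L_1$ has rank $2$, this reduces to classifying $\varphi_1$-compatible Gram matrices in $\GL_2(O_B)$ up to $\Aut(\L_1)$. Condition \eqref{eq:herm-bimod-eq-1} should force the shape $\begin{bmatrix}0&c\\-c&0\end{bmatrix}$ modulo automorphisms, and Remark~\ref{rem:indep-c} removes the dependence on $c$. The same argument handles $\L_2$ and $\M_2$.

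When $t_1=t_2=0$ the lattice is admissible, with $\Pi L=L\Pi$. We pick $x\in L$ generating an $\O$-summand. Its self-pairing $\langle x,x\rangle_u$ lies in $\Pi O_B$, because $\O$ is not self-dual ($\O^\sharp\simeq\P$). Nondegeneracy modulo $\Pi$ therefore yields $y$ in a $\P$-summand with $\langle x,y\rangle_u\in O_B^\times$. A Gram--Schmidt adjustment then produces the hyperbolic matrix $C_3$, splitting off $\M_3$.

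The main obstacle is this splitting step: finding a unimodular sub-bilattice isomorphic to $\L_i$ (or to $\O\oplus\P$) and normalizing its Gram matrix, which requires careful bookkeeping of how $'$ interacts with $\varphi_i$ modulo $\Pi$.
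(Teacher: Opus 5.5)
Your necessity and sufficiency arguments are sound. Computing $\O^\sharp\simeq\P$ directly via the canonical involution is a tidy shortcut around the paper's truncation computation in Lemma~\ref{lem:dual-latt-odd}, and so is deducing $\L_i\simeq\L_i^\vee\simeq\L_i^\sharp$ from the mere existence of the perfect forms $C_1,C_2$.

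\textbf{The gap is in uniqueness.} It sits exactly at the step you call the main obstacle, and it is a missing idea rather than bookkeeping.

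\emph{What the step actually requires.} A copy of $\L_1$ in $L$ is generated as a bimodule by a single element $e\in L_{\id}$, with $f=\Pi e$. Its Gram matrix is $\begin{bmatrix}0&d\\-d&0\end{bmatrix}$ with $d=\langle e,\Pi e\rangle_u$. So there is no independent ``partner'' to choose. A unimodular cross pairing $\langle N,g(N)\rangle_u$ is also irrelevant, because Lemma~\ref{lem:orth-compl-direct-summ} needs the form restricted to the summand itself to be perfect. What you must prove is that some $w\in\overline{L}^0$ has $H(w,\varepsilon w)\neq0$. This does not follow from nondegeneracy of $H$, since $\overline{L}^0$ and $\overline{L}^1$ are totally isotropic.

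\emph{The obvious candidates can fail.} Take $\L_1^{\oplus2}$ with canonical bases $\{e_i,f_i\}$ and the form determined by $\langle e_i,e_j\rangle_u=0$, $\langle e_i,f_i\rangle_u=0$, $\langle e_1,f_2\rangle_u=1$, $\langle e_2,f_1\rangle_u=-1$. This form is perfect, yet $e_1$, $e_2$ and $e_1+e_2$ all generate totally isotropic copies of $\L_1$. This is exactly why the paper's Step~1 proceeds in two stages. It first uses Lemma~\ref{lem:isotropic-compt-odd} to find a unit $\langle e_1,f_s\rangle_u$. It then rescales by $b$ so that $d-d^\sigma\in O_K^\times$ before passing to $e_1+e_s$.

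\emph{How to close the gap within your mod-$\Pi$ framework.*} On $\overline{L}^0$ put $S(x,y)\coloneqq H(x,\varepsilon y)$.
\begin{enumerate}
\item Using $\varepsilon'=-\varepsilon$ one gets $S(y,x)=-S(x,y)^\sigma$, so $S$ is a $\sigma$-skew-hermitian form.
\item $S$ is nonzero: $H$ pairs $\overline{L}^0$ perfectly with $\overline{L}^1$, and $\varepsilon\overline{L}^0\neq0$ when $t_1>0$.
\item If $S(x,x)=0$ for all $x$, polarizing gives $S(x,y)\in\F_q$ for all $x,y$. This contradicts $S(xa,y)=a^\sigma S(x,y)$ unless $S=0$.
\item Hence some $w$ has $S(w,w)\neq0$. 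Lifting $w$ to $e\in L_{\id}$ gives $eO_B+\Pi eO_B\simeq\L_1$ with $d\in O_K^\times$.
\item This copy splits off, and it is isometric to $\M_1$ by Corollary~\ref{cor:mat-under-can-base-L-1-3} and Remark~\ref{rem:indep-c}. The case of $\L_2$ is symmetric, using $\overline{L}^1$.
\end{enumerate}
With this step supplied, your route would even avoid Lemma~\ref{lem:isotropic-compt-odd}.

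\textbf{A smaller point in your $\O\oplus\P$ step.} To push a unit value $\langle x,y\rangle_u$ into a $\P$-summand, you need $\langle x,\xi\rangle_u\in\Pi O_B$ for every generator $\xi$ of every $\O$-summand. Knowing only $\langle x,x\rangle_u\in\Pi O_B$ is not enough when $r\geq2$. The needed inclusion follows from Lemma~\ref{lem:pairing-property-odd}(1).
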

The proof of this theorem will occupy the remainder of Section~\ref{subsec:self-dual-latt-odd}. We begin with some simple observations. By restriction of scalars, every $(O_B, O_B)$-bilattice $L$ is an $(O_K, O_K)$-bilattice. Since $K/F$ is unramified, we have an isomorphism $O_K\otimes_{O_F}O_K\simeq O_K\oplus O_K$ mapping each $a\otimes b\in O_K\otimes_{O_F}O_K$ to $(ab, a^\sigma b)\in O_K\oplus O_K$. This induces a decomposition $L=L_{\id}\oplus L_\sigma$ of $L$ into a direct sum of two $(O_K, O_K)$-sub-bilattices $L_{\id}$ and $ L_\sigma$, where 
\begin{equation}\label{eq:L-id-L-sigma}
     L_{\id}\coloneqq\{x\in L\mid ax=xa, \forall a\in O_K\},\quad
        L_{\sigma}\coloneqq\{x\in L\mid ax=xa^\sigma, \forall a\in O_K\}.
\end{equation}
\begin{lemma}\label{lem:pairing-property-odd}
    Let $(L, \langle~,~\rangle)$ be a hermitian $(O_B, *, O_B)$-bilattice, and $L=L_{\id}\oplus L_\sigma$ be the decomposition as above.  
    Then the associated pairing $\langle~,~\rangle_u$  has the following properties.
    \begin{enumerate}[(1)]
        \item If either $x, y\in L_{\id}$ or $x, y\in L_{\sigma}$, then $\langle x, y\rangle_u\in O_K\Pi$; in particular, $\langle x, x \rangle_u=0$. Moreover, if further $\Pi x=x\Pi$ and $\Pi y=y\Pi$, then $\langle x, y \rangle_u\in O_F\Pi$.
        \item If $x\in L_{\id}$ and $y\in L_{\sigma}$, then $\langle x, y \rangle_u\in O_K$. Moreover, if further $\Pi x=x\Pi$ and $\Pi y=y\Pi$, then $\langle x, y \rangle_u\in O_F$.
        \item If $x\in L_{\id}$, $y\in L_{\sigma}$ and $\Pi x=y$ (or $\Pi y=x$), then $\langle x, y \rangle_u$ is an element of $ O_K$ satisfying $\langle x, y \rangle_u^\sigma+\langle x, y \rangle_u=0$.
    \end{enumerate}
\end{lemma}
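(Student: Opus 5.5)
The plan is to compute directly from the defining properties of $\langle~,~\rangle_u$: hermitian symmetry $\langle x,y\rangle_u=\overline{\langle y,x\rangle_u}$, sesquilinearity in the right $O_B$-variable, and the compatibility \eqref{eq:herm-bimod-eq-1} $\langle \alpha x,y\rangle_u=\langle x,\alpha' y\rangle_u$, where $\alpha'=\Pi\bar\alpha\Pi^{-1}$. First I would record what $'$ does on $O_K$ and on $\Pi$. For $a\in O_K$ we have $\bar a=a^\sigma$, hence $a'=\Pi a^\sigma\Pi^{-1}=a$. For $\Pi$ itself, $\bar\Pi=-\Pi$ (trace zero, since $\Pi^2=\pi\in F$), so $\Pi'=-\Pi$. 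Throughout I write $z=\langle x,y\rangle_u=z_0+z_1\Pi$ with $z_0,z_1\in O_K$.

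\emph{Parts (1) and (2).} For $a\in O_K$, I compare two expressions for $\langle ax,y\rangle_u$. On the one hand it equals $\langle x,a'y\rangle_u=\langle x,ay\rangle_u$. On the other hand, for $x\in L_{\id}$ we have $ax=xa$, so $\langle ax,y\rangle_u=\bar a z=a^\sigma z$; for $x\in L_\sigma$ we have $ax=xa^\sigma$, so $\langle ax,y\rangle_u=az$. Similarly $\langle x,ay\rangle_u$ equals $za$ if $y\in L_{\id}$ and $za^\sigma$ if $y\in L_\sigma$. When $x,y$ lie in the same summand, this gives $a^\sigma z=za$ for all $a\in O_K$. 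Using $z_1\Pi a=z_1a^\sigma\Pi$, this forces $z_0(a^\sigma-a)=0$, so $z_0=0$ and $z\in O_K\Pi$. When $x,y$ lie in different summands, we get $az=za$, hence $z_1(a-a^\sigma)=0$, so $z_1=0$ and $z\in O_K$.

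For the vanishing $\langle x,x\rangle_u=0$, I use that $\langle x,x\rangle_u$ is fixed by the canonical involution, so it lies in $F$; being also in $O_K\Pi$, it must be $0$. For the refinement when $\Pi x=x\Pi$ and $\Pi y=y\Pi$, I compute $\langle \Pi x,y\rangle_u$ in two ways: it equals $\bar\Pi z=-\Pi z$, and it also equals $\langle x,-\Pi y\rangle_u=-z\Pi$. So $\Pi z=z\Pi$. For $z=z_1\Pi$ this gives $z_1^\sigma=z_1$, i.e.\ $z\in O_F\Pi$; for $z=z_0\in O_K$ it gives $z_0\in O_F$.

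\emph{Part (3).} Let $x\in L_{\id}$, $y\in L_\sigma$ with $\Pi x=y$, and set $w=\langle x,y\rangle_u\in O_K$ (by part (2)). Then
\[
w=\langle x,\Pi x\rangle_u=\langle \Pi' x,x\rangle_u=-\langle \Pi x,x\rangle_u=-\langle y,x\rangle_u=-\overline{w}=-w^\sigma,
\]
which is exactly $w+w^\sigma=0$. If instead $\Pi y=x$, the same computation with the roles of $x$ and $y$ swapped gives $\langle y,x\rangle_u=-\overline{\langle y,x\rangle_u}$, and conjugating yields the claim for $w$.

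The only delicate point is keeping the signs and the order of conjugation straight, in particular $\Pi'=-\Pi$ and $a'=a$ for $a\in O_K$. The integrality of all values is automatic once the pairing is assumed $O_B$-valued, as stated in the text preceding the lemma.
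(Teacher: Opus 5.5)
Your proof is correct and follows essentially the same route as the paper: compare $\langle ax,y\rangle_u$ with $\langle x,ay\rangle_u$ using $a'=a$ on $O_K$; use $\Pi'=-\Pi$ to show that the value commutes with $\Pi$; and for part (3) use the chain $\langle x,\Pi x\rangle_u=-\langle \Pi x,x\rangle_u=-\langle y,x\rangle_u$. Your explicit decomposition $z=z_0+z_1\Pi$ only spells out the step the paper leaves implicit when it concludes $\langle x,y\rangle_u\in O_K\Pi$ (resp.\ $\in O_K$) from the commutation relation.
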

\begin{proof}
(1) We only consider the case $x, y\in L_{\id}$, as the case $x, y\in L_{\sigma}$ can be treated in exactly the same way.
Since $a'=\Pi \bar{a}\Pi^{-1}=a$ for every $a\in O_K$, 
 we have $\langle ax, y\rangle_u=\langle x, ay\rangle_u$. On the other hand, the assumption $x, y\in L_{\id}$ implies that
    \[
        \langle ax, y\rangle_u=\langle xa, y\rangle_u=a^\sigma\langle x, y\rangle_u,\quad
        \langle x, ay\rangle_u=\langle x, ya\rangle_u=\langle x, y\rangle_u a, \qquad \forall a\in O_K. 
   \]
Hence $a^\sigma\langle x, y\rangle_u=\langle x, y\rangle_u a$ for every $a\in O_K$, which implies that $\langle x, y\rangle_u\in O_K\Pi$. 
In particular, $\langle x, x\rangle_u\in O_F\cap O_K\Pi=\{0\}$. Lastly, since $\Pi'=-\Pi$, we have $\langle \Pi x, y\rangle_u=-\langle x, \Pi y\rangle_u$. 
Now suppose additionally that $\Pi x=x\Pi$ and $\Pi y=y\Pi$. Then $\langle x, y\rangle_u$ commutes with $\Pi$ as shown by the calculation below: 
\[
    \langle \Pi x, y\rangle_u=\langle x \Pi , y\rangle_u=-\Pi \langle x, y \rangle_u,\quad
    \langle x, \Pi y\rangle_u=\langle x, y\Pi \rangle_u=\langle x, y \rangle_u\Pi.
\]
We  conclude that $\langle x, y \rangle_u\in O_F\Pi$ under this additional assumption. 

(2) Suppose that $x\in L_{\id}$ and $y\in L_{\sigma}$. Similarly to the preceding case, we have
\[
    a^\sigma\langle x, y\rangle_u= \langle ax, y\rangle_u= \langle x, ay\rangle_u=\langle x, y\rangle_u a^\sigma,\qquad \forall a\in O_K, 
\]
which shows that $\langle x, y\rangle_u\in O_K$. If further $\Pi x=x\Pi$ and $\Pi y=y\Pi$, then $\langle x, y\rangle_u$ commutes with $\Pi$ as in (1), and hence $\langle x, y\rangle_u\in O_F$. 

(3) Suppose that $\Pi x=y$. Note that $\langle x, y\rangle_u\in O_K$ by (2), so $\langle y, x\rangle_u=\langle x, y\rangle_u^\sigma$. Then
\[
    \langle x, y\rangle_u=\langle x, \Pi x\rangle_u=-\langle \Pi x, x\rangle_u=-\langle y, x\rangle_u=-\langle x, y\rangle_u^\sigma
\]
as desired. The case $\Pi y=x$ can be treated in a similar way.
\end{proof}
Immediate consequences of these observations are the following useful facts.
\begin{cor}\label{cor:mat-under-can-base-L-1-3}
    Let $(L, \langle~,~\rangle)$ be a hermitian $(O_B, *, O_B)$-bilattice of $O_B$-rank $2$.
    \begin{enumerate}
        \item If $L\simeq \L_1$, that is, $L$ has a right $O_B$-basis $\{e, f\}$ satisfying
        \begin{equation}\label{eq:can-basis-L-1}
    ae=ea,\quad af=fa^\sigma, \quad\forall a\in O_K,\quad\text{and}\quad \Pi e=f,\quad \Pi f=\pi e,
    \end{equation}
    then the Gram matrix of $\langle~,~\rangle_u$ with respect to $\{e, f\}$ is $\begin{bmatrix}
        0 & d \\ -d & 0
    \end{bmatrix}$ for some $d\in O_K$ with $d+d^\sigma=0$.
        \item If $L\simeq \L_2$, that is, $L$ has a right $O_B$-basis $\{l, m\}$ satisfying
        \begin{equation}\label{eq:can-basis-L-2}
    al=la^\sigma,\quad am=ma, \quad\forall a\in O_K,\quad\text{and}\quad \Pi l=m,\quad \Pi m=\pi l,
    \end{equation}
    then the Gram matrix of $\langle~,~\rangle_u$ with respect to $\{l, m\}$ is $\begin{bmatrix}
        0 & d \\ -d & 0
    \end{bmatrix}$ for some $d\in O_K$ with $d+d^\sigma=0$. 
        \item If $L\simeq \L_3$, that is, $L$ has a right $O_B$-basis $\{\xi, \eta\}$ satisfying
           \begin{equation}\label{eq:can-basis-L-3}
        a\xi=\xi a,\quad a\eta=\eta a^\sigma, \quad\forall a\in O_K,\quad\text{and}\quad \Pi \xi=\xi \Pi,\quad \Pi \eta= \eta\Pi,
    \end{equation}
    then the Gram matrix of $\langle~,~\rangle_u$ with respect to $\{\xi, \eta\}$ is $\begin{bmatrix}
        0 & d \\ d & 0
    \end{bmatrix}$ for some $d\in O_F$.
    \end{enumerate}
    In the above cases,  $(L, \langle~,~\rangle)$ is self-dual if and only if $d$ is a unit, that is $d\in O_K^\times$ for cases (1) and (2), and $d\in O_F^\times$ for case (3). 
\end{cor}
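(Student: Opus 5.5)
The plan is to compute the Gram matrix entry by entry from Lemma~\ref{lem:pairing-property-odd}, using the hermitian symmetry $\langle y,x\rangle_u=\overline{\langle x,y\rangle_u}$. We treat case (1) in detail; the other two cases are parallel.

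For case (1), write $g_{ij}$ for the Gram entries with respect to $\{e,f\}$. By \eqref{eq:can-basis-L-1} we have $e\in L_{\id}$ and $f\in L_\sigma$.
\begin{enumerate}[(a)]
\item Lemma~\ref{lem:pairing-property-odd}(1) gives $\langle e,e\rangle_u=\langle f,f\rangle_u=0$.
\item Since $\Pi e=f$, Lemma~\ref{lem:pairing-property-odd}(3) shows that $d\coloneqq\langle e,f\rangle_u$ lies in $O_K$ and satisfies $d+d^\sigma=0$.
\item Then $\langle f,e\rangle_u=\bar d=d^\sigma=-d$, because the canonical involution restricts to $\sigma$ on $O_K$.
\end{enumerate}
Case (2) is identical after swapping the roles: $l\in L_\sigma$, $m\in L_{\id}$ and $\Pi l=m$. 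Lemma~\ref{lem:pairing-property-odd}(3) applies with the ``$\Pi y=x$'' variant. Concretely, we work with the pair $(m,l)$, where $\Pi l=m$; this gives $\langle m,l\rangle_u\in O_K$ with trace zero. Its conjugate $\langle l,m\rangle_u$ is then $-\langle m,l\rangle_u$, which also has trace zero. This yields the asserted skew shape.

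For case (3), we have $\xi\in L_{\id}$ and $\eta\in L_\sigma$, and both commute with $\Pi$.
\begin{enumerate}[(a)]
\item Lemma~\ref{lem:pairing-property-odd}(1) gives vanishing diagonal entries.
\item The ``moreover'' clause of Lemma~\ref{lem:pairing-property-odd}(2) gives $d\coloneqq\langle\xi,\eta\rangle_u\in O_F$.
\item Hence $\langle\eta,\xi\rangle_u=\bar d=d$, so the Gram matrix is symmetric.
\end{enumerate}

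For the final assertion, $\langle~,~\rangle_u$ differs from $\langle~,~\rangle$ by the unit $u$, so $(L,\langle~,~\rangle)$ is self-dual iff $(L,\langle~,~\rangle_u)$ is. As recalled after \eqref{dual-map}, self-duality is equivalent to the Gram matrix lying in $\GL_2(O_B)$. For an antidiagonal matrix with entries $\pm d$, this holds iff $d\in O_B^\times$. Since $d\in O_K$ (resp.~$O_F$), the condition becomes $d\in O_K^\times$ (resp.~$O_F^\times$), using $O_B^\times\cap K=O_K^\times$ and $O_B^\times\cap F=O_F^\times$.

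There is no real obstacle here. The only point needing care is the bookkeeping in case (2), namely which vector lies in $L_{\id}$ and which in $L_\sigma$, and which variant of Lemma~\ref{lem:pairing-property-odd}(3) applies. We handle this by reading off the $O_K$-actions directly from \eqref{eq:can-basis-L-2}.
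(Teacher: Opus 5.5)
Your proof is correct and takes the paper's route. The paper states this corollary as an immediate consequence of Lemma~\ref{lem:pairing-property-odd}, and your entry-by-entry computation is exactly the omitted verification, including the ``$\Pi y=x$'' variant for the basis $\{l,m\}$ and the reduction of self-duality to $d\in O_B^\times$. It mirrors how the paper proves the analogous Corollary~\ref{cor:mat-under-can-base-L-4-6}.
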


\begin{lemma}\label{lem:self-dual-rank-two-odd}
    Let $(\L_i, \langle~,~\rangle)$ be a self-dual hermitian $(O_B, *, O_B)$-bilattice with $\L_i=(O_B^{\oplus 2}, \varphi_i)$ for $1\leq i\leq 3$. Then $(\L_i, \langle~,~\rangle_u)$ is isometric to  $\M_i$ for each $i$.
\end{lemma}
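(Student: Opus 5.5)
By Corollary~\ref{cor:mat-under-can-base-L-1-3}, we already know the shape of the Gram matrix of $\langle~,~\rangle_u$ with respect to a canonical basis of $\L_i$. For $i\in\{1,2\}$ it is $\begin{bmatrix}0&d\\-d&0\end{bmatrix}$ with $d\in O_K$ and $d+d^\sigma=0$. For $i=3$ it is $\begin{bmatrix}0&d\\d&0\end{bmatrix}$ with $d\in O_F$. Self-duality forces $d$ to be a unit, so $d\in O_K^\times$ for $i=1,2$ and $d\in O_F^\times$ for $i=3$. The plan is to realize each of these Gram matrices as the fixed model $C_i$ by applying a bilattice automorphism of $\L_i$ that rescales the pairing.

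For $i\in\{1,2\}$, I would invoke Remark~\ref{rem:indep-c} directly. The elements $d$ and $c$ both lie in $O_K^\times$ with trace zero, so $c/d\in O_F^\times$ (indeed $(c/d)^\sigma=(-c)/(-d)=c/d$). Since $K/F$ is unramified, $\Nm_{K/F}$ is surjective on units, so there is $b\in O_K^\times$ with $b^\sigma b=c/d$. Then the map $\begin{bmatrix}x\\y\end{bmatrix}\mapsto\begin{bmatrix}bx\\by\end{bmatrix}$ is an automorphism of $\L_i$, because scalars of $O_K$ commute with $\varphi_i(O_B)\subseteq\Mat_2(O_K)$ up to the twist already accounted for in the remark (equivalently, left multiplication by $b$ on both coordinates is the $O_K$-action described in Proposition~\ref{prop:isom-cls-latt}). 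Following the computation in that remark, this map carries the form with Gram matrix of parameter $d$ to the one with parameter $c$. This gives an isometry $(\L_i,\langle~,~\rangle_u)\simeq\M_i$.

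For $i=3$ I will rescale a single basis vector. The bilattice $\L_3=\O\oplus\P$ has automorphism group containing $O_F^\times\times O_F^\times$ acting diagonally by $(\xi,\eta)\mapsto(\xi s,\eta t)$, since central scalars commute with everything. Under this map the off-diagonal entry $d$ becomes $s d t$. Taking $s=1$ and $t=d^{-1}\in O_F^\times$ yields the Gram matrix $C_3$.

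The one point that needs care is the first step: for $\L_1,\L_2$ I must verify that multiplication by $b\in O_K^\times$ really is an $(O_B,O_B)$-endomorphism, and that it transforms the pairing by $b^\sigma(\cdot)b$ (or $\bar b(\cdot) b$) so that $d\mapsto b^\sigma d b=\Nm(b)\,d$ up to the convention. I would check this directly against \eqref{eq:defn-varphi-1}. Here $\varphi_i(a)$ is diagonal in $O_K$, so it commutes with the scalar $b$. The element $\varphi_i(\Pi)$ has entries in $O_F$, so it also commutes with the scalar $b$. Hence the scalar map is indeed left $O_B$-linear. It is right $O_B$-linear automatically, since it acts by left multiplication on the coordinates.
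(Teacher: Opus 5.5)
Your proposal is correct and follows essentially the same route as the paper. For $i=1,2$ you combine Corollary~\ref{cor:mat-under-can-base-L-1-3} with Remark~\ref{rem:indep-c}, and you also check explicitly that coordinatewise multiplication by $b\in O_K^\times$ is a bilattice automorphism rescaling $d$ by $\Nm_{K/F}(b)$, which the paper calls straightforward. For $i=3$ you rescale one basis vector by $d^{-1}$ (you rescale $\eta$ where the paper rescales $\xi$, which makes no difference).
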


\begin{proof}
The cases $i=1, 2$ are proved by combining Corollary~\ref{cor:mat-under-can-base-L-1-3} (1) and (2) with  Remark~\ref{rem:indep-c}. To prove that $(\L_3, \langle~,~\rangle_u)$ is isometric to $\M_3$, observe that the  standard right $O_B$-basis $\{\xi\coloneqq (1, 0)^t, \eta\coloneqq (0, 1)^t\}$  of $\L_3$ satisfies \eqref{eq:can-basis-L-3}, so
    by Corollary~\ref{cor:mat-under-can-base-L-1-3} (3) the Gram matrix of $\langle~,~\rangle_u$ relative to $\{\xi, \eta\}$ is of the form $\begin{bmatrix}
        0 & d \\ d & 0
    \end{bmatrix}$, with $d\coloneqq\langle \xi, \eta\rangle_u$ lying in $O_F^\times$. 
   Now the right $O_B$-basis $\{\xi d^{-1}, \eta\}$ of $\L_3$ again satisfies \eqref{eq:can-basis-L-3}, under which the Gram matrix of $\langle~,~\rangle_u$ is $\begin{bmatrix}
        0 & 1 \\ 1 & 0
    \end{bmatrix}$. It follows that $(\L_3, \langle~,~\rangle_u) $ is isometric to $\M_3$.
\end{proof}

The following lemma together with Lemma~\ref{lem:self-dual-rank-two-odd} shows that $\{\M_1, \M_2, \M_3\}$ in fact forms a complete list of self-dual hermitian $(O_B,\,', O_B)$-bilattices of $O_B$-rank $2$ up to isometry.

\begin{lemma}\label{lem:dual-latt-odd}
    For the four indecomposable $(O_B, O_B)$-bilattices $\O=(O_B, \id)$, $\P=(O_B, \tau)$, $\L_1=(O_B^{\oplus 2},\varphi_1)$, and $\L_2=(O_B^{\oplus 2},\varphi_2)$, we have the following isomorphisms between these bilattices and their $O_B$-duals:
    \[
        \O^{\sharp}\simeq \P,\quad \P^{\sharp}\simeq \O, \quad \L_1^{\sharp}\simeq \L_1,\quad \L_2^{\sharp}\simeq \L_2.
    \]
\end{lemma}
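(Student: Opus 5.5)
We plan to turn the dual $L^\sharp$ of a bilattice $L=(O_B^{\oplus n},\varphi)$ back into the form $(O_B^{\oplus n},\psi)$ for an explicit embedding $\psi$. Then we identify the isomorphism class of $\psi$ using Remark~\ref{rem:str-inv-comp-conj} and the truncation criterion of Corollary~\ref{cor:one-to-one-corre}. Throughout, $*$ is the fixed involution with $\alpha^*=\gamma\bar\alpha\gamma^{-1}$, where $\gamma=u\Pi$ has odd valuation, and $L^\sharp$ carries the bimodule structure \eqref{eq:L-prime-bim-str}.

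\emph{Step 1: matrix description of the dual.} An $f\in L^\sharp=\Hom_{O_B}(O_B^{\oplus n},O_B)$ is given by a row vector $r=(f(e_1),\dots,f(e_n))$, with $f(x)=rx$. Write $\theta(A)\coloneqq\bar A^{t}$ for the conjugate-transpose involution on $\Mat_n(B)$, and attach to $f$ the column $v\coloneqq\theta(r)$. Then $(f\beta)(x)=\bar\beta rx$ corresponds to $v\beta$, so the right structure is the standard one. The left action $(\alpha f)(x)=r\varphi(\alpha^*)x$ corresponds to $v\mapsto\theta(\varphi(\alpha^*))v$. Hence $L^\sharp\simeq(O_B^{\oplus n},\psi)$ with $\psi(\alpha)=\theta(\varphi(\alpha^*))$. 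Since both $\theta$ and $\alpha\mapsto\alpha^*$ are anti-automorphisms, $\psi$ is a ring embedding. Next set $\chi(\alpha)\coloneqq\theta(\varphi(\bar\alpha))$, which is again an embedding $O_B\hookrightarrow\Mat_n(O_B)$. Then $\psi(\alpha)=\chi(\gamma^{-1}\alpha\gamma)$, so $\psi=\chi\circ\xint(\gamma^{-1})$. Because $\ord_B(\gamma)$ is odd, Remark~\ref{rem:str-inv-comp-conj} gives
\[\ulm(L^\sharp)=(r_2,r_1,t_2,t_1),\qquad\text{where }\ulm(O_B^{\oplus n},\chi)=(r_1,r_2,t_1,t_2).\]
It therefore suffices to show that $\chi$ is conjugate to $\id,\tau,\varphi_2,\varphi_1$ when $\varphi=\id,\tau,\varphi_1,\varphi_2$, respectively.

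\emph{Step 2: the rank-one cases.} For $\varphi=\id$ we get $\chi(\alpha)=\bar{\bar\alpha}=\alpha$, so $\chi=\id$ and $\O^\sharp\simeq\P$. For $\varphi=\tau$ we get $\chi(\alpha)=\overline{\Pi^{-1}\bar\alpha\Pi}=\bar\Pi\alpha\bar\Pi^{-1}=\Pi\alpha\Pi^{-1}$, using $\bar\Pi=-\Pi$. Since $\Pi\alpha\Pi^{-1}=\Pi^{-1}(\pi\alpha\pi^{-1})\Pi=\tau(\alpha)$, we have $\chi=\tau$ and $\P^\sharp\simeq\O$.

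\emph{Step 3: the rank-two cases.} Take $\varphi=\varphi_1$. For $a\in O_K$ we have $\bar a=a^\sigma$, so $\chi(a)=\theta(\mathrm{diag}(a^\sigma,a))=\mathrm{diag}(a,a^\sigma)$. Also $\chi(\Pi)=\theta(\varphi_1(-\Pi))=-\begin{bmatrix}0&1\\ \pi&0\end{bmatrix}$. Rather than search for an explicit conjugating matrix in $\GL_2(O_B)$, we compute the truncation $W=L/L\Pi$ as in Lemma~\ref{lem:linear-alg-descrip-trun-bimod}. Here $W^0$ is spanned by $\bar e_1$ and $W^1$ by $\bar e_2$. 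The map $\varepsilon$ sends $\bar e_1\mapsto -\pi\bar e_2=0$ and $\bar e_2\mapsto -\bar e_1\neq0$. So $\varepsilon^{(0)}=0$ and $\varepsilon^{(1)}$ is an isomorphism, i.e.\ $W\simeq W_{\varphi_2}$ by Example~\ref{eg:indecomp-trunc}. By Corollary~\ref{cor:one-to-one-corre}, $\chi\sim\varphi_2$, and after the swap from Step~1 we get $\L_1^\sharp\simeq\L_1$. The case $\varphi_2$ is symmetric: $\chi(a)=\mathrm{diag}(a^\sigma,a)$ and $\chi(\Pi)$ is the same matrix, so $\varepsilon^{(0)}\neq0$ and $\varepsilon^{(1)}=0$. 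Thus $\chi\sim\varphi_1$, and $\L_2^\sharp\simeq\L_2$.

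The main point needing care is Step~1: checking that the conjugate-transpose identification intertwines the twisted bimodule structure \eqref{eq:L-prime-bim-str} correctly, and tracking the conjugation by $\gamma$, whose odd valuation is exactly what produces the swap. The remaining computations are routine once this bookkeeping is right.
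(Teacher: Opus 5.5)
Your proof is correct, but it handles the involution $*$ differently from the paper.

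\textbf{The paper's route.} The paper transports the bimodule structure of $L^\sharp$ onto row vectors, $\alpha\cdot x\cdot\beta=\bar\beta\,x\,\varphi(\alpha^*)$, and computes the truncation $L^\sharp/L^\sharp\Pi$ directly in each case. For $\O^\sharp$ it reduces $a^*=u\Pi a^\sigma\Pi^{-1}u^{-1}$ modulo $\Pi$, so that $a\cdot x\equiv xa$ while $x\cdot a=a^\sigma x$. For $\L_1^\sharp$ it first replaces $\varphi_1$ by $\varphi_1(u)^{-1}\varphi_1\varphi_1(u)$, which turns $*$ into the involution $\alpha'=\Pi\bar\alpha\Pi^{-1}$, and then reads off $\varepsilon^{(0)},\varepsilon^{(1)}$ from $\varphi_1(\Pi')$. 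Both cases conclude via Corollary~\ref{cor:one-to-one-corre}.

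\textbf{Your route.} You use the conjugate transpose $\theta(A)=\bar A^{t}$ to identify $L^\sharp$ with an explicit embedding $\psi=\chi\circ\xint(\gamma^{-1})$. Here $\chi(\alpha)=\theta(\varphi(\bar\alpha))$ is the dual taken with respect to the canonical involution. Remark~\ref{rem:str-inv-comp-conj} then absorbs the entire effect of $\gamma$.

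\textbf{What each buys.} Your version separates the roles cleanly. The rank-one cases become literal identities ($\chi=\id$, $\chi=\tau$) with no truncation needed. The parity of $\ord_B(\gamma)$ enters only through the swap $(r_1,r_2,t_1,t_2)\mapsto(r_2,r_1,t_2,t_1)$. In particular, the same computation proves Lemma~\ref{lem:dual-latt-even} at no extra cost, since the factorization $\psi=\chi\circ\xint(\gamma^{-1})$ uses only $\bar\gamma=-\gamma$. When $\ord_B(\gamma)$ is even there is no swap, giving $\O^\sharp\simeq\O$, $\P^\sharp\simeq\P$, $\L_1^\sharp\simeq\L_2$ and $\L_2^\sharp\simeq\L_1$; the paper omits that proof as ``similar.'' The paper's route is more hands-on: it works entirely with the explicit truncated bimodules and does not need the Remark on conjugation by $\xint(\beta)$.
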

 
\begin{proof}
 We present only the proofs of $\O^{\sharp}\simeq \P$ and $\L_1^{\sharp}\simeq \L_1$, as the proof of the remaining two isomorphisms can be carried out similarly. By Corollary~\ref{cor:one-to-one-corre}, it is enough to work out  the structures of the truncated bimodules $\O^{\sharp}/\O^{\sharp}\Pi$ and $\L_1^{\sharp}/\L_1^{\sharp}\Pi$. For $\O^{\sharp}$, there is a canonical isomorphism  of $O_F$-modules: 
   \[
       O_B\xrightarrow{\simeq} \O^{\sharp}=\Hom_{\mathrm{Mod}-O_B}(\O, O_B),\quad x\mapsto (y\mapsto xy).
   \]
From \eqref{eq:L-prime-bim-str}, the  $(O_B, O_B)$-bimodule structure on $\O^{\sharp}$ transports to an $(O_B, O_B)$-bimodule structure on $O_B$  given as follows:
   \[
       \alpha\cdot x \cdot\beta= \ol{\beta} x \id(\alpha^*)=\ol{\beta} x \alpha^*,\quad
       \forall \alpha, \beta, x\in O_B.
   \]
   Hence $\O^{\sharp}/\O^{\sharp}\Pi\simeq O_B/ (O_B\cdot\Pi)=O_B/\ol{\Pi} O_B=\F_{q^2}$. For every $a\in O_K$ and $x\in O_B$, we have 
   \[
       a\cdot x=x a^*=x u\Pi a^\sigma \Pi^{-1} u^{-1}=xua u^{-1}\equiv xa \bmod \Pi,\quad
       x\cdot a= \ol{a}x=a^\sigma x.
   \]
   It follows that $\O^{\sharp}/\O^{\sharp}\Pi\simeq W_{\tau}\simeq \P/\P \Pi$, where $W_{\tau}$ is given in Example~\ref{eg:indecomp-trunc}, so $\O^{\sharp}\simeq \P$ by Corollary~\ref{cor:one-to-one-corre}.

   To compute the $O_B$-dual of $\L_1$, we consider the $(O_B, O_B)$-bilattice $\tilde{L}_1\coloneqq (O_B^{\oplus 2}, \tilde{\varphi}_1)$, where $\tilde{\varphi}_1=\varphi_1(u)^{-1}\varphi_1\varphi_1(u)$. Then $\tilde{L}_1\simeq \L_1$ since $\tilde{\varphi}_1$ is $\GL_2(O_B)$-conjugate to $\varphi_1$, and hence $\tilde{L}_1^{\sharp}\simeq \L_1^{\sharp}$. 
   For $\tilde{L}_1^{\sharp}$, there is a canonical isomorphism of $O_F$-modules:
   \[
       O_B\oplus O_B \xrightarrow{\simeq} \tilde{L}_1^{\sharp}=\Hom_{\mathrm{Mod}-O_B}(\tilde{L}_1, O_B),\quad (x, y)\mapsto
       \left(\begin{bmatrix}
           z \\ w
       \end{bmatrix}\mapsto xz+yw  \right).
   \]
    As in the previous case, we carry the $(O_B, O_B)$-bimodule structure of $\tilde{L}_1^{\sharp}$ to an $(O_B, O_B)$-bimodule structure on $O_B\oplus O_B$ given as follows:
   \[
       \alpha\cdot (x, y)\cdot \beta= (\ol{\beta}x, \ol{\beta}y)\tilde{\varphi_1}(\alpha^*)=(\ol{\beta}x, \ol{\beta}y)\varphi_1(\alpha'),\quad
       \forall \alpha, \beta, x, y\in O_B.
   \]
   Hence $\tilde{L}_1^{\sharp}/\tilde{L}_1^{\sharp}\Pi\simeq (O_B\oplus O_B)/((O_B\oplus O_B)\cdot\Pi)= (O_B\oplus O_B)/\ol{\Pi}(O_B\oplus O_B)=\F_{q^2}\oplus\F_{q^2}$. For any  $a\in O_K$ and $x, y\in O_B$, we have 
   \begin{align*}
       a\cdot (x, y)&= (x, y)\varphi_1(a')=(x, y)\varphi_1(a)
    =(x, y)\begin{bmatrix} a & \\ & a^\sigma \end{bmatrix},\\
       (x, y)\cdot a&=(a^\sigma x, a^\sigma y).
   \end{align*}
  If we  put $W\coloneqq (O_B\oplus O_B)/((O_B\oplus O_B)\cdot \Pi))=\F_{q^2}\oplus\F_{q^2}$, then $W^0=0\oplus\F_{q^2}$ and $W^1=\F_{q^2}\oplus 0$, where $W^i$ is defined in \eqref{eq:defn-W-0-W-1}. Since 
   \[
       \Pi\cdot (x, y)=(x, y) \varphi_1(\Pi')=(x, y)\begin{bmatrix} 0 & -\pi \\ -1 & 0
       \end{bmatrix},
   \]
the image $\varepsilon\coloneqq \Pi+\pi O_B$   of $\Pi$ in $O_B/\pi O_B$ defines an isomorphism of right $\F_{q^2}$-vector spaces from $W^0$ to $W^1$, and hence $W\simeq W_{\varphi_1}$, where $W_{\varphi_1}\coloneqq \L_1/\L_1\Pi$ is given in Example~\ref{eg:indecomp-trunc}. By Corollary~\ref{cor:one-to-one-corre}, we have $\L_1^{\sharp}\simeq \tilde{L}_1^{\sharp}\simeq \L_1$.
\end{proof}
Let $(L, \langle~,~\rangle)$ be a hermitian $(O_B, *, O_B)$-bilattice with structural invariant $(r_1, r_2, t_1, t_2)$.
From Lemma~\ref{lem:dual-latt-odd}, in order for $(L, \langle~,~\rangle)$ to be self-dual, we must have $r_1=r_2$. Conversely, we  show that this condition is also sufficient for the existence of such a self-dual hermitian bilattice. For this, we need the following lemma.
\begin{lemma}\label{lem:isotropic-compt-odd}
Suppose that $(L, \langle~,~\rangle)$ is a hermitian $(O_B, *, O_B)$-bilattice with structural invariant $(r, r, t_1, t_2)$ so that $L=\L_1^{\oplus t_1}\oplus \L_2^{\oplus t_2}\oplus \L_3^{\oplus r}$, where $\L_3$ is given by \eqref{eq:defn-L-3}. If $(L, \langle~,~\rangle)$ is self-dual, then $\L_1^{\oplus t_1}$, $\L_2^{\oplus t_2}$, and $\L_3^{\oplus r}$ are all self-dual with respect to the restrictions of the hermitian pairing $\langle~,~\rangle$.
\end{lemma}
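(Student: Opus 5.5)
Write $L=N_1\oplus N_2\oplus N_3$ with $N_1=\L_1^{\oplus t_1}$, $N_2=\L_2^{\oplus t_2}$, $N_3=\L_3^{\oplus r}$. I will show that the Gram matrix of $\langle~,~\rangle_u$, reduced modulo $\Pi$ (that is, viewed in $\Mat(\F_{q^2})$ after reducing the $O_B$-valued entries modulo $\Pi O_B$), is block diagonal. It is enough to do this for $\langle~,~\rangle_u$, since $u\in O_B^\times$ and $\langle~,~\rangle$ is self-dual exactly when $\langle~,~\rangle_u$ is. Because $L$ is self-dual, this Gram matrix lies in $\GL_n(O_B)$. An invertible matrix over $O_B$ whose off-diagonal blocks lie in $\Pi O_B$ has diagonal blocks invertible modulo $\Pi$. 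By Nakayama, $O_B$ is local, so those diagonal blocks lie in $\GL(O_B)$, and hence each $N_i$ is self-dual. The whole argument therefore comes down to showing that the cross pairings $\langle N_i,N_j\rangle_u$ for $i\neq j$ land in $\Pi O_B$.

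I will work with the canonical bases from Corollary~\ref{cor:mat-under-can-base-L-1-3}. For $N_1$, take bases $\{e_k,f_k\}$ with $e_k\in L_{\id}$, $f_k=\Pi e_k\in L_\sigma$, and $\Pi f_k=\pi e_k$. For $N_2$, take $\{l_k,m_k\}$ with $l_k\in L_\sigma$, $m_k=\Pi l_k\in L_{\id}$, and $\Pi m_k=\pi l_k$. For $N_3$, take $\{\xi_k,\eta_k\}$ with $\xi_k\in L_{\id}$, $\eta_k\in L_\sigma$, and both commuting with $\Pi$. Lemma~\ref{lem:pairing-property-odd} says that pairings within the same eigenspace lie in $O_K\Pi\subset\Pi O_B$. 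So modulo $\Pi$ only the pairings between $L_{\id}$ and $L_\sigma$ survive, and these lie in $O_K$. The key tool is $\langle\Pi x,y\rangle_u=-\langle x,\Pi y\rangle_u$, which holds because $\Pi'=-\Pi$.

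The cross terms are handled as follows.

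\textbf{$N_1$ with $N_2$.} We have $\langle e,l\rangle_u=\pi^{-1}\langle e,\Pi m\rangle_u=-\pi^{-1}\langle \Pi e,m\rangle_u=-\pi^{-1}\langle f,m\rangle_u$. Also $\langle f,m\rangle_u$ lies in $O_K$, since $f\in L_\sigma$ and $m\in L_{\id}$. This does not directly give $\Pi$-divisibility, so I compute instead $\langle f,m\rangle_u=\langle \Pi e,\Pi l\rangle_u=-\langle e,\Pi^2 l\rangle_u=-\pi\langle e,l\rangle_u$. Hence $\langle f,m\rangle_u\in\pi O_K\subset\Pi O_B$. The remaining cross pairings $\langle e,m\rangle_u$ and $\langle f,l\rangle_u$ are same-eigenspace pairings, so they already lie in $O_K\Pi$.

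\textbf{$N_1$ with $N_3$.} For $\langle e,\eta\rangle_u$ we use $\eta\Pi=\Pi\eta$ and $e=\pi^{-1}\Pi f$. This gives $\langle e,\eta\rangle_u\,\Pi=\langle e,\Pi\eta\rangle_u=-\langle f,\eta\rangle_u$. The right-hand side is a same-eigenspace pairing, so it lies in $O_K\Pi$. Therefore $\langle e,\eta\rangle_u\in O_K\Pi\,\Pi^{-1}$, and I expect this to need a refinement using $\langle f,\eta\rangle_u=\langle\Pi e,\eta\rangle_u$. The cleaner route is to use $\langle f,\xi\rangle_u=\langle\Pi e,\xi\rangle_u=-\langle e,\Pi\xi\rangle_u=-\langle e,\xi\rangle_u\Pi\in\Pi O_B$. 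For $\langle e,\eta\rangle_u$, I will write $\langle e,\eta\rangle_u\,\Pi=-\langle f,\eta\rangle_u$ and then show that $\langle f,\eta\rangle_u=\langle\Pi e,\eta\rangle_u=-\langle e,\eta\rangle_u\Pi$. This is consistent but circular. So in this case I expect to pass through the truncation instead: use that $\Pi$ acts as zero on $N_3/N_3\Pi$ while $\varepsilon^{(0)}$ is bijective on $W_{\varphi_1}$.

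\textbf{$N_2$ with $N_3$.} This case is symmetric to the previous one.

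\textbf{Main obstacle.} The hard part will be these $N_{1,2}$-versus-$N_3$ pairings between $L_{\id}$ and $L_\sigma$. If the direct computation stays circular, I will first try the duality formulation: the map $L\to L^\sharp$ induced by the pairing is an isomorphism. Composing with the projection $L^\sharp\to N_3^\sharp\simeq N_3$ (using Lemma~\ref{lem:dual-latt-odd}) and then reducing modulo $\Pi$ gives bimodule maps $W_{\varphi_i}\to W_{\id}\oplus W_{\tau}$. I will then argue via Proposition~\ref{prop:trun-bimod-2} that these maps kill the appropriate parts, which forces the off-diagonal blocks to vanish modulo $\Pi$.
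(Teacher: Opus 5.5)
Your reduction rests on a false claim. For a decomposition $L=N_1\oplus N_2\oplus N_3$ as in the lemma, the Gram matrix of $\langle~,~\rangle_u$ modulo $\Pi$ need \emph{not} be block diagonal. Such a decomposition is far from canonical, and the pairing between the $L_{\id}$-part of one summand and the $L_\sigma$-part of another can be a unit.

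Concretely, take $\M_1\boxplus\M_3$ with standard bases $\{e,f\}$ and $\{\xi,\eta\}$, so that $\langle e,f\rangle_u=c\in O_K^\times$ and all cross pairings vanish. Put $\eta'=\eta+f+e\Pi$. Then $a\eta'=\eta'a^\sigma$ for all $a\in O_K$ and $\Pi\eta'=\eta'\Pi$. Hence $L=(eO_B+fO_B)\oplus(\xi O_B+\eta'O_B)$ is a decomposition $\L_1\oplus\L_3$ of exactly the type in the lemma. Yet $\langle e,\eta'\rangle_u=\langle e,f\rangle_u+\langle e,e\rangle_u\Pi=c$ is a unit.

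The same happens elsewhere. In $\M_1\boxplus\M_2$, replacing $(l,m)$ by $(l+f,\,m+e\pi)$ gives $\langle e,l+f\rangle_u=c$. In $\M_2\boxplus\M_3$, replacing $\xi$ by $\xi+m+l\Pi$ makes $\langle l,\xi+m+l\Pi\rangle_u$ a unit.

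This is why your $N_1$--$N_3$ computation went in a circle. Your $N_1$--$N_2$ case, as written, only shows $\langle f,m\rangle_u\in\pi O_K$ and silently drops $\langle e,l\rangle_u$, which is the pairing you set out to bound. The duality fallback cannot rescue the claim either. The induced map $W_{\varphi_1}\to W_{\id}\oplus W_\tau$ must kill $\varepsilon W_{\varphi_1}$, but it can be nonzero on the top $W_{\varphi_1}/\varepsilon W_{\varphi_1}\simeq W_{\id}$, and the examples above show it is.

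What is true, and suffices, is a partial vanishing followed by elimination; this is the paper's argument.
\begin{enumerate}
\item Pass to the reduced form $H$ on $\overline L=L/L\Pi$. Besides the same-eigenspace vanishing you already have, use $\varepsilon'=-\varepsilon$ to get $H(\varepsilon x,y)=-H(x,\varepsilon y)=0$ whenever $\varepsilon y=0$.
\item Since $\varepsilon$ kills $\overline{T}_2^0$ (spanned by the $\bar m$'s) and all of $\overline{T}_3$, the subspace $\overline{T}_1^1=\varepsilon\overline{T}_1^0$ (the $\bar f$'s) is orthogonal to everything except $\overline{T}_1^0$. Symmetrically, $\overline{T}_2^0=\varepsilon\overline{T}_2^1$ pairs only with $\overline{T}_2^1$. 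Your remark that $\Pi$ is zero on $N_3/N_3\Pi$ while $\varepsilon^{(0)}$ is bijective on $W_{\varphi_1}$ is precisely this observation.
\item Thus in the reduced Gram matrix, the columns indexed by $\overline{T}_1^1$ and $\overline{T}_2^0$ each contain a single nonzero block. Nondegeneracy then forces $H(\overline{T}_1^0,\overline{T}_1^1)$ and $H(\overline{T}_2^1,\overline{T}_2^0)$ to be invertible.
\item Row and column operations using these two blocks clear the surviving cross blocks $H(\overline{T}_1^0,\overline{T}_2^1\oplus\overline{T}_3^1)$ and $H(\overline{T}_2^1,\overline{T}_3^0)$, together with their conjugate transposes. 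These operations do not alter the diagonal blocks, so each restriction $H|_{\overline{T}_i}$ is nondegenerate.
\end{enumerate}
Your final step, lifting invertibility from $\F_{q^2}$ to $O_B$, then finishes the proof.
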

\begin{proof}
    First note that the involution $'$ keeps $\pi O_B$ stable, so it induces an involution on $\F_{q^2}[\varepsilon]=O_B/\pi O_B$, which we also denote by $'$ by an abuse of notation. Since $\Pi'=-\Pi$ and $'$ acts as identity on $O_K$, we have $\varepsilon'=-\varepsilon$ and $a'=a$  for all $a\in \F_{q^2}$. 
     As in Section~\ref{sec:1}, we write $\overline{L}$ for the truncation  $L/ L\Pi$ of $L$, which is a bimodule over $(O_B/\pi O_B, O_B/O_B\Pi)=(\F_{q^2}[\varepsilon], \F_{q^2})$. 
     Clearly, $\langle~,~\rangle_u$ induces a (possibly degenerate) hermitian form $H: \overline{L}\times \overline{L}\to O_B/ O_B\Pi=\F_{q^2}$ on the  right $\F_{q^2}$-vector space $\overline{L}$, that is, 
    \[
        H(x, y)=H(y, x)^\sigma,\quad H(xa, yb)=a^\sigma H(x, y) b, \qquad \forall x,y\in\overline{L},\ \forall a, b\in\F_{q^2}. 
    \]
    Moreover, by \eqref{eq:herm-bimod-eq-1} it satisfies
    \begin{equation}\label{eq:herm-bimod-eq-H}
        H(\alpha x, y)=H(x, \alpha' y), \quad \forall x, y\in\overline{L},\,\forall \alpha\in\F_{q^2}[\varepsilon].
    \end{equation}
    Since $O_B$ is a (non-commutative) discrete valuation ring with uniformizer $\Pi$, 
 a matrix $C\in \Mat_n(O_B)$ is invertible if and only if its reduction $\overline{C}\in\Mat_n(\F_{q^2})$ modulo $\Pi$ is invertible. Therefore,  $(L, \langle~,~\rangle_u)$ is self-dual if and only if  $H$ is non-degenerate. 
    For each $i=1, 2, 3$, let $T_i\coloneqq \L_i^{\oplus t_i}$ with $t_3\coloneqq r$. The lemma would be proved once we show that  the restrictions of $H$ to $\overline{T}_1$, $\overline{T}_2$ and $\overline{T}_3$ are all non-degenerate.

According to  Lemma~\ref{lem:linear-alg-descrip-trun-bimod},  $\overline{L}$ decomposes as a direct sum $\overline{L}^0\oplus \overline{L}^1$ of  $(\F_{q^2}, \F_{q^2})$-sub-bimodules, where
    \[
        \overline{L}^0\coloneqq \{x\in \overline{L}\mid ax=xa,\,\forall a\in\F_{q^2}\} ,\quad \overline{L}^1\coloneqq \{x\in \overline{L}\mid  ax=xa^\sigma,\,\forall a\in\F_{q^2} \}.
    \]
    Similarly, $\overline{T}_i=\overline{T}_i^0\oplus\overline{T}_i^1$ for each $i$. We note that $\overline{T}_3^0=\overline{\O}^{\oplus r}$ and $\overline{T}_3^1=\overline{\P}^{\oplus r}$ since $T_3=\L_3^{\oplus r}=\O^{\oplus r}\oplus \P^{\oplus r}$ by definition.
    It follows that there are further decompositions
    \[
        \overline{L}^0= \overline{T}_1^0\oplus \overline{T}_2^0\oplus \overline{T}_3^{0},\qquad 
        \overline{L}^1=\overline{T}_1^1\oplus \overline{T}_2^1\oplus\overline{T}_3^{1}.
    \]  
  We show that the following statements hold in the hermitian space $(\overline{L}, H)$: 
     \begin{enumerate}[itemsep=4pt]
         \item Both $\overline{L}^0$ and $\overline{L}^1$ are totally isotropic, that is, $H(\overline{L}^0, \overline{L}^0)=0=H(\overline{L}^1, \overline{L}^1)$.
     
         \item $\overline{T}_1^1$ is orthogonal to $\overline{T}_2^0\oplus \overline{T}_3^0$, that is, $H(\overline{T}_1^1,\, \overline{T}_2^0\oplus \overline{T}_3^0)=0$.
         
         \item $\overline{T}_2^0$ is orthogonal to $\overline{T}_1^1\oplus \overline{T}_3^1$, that is, $H(\overline{T}_2^0,\, \overline{T}_1^1\oplus \overline{T}_3^1)=0$.
     \end{enumerate}
    First suppose  that $x, y\in \overline{L}^0$. Since $a'=a$ for every $a\in\F_{q^2}$, we have
     \[
         a^\sigma H(x, y)=H(xa, y)=H(ax, y)=H(x, ay)=H(x, ya)=H(x, y)a,
     \] 
     which implies that $H(x, y)=0$ since $a$ is arbitrary. Similarly, $\overline{L}^1$ is totally isotropic. Next, note that $\varepsilon y=0$ for any   $y\in \overline{T}_2^0\oplus \overline{T}_3^0$,  so the equality $\varepsilon'=-\varepsilon$ implies that 
     \[
         H(\varepsilon x, y)=-H(x, \varepsilon y)=0, \qquad \forall x\in\overline{T}_1^0.
     \]
     Then statement (2) follows from the fact that $\varepsilon x$ ranges through $\overline{T}_1^1$ when $x$ ranges through $\overline{T}_1^0$. Lastly, statement (3) can be proved in exactly the same  way as (2).

      For each $i=1, 2, 3$, let $\mathscr{B}_i$ be an $\F_{q^2}$-basis of $\overline{T}_i^0$, and $\mathscr{C}_i$ be an $\F_{q^2}$-basis of $\overline{T}_i^1$.
   Then
     \begin{equation}\label{eq:basis-total-space-odd}
          \mathscr{B}_1\sqcup \mathscr{C}_1\sqcup \mathscr{B}_2\sqcup \mathscr{C}_2\sqcup \mathscr{B}_3\sqcup \mathscr{C}_3
     \end{equation}
     forms an $\F_{q^2}$-basis of the space $
     \overline{L}=\bigoplus_{j=1}^3(\overline{T}_j^0 \oplus \overline{T}_j^1)$. 
  Statements (1), (2) and (3) show respectively that
 \[
     H(\mathscr{B}_i, \mathscr{B}_j)=H(\mathscr{C}_i, \mathscr{C}_j)=0,\quad
     H(\mathscr{C}_1, \mathscr{B}_2)=H(\mathscr{C}_1, \mathscr{B}_3)=0,\quad
     H(\mathscr{B}_2, \mathscr{C}_1)=H(\mathscr{B}_2, \mathscr{C}_3)=0.
 \]
 It follows that the Gram matrix of $H$ with respect to the $\F_{q^2}$-basis \eqref{eq:basis-total-space-odd} of $\overline{L}$ can be put into a block matrix form as
\[
    \begin{bNiceMatrix}[first-row,first-col]
           & t_1 & t_1 & t_2 & t_2 & r & r \\
          t_1 & 0 & C_{12} & 0 & C_{14} & 0 & C_{16} \\
          t_1 & C_{21} & 0 & 0 & 0 & 0 & 0 \\
          t_2 & 0 & 0 & 0 & C_{34} & 0 & 0 \\ 
          t_2 & C_{41} & 0 & C_{43} & 0 & C_{45} & 0 \\
          r & 0 & 0 & 0 & C_{54} & 0 & C_{56} \\
          r & C_{61} & 0 & 0 & 0 & C_{65} & 0
    \end{bNiceMatrix}\in\Mat_n(\F_{q^2}), 
\] 
where $n\coloneqq 2r+2t_1+2t_2$, and
 we put the numbers $t_j$'s and $r$ on the side of the matrix to indicate the sizes of respective blocks. 
Suppose that $(L, \langle~,~\rangle_u)$ is self-dual so that this block matrix is invertible. Necessarily, $C_{12}$ is invertible 
     since all other blocks in the same column  are zero. Then $C_{21}=(C_{12}^\sigma)^t$ is also invertible since $H$ is a hermitian form. Likewise, both $C_{34}$ and $C_{43}$ are invertible. Since $C_{12}$ is invertible, performing suitable elementary column operations to the above block matrix, we can turn $C_{16}$ into $0$ while leaving $C_{56}$ unchanged. It follows that both $C_{56}$ and $C_{65}$ are invertible. Now the matrices
     \[
         \begin{bmatrix}
             0 & C_{12} \\
             C_{21} & 0
         \end{bmatrix},\quad\begin{bmatrix}
             0 & C_{34} \\
             C_{43} & 0
         \end{bmatrix},\quad\begin{bmatrix}
             0 & C_{56} \\
             C_{65} & 0
         \end{bmatrix}
     \]
     are all invertible, which implies that the restrictions of $H$ to $\overline{T}_1$, $\overline{T}_2$ and $\overline{T}_3$ are all non-degenerate as desired.
\end{proof}

\begin{proof}[Proof of Theorem~\ref{thm:self-dual-latt-odd}]
    The `only if' part follows from Lemma~\ref{lem:dual-latt-odd} as indicated above, while the `if' part is obtained by taking $(L, \langle~,~\rangle_u)$ to be the hermitian $(O_B, ', O_B)$-bilattice $\M_1^{\boxplus t_1}\boxplus \M_2^{\boxplus t_2}\boxplus \M_3^{\boxplus r}$ in \eqref{eq:orth-split-odd}. It remains to prove that if $(L, \langle~,~\rangle)$ is self-dual, then $(L, \langle~,~\rangle_u)$ is isometric to $\M_1^{\boxplus t_1}\boxplus \M_2^{\boxplus t_2}\boxplus \M_3^{\boxplus r}$. We proceed step-by-step.
    
    \emph{Step 1.} We claim that if $t_1>0$, then there exist right $O_B$-linearly independent elements $e, f\in L$ such that \eqref{eq:can-basis-L-1} holds and the $(O_B, O_B)$-sub-bilattice $N\coloneqq e O_B +f O_B$ of $L$ is self-dual with respect to the restriction of $\langle~,~\rangle_u$. 
    From \eqref{eq:under-latt}, there are $t_1$ direct summands of $L$ isomorphic to the $(O_B, O_B)$-bilattice $\L_1$. Let $\{e_1, f_1\}, \dots, \{e_{t_1}, f_{t_1}\}$ denote their respective standard right $O_B$-bases so that each $\{e_i, f_i\}$ satisfies \eqref{eq:can-basis-L-1}. By Lemma~\ref{lem:pairing-property-odd}~(2),  $\langle e_i, f_i\rangle_u\in O_K$ for each $1\leq i\leq t_1$.
    If $\langle e_i, f_i \rangle_u\in O_K^{\times}$ for some $i$, then $e\coloneqq e_i, f\coloneqq f_i$ satisfy the desired condition by Corollary~\ref{cor:mat-under-can-base-L-1-3}, so we assume that all $\langle e_i, f_i \rangle_u\in \pi O_K$ in the following, where $\pi=\Pi^2$ denotes a fixed uniformizer of $O_F$ as in \eqref{eq:quat-multi-rul-1}.  By Lemma~\ref{lem:isotropic-compt-odd}, $\bigoplus_{i=1}^{t_1}(e_i O_B\oplus f_i O_B)$ remains self-dual with respect to the restriction of $\langle~,~\rangle_u$. 
    It follows that at least one of the following elements
    \[
        \langle e_1, e_j\rangle_u,\quad \langle e_1, f_j\rangle_u,\quad\text{with}\quad 1\leq j\leq t_1
    \]
  lies in $O_B^\times$. By Lemma~\ref{lem:pairing-property-odd} (1), $\langle e_1, e_j\rangle_u\in O_K\Pi$ for all $1\leq j\leq t_1$. Hence there exists $1< s\leq t_1$ such that $d\coloneqq\langle e_1, f_s\rangle_u \in O_K^\times$ (see Lemma~\ref{lem:pairing-property-odd} (2)). 
  If $d-d^\sigma\in \pi O_K$, then we choose $b\in O_K^\times$ such that\footnote{We claim that there exists $x\in O_K^\times$ such that $x-x^\sigma\in O_K^\times$. Take a nonzero element $z$ of the $\F_q$-subspace $\{a\in\F_{q^2}\mid a+a^\sigma=0\}$ of $\F_{q^2}$. From the additive version of Hilbert Theorem 90, namely, $H^1(\Gal(\F_{q^2}/\F_q), \F_{q^2})=\{0\}$, there exists $y\in \F_{q^2}^\times$ such that $z=y-y^\sigma$. Let $x\in O_K^\times$ be a lift of $y$. Then we have $x-x^\sigma\in O_K^\times$ as desired.} $bd-(bd)^\sigma\in O_K^\times$. Thus, replacing $\{e_1, f_1\}$ by $\{e_1 b^\sigma, f_1 b^\sigma\}$, we may assume that $d-d^\sigma\in O_K^\times$.
    Since $e_1, f_1, e_s, f_s$ are right $O_B$-linearly independent, the elements
    \[
        e\coloneqq e_1+e_s,\quad f\coloneqq f_1+f_s
    \]
    are  right $O_B$-linearly independent as well, and are easily seen to satisfy \eqref{eq:can-basis-L-1}. Moreover, from the equality $\langle e_s, f_1\rangle_u=\langle e_s, \Pi e_1\rangle_u=-\langle \Pi e_s, e_1\rangle_u=-\langle f_s, e_1\rangle_u=-d^\sigma$, we get
    \[
        \langle e, f\rangle_u=\langle e_1+e_s, f_1+f_s\rangle_u=
         \langle e_1, f_1\rangle_u+ \langle e_s, f_s\rangle_u+d-d^\sigma\in O_K^\times,
    \]
    since both $\langle e_1, f_1\rangle_u$ and  $\langle e_s, f_s\rangle_u$ belong to $\pi O_K$ by our hypothesis. Hence $e, f$ are the desired elements in our claim by Corollary~\ref{cor:mat-under-can-base-L-1-3}.

    Similarly,  if $t_2>0$, then there exist right $O_B$-linearly independent elements $l, m\in L$ such that \eqref{eq:can-basis-L-2} holds and the $(O_B, O_B)$-sub-bilattice $N\coloneqq l O_B +m O_B$ of $L$ is self-dual with respect to the restriction of $\langle~,~\rangle_u$.

    \emph{Step 2.} We claim that if $r\coloneqq r_1=r_2>0$, then there exist right $O_B$-linearly independent elements $\xi, \eta\in L$ such that \eqref{eq:can-basis-L-3} holds and the $(O_B, O_B)$-sub-bilattice $N\coloneqq \xi O_B +\eta O_B$ of $L$ is self-dual with respect to the restriction of $\langle~,~\rangle_u$. Since $r\coloneqq r_1=r_2$, from \eqref{eq:under-latt}, we have $L\simeq \L_3^{\oplus r}\oplus \L_1^{\oplus t_1}\oplus \L_2^{\oplus t_2}$. Thus there exist $r$ direct summands of $L$ isomorphic to $\L_3$. Let $\{\xi_1, \eta_1\}, \dots, \{\xi_{r}, \eta_{r}\}$ denote their respective standard right $O_B$-bases. 
  Then each $\{\xi_i, \eta_i\}$ satisfies \eqref{eq:can-basis-L-3}. By Lemma~\ref{lem:isotropic-compt-odd}, $\bigoplus_{i=1}^{r}(\xi_i O_B\oplus \eta_i O_B)$ is self-dual with respect to the restriction of $\langle~,~\rangle_u$. Thus at least one of the following elements
    \[
        \langle \xi_1, \eta_j \rangle_u,\quad \langle \xi_1, \xi_j\rangle_u,\quad\text{with}\quad  1\leq j\leq r
    \]
   lies in $O_B^\times$. Note that $\langle \xi_1, \xi_j\rangle_u\in O_F\Pi$ by Lemma~\ref{lem:pairing-property-odd} (1), so we must have $\langle \xi_1, \eta_s\rangle_u \in O_F^\times$ for some $s$ (see Lemma~\ref{lem:pairing-property-odd} (2)). Then $\xi\coloneqq \xi_1, \eta\coloneqq \eta_s$ satisfy the condition in our claim by Corollary~\ref{cor:mat-under-can-base-L-1-3}.

    \emph{Step 3.} We complete the proof by induction on the $O_B$-rank $n\coloneqq 2r+2t_1+2t_2$ of the bilattice $L$ ($n$ is necessarily even). If $n=0$, there is nothing to prove. Now suppose that $n>0$ and the theorem is true for $n-2$. If $t_1>0$, then by Step 1 there exists an $(O_B, O_B)$-sub-bilattice $N$ of $L$ isomorphic to $\L_1$
    such that $(N, \langle~,~\rangle_u|_{N})$ is self-dual. Let $N^{\bot}$ be the  orthogonal complement of $N$ in $(L, \langle~,~\rangle_u)$, that is, 
    \begin{equation}\label{eq:orth-compl-defn}
                N^{\bot}\coloneqq\{x\in L\mid \langle x, y\rangle_u=0, \forall y\in N \},
    \end{equation}
   which is an $(O_B, O_B)$-sub-bilattice of $L$ by \eqref{eq:herm-bimod-eq-1}.
    Moreover, by Lemma~\ref{lem:orth-compl-direct-summ} below, we have $L=N\boxplus N^{\bot}$. From Lemma~\ref{lem:self-dual-rank-two-odd}, $(N, \langle~,~\rangle_u|_{N})$ is isometric to $\M_1$. On the other hand, the $O_B$-rank of $N^{\bot}$ is $n-2$ and $(N^{\bot}, \langle~,~\rangle_u|_{N^{\bot}})$ is necessarily self-dual, and so the induction hypothesis applies. If $t_2>0$ or $r>0$, we proceed in the same way.
\end{proof}
\begin{lemma}\label{lem:orth-compl-direct-summ}
    Let $L$ be a right $O_B$-lattice and $\langle~,~\rangle: L\times L\to O_B$ be a hermitian form on $L$. Let $N$ be an $O_B$-sub-lattice of $L$. If the restriction $\langle~,~\rangle|_{N}$ is perfect, then $L=N\boxplus N^{\bot}$, where $N^{\bot}$ is the orthogonal complement of $N$ in $L$ defined by \eqref{eq:orth-compl-defn}.
\end{lemma}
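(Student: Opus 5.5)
The plan is the standard orthogonal projection argument, adapted to the quaternion hermitian setting. Since $N\subseteq L$ is a right $O_B$-sublattice, $L$ and $N$ are finite free right $O_B$-modules, because $O_B$ is a (noncommutative) discrete valuation ring. The idea is to use perfectness of $\langle~,~\rangle|_N$ to build, for each $x\in L$, an element $p(x)\in N$ with
\[
\langle p(x), y\rangle=\langle x, y\rangle \quad\text{for all } y\in N.
\]
Then $x-p(x)\in N^{\bot}$, which gives $L=N+N^{\bot}$. Finally we show $N\cap N^{\bot}=0$.

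For the construction of $p(x)$, fix $x\in L$ and consider the map $f_x\colon N\to O_B$ given by $y\mapsto \langle x,y\rangle$. By \eqref{eq:herm-form-defn}, $f_x$ is right $O_B$-linear, so $f_x\in N^{\sharp}=\Hom_{O_B}(N,O_B)$; here we use that the form is $O_B$-valued on $L$, as assumed in the statement. Perfectness of $\langle~,~\rangle|_N$ means that the map $N\to N^{\sharp}$, $z\mapsto \langle z,\cdot\rangle$, is bijective, as in \eqref{dual-map}. Concretely, the Gram matrix $G$ of $N$ with respect to a right $O_B$-basis $\{n_1,\dots,n_k\}$ lies in $\GL_k(O_B)$. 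So there is a unique $p(x)\in N$ with $\langle p(x),y\rangle=\langle x,y\rangle$ for all $y\in N$. Explicitly, $p(x)=\sum_i n_i c_i$, where the column vector $c=(c_i)$ is obtained by solving $\overline{c}^{\,t}G=(\langle x,n_j\rangle)_j$; this is solvable over $O_B$ because $G$ is invertible over $O_B$. Hence $x-p(x)\in N^{\bot}$, and $L=N+N^{\bot}$.

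For the intersection, let $z\in N\cap N^{\bot}$. Then $\langle z,y\rangle=0$ for all $y\in N$, and the nondegeneracy of $\langle~,~\rangle|_N$ (which follows from perfectness) forces $z=0$. Thus $L=N\oplus N^{\bot}$, and the sum is orthogonal by the definition of $N^{\bot}$, so $L=N\boxplus N^{\bot}$.

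In the application the sublattices are also $(O_B,O_B)$-stable. This needs no extra work here: $N^{\bot}$ is automatically a sub-bilattice by \eqref{eq:herm-bimod-eq-1}, as already noted before the lemma. The only real obstacle is the noncommutative bookkeeping, that is, placing the conjugations correctly when solving for the coefficients of $p(x)$. This is handled by working with $N^{\sharp}$ abstractly as above rather than with explicit matrices.
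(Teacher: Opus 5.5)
Your proposal is correct. It is essentially the paper's approach: the paper gives no written proof and instead cites the standard result for quadratic forms (Baeza, Knebusch), and the argument behind that citation is exactly your projection argument. Perfectness makes $N\to N^{\sharp}$, $z\mapsto\langle z,\cdot\rangle$, bijective, which gives both $L=N+N^{\bot}$ and $N\cap N^{\bot}=0$.
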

This lemma is a variant of a similar result on quadratic forms, and can be proved in the same way; see \cite[Proposition~3.2, Chapter~I]{MR491773} or \cite[Proposition~I.2]{Knebusch-1977}.

\subsection{Self-dual hermitian $(O_B, *, O_B)$-bilattices with $\ord_{B}(\gamma)$ even}
\label{subsec:self-dual-latt-even} 
Keep the notation introduced at the beginning of Section~\ref{subsec:self-dual-latt-odd} with the only exception that $\ord_{B}(\gamma)$ is now assumed to be even. In view of $\Pi^2=\pi$ and \eqref{eq:invol-star}, we may assume without loss of generality that $\ord_{B}(\gamma)=0$, that is, $\gamma=u\in O_B^\times$ is a unit. The discussion in Section~\ref{subsec:self-dual-latt-even} runs parallel to that of Section~\ref{subsec:self-dual-latt-odd}, employing similar technique of proofs.

Let $(L,\langle~,~\rangle)$ be a hermitian $(O_B, *, O_B)$-bilattice. We attach to $\langle~,~\rangle$ another pairing $\langle~,~\rangle_u$ as defined by the equation \eqref{eq:defn-associ-pairing}. Since $u^*=-u$ in this case,  $\langle~,~\rangle_u$ is a skew-hermitian form on the right $O_B$-lattice $L$, and the equality \eqref{eq:herm-bimod-eq} is equivalent to
\begin{equation}\label{eq:herm-bimod-eq-2}
    \langle \alpha x, y\rangle_u=\langle x, \ol{\alpha} y \rangle_u,\quad\forall x, y\in L,
    \text{ }\forall\alpha\in O_B.
\end{equation}
In other words, $(L, \langle~,~\rangle_u)$ forms a skew-hermitian $(O_B, \bar{\text{ }}, O_B)$-bilattice, and $(L, \langle~,~\rangle)$ is self-dual if and only if $(L, \langle~,~\rangle_u)$ is so.

Recall from \eqref{eq:defn-four-indecomp-latt} the four basic $(O_B, O_B)$-bilattices $\O\coloneqq(O_B, \id)$, $\P\coloneqq(O_B, \tau)$, $\L_1\coloneqq (O_B^{\oplus 2}, \varphi_1)$ and  $\L_2\coloneqq (O_B^{\oplus 2}, \varphi_2)$.  Define $(O_B, O_B)$-bilattices $\L_4$, $\L_5$ and $\L_6$ as follows
\begin{equation}\label{eq:defn-L-4-6}
     \L_4\coloneqq (O_B^{\oplus 2}, \varphi_4)=\O\oplus \O,\quad \L_5\coloneqq (O_B^{\oplus 2}, \varphi_5)=\P\oplus \P,\quad \L_6\coloneqq (O_B^{\oplus 4}, \varphi_6)=\L_1\oplus \L_2,
\end{equation}
where $\varphi_4\coloneqq \id\oplus\id$, $\varphi_5\coloneqq \tau\oplus\tau$, and  $\varphi_6\coloneqq \varphi_1\oplus \varphi_2$.
For each $4\leq i\leq 6$, let $(\M_i, \langle~,~\rangle_i)$ be the self-dual hermitian $(O_B, *, O_B)$-bilattice such that the underlying $(O_B, O_B)$-bilattice $\M_i$ is equal to $\L_i$, and the Gram matrix $C_i$ of the associated pairing $\langle~,~\rangle_{i, u}$ relative to the standard right $O_B$-basis of $\M_i$ is given as follows
\begin{equation}\label{eq:block-defn-even}
   C_4=\begin{bmatrix}
        0 & 1 \\ -1 & 0
    \end{bmatrix}, \qquad C_5=\begin{bmatrix}
                      0 & 1 \\ -1 & 0
                    \end{bmatrix}, \qquad C_6=\begin{bmatrix}
                                          0 & 0 &  0 & 1 \\ 
                                          0 & 0 &  -1 & 0 \\
                                          0 & 1 &  0  & 0 \\
                                          -1 & 0 & 0 & 0
                                          \end{bmatrix}.
\end{equation}
As in the last subsection, we simply write $\M_i$ for the skew-hermitian $(O_B, \bar{\text{ }}, O_B)$-bilattice $(\M_i, \langle~,~\rangle_{i, u})$ in the sequel. 
These $\M_i$'s serve as building blocks of self-dual skew-hermitian $(O_B, \bar{\text{ }}, O_B)$-bilattices  as stated in the following theorem.
\begin{thm}\label{thm:self-dual-latt-even}
Keep the assumptions that $B$ is a quaternion division $F$-algebra with $\fchar(F)\neq 2$ and $\ord_{B}(\gamma)$ is even.
Let $L$ be an $(O_B, O_B)$-bilattice with structural invariant $(r_1, r_2, t_1, t_2)\in\Z_{\geq 0}^4$, that is,
\begin{equation}\label{eq:under-latt-1}
    L\simeq \O^{\oplus r_1}\oplus \P^{\oplus r_2}\oplus \L_1^{\oplus t_1}\oplus \L_2^{\oplus t_2}.
\end{equation}
Then there exists a perfect hermitian $(O_B, *, O_B)$-pairing $\langle~,~\rangle$ on $L$ if and only if both $r_1$ and $r_2$ are even and $t_1=t_2$. In this case, $(L, \langle~,~\rangle_u)$ is isometric to the orthogonal direct sum
\begin{equation}\label{eq:orth-split-even}
        \M_4^{\boxplus \frac{r_1}{2}}\boxplus \M_5^{\boxplus \frac{r_2}{2}}\boxplus \M_6^{\boxplus t},\qquad \text{ with } t\coloneqq t_1=t_2. 
\end{equation}
In particular, up to isometry, there exists a unique self-dual hermitian $(O_B, *, O_B)$-bilattice with given structural invariant $(r_1, r_2, t, t)$ for  $r_1, r_2\in 2\Z_{\geq 0}$ and $t\in \Z_{\geq 0}$.
\end{thm}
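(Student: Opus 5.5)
The plan follows the odd case of Section~\ref{subsec:self-dual-latt-odd} step by step. The one change is that $\langle~,~\rangle_u$ is now a skew-hermitian $(O_B,\bar{\ },O_B)$-pairing satisfying \eqref{eq:herm-bimod-eq-2}. Sufficiency is immediate: one checks that each $C_i$ in \eqref{eq:block-defn-even} is skew-hermitian, satisfies \eqref{eq:herm-bimod-eq-2} and lies in $\GL(O_B)$, so the orthogonal sum \eqref{eq:orth-split-even} is self-dual with structural invariant $(r_1,r_2,t,t)$.

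For necessity I would first prove the analogue of Lemma~\ref{lem:dual-latt-odd}: with the bimodule structure \eqref{eq:L-prime-bim-str} and $\alpha^*=u\bar\alpha u^{-1}$,
\[
\O^\sharp\simeq\O,\qquad \P^\sharp\simeq\P,\qquad \L_1^\sharp\simeq\L_2,\qquad \L_2^\sharp\simeq\L_1 .
\]
The computation of the truncations $L^\sharp/L^\sharp\Pi$ runs as in Lemma~\ref{lem:dual-latt-odd}, and Corollary~\ref{cor:one-to-one-corre} then identifies the duals. Since $L\simeq L^\vee\simeq L^\sharp$ for a perfect pairing, the uniqueness in Theorem~\ref{thm:conj-class} forces $t_1=t_2$.

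For the parity of $r_1,r_2$ I would pass to the truncation $\overline L=L/L\Pi$ with the induced form $H$. This form is skew-hermitian and satisfies $H(ax,y)=H(x,a^\sigma y)$ and $H(\varepsilon x,y)=-H(x,\varepsilon y)$. First step: for $x\in\overline L^0$ and $y\in\overline L^1$,
\[
a^\sigma H(x,y)=H(ax,y)=H(x,a^\sigma y)=H(x,y)a ,
\]
so $\overline L^0\perp\overline L^1$. Second step: write $L=\O^{r_1}\oplus\P^{r_2}\oplus\L_1^{t}\oplus\L_2^{t}$ with blocks $T_1,\dots,T_4$. Using $\varepsilon\overline T_1^0=\overline T_1^1$, $\varepsilon\overline T_2^1=\overline T_2^0$, and $\varepsilon=0$ on $\overline T_1^1$, $\overline T_2^0$, $\overline{\O}$, $\overline{\P}$, a block-matrix argument as in Lemma~\ref{lem:isotropic-compt-odd} should show:
\begin{enumerate}[(a)]
\item $\overline T_1^1$ pairs only with $\overline T_2^1$, so the corresponding block is invertible;
\item after column operations, the restrictions of $H$ to $\overline{\O}^{r_1}$, to $\overline{\P}^{r_2}$ and to $\overline T_1\oplus\overline T_2$ are non-degenerate.
\end{enumerate}
Third step: on $\O^{r_1}$ all basis vectors commute with $O_K$ and with $\Pi$. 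The argument of Lemma~\ref{lem:pairing-property-odd} then puts the Gram values in $O_F$. So the restricted form is an alternating form over $O_F$ (using $\fchar F\neq 2$ and skew-symmetry), and a perfect alternating form forces $r_1$ even. The same argument applies to $\P^{r_2}$.

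For uniqueness I would argue by induction, splitting off self-dual pieces with Lemma~\ref{lem:orth-compl-direct-summ}. A perfect alternating $O_F$-lattice has a symplectic basis, which gives copies of $\M_4$ (respectively $\M_5$); these are then checked to be sub-bilattices compatible with the bimodule structure. For $\L_1^t\oplus\L_2^t$, take standard bases $\{e_i,f_i\}$ and $\{l_j,m_j\}$. By (a) some $\langle e_1,m_s\rangle_u$ is a unit; the analogue of Lemma~\ref{lem:pairing-property-odd} places it in $O_K$, and rescaling by an element of $O_K^\times$ makes it $1$. Then $\langle f_1,l_s\rangle_u$ is determined by $\Pi$-compatibility, and the pairings $\langle e_1,f_1\rangle_u$ and $\langle l_s,m_s\rangle_u$ must be cleared by a Gram--Schmidt-type change of basis that preserves \eqref{eq:can-basis-L-1}--\eqref{eq:can-basis-L-2}. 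The aim is to reach the Gram matrix $C_6$, which gives a self-dual copy of $\M_6$; we then split it off and recurse.

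I expect this last normalization to the exact matrix $C_6$ (rather than merely an invertible block) to be the main obstacle. There the constraints from $\Pi$-equivariance and skew-hermitian symmetry must be balanced, possibly using surjectivity of $\Nm_{K/F}$ on units and the additive Hilbert 90 trick from Step~1 of the proof of Theorem~\ref{thm:self-dual-latt-odd}.
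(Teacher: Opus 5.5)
Your necessity argument is essentially the paper's: duals via truncations give $t_1=t_2$, and the truncated block-matrix argument followed by an $O_F$-valued alternating Gram matrix gives the parity of $r_1,r_2$. For the $\O^{\oplus r_1}$ and $\P^{\oplus r_2}$ pieces, your symplectic-basis argument is a clean alternative to the paper's Step~1, since a change of basis by $\GL_{r}(O_F)$ preserves \eqref{eq:can-basis-L-4} and \eqref{eq:can-basis-L-5}. Your splitting step for $\L_1^{\oplus t}\oplus\L_2^{\oplus t}$ is also fine: $e_1,f_1,l_s,m_s$ already span a self-dual copy of $\L_6$ by Corollary~\ref{cor:mat-under-can-base-L-4-6}.

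The gap is the step you leave open: that every self-dual hermitian structure on $\L_6$ is isometric to $\M_6$, which is the paper's Lemma~\ref{lem:self-dual-rank-two-even}. Without it you only show that $L$ is an orthogonal sum of copies of $\M_4$, $\M_5$ and \emph{some} self-dual structures on $\L_6$. That proves neither the isometry with \eqref{eq:orth-split-even} nor the uniqueness statement. Your sketch also targets the wrong entries. The pairings $\langle e,f\rangle_u$ and $\langle l,m\rangle_u$ vanish automatically by Lemma~\ref{lem:pairing-property-even}(2). The real obstructions are visible in \eqref{eq:mat-under-can-basis-L-6}:
\begin{itemize}
\item the diagonal entries $b_1=\langle e,e\rangle_u$ and $b_2=\langle l,l\rangle_u$, which are trace-zero elements of $O_K$ that need not vanish because $e,l$ do not commute with $\Pi$ (unlike the odd case, where $\langle x,x\rangle_u=0$);
\item the cross term $\langle e,l\rangle_u=c_1\Pi$.
\end{itemize}

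These cannot be removed by norm surjectivity or additive Hilbert~90 alone. The natural move preserving \eqref{eq:can-basis-L-6} is $e\mapsto e+md$, $f\mapsto f+l(\pi d)$. Since $\langle m,m\rangle_u=-\pi b_2$, it turns $b_1$ into $b_1+dc_2-(dc_2)^\sigma-\pi b_2dd^\sigma$, which is a quadratic equation in $d$. Attacking $b_2$ first instead produces the symmetric term $-\pi b_1hh^\sigma$. The paper solves this equation by $\pi$-adic successive approximation (Lemma~\ref{lem:solve-eq}). Only once $b_1=0$ can $b_2$ be killed linearly, via $l\mapsto l-fh^\sigma$, $m\mapsto m-e(\pi h^\sigma)$ and additive Hilbert~90 for $O_K$. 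Then $c_2$ is normalized to $1$ by rescaling $e,f$. The cross term $c_1\Pi$ must be cleared beforehand; the paper first arranges $b_1\in O_K^\times$ and then subtracts $e(b_1^{-1}c_1\Pi)$ from $l$. These ingredients are what your proposal is missing.
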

The proofs of the following two lemmas are similar to that of Lemmas~\ref{lem:pairing-property-odd} and \ref{lem:dual-latt-odd}, so we omit the details.
\begin{lemma}\label{lem:pairing-property-even}
    Let $(L, \langle~,~\rangle)$ be a hermitian $(O_B, *, O_B)$-bilattice, and let $L_{\id}\coloneqq\{x\in L\mid ax=xa, \forall a\in O_K\}$ and $L_{\sigma}\coloneqq\{x\in L\mid ax=xa^\sigma, \forall a\in O_K\}$ be the $(O_K, O_K)$-sub-bilattices of $L$ as defined in \eqref{eq:L-id-L-sigma}.
    Then the associated pairing $\langle~,~\rangle_u$  has the following properties.
    \begin{enumerate}[(1)]
        \item If either $x, y\in L_{\id}$ or $x, y\in L_{\sigma}$, then $\langle x, y\rangle_u\in O_K$ and $\langle x, x \rangle_u^\sigma=-\langle x, x\rangle_u$. Moreover, if further $\Pi x=x\Pi$ and $\Pi y=y\Pi$, then $\langle x, y \rangle_u\in O_F$; in particular, $\langle x, x\rangle_u=0$.
        \item If $x\in L_{\id}$ and $y\in L_{\sigma}$, then $\langle x, y \rangle_u\in O_K\Pi$. Moreover, if further $x=\Pi y$ (or $y=\Pi x$), then $\langle x, y \rangle_u=0$.
    \end{enumerate}
\end{lemma}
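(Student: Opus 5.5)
The plan is to follow the proof of Lemma~\ref{lem:pairing-property-odd}, using two identities in place of the ones from the odd case. First, $\langle~,~\rangle_u$ is skew-hermitian. Since $u^*=\gamma\bar u\gamma^{-1}=\bar u=-u$, we get
\[\langle x,y\rangle_u=\langle x,uy\rangle=\langle u^*x,y\rangle=-\langle ux,y\rangle=-\overline{\langle y,ux\rangle}=-\overline{\langle y,x\rangle_u}.\]
Second, \eqref{eq:herm-bimod-eq-2} holds. On $O_K$ the canonical involution is $\bar a=a^\sigma$, and $\bar\Pi=-\Pi$.

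Part (1) is a two-way evaluation of $\langle ax,y\rangle_u$ for $a\in O_K$. If $x,y\in L_{\id}$, then
\[\langle ax,y\rangle_u=\langle xa,y\rangle_u=a^\sigma\langle x,y\rangle_u,\qquad \langle ax,y\rangle_u=\langle x,a^\sigma y\rangle_u=\langle x,ya^\sigma\rangle_u=\langle x,y\rangle_u a^\sigma.\]
If $x,y\in L_\sigma$, the same computation gives $a\langle x,y\rangle_u=\langle x,y\rangle_u a$. In both cases $z\coloneqq\langle x,y\rangle_u$ commutes with all of $O_K$. Writing $B=K\oplus K\Pi$, this forces $z\in O_K$. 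Skew-hermitianity then gives $\langle x,x\rangle_u=-\overline{\langle x,x\rangle_u}=-\langle x,x\rangle_u^\sigma$. Now suppose in addition that $\Pi x=x\Pi$ and $\Pi y=y\Pi$. Then
\[\langle\Pi x,y\rangle_u=\langle x\Pi,y\rangle_u=\bar\Pi z=-\Pi z,\qquad \langle \Pi x,y\rangle_u=\langle x,\bar\Pi y\rangle_u=-\langle x,y\Pi\rangle_u=-z\Pi.\]
So $z$ also commutes with $\Pi$, and hence $z\in O_F$. For $x=y$ this yields $z=z^\sigma=-z$, so $2z=0$, and since $\fchar(F)\neq2$ we conclude $\langle x,x\rangle_u=0$.

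For part (2), take $x\in L_{\id}$, $y\in L_\sigma$ and $z\coloneqq\langle x,y\rangle_u$. The same two-way evaluation gives
\[a^\sigma z=\langle ax,y\rangle_u=\langle x,a^\sigma y\rangle_u=\langle x,ya\rangle_u=za\quad\text{for all } a\in O_K.\]
Write $z=z_0+z_1\Pi$ with $z_0,z_1\in O_K$. The $\Pi$-component satisfies $a^\sigma z_1\Pi=z_1\Pi a$ automatically. The $O_K$-component requires $a^\sigma z_0=z_0a$; choosing $a$ with $a\ne a^\sigma$ forces $z_0=0$, so $z\in O_K\Pi$.

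For the vanishing statement, write $z=c\Pi$ with $c\in O_K$. Then $\bar z=\bar\Pi c^\sigma=-\Pi c^\sigma=-c\Pi=-z$. If $x=\Pi y$, then
\[z=\langle\Pi y,y\rangle_u=\langle y,\bar\Pi y\rangle_u=-\langle y,x\rangle_u=\bar z=-z.\]
If $y=\Pi x$, then $z=\langle x,\Pi x\rangle_u=\langle\bar\Pi x,x\rangle_u=-\langle y,x\rangle_u=\bar z=-z$. In either case $z=0$, again using $\fchar(F)\neq2$.

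I expect no real obstacle. The only places needing care are the sign bookkeeping coming from skew-hermitianity together with $\bar\Pi=-\Pi$, and remembering that $\fchar(F)\ne2$ is what turns $z=-z$ into $z=0$; this is where the argument differs from the odd case, in which isotropy came from $O_F\cap O_K\Pi=0$ instead.
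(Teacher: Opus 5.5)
Your proof is correct and takes essentially the paper's approach: the paper omits this proof as parallel to Lemma~\ref{lem:pairing-property-odd}, and your two-way evaluations of $\langle ax,y\rangle_u$ and $\langle \Pi x,y\rangle_u$ are exactly that adaptation. The needed changes are skew-hermitianity of $\langle~,~\rangle_u$, the relation \eqref{eq:herm-bimod-eq-2}, the standing $O_B$-valuedness of the pairing (to land in $O_K$ rather than $K$), and $\fchar(F)\neq 2$ for the two vanishing statements.
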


\begin{lemma}\label{lem:dual-latt-even}
   For the four indecomposable $(O_B, O_B)$-bilattices $\O=(O_B, \id)$, $\P=(O_B, \tau)$, $\L_1=(O_B^{\oplus 2},\varphi_1)$, and $\L_2=(O_B^{\oplus 2},\varphi_2)$, we have the following isomorphisms between these bilattices and their $O_B$-duals:
    \[
    \O^{\sharp}\simeq \O,\quad \P^{\sharp}\simeq \P, \quad \L_1^{\sharp}\simeq \L_2,\quad \L_2^{\sharp}\simeq \L_1.
    \]
\end{lemma}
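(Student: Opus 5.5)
The plan is to imitate the proof of Lemma~\ref{lem:dual-latt-odd}. By Corollary~\ref{cor:one-to-one-corre}, an $(O_B,O_B)$-bilattice is determined up to isomorphism by its right truncation. So for each of $\O,\P,\L_1,\L_2$ I will identify the dual $L^{\sharp}$ with $O_B^{\oplus n}$ (row vectors), transport the bimodule structure \eqref{eq:L-prime-bim-str}, compute $W\coloneqq L^{\sharp}/L^{\sharp}\Pi$ as an $(\F_{q^2}[\varepsilon],\F_{q^2})$-bimodule, and read off which of $W_{\id},W_\tau,W_{\varphi_1},W_{\varphi_2}$ it is, using Lemma~\ref{lem:linear-alg-descrip-trun-bimod} and Example~\ref{eg:indecomp-trunc}. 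The only change from the odd case is that now $\alpha^*=u\ol{\alpha}u^{-1}$ with $u\in O_B^\times$.

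\emph{The rank-one cases.} For $\O^{\sharp}$, I identify it with $O_B$ via $x\mapsto(y\mapsto xy)$. The transported structure is
\[
\alpha\cdot x\cdot\beta=\ol\beta\,x\,\alpha^*.
\]
The right truncation is $O_B/\ol\Pi O_B=\F_{q^2}$. For $a\in O_K$ I get $a\cdot x=x\,u a^\sigma u^{-1}\equiv x a^\sigma \bmod \Pi$, because $u$ mod $\Pi$ lies in the commutative field $\F_{q^2}$. I also get $x\cdot a=a^\sigma x$. Hence in the residue field $a\cdot \bar x=\bar x\cdot a$, so $W=W^0\simeq W_{\id}$ and $\O^\sharp\simeq\O$.

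For $\P^\sharp$ the left action is through $\tau(\alpha^*)$. For $a\in O_K$ this is $\Pi^{-1}ua^\sigma u^{-1}\Pi\equiv a \bmod \Pi$. Then $a\cdot\bar x=\bar x a$ while $\bar x\cdot a=a^\sigma\bar x$, so $W=W^1\simeq W_\tau$ and $\P^\sharp\simeq\P$.

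\emph{The rank-two cases.} As in the odd case, I first replace $\L_1$ by the isomorphic bilattice $(O_B^{\oplus 2},\varphi_1(u)^{-1}\varphi_1\varphi_1(u))$. On the dual, identified with $O_B\oplus O_B$, the transported structure then becomes
\[
\alpha\cdot(x,y)\cdot\beta=(\ol\beta x,\ol\beta y)\,\varphi_1(\ol\alpha).
\]
For $a\in O_K$ this gives $a\cdot(x,y)=(xa^\sigma,ya)$ and $(x,y)\cdot a=(a^\sigma x,a^\sigma y)$. Modulo $\ol\Pi$, the first coordinate therefore spans $W^0$ and the second spans $W^1$. Next, $\Pi\cdot(x,y)=(x,y)\varphi_1(-\Pi)=-(y,\pi x)$. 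This vanishes on $W^0$ and maps $W^1$ isomorphically onto $W^0$, so $\varepsilon^{(0)}=0$ and $\varepsilon^{(1)}$ is an isomorphism. Hence $W\simeq W_{\varphi_2}$, and $\L_1^\sharp\simeq\L_2$. The same computation with $\varphi_2$ swaps the roles of the coordinates and gives $\L_2^\sharp\simeq\L_1$.

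The main point requiring care is the sign and coordinate bookkeeping in the last step. Identifying which summand is $W^0$ versus $W^1$, and in which direction $\varepsilon$ is nonzero, is exactly what distinguishes $W_{\varphi_1}$ from $W_{\varphi_2}$. The crucial difference from the odd case is that $\ol a=a^\sigma$, whereas $a'=a$ there; this reverses the $W^0$/$W^1$ assignment while $\varepsilon$ keeps its direction, and so swaps $\L_1$ and $\L_2$.
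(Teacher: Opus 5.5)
Your proposal is correct and is the argument the paper intends. The paper omits this proof as parallel to Lemma~\ref{lem:dual-latt-odd}: one computes the right truncations of the duals, after conjugating $\varphi_i$ by $\varphi_i(u)$, and invokes Corollary~\ref{cor:one-to-one-corre}. Your bookkeeping checks out: since $\ol{a}=a^\sigma$, the $W^0$/$W^1$ roles of the two coordinates are swapped relative to the odd case, while $\varepsilon$ still maps the second coordinate onto the first, which yields $W_{\varphi_2}$ for $\L_1^\sharp$ and $W_{\varphi_1}$ for $\L_2^\sharp$.
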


\begin{cor}\label{cor:mat-under-can-base-L-4-6}
    Let $(L, \langle~,~\rangle)$ be a hermitian $(O_B, *, O_B)$-bilattice of $O_B$-rank $2$ or $4$.
    \begin{enumerate}
        \item If $L\simeq \L_4$, that is, it has a right $O_B$-basis $\{\xi, \zeta\}$ satisfying
   \begin{equation}\label{eq:can-basis-L-4}
        a\xi=\xi a,\quad a\zeta=\zeta a, \quad\forall a\in O_K,\quad\text{and}\quad \Pi \xi=\xi \Pi,\quad \Pi \zeta= \zeta\Pi,
    \end{equation}
    then the Gram matrix of $\langle~,~\rangle_u$ with respect to $\{\xi, \zeta\}$ is $\begin{bmatrix}
        0 & b \\ -b & 0
    \end{bmatrix}$ for some $b\in O_F$.
        \item If $L\simeq \L_5$, that is, it has a right $O_B$-basis $\{\eta, \mu\}$ satisfying
    \begin{equation}\label{eq:can-basis-L-5}
        a\eta=\eta a^\sigma,\quad a\mu=\mu a^\sigma, \quad\forall a\in O_K,\quad\text{and}\quad \Pi \eta=\eta \Pi,\quad \Pi \mu= \mu\Pi,
    \end{equation}
    then the Gram matrix of $\langle~,~\rangle_u$ with respect to $\{\eta, \mu\}$ is $\begin{bmatrix}
        0 & b \\ -b & 0
    \end{bmatrix}$ for some $b\in O_F$.
        \item If $L\simeq \L_6$, that is, it has a right $O_B$-basis $\{e, f, l, m\}$ satisfying
   \begin{equation}\label{eq:can-basis-L-6}
   \begin{dcases}
        ae=ea,\quad af=fa^\sigma,\quad \forall a\in O_K,\quad \Pi e=f,\quad \Pi f=\pi e,\\
        al=la^\sigma,\quad am=ma,\quad \forall a\in O_K,\quad \Pi l=m,\quad \Pi m=\pi l,
    \end{dcases}
    \end{equation}
    then the Gram matrix of $\langle~,~\rangle_u$ with respect to $\{e, f, l, m\}$ is of the form
    \begin{equation}\label{eq:mat-under-can-basis-L-6}
       C= \begin{bmatrix}
          b_1 & 0 & c_1\Pi & c_2 \\
          0 & -\pi b_1 & -c_2 & -\pi c_1\Pi \\
          c_1\Pi & c_2^\sigma & b_2 & 0 \\
          -c_2^\sigma & -\pi c_1\Pi & 0 & -\pi b_2
        \end{bmatrix},
    \end{equation}
    for some $b_1, b_2, c_1, c_2\in O_K$ with $b_1+b_1^\sigma=b_2+b_2^\sigma=0$.
    \end{enumerate}
        In  the above cases, $(L, \langle~,~\rangle)$ is self-dual if and only if $b\in O_F^\times$ for cases (1) and (2), and $c_2\in O_K^\times$ for case (3).
\end{cor}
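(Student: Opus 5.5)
The plan is to use only the skew-hermitian property of $\langle~,~\rangle_u$, the compatibility \eqref{eq:herm-bimod-eq-2}, and Lemma~\ref{lem:pairing-property-even}. Self-duality will then be read off from the reduction of the Gram matrix modulo $\Pi$. As in the proof of Lemma~\ref{lem:isotropic-compt-odd}, a matrix in $\Mat_n(O_B)$ is invertible if and only if its image in $\Mat_n(\F_{q^2})$ is. Throughout I use three facts:
\begin{itemize}
\item $\langle y,x\rangle_u=-\overline{\langle x,y\rangle_u}$;
\item $\overline{\Pi}=-\Pi$;
\item $\Pi a=a^\sigma\Pi$ for all $a\in O_K$.
\end{itemize}

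For cases (1) and (2), all basis vectors lie in $L_{\id}$ (respectively $L_\sigma$) and commute with $\Pi$. Lemma~\ref{lem:pairing-property-even}~(1) then puts every Gram entry in $O_F$ and forces the diagonal entries to vanish. Since the canonical involution is trivial on $O_F$, skew-hermitianity gives $\langle\zeta,\xi\rangle_u=-\langle\xi,\zeta\rangle_u$. The Gram matrix is therefore $\begin{bmatrix}0&b\\-b&0\end{bmatrix}$ with $b\in O_F$. It is invertible over $O_B$ if and only if $b^2\in O_F^\times$, that is, if and only if $b\in O_F^\times$.

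For case (3), note that $e,m\in L_{\id}$ and $f,l\in L_\sigma$. I would compute the entries of the first two rows one at a time and then fill in the rest by skew-hermitianity.
\begin{itemize}
\item $b_1\coloneqq\langle e,e\rangle_u$ and $b_2\coloneqq\langle l,l\rangle_u$ lie in $O_K$ with $b_i^\sigma=-b_i$, by Lemma~\ref{lem:pairing-property-even}~(1).
\item $\langle e,f\rangle_u=0$ and $\langle l,m\rangle_u=0$ by Lemma~\ref{lem:pairing-property-even}~(2), since $f=\Pi e$ and $m=\Pi l$.
\item $\langle f,f\rangle_u=\langle\Pi e,\Pi e\rangle_u=\langle e,\overline{\Pi}\Pi e\rangle_u=-\pi b_1$, and similarly $\langle m,m\rangle_u=-\pi b_2$.
\item $\langle e,l\rangle_u=c_1\Pi$ with $c_1\in O_K$ (Lemma~\ref{lem:pairing-property-even}~(2)), and $\langle e,m\rangle_u=c_2\in O_K$ (Lemma~\ref{lem:pairing-property-even}~(1)).
\item $\langle f,l\rangle_u=\langle e,-\Pi l\rangle_u=-c_2$, and $\langle f,m\rangle_u=-\langle e,\Pi m\rangle_u=-\pi c_1\Pi$.
\end{itemize}
The remaining entries follow from skew-hermitianity. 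For example,
\[
\langle l,e\rangle_u=-\overline{c_1\Pi}=\Pi c_1^\sigma=c_1\Pi
\qquad\text{and}\qquad
\langle m,e\rangle_u=-\overline{c_2}=-c_2^\sigma .
\]
The other entries, such as $\langle m,f\rangle_u$, come out the same way. This reproduces \eqref{eq:mat-under-can-basis-L-6}. The only real work is the sign bookkeeping in these conjugations.

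Finally, reduce $C$ modulo $\Pi$. Every entry containing $\pi$ or $\Pi$ dies, leaving
\[
\overline{C}=\begin{bmatrix}\bar b_1&0&0&\bar c_2\\0&0&-\bar c_2&0\\0&\bar c_2^\sigma&\bar b_2&0\\-\bar c_2^\sigma&0&0&0\end{bmatrix}.
\]
Expanding along the fourth row and then the second row shows $\det\overline{C}=\pm(\bar c_2\bar c_2^\sigma)^2$. Hence $\overline{C}$, and so $C$, is invertible if and only if $c_2\in O_K^\times$, which is the self-duality criterion claimed in case (3).
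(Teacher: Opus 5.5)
Your proposal is correct and follows the paper's proof essentially step for step. As in the paper, you use Lemma~\ref{lem:pairing-property-even} together with \eqref{eq:herm-bimod-eq-2} to get the first two rows of $C$, fill in the remaining entries by skew-hermitianity, and read off self-duality from the reduction of $C$ modulo $\Pi$. Your explicit computation $\det\overline{C}=\pm(\bar c_2\bar c_2^\sigma)^2$ simply writes out the step the paper leaves as ``easy to see.''
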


\begin{proof}
     Statements (1) and (2) are immediate by Lemma~\ref{lem:pairing-property-even}, so we only need to prove statement (3). By Lemma~\ref{lem:pairing-property-even}, we have 
    \[
        b_1\coloneqq\langle e, e\rangle_u\in O_K,\text{ } \langle e, f\rangle_u=\langle l, m\rangle_u=0,\text{ } c_2\coloneqq\langle e, m\rangle_u\in O_K,\text{ } b_2\coloneqq\langle l, l\rangle_u\in O_K,
    \]
    and $b_1+b_1^\sigma=b_2+b_2^\sigma=0$. Moreover, $\langle e, l\rangle_u=c_1\Pi \in O_K\Pi$ for some $c_1\in O_K$.
    By \eqref{eq:herm-bimod-eq-2}, we have
    \[\langle f, f\rangle_u=\langle \Pi e, \Pi e\rangle_u=\langle e, \ol{\Pi}\Pi e \rangle_u=-\pi \langle e, e\rangle_u=-\pi b_1;\] 
    similarly, $\langle m, m\rangle_u=-\pi \langle l, l\rangle_u=-\pi b_2$. Likewise,
    \[
        \langle f , l\rangle_u=\langle \Pi e, l\rangle_u=\langle e, \ol{\Pi} l\rangle_u=
        -\langle e, \Pi l\rangle_u=-\langle e, m\rangle_u=-c_2,
    \]
    and $\langle f, m\rangle_u=-\pi \langle e, l\rangle_u=-\pi c_1\Pi$.
    Combining these computations with the fact that $\langle~,~\rangle_u$ is skew-hermitian, we find that the Gram matrix of $\langle~,~\rangle_u$ with respect to the right $O_B$-basis $\{e, f, l, m\}$ of $\L_6$ is given by
    \eqref{eq:mat-under-can-basis-L-6}.
    Finally, taking the reduction of $C$ modulo $\Pi$, it is easy to see that $C$ is invertible if and only if $c_2\in O_K^\times$.   The proof of the corollary is now complete.
\end{proof}
\begin{lemma}\label{lem:self-dual-rank-two-even}
    Let $(\L_i, \langle~,~\rangle)$ be a self-dual hermitian $(O_B, *, O_B)$-bilattice with $\L_i$ defined by \eqref{eq:defn-L-4-6} for $4\leq i\leq 6$. Then $(\L_i, \langle~,~\rangle_u)$ is isometric to  $\M_i$ for every $i$.
\end{lemma}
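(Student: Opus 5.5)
For $i=4,5$ we will normalize the Gram matrix directly. By Corollary~\ref{cor:mat-under-can-base-L-4-6}(1),(2), the Gram matrix of $\langle~,~\rangle_u$ with respect to the standard basis $\{\xi,\zeta\}$ (resp.\ $\{\eta,\mu\}$) is $\begin{bmatrix}0&b\\-b&0\end{bmatrix}$ with $b\in O_F$. Self-duality forces $b\in O_F^\times$. Replacing $\zeta$ by $\zeta b^{-1}$ (resp.\ $\mu$ by $\mu b^{-1}$) keeps conditions \eqref{eq:can-basis-L-4} (resp.\ \eqref{eq:can-basis-L-5}), because $b^{-1}$ is central. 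In the new basis the Gram matrix becomes $C_4$ (resp.\ $C_5$), which gives the isometry with $\M_4$ (resp.\ $\M_5$).

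For $i=6$, Corollary~\ref{cor:mat-under-can-base-L-4-6}(3) gives the Gram matrix \eqref{eq:mat-under-can-basis-L-6}, and self-duality gives $c_2\in O_K^\times$. We look for a new basis $\{e',f',l',m'\}$ that again satisfies \eqref{eq:can-basis-L-6} and has $b_1=b_2=c_1=0$ and $c_2=1$. In that basis the Gram matrix is exactly $C_6$, since the corollary forces the remaining entries.

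The allowed basis changes are the following. First, $e'=e\lambda$ and $f'=\Pi e'$, where $\lambda=\lambda_0+\lambda_1\Pi$ with $\lambda_i\in O_K$ and with correction terms added so that the $O_K$-eigenspace conditions hold. It is simpler to write $e'=e x+ l y\Pi+\dots$. More concretely, we note that the $O_K$-eigenvectors of type $L_{\id}$ with $\Pi^2$-behaviour like $e$ are of the form $e':=e a+f a'\Pi^{-1}\cdot(\ )$. Rather than parametrize this fully, the natural moves are $e\mapsto e+m\,s$ with $s\in O_K$, which keeps $e\in L_{\id}$ and $\Pi e'=f+\pi l s^\sigma$. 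A similar move is available for $l$, together with scalings by $O_K^\times$.

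The steps are as follows.
\begin{enumerate}
\item Scale $e$ by a unit of $O_K$ (using surjectivity of the norm, as in Remark~\ref{rem:indep-c}) so that $c_2=1$.
\item Replace $e$ by $e'=e+m s$. This changes $b_1$ to $b_1+s-s^\sigma$ up to terms in $\pi O_K$. Choosing $s$ solves $s-s^\sigma=-b_1$ exactly, which is possible since $b_1$ has trace zero and additive Hilbert 90 holds for the unramified extension $K/F$.
\item Kill $b_2$ symmetrically by modifying $l$ with $f$.
\item Remove $c_1\Pi$ using a modification $e\mapsto e+ l t\Pi$, which stays in $L_{\id}$.
\end{enumerate}

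The main obstacle is that each modification disturbs the other entries. Instead of solving exactly at each stage, we plan to argue by successive approximation. After the steps above the defects lie in $\pi O_K$, and iterating converges $\pi$-adically by completeness of $O_B$. Alternatively, we can reduce modulo $\Pi$ via Lemma~\ref{lem:lem13}-style arguments. A cleaner route is to first split off $N=eO_B+mO_B$ if that is possible, but $N$ is not a sub-bilattice. We will therefore do the approximation carefully, using $\fchar F\neq2$ where halving is needed.
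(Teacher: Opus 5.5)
Your treatment of $i=4,5$ is correct and is the paper's argument (you rescale $\zeta$ where the paper rescales $\xi$). For $i=6$, however, the proposal breaks down at step~4, the removal of $c_1$.

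For $e'=e+lt\Pi$ one has $\langle lt\Pi,l\rangle_u=\overline{t\Pi}\,b_2=tb_2\Pi$, so the new $c_1$ is $c_1+tb_2$. This move can only kill $c_1$ when $b_2$ is a unit, but step~3 has just pushed $b_2$ into $\pi O_K$. Your other moves do not help either:
\begin{itemize}
\item scalings only multiply $c_1$ by units;
\item $e\mapsto e+ms$ and $l\mapsto l-fh^\sigma$ leave $c_1$ unchanged, because $\langle m,l\rangle_u=\langle e,f\rangle_u=0$ by Lemma~\ref{lem:pairing-property-even}(2).
\end{itemize}
So the assertion that after your steps ``the defects lie in $\pi O_K$'' is false for $c_1$. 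No iteration of these moves changes whether $c_1$ is a unit.

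Here is a concrete instance. In $\M_6$ with its standard basis $\{e,f,l,m\}$, replace $l,m$ by $l+mt\Pi$ and $m+\pi lt\Pi$ with $t\in O_K^\times$. This basis still satisfies \eqref{eq:can-basis-L-6}, and its Gram matrix has $b_1=b_2=0$, $c_2=1$, $c_1=t$. Your procedure cannot move it. The appeal to successive approximation therefore stands in for exactly the step where an idea is required.

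The missing idea is a unit pivot for $c_1$, installed before $b_1$ and $b_2$ are killed. There are three ways to get one:
\begin{itemize}
\item The paper first forces $b_1\in O_K^\times$ if necessary: rescale $e,f$ so that $c_2-c_2^\sigma\in O_K^\times$, then pass to $e+m$, $f+\pi l$. It then replaces $l,m$ by $l-e(b_1^{-1}c_1\Pi)$ and $m-f(b_1^{-1}c_1\Pi)$.
\item Your step~4 would work in the mirror-image order: make $b_2$ a unit first, and do step~4 before steps~2 and~3.
\item More directly, the move $l\mapsto l+mt\Pi$, $m\mapsto m+\pi lt\Pi$ above changes $c_1$ to $c_1+c_2t$. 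So $t=-c_2^{-1}c_1$ kills it, using the unit $c_2$.
\end{itemize}

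Once $c_1=0$, no approximation bookkeeping is needed:
\begin{itemize}
\item The move $e\mapsto e+md$, $f\mapsto f+l\pi d$ preserves $c_1=0$ and kills $b_1$ exactly, with $d$ from Lemma~\ref{lem:solve-eq}. This is the one Hensel-type argument, and it handles the quadratic term $-\pi b_2dd^\sigma$ you noticed.
\item Now $\langle f,f\rangle_u=-\pi b_1=0$. So $l\mapsto l-fh^\sigma$, $m\mapsto m-e\pi h^\sigma$ kills $b_2$ exactly via the linear equation $hc_2-(hc_2)^\sigma+b_2=0$ (additive Hilbert~90 for $O_K$). It does not disturb $b_1$, $c_1$ or $c_2$.
\item A final rescaling of $e,f$ by $(c_2^\sigma)^{-1}$ yields $C_6$.
\end{itemize}
No halving is needed anywhere.

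Incidentally, norm surjectivity is irrelevant in your step~1. Replacing $e,f$ by $ea,fa$ multiplies $c_2$ by $a^\sigma$, not by a norm, so $a^\sigma=c_2^{-1}$ works directly.
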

\begin{proof}
    Let $\xi, \zeta \in \L_4$ be the standard right $O_B$-basis of $\L_{4}=(O_B^{\oplus 2}, \varphi_4)=O\oplus O$. Then the left $O_B$-module structure of $\L_4$ is given by \eqref{eq:can-basis-L-4}.
    By Corollary~\ref{cor:mat-under-can-base-L-4-6} (1), the Gram matrix of $\langle~,~\rangle_u$ relative to the basis $\{\xi, \zeta\}$ is $\begin{bmatrix} 0 & b \\ -b & 0 \end{bmatrix}$ with $b\in O_K^\times$.
 Now the right $O_B$-basis  $\{\xi'\coloneqq \xi b^{-1}, \zeta'\coloneqq \zeta\}$ of $\L_4$ still satisfies the relations \eqref{eq:can-basis-L-4}, under which  the Gram matrix of $\langle~,~\rangle_u$ is $\begin{bmatrix}
       0 & 1 \\ -1 & 0
    \end{bmatrix}$. It follows that $(\L_4, \langle~,~\rangle_u)$ is isometric to $\M_4$. The case $i=5$ can be treated in exactly the same way. 

    To prove that $(\L_6, \langle~,~\rangle_u)$ is isometric to $\M_6$, let $e, f, l,  m\in \L_6$ be the standard right $O_B$-basis of $\L_6=(O_B^{\oplus 4}, \varphi_6)=\L_1\oplus \L_2$. Then the left $O_B$-module structure of $\L_6$ is given by \eqref{eq:can-basis-L-6}.
    As before, we modify the basis $\{e, f, l, m\}$ to obtain another right $O_B$-basis of $\L_6$ satisfying \eqref{eq:can-basis-L-6}, under which the Gram matrix of $\langle~,~\rangle_u$ coincides with $C_6$ in \eqref{eq:block-defn-even}.
    However, this process is not as straightforward as before. Suppose that the Gram matrix of $\langle~,~\rangle_u$ under $\{e, f, l, m\}$ is given by $C$ as in \eqref{eq:mat-under-can-basis-L-6}. 
    We first point out that we can assume without loss of generality that $b_1\in O_K^\times$. Suppose otherwise that $b_1\in \pi O_K$. Recall that $c_2\in O_K^\times$, so we can choose $b\in O_K^\times$ such that $(bc_2)-(bc_2)^\sigma\in O_K^\times$. Note that $\langle eb^\sigma, m\rangle_u=b\langle e, m\rangle_u=bc_2$. Replacing $\{e, f, l, m\}$ by $\{eb^\sigma, fb^\sigma, l, m\}$ if necessary, we assume directly that $c_2-c_2^\sigma\in O_K^\times$. The right $O_B$-basis 
    \[
        e'=e+m,\quad f'=f+\pi l,\quad l'=l,\quad m'=m
    \]
   of $\L_6$ satisfies \eqref{eq:can-basis-L-6} and
   \begin{equation}
       \langle e', e'\rangle_u=\langle e+m, e+m\rangle_u
                               =b_1+c_2-c_2^\sigma-\pi b_2\in O_K^\times
   \end{equation}        
    since $b_1\in \pi O_K$ by our assumption. Replacing $\{e, f, l, m\}$ by $\{e', f', l', m'\}$, we assume that $b_1\in O_K^\times$. Under this assumption, the elements
    \[
        e''=e,\quad f''=f,\quad l''=l-e(b_1^{-1}c_1\Pi), \quad  m''=m-f(b_1^{-1}c_1\Pi)
    \]
    form another right $O_B$-basis of $\L_6$ satisfying \eqref{eq:can-basis-L-6}. Moreover,  we have
    \[
        \langle e'', l''\rangle_u=\langle e, l-e(b_1^{-1}c_1\Pi)\rangle_u=c_1\Pi-b_1(b_1^{-1}c_1\Pi)=0.
    \]
Therefore, we could have assumed $c_1=0$ at the very beginning.  More explicitly, from now on we assume (by an abuse of notation) that  $e, f, l,  m\in \L_6$ is a right $O_B$-basis of $\L_6$ that satisfies \eqref{eq:can-basis-L-6} with the Gram matrix given by \eqref{eq:mat-under-can-basis-L-6} and $c_1\Pi\coloneqq \langle e, l\rangle_u=0$. 
The previous intermediate  assumptions $c_2-c_2^\sigma\in O_K^\times$  and $b_1\in O_K^\times$  are no longer needed for the remaining part of the proof. 
    
    Since $b_1+b_1^\sigma=b_2+b_2^\sigma=0$,  by Lemma~\ref{lem:solve-eq} below there exists $d\in O_K$ such that
    \[
        (dc_2)-(dc_2)^\sigma-\pi b_2 dd^\sigma+ b_1=0.
    \]
    The elements
    \[
        e_1=e+md,\quad f_1=f+l(\pi d),\quad l_1=l,\quad m_1=m
    \]
    still satisfy \eqref{eq:can-basis-L-6} and form a right $O_B$-basis of $\L_6$. Computing directly, we get 
    \[
        \langle e_1, e_1\rangle_u=(dc_2)-(dc_2)^\sigma-\pi b_2 dd^\sigma+ b_1=0,\text{ and }
        \langle e_1, l_1\rangle_u=0.
    \]
    Put $c\coloneqq\langle e_1, m_1\rangle_u \in O_K^\times$. Then the Gram matrix of $\langle~,~\rangle_u$ under $\{e_1, f_1, l_1, m_1\}$ is given by
    \[
        \begin{bmatrix}
            0 & 0 & 0 & c \\
            0 & 0 & -c & 0 \\
            0 & c^\sigma & b_2 & 0 \\
            -c^\sigma & 0 & 0 & -\pi b_2 
        \end{bmatrix}.
    \]
  Since $K/F$ is unramified, the trace map $\Tr_{K/F}: O_K\to O_F$ is surjective. We can easily adapt the proof of \cite[Theorem~10.1]{Lang-Algebra} to show that the additive form of Hilbert theorem 90 holds for $O_K$, that is, $H^1(\Gal(K/F), O_K)=\{0\}$. This implies that 
   there exists $h\in O_K$ such that
    \[
        (hc)-(hc)^\sigma+b_2=0,
    \]
    which also follows directly from Lemma~\ref{lem:solve-eq}.
    Consider the right $O_B$-basis of $\L_6$ as follows:
    \[
    e_2=e_1,\quad f_2=f_1,\quad l_2=l_1-f_1h^\sigma,\quad m_2=m_1-e_1(\pi h^\sigma).
    \]
    These elements still satisfy  \eqref{eq:can-basis-L-6} and we have 
    \[
        \langle l_2, l_2\rangle_u=(hc)-(hc)^\sigma+b_2=0, \text{ and } \langle e_2, e_2\rangle_u=\langle e_2, l_2\rangle_u=0.
    \]
    Thus the Gram matrix of $\langle~,~\rangle_u$ under $\{e_2, f_2, l_2, m_2\}$ is
    \[
        \begin{bmatrix}
            0 & 0 & 0 & c \\
            0 & 0 & -c & 0 \\
            0 & c^\sigma & 0 & 0 \\
            -c^\sigma & 0 & 0 & 0
        \end{bmatrix}.
    \]
    Now  $\{e_2c^{-1}, f_2c^{-1}, l_2, m_2\}$ is a right $O_B$-basis of $\L_6$ satisfying \eqref{eq:can-basis-L-6} whose  Gram matrix under $\langle~,~\rangle_u$  coincides with $C_6$ in  \eqref{eq:block-defn-even} as desired.
\end{proof}

\begin{lemma}\label{lem:solve-eq}
    For any $c\in O_K^\times$ and $b_1, b_2\in O_K$ with $b_1+b_1^\sigma=b_2+b_2^\sigma=0$, there exists $d\in O_K$ such that
    \begin{equation}\label{eq:e22-1}
        (dc)-(dc)^\sigma-\pi b_2 dd^\sigma+ b_1=0.
    \end{equation}
\end{lemma}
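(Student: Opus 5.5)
We must solve $(dc)-(dc)^\sigma-\pi b_2 dd^\sigma+b_1=0$ for $d\in O_K$, given $c\in O_K^\times$ and $b_1,b_2$ of trace zero. The plan is Hensel-type successive approximation: first solve modulo $\pi$, then correct $\pi$-adically, using only the surjectivity of the linear map $d\mapsto dc-(dc)^\sigma$ onto the trace-zero elements.

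\emph{Setup.} Let $K^-\coloneqq\{x\in O_K\mid x+x^\sigma=0\}$, a free $O_F$-module of rank one. Since $\fchar(F)\neq 2$ and $K/F$ is unramified, it is generated by the element $c_0\in O_K^\times$ with $c_0+c_0^\sigma=0$ used earlier, so $K^-=O_Fc_0$. Consider the map
\[
\Phi(d)\coloneqq dc-(dc)^\sigma-\pi b_2dd^\sigma+b_1 .
\]
Each of the three terms $dc-(dc)^\sigma$, $\pi b_2 dd^\sigma$ (because $dd^\sigma\in O_F$ and $b_2\in K^-$) and $b_1$ lies in $K^-$, so $\Phi$ maps $O_K$ into $K^-$.

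\emph{Key linear fact.} The $O_F$-linear map $\lambda\colon O_K\to K^-$, $x\mapsto xc-(xc)^\sigma$, is surjective. Since $c$ is a unit, this reduces to the surjectivity of $x\mapsto x-x^\sigma$ onto $K^-$, which is the additive Hilbert 90 statement $H^1(\Gal(K/F),O_K)=0$. The paper already cites this, derived from surjectivity of $\Tr_{K/F}$ on $O_K$. Concretely, pick $t\in O_K$ with $\Tr(t)=1$; then for $y\in K^-$ the element $x=ty$ satisfies $x-x^\sigma=ty+t^\sigma y=y$.

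\emph{Iteration.} Choose $d_0$ with $\lambda(d_0)=-b_1$, so $\Phi(d_0)=-\pi b_2d_0d_0^\sigma\in\pi K^-$. Inductively, suppose $\Phi(d_k)\in\pi^{k+1}K^-$. Set $d_{k+1}=d_k+\delta$, where $\delta\in\pi^{k+1}O_K$ solves $\lambda(\delta)=-\Phi(d_k)$; such $\delta$ exists by surjectivity of $\lambda$ after dividing by $\pi^{k+1}$. Expanding,
\[
\Phi(d_k+\delta)=\Phi(d_k)+\lambda(\delta)-\pi b_2\bigl(d_k\delta^\sigma+\delta d_k^\sigma+\delta\delta^\sigma\bigr),
\]
and the last term lies in $\pi^{k+2}O_K\cap K^-$. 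Hence $\Phi(d_{k+1})\in\pi^{k+2}K^-$.

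\emph{Convergence.} The sequence $(d_k)$ is Cauchy, since $d_{k+1}-d_k\in\pi^{k+1}O_K$, and $O_K$ is complete, so $d_k\to d\in O_K$. By continuity of $\Phi$, we get $\Phi(d)=0$.

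The only nonformal input is the surjectivity of $\lambda$, i.e. the additive Hilbert 90 for $O_K$. It holds because $K/F$ is unramified, and this is exactly why the perturbation by $\pi$ is harmless.
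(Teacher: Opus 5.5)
Your proposal is correct and follows essentially the same route as the paper: a $\pi$-adic successive-approximation (Hensel) argument in which the linear part $x\mapsto xc-(xc)^\sigma$ kills the error at each step via additive Hilbert 90, followed by completeness of $O_K$. The differences are cosmetic. The paper first reduces to $c=1$ and solves each correction only modulo $\pi$ using $H^1(\Gal(\F_{q^2}/\F_q),\F_{q^2})=0$, whereas you solve each correction exactly over $O_K$ with a trace-one element $t$.
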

\begin{proof}
    Without loss of generality, we assume that $c=1$. We construct inductively a sequence $\{d_n\}_{n\geq 1}$ in $O_K$ with $d_n\equiv d_{n-1} \bmod \pi^{n-1}$ such that
    \begin{equation}\label{eq:e22-2}
        d_n-d_n^\sigma-\pi b_2 d_n d_n^\sigma+ b_1\equiv 0 \bmod \pi^n.
    \end{equation}
 From the additive form of Hilbert Theorem 90, namely, $H^1(\Gal(\F_{q^2}/\F_{q}), \F_{q^2})=\{0\}$, there exists $d_1\in O_K$ such that $d_1-d_1^\sigma+b_1\equiv 0 \bmod \pi$. In other words, equation \eqref{eq:e22-2} holds for $n=1$. Suppose that $n>1$, and $d_{n-1}$ has already been defined. By the induction hypothesis, there exists an $a\in O_K$ such that
    \[
        d_{n-1}-d_{n-1}^\sigma-\pi b_2 d_{n-1} d_{n-1}^\sigma+ b_1=\pi^{n-1} a.
    \]
   The assumption on $b_1$ and $b_2$ now  implies that $a+a^\sigma=0$,  so we can choose $b\in O_K$ such that $b-b^\sigma+a\equiv 0 \bmod \pi$ as above. Put $d_n\coloneqq d_{n-1}+\pi^{n-1}b$ so that $d_n\equiv d_{n-1} \bmod \pi^{n-1}$. A direct computation shows that
    \[
        d_n-d_n^\sigma-\pi b_2 d_n d_n^\sigma+ b_1=\pi^{n-1}(b-b^\sigma+a-\pi b_2(d_{n-1}^\sigma b+ d_{n-1}b^\sigma+\pi^{n-1}bb^\sigma)).
    \]
    In particular, \eqref{eq:e22-2} holds  by our choice of $b$. Define $d_n$ recursively in this way for all $n$. By construction, $\{d_n\}$ is a Cauchy sequence with respect to the $\pi$-adic topology on $O_K$, and it converges to a solution of the equation \eqref{eq:e22-1} for $c=1$.
\end{proof}
\begin{lemma}\label{lem:isotropic-compt-even}
    Suppose that $(L, \langle~,~\rangle)$  is a hermitian $(O_B, *, O_B)$-bilattice with structural invariant $(r_1, r_2, t, t)$ so that $L=\O^{\oplus r_1}\oplus \P^{\oplus r_2}\oplus \L_6^{\oplus t}$, where $\L_6$ is given by \eqref{eq:defn-L-4-6}.
    If $(L, \langle~,~\rangle)$ is self-dual, then $\O^{\oplus r_1}$, $\P^{\oplus r_2}$, and $\L_6^{\oplus t}$ are all self-dual with respect to the restrictions of the hermitian pairing $\langle~,~\rangle$. 
\end{lemma}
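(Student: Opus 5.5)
We argue as in the proof of Lemma~\ref{lem:isotropic-compt-odd}, now with the skew-hermitian $(O_B,\bar{~},O_B)$-bilattice $(L,\langle~,~\rangle_u)$. Reduce modulo $\Pi$. The pairing $\langle~,~\rangle_u$ induces a (possibly degenerate) skew-hermitian form $H$ on the truncation $\overline{L}=L/L\Pi$. Here $H$ takes values in $\F_{q^2}$, satisfies $H(xa,yb)=a^\sigma H(x,y)b$, and by \eqref{eq:herm-bimod-eq-2} also satisfies $H(\alpha x,y)=H(x,\bar\alpha y)$ for $\alpha\in\F_{q^2}[\varepsilon]$. Since $\bar a=a^\sigma$ on $O_K$ and $\bar\Pi=-\Pi$, the induced involution on $\F_{q^2}[\varepsilon]$ sends $a\mapsto a^q$ and $\varepsilon\mapsto-\varepsilon$. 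As before, $(L,\langle~,~\rangle_u)$ is self-dual if and only if $H$ is non-degenerate, and the same criterion applies to each summand.

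Put $T_1\coloneqq\O^{\oplus r_1}$, $T_2\coloneqq\P^{\oplus r_2}$ and $T_3\coloneqq\L_6^{\oplus t}=\L_1^{\oplus t}\oplus\L_2^{\oplus t}$, and decompose $\overline{L}=\overline{L}^0\oplus\overline{L}^1$ as in Lemma~\ref{lem:linear-alg-descrip-trun-bimod}. We have $\overline{T}_1=\overline{T}_1^0$ and $\overline{T}_2=\overline{T}_2^1$, and $\overline{T}_3^i$ is spanned by the images of the $\L_1$-parts and the $\L_2$-parts (the vectors $\bar e,\bar m$ in degree $0$ and $\bar f,\bar l$ in degree $1$). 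The key orthogonality relations are the following.

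\begin{enumerate}[(1)]
\item $\overline{L}^0\perp\overline{L}^1$. For $x\in\overline{L}^0$ and $y\in\overline{L}^1$ we get $a^\sigma H(x,y)=H(ax,y)=H(x,a^\sigma y)=H(x,y)a^{\sigma\sigma}\cdot$, which forces $H(x,y)=0$. This is the analogue of Lemma~\ref{lem:pairing-property-even}(2) modulo $\Pi$, since $O_K\Pi\equiv 0$.
\item $\varepsilon\overline{L}\perp\ker(\varepsilon)$. Because $\varepsilon$ kills $\overline{T}_1$, $\overline{T}_2$ and $\overline{\L}_1^{1},\overline{\L}_2^{0}$ (the images $\bar f,\bar m$), we get $H(\varepsilon x,y)=-H(x,\varepsilon y)=0$ whenever $\varepsilon y=0$. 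Hence $\varepsilon\overline{L}=\langle\bar f,\bar m\rangle$ is orthogonal to $\overline{T}_1\oplus\overline{T}_2\oplus\varepsilon\overline{L}$.
\end{enumerate}

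Order a basis of $\overline{L}^0$ as (images of $e$'s, basis of $\overline{T}_1$, images of $m$'s) and a basis of $\overline{L}^1$ as (images of $l$'s, basis of $\overline{T}_2$, images of $f$'s). By (1), the Gram matrix is block diagonal $\mathrm{diag}(G^0,G^1)$. By (2), the rows of $G^0$ indexed by the $m$'s vanish except in the $e$-columns, and the rows indexed by $\overline{T}_1$ vanish in the $m$-columns. Thus
\[
G^0=\begin{bmatrix} * & * & X\\ * & Y & 0\\ X' & 0 & 0\end{bmatrix},
\]
and similarly for $G^1$, with $\overline{T}_2$ in the middle and the $f$'s last. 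Invertibility of $G^0$ forces the $t\times t$ block $X=H(\bar e,\bar m)$ to be invertible. Column and row operations using $X$ then clear the $(\overline{T}_1,e)$ entries, so $Y=H|_{\overline{T}_1}$ is invertible as well, and likewise $H|_{\overline{T}_2}$ is invertible from $G^1$. Hence $\O^{\oplus r_1}$ and $\P^{\oplus r_2}$ are self-dual. For $T_3$, the Gram matrix of $H|_{\overline{T}_3}$ has the same antitriangular shape with invertible blocks $X$ and $H(\bar l,\bar f)=-H(\bar e,\bar m)^{*}$-type blocks, the latter obtained via $\bar f=\varepsilon\bar e$ and $\bar m=\varepsilon\bar l$. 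It is therefore invertible, so $\L_6^{\oplus t}$ is self-dual.

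The main obstacle is the careful bookkeeping in (2). One must verify that the off-diagonal blocks coupling $\overline{T}_1$ with the $e$'s do not obstruct the conclusion, using that the corresponding $X$-block is invertible to eliminate them. One must also confirm $H(\bar l,\bar f)=\pm H(\bar e,\bar m)^{\sigma t}$-type relations, which follow from $H(\varepsilon x,y)=-H(x,\varepsilon y)$ together with skew-hermitianity.
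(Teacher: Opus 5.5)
Your proposal is correct and is essentially the paper's proof. You reduce $\langle~,~\rangle_u$ modulo $\Pi$, use $\overline{L}^0\perp\overline{L}^1$ together with $H(\varepsilon x,y)=-H(x,\varepsilon y)$ to force the zero pattern of the Gram matrix, deduce that the corner blocks $H(\bar e,\bar m)$ and $H(\bar l,\bar f)$ are invertible, and then eliminate the remaining off-diagonal blocks; your statement $\varepsilon\overline{L}\perp\ker\varepsilon$ and the splitting $G=\mathrm{diag}(G^0,G^1)$ simply repackage the paper's statements (2)--(3) and its $6\times 6$ block matrix (the invertibility of $H(\bar l,\bar f)$ already follows from the $f$-columns of $G^1$, so the relation to $H(\bar e,\bar m)$ is not needed; for the record it is $H(\bar l_i,\bar f_j)=H(\bar e_j,\bar m_i)^\sigma$, with a plus sign).
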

\begin{proof}
Clearly,  the canonical involution $\bar{\text{ }}$ of $B$  induces an involution on $\F_{q^2}[\varepsilon]=O_B/\pi O_B$, which is still denoted by $\bar{\text{ }}$. More explicitly,  we have $\ol{\varepsilon}=-\varepsilon$ and $\ol{a}=a^\sigma$  for  $\varepsilon\coloneqq \Pi+\pi O_B$ and for all $a\in \F_{q^2}$. 
 As in Section~\ref{sec:1}, we write $\overline{L}$ for the truncated bimodule\footnote{Admittedly, there seems to be a conflict of notation since we use $\bar{\phantom{a}}$ for the canonical involution and also $\overline{\phantom{L}}$ for the truncated bimodule. However, we shall never apply the canonical involution to a bilattice, so the notation should be clear in that sense. } $L/ L\Pi$, which is a bimodule over $(O_B/\pi O_B, O_B/ O_B \Pi)=(\F_{q^2}[\varepsilon], \F_{q^2})$. 
 The perfect pairing  $\langle~,~\rangle_u$ induces a  non-degenerate skew-hermitian form on the right $\F_{q^2}$-vector space $\overline{L}$, that is,
    \[
        H(x, y)=-H(y, x)^\sigma,\quad H(xa, yb)=a^\sigma H(x, y) b, \qquad \forall x,y\in\overline{L},\,\forall a, b\in\F_{q^2}.
    \]
In addition, by \eqref{eq:herm-bimod-eq-2} it satisfies
    \begin{equation}\label{eq:herm-bimod-eq-H-even}
        H(\alpha x, y)=H(x, \ol{\alpha} y),\quad \forall x, y\in\overline{L},\,\forall\alpha\in\F_{q^2}[\varepsilon].
    \end{equation}  
 Let $T_1=\L_1^{\oplus t}$, $T_2=\L_2^{\oplus t}$, $T_3=\O^{\oplus r_1}$ and  $T_4=\P^{\oplus r_2}$. Then $\L_6^{\oplus t}=T_1\oplus T_2$, which we denote by $T$ for simplicity.
 Similar as in the proof of Lemma~\ref{lem:isotropic-compt-odd}, to prove the lemma, it is enough to  check that the restrictions of $H$ to $\overline{T}_3$, $\overline{T}_4$ and $\overline{T}$ are all non-degenerate.

    By Lemma~\ref{lem:linear-alg-descrip-trun-bimod}, $\overline{L}$ decomposes as a direct sum $\overline{L}^0\oplus \overline{L}^1$ of $(\F_{q^2}, \F_{q^2})$-sub-bimodules, where
    \[
        \overline{L}^0=\{x\in \overline{L}\mid ax=xa,\,\forall a\in\F_{q^2}\} ,\quad \overline{L}^1=\{x\in \overline{L}\mid  ax=xa^\sigma,\,\forall a\in\F_{q^2} \}.
    \]
    Similarly, $\overline{T}_i=\overline{T}_i^0\oplus\overline{T}_i^1$ for each $i$. Note that $\overline{T}_3=\overline{T}_3^0$, $\overline{T}_3^1=0$,  and $\overline{T}_4^0=0$,  $\overline{T}_4=\overline{T}_4^1$. Then
    \[
        \overline{L}^0=\overline{T}_3\oplus \overline{T}_1^0\oplus \overline{T}_2^0,\quad
        \overline{L}^1=\overline{T}_4\oplus \overline{T}_1^1\oplus \overline{T}_2^1.
    \]
     We check that the following statements hold in the skew-hermitian space $(\overline{L}, H)$:
     \begin{enumerate}[itemsep=4pt]
         \item $\overline{L}^0$ and $\overline{L}^1$ are orthogonal to each other, that is, $H(\overline{L}^0, \overline{L}^1)=0$.
         \item $\overline{T}_2^0$ and $\overline{T}_1^1$ are totally isotropic subspaces, that is, $H(\overline{T}_2^0, \overline{T}_2^0)=H(\overline{T}_1^1, \overline{T}_1^1)=0$.
         \item $\overline{T}_2^0$ is orthogonal to $\overline{T}_3$, $\overline{T}_1^1$ is orthogonal to $\overline{T}_4$, that is, $H(\overline{T}_2^0, \overline{T}_3)=H(\overline{T}_1^1, \overline{T}_4)=0$.
     \end{enumerate}
     Suppose that $x\in \overline{L}^0$ and $y\in \overline{L}^1$.  For every $a\in\F_{q^2}$, we have $\ol{a}=a^\sigma$, so
     \[
           a^\sigma H(x, y)=H(xa, y)=H(ax, y)=H(x, a^\sigma y)=H(x, ya)=H(x, y)a,
     \]
     which implies that $H(x, y)=0$ since $a$ is arbitrary. This proves statement (1). Next, note that $\varepsilon y=0$ for all $y\in \overline{T}_4\oplus \overline{T}_1^1$, so the equality $\ol{\varepsilon}=-\varepsilon$ implies that
     \[
         H(\varepsilon x, y)=-H(x, \varepsilon y)=0, \qquad\forall x\in \overline{L}^0.
     \]
     Since $\varepsilon x$ ranges through $\overline{T}_1^1$ when $x$ ranges through $\overline{L}^0$, it follows that $\overline{T}_1^1$ is totally isotropic and  $\overline{T}_1^1$ is orthogonal to $\overline{T}_4$. The proof of the remaining statements can be carried out similarly. 

     Let $ \mathscr{B}_1, \mathscr{B}_2, \mathscr{B}_3$ be $\F_{q^2}$-bases of $\overline{T}_1^0, \overline{T}_2^0, \overline{T}_3$ respectively, and $ \mathscr{C}_1, \mathscr{C}_2, \mathscr{C}_4$ be $\F_{q^2}$-bases of $ \overline{T}_1^1, \overline{T}_2^1, \overline{T}_4$ respectively. Then
     \begin{equation}\label{eq:basis-total-space-even}
         \mathscr{B}_3\sqcup \mathscr{C}_4\sqcup \mathscr{B}_1\sqcup \mathscr{C}_1\sqcup \mathscr{B}_2\sqcup \mathscr{C}_2 
    \end{equation}
     forms an $\F_{q^2}$-basis of the space $\overline{L}= \overline{T}_3\oplus\overline{T}_4\oplus \overline{T}_1^0\oplus \overline{T}_1^1\oplus \overline{T}_2^0\oplus \overline{T}_2^1 $.
    Statements (1), (2) and (3) show respectively that
      \[
      H(\mathscr{B}_i, \mathscr{C}_j)=0,\quad
      H(\mathscr{B}_2, \mathscr{B}_2)=H(\mathscr{C}_1, \mathscr{C}_1)=0,\quad
      H(\mathscr{B}_2, \mathscr{B}_3)=H(\mathscr{C}_1, \mathscr{C}_4)=0.
      \]
     It follows that the Gram matrix of $H$ with respect to the $\F_{q^2}$-basis \eqref{eq:basis-total-space-even} of $\overline{L}$ can be put into a block matrix form as
     \[
         \begin{bNiceMatrix}[first-row,first-col]
               & r_1 & r_2 & t & t & t & t \\
          r_1 & C_{11} & 0 & C_{13} & 0 & 0 & 0 \\ 
          r_2 & 0 & C_{22} & 0 & 0 & 0 & C_{26} \\
          t & C_{31} & 0 & C_{33} & 0 & C_{35} & 0 \\ 
          t & 0 & 0 & 0 & 0 & 0 & C_{46} \\
          t & 0 & 0 & C_{53} & 0 & 0 & 0 \\
          t & 0 & C_{62} & 0 & C_{64} & 0 & C_{66}
         \end{bNiceMatrix}\in\Mat_n(\F_{q^2}),
     \]
      where $n\coloneqq r_1+r_2+4t$.
     Suppose that $(L, \langle~,~\rangle_u)$ is self-dual so that the above  matrix is invertible. As in the proof of Lemma~\ref{lem:isotropic-compt-odd}, it is easy to see that 
     both $C_{35}$ and $C_{64}$ are invertible, which in turn implies that   the matrices
     \[
         C_{11},\quad  C_{22},\quad\begin{bmatrix}
             C_{33} & 0 & C_{35} & 0 \\
              0 & 0 & 0 & C_{46} \\
               C_{53} & 0 & 0 & 0 \\
               0 & C_{64} & 0 & C_{66}
         \end{bmatrix}
     \]
     are all invertible. This shows that the restrictions of $H$ to $\overline{T}_3$, $\overline{T}_4$ and $\overline{T}$ are all non-degenerate as desired.
\end{proof}

\begin{proof}[Proof of Theorem~\ref{thm:self-dual-latt-even}] 
    The `if' part is easy as we can take $(L, \langle~,~\rangle_u)$ to be the skew-hermitian $(O_B, \bar{\phantom{a}}, O_B)$-bilattice $\M_4^{\boxplus \frac{r_1}{2}}\boxplus \M_5^{\boxplus \frac{r_2}{2}}\boxplus \M_6^{\boxplus t}$ in \eqref{eq:orth-split-even}. For the `only if' part, the equality $t_1=t_2$ is immediate by Lemma~\ref{lem:dual-latt-even}. We show  that both $r_1$ and $r_2$ are even. By Lemma~\ref{lem:isotropic-compt-even}, the skew-hermitian $(O_B, \bar{\text{ }}, O_B)$-bilattice $(\O^{\oplus r_1}, \langle~,~\rangle_u|_{\O^{\oplus r_1}})$ is self-dual. From~\eqref{eq:herm-bimod-eq-2}, there exists a skew-hermitian matrix $g\in\GL_{r_1}(O_B)$ such that $\ol{\varphi(\alpha)}^t g=g \varphi(\ol{\alpha})$ for all $\alpha\in O_B$, where $\varphi\coloneqq \id^{\oplus r_1}$ denotes the diagonal embedding $O_B\hookrightarrow\Mat_{r_1}(O_B)$. Note that $\ol{\varphi(\alpha)}^t=\ol{\varphi(\alpha)}=\varphi(\ol{\alpha})$, so $\varphi(\ol{\alpha}) g=g \varphi(\ol{\alpha})$ for all $\alpha\in O_B$, which implies that $g\in \Mat_{r_1}(F)$.
    In particular, $g$ is skew-symmetric  in $\Mat_{r_1}(F)$ because it is  skew-hermitian.  Since $\fchar(F)\neq 2$, we conclude that $r_1$ is even from the non-degeneracy of $g$. 
     Similarly, $r_2$ is  even as well. 
    
    Now suppose that both $r_1$ and $r_2$ are even and $t\coloneqq t_1=t_2$. Let  $(L, \langle~,~\rangle_u)$ be a skew-hermitian $(O_B, \bar{\phantom{a}}, O_B)$-bilattice with structural invariant $(r_1, r_2, t, t)$ so that 
    the isomorphism \eqref{eq:under-latt-1} can be rewritten as
    \begin{equation}\label{eq:under-latt-even}
        L\simeq \L_4^{\oplus \frac{r_1}{2}}\oplus \L_5^{\oplus \frac{r_2}{2}}\oplus \L_6^{\oplus t},
    \end{equation}
    where $\L_4$, $\L_5$ and $\L_6$ are defined in \eqref{eq:defn-L-4-6}.
     We show that $(L, \langle~,~\rangle_u)$ is isometric to $\M_4^{\boxplus \frac{r_1}{2}}\boxplus \M_5^{\boxplus \frac{r_2}{2}}\boxplus \M_6^{\boxplus t}$
     step-by-step as for Theorem~\ref{thm:self-dual-latt-odd}. 

    \emph{Step 1.} We claim that if $r_1>0$, then there exist right $O_B$-linearly independent elements $\xi, \zeta\in L$ such that \eqref{eq:can-basis-L-4} holds and the $(O_B, O_B)$-sub-bilattice $N\coloneqq \xi O_B +\zeta O_B$ of $L$ is self-dual with respect to the restriction of $\langle~,~\rangle_u$. 
    From \eqref{eq:under-latt-even}, there are $r_1/2$ direct summands of $L$ isomorphic to the $(O_B, O_B)$-bilattice $\L_4$.
    Let $\{\xi_1, \zeta_1\}, \dots, \{\xi_{r_1/2}, \zeta_{r_1/2}\}$ denote their standard right $O_B$-bases respectively so that each $\{\xi_i, \zeta_i\}$ satisfies \eqref{eq:can-basis-L-4}. By Lemma~\ref{lem:isotropic-compt-even}, $\bigoplus_{i=1}^{r_1/2}(\xi_i O_B\oplus \zeta_i O_B)$ is self-dual with respect to the restriction of $\langle~,~\rangle_u$. It follows that  at least one of the following elements
    \[
        \langle \xi_1, \xi_j\rangle_u,\quad \langle \xi_1, \zeta_j\rangle_u,\qquad \text{with}\quad 1\leq j\leq r_1/2
    \]
    lies in $O_B^\times$, and hence in $O_F^\times$ by Lemma~\ref{lem:pairing-property-even} (1). Put $\xi\coloneqq\xi_1$. 
    If $\langle \xi_1, \xi_s\rangle_u\in O_F^\times$ for some $s$, we take $\zeta\coloneqq\xi_s$; or if $\langle \xi_1, \zeta_{s'}\rangle_u\in O_F^\times$ for some $s'$, we take $\zeta\coloneqq\zeta_{s'}$. Note that $\langle\xi_1, \xi_1 \rangle_u=0$ by Lemma~\ref{lem:pairing-property-even}~(1), so $s>1$ if $\zeta=\xi_s$.  Then $\xi, \zeta$ are right $O_B$-linearly independent, and are easily seen to satisfy our requirement by Corollary~\ref{cor:mat-under-can-base-L-4-6}.

    Similarly, if $r_2>0$, then there exist right $O_B$-linearly independent elements $\eta, \mu\in L$ such that \eqref{eq:can-basis-L-5} holds and the $(O_B, O_B)$-sub-bilattice $N\coloneqq \eta O_B +\mu O_B$ of $L$ is self-dual with respect to the restriction of $\langle~,~\rangle_u$.

    \emph{Step 2.} We claim that if $t_1>0$, then there exist right $O_B$-linearly independent elements $e, f, l, m\in L$ such that \eqref{eq:can-basis-L-6} holds and the $(O_B, O_B)$-sub-bilattice $N\coloneqq e O_B +f O_B+l O_B+m O_B$ of $L$ is self-dual with respect to the restriction of $\langle~,~\rangle_u$. 
    From \eqref{eq:under-latt-even}, there exists $t$ direct summands of $L$ isomorphic to the $(O_B, O_B)$-bilattice $\L_6$.
    Let $\{e_1, f_1, l_1, m_1\}$, \dots, $\{e_{t_1}, f_{t_1}, l_{t_1}, m_{t_1}\}$ denote their respective standard right $O_B$-bases. Then each $\{e_i, f_i, l_i, m_i\}$ satisfies \eqref{eq:can-basis-L-6}. First note that $\langle f_1, l_1\rangle_u\in O_K$ by Lemma~\ref{lem:pairing-property-even}~(1). If $\langle f_1, l_1\rangle_u\in O_K^\times$, then the elements $e\coloneqq e_1, f\coloneqq f_1, l\coloneqq l_1, m\coloneqq m_1$ are the desired elements in our claim by Corollary~\ref{cor:mat-under-can-base-L-4-6}, so we suppose in the following that $\langle f_1, l_1\rangle_u\in \pi O_K$. From Lemma~\ref{lem:isotropic-compt-even}, $\bigoplus_{i=1}^{t}(O_B e_i\oplus O_B f_i\oplus O_B l_i\oplus O_B m_i)$ is self-dual with respect to the restriction of $\langle~,~\rangle_u$, so at least one of the following elements
    \[
        \langle f_1, e_j\rangle_u,\quad \langle f_1, f_j\rangle_u,\quad
        \langle f_1, l_j\rangle_u,\quad \langle f_1, m_j\rangle_u,\qquad\text{with}\quad 
        1\leq j\leq t_1 
    \]
    lies in $O_B^\times$.
    By Lemma~\ref{lem:pairing-property-even}, for each $1\leq j\leq t_1$, we have $\langle f_1, e_j \rangle_u\in \Pi O_K$, $\langle f_1, m_j\rangle_u \in \Pi O_K$ and 
    \[
        \langle f_1, f_j\rangle_u=\langle \Pi e_1, \Pi e_j\rangle_u=\langle e_1, \ol{\Pi}\Pi e_j\rangle_u=-\pi \langle e_1, e_j\rangle_u\in \pi O_K.
    \]
    Note that $\langle f_1, l_j\rangle_u\in O_K$ for all  $1\leq j\leq t_1$  by Lemma~\ref{lem:pairing-property-even} (1), so $\langle f_1, l_s\rangle_u\in O_K^\times$ for some $s>1$ by our hypothesis. Put
    \[
        e\coloneqq e_1,\quad f\coloneqq f_1,\quad l\coloneqq l_1+l_s,\quad m\coloneqq m_1+m_s.
    \]
    Then $\langle f, l\rangle_u=\langle f_1, l_1+l_s\rangle_u=\langle f_1, l_1\rangle_u+\langle f_1, l_s\rangle_u \in O_K^\times$ since $\langle f_1, l_1\rangle_u\in \pi O_K$ by our hypothesis. Clearly, $e, f, l, m$ are right $O_B$-linearly independent and satisfy  \eqref{eq:can-basis-L-6}. 
    Then by Corollary~\ref{cor:mat-under-can-base-L-4-6}, $e, f, l, m$ are desired elements in our claim.
    
    \emph{Step 3.} Just as in the proof of Theorem~\ref{thm:self-dual-latt-odd}, we complete the proof of the theorem  by induction on the $O_B$-rank of  $L$.
\end{proof}

\subsection{Self-dual hermitian $(O_1, *, O_2)$-bilattices when $B_1$ is split and $B_2$ is division}
\label{subsec:self-dual-latt-B-1-split}

In this section, we treat case (iii), whose assumptions on the quaternion $F$-algebras $B_1$ and $B_2$ have just been stated in the title. Henceforth we fix an identification of $B_1$ with $\Mat_2(F)$ such that $O_1$ is identified with $\Mat_2(O_F)$.
Since the involution $*$  sends every $\alpha\in B_1$ to $\alpha^*=\gamma\overline{\alpha}\gamma^{-1}$  as in \eqref{eq:invol-star}, the assumption that $*$ stabilizes $O_1=\Mat_2(O_F)$  implies that $\gamma\in F^{\times}\GL_2(O_F)$. In view of \eqref{eq:invol-star}, we may and will assume that $\gamma\in\GL_2(O_F)$. 

Consider the $(O_1, O_2)$-bilattice $\M_0\coloneqq\begin{bmatrix} O_2 \\ O_2 \end{bmatrix}$,  equipped with the natural right $O_2$-module structure and the left $O_1$-module structure given by the natural inclusion $O_1=\Mat_2(O_F)\hookrightarrow\End_{\mathrm{Mod}-O_2}(\M_0)=\Mat_2(O_2)$.

\begin{lemma}\label{lem:bilatt-rank-even}
    Let $L$ be an $(O_1, O_2)$-bilattice of $O_2$-rank $n$. Then $n$ is necessarily even, and $L$ is isomorphic to $\M_0^{\oplus \frac{n}{2}}$.
\end{lemma}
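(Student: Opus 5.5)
The plan is to use Morita equivalence for $O_1=\Mat_2(O_F)$. Let $e_{11},e_{12},e_{21},e_{22}$ be the standard matrix units in $O_1$. Given an $(O_1,O_2)$-bilattice $L$, set $N\coloneqq e_{11}L$. This is a right $O_2$-submodule of $L$, since the left and right actions commute.

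First I will show that $L=e_{11}L\oplus e_{22}L$ as right $O_2$-modules, because $e_{11}+e_{22}=1$ are orthogonal idempotents. Next, left multiplication by $e_{21}$ gives a right $O_2$-isomorphism $e_{11}L\to e_{22}L$, with inverse given by $e_{12}$. Hence $L\simeq N\oplus N$ as right $O_2$-modules. Since $N$ is a submodule of the lattice $L$, it is a finitely generated torsion-free right $O_2$-module. As $B_2$ is division, $N$ is free, say $N\simeq O_2^{\oplus m}$ (see the remark at the start of Section~\ref{sec:1}). Comparing ranks gives $n=2m$, so $n$ is even.

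Now I will produce the bimodule isomorphism. Choose a right $O_2$-basis $x_1,\dots,x_m$ of $N$ and define
\[
\Phi:\M_0^{\oplus m}\to L,\qquad \Big(\begin{bmatrix}\beta_i\\ \beta_i'\end{bmatrix}\Big)_{i=1}^m\mapsto \sum_{i=1}^m\big(x_i\beta_i+e_{21}x_i\beta_i'\big).
\]
This map is right $O_2$-linear. To check left $O_1$-linearity, it suffices to test the matrix units, using the relations $e_{jk}x_i=e_{jk}e_{11}x_i$ together with the matrix unit multiplication rules; this is a routine check. Bijectivity follows from the decomposition $L=N\oplus e_{21}N$ established above. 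Therefore $L\simeq \M_0^{\oplus m}=\M_0^{\oplus n/2}$.

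I do not expect a real obstacle here. The only points needing care are verifying the $O_1$-equivariance of $\Phi$ on all four matrix units, and noting that the conjugating element $\gamma$ plays no role, since this lemma concerns only the unpolarized bilattice. Alternatively, one could cite the Morita equivalence between $\Mat_2(O_F)\otimes O_2^{\opp}$-lattices and $O_2^{\opp}$-lattices, but the explicit argument above is short enough to give directly.
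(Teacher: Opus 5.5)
Your proof is correct, but it takes a different route from the paper's. The paper's argument has two steps, each resting on an earlier result or citation:
\begin{itemize}
\item Evenness of $n$ is read off from the rational statement in Remark~\ref{rem:quater-bimod-rank}, namely that a $(\Mat_2(F), B_2)$-bimodule has even $B_2$-rank.
\item The isomorphism $L\simeq\M_0^{\oplus n/2}$ comes from observing that $O_1\otimes_{O_F}O_2^{\opp}=\Mat_2(O_2^{\opp})$ is a maximal order. By \cite[Theorem~18.7]{reiner:mo}, an $(O_1,O_2)$-bilattice is then determined up to isomorphism by its $O_2$-rank.
\end{itemize}

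You instead carry out the Morita equivalence by hand. You use the idempotent splitting $L=e_{11}L\oplus e_{21}e_{11}L$ and the freeness of $N=e_{11}L$ over the noncommutative discrete valuation ring $O_2$. You then write down the explicit map $\Phi$. Its $O_1$-equivariance does reduce to the four matrix units, but this needs one remark you should state explicitly. $\Phi$ is right $O_2$-linear, and on a bilattice the left and right $O_F$-actions agree. Hence $\Phi$ commutes with the scalar coefficients in $a=\sum a_{jk}e_{jk}$.

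What your route buys:
\begin{itemize}
\item It proves evenness directly at the integral level, without passing to $L\otimes_{O_F}F$.
\item It avoids the structure theory of lattices over maximal orders.
\item It produces an explicit right $O_2$-basis $\{x_i, e_{21}x_i\}$ adapted to the matrix units. This is precisely the kind of basis the paper builds anyway in the proof of Proposition~\ref{prop:self-dual-B-1-split}, using $L^1=e_{11}L$, $L^2=e_{22}L$ and the map $x\mapsto e_{21}x$.
\end{itemize}
The paper's version is shorter and places the lemma in a general framework: lattices over a local maximal order are classified by rank. The cost is its reliance on external results.
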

\begin{proof}
    Since $B_2$ is divison and $B_1=\Mat_2(F)$, by Remark~\ref{rem:quater-bimod-rank} the $B_2$-rank of any $(B_1, B_2)$-bimodule is necessarily even. 
    It follows that the $O_2$-rank of the $(O_1, O_2)$-bilattice $L$ is even as well. Note that $O_1\otimes_{O_F}O_2^{\opp}=\Mat_2(O_2^{\opp})$ is a maximal order, so the isomorphism class of an $(O_1, O_2)$-bilattice is completely determined by its $O_2$-rank by \cite[Theorem~18.7]{reiner:mo}. We conclude that $L\simeq \M_0^{\oplus \frac{n}{2}}$.
\end{proof}

Given a hermitian $(O_1, *, O_2)$-bilattice $(L, \langle~,~\rangle)$, we define a pairing $\langle~,~\rangle_{\gamma}: L\times L\to O_2$ by the formula
\begin{equation}\label{eq:defn-lange-range-gamma}
    \langle x, y \rangle_{\gamma}\coloneqq \langle x, \gamma y\rangle,\quad\forall x, y\in L.
\end{equation}
Then $\langle~,~\rangle_{\gamma}$ is a skew-hermitian form on the right $O_2$-lattice $L$, and equation \eqref{eq:herm-bimod-eq} is equivalent to 
\begin{equation}\label{eq:skew-herm-bimod-eq}
    \langle \alpha x, y \rangle_{\gamma}=\langle x, \ol{\alpha} y \rangle_{\gamma},\quad\forall x, y\in L, \forall\alpha\in O_1.
\end{equation}
In other words, $(L, \langle~,~\rangle_{\gamma})$ forms a skew-hermitian $(O_1, \bar{\text{ }}, O_2)$-bilattice. Since we have assumed that $\gamma\in\GL_2(O_F)$, $(L, \langle~,~\rangle)$ is self-dual if and only if $(L, \langle~,~\rangle_{\gamma})$ is so.

Let $\langle~,~\rangle_{0}:\M_0\times \M_0\to O_2$ be the perfect hermitian $(O_1, *, O_2)$-pairing on $\M_0$ such that the Gram matrix of its associated pairing $\langle~,~\rangle_{0, \gamma}$ with respect to the standard right $O_2$-basis of $\M_0$ is given by
\begin{equation*}
        \begin{bmatrix}
            0  &  1 \\  -1 & 0
        \end{bmatrix}.
\end{equation*}
For simplicity, we still write $\M_0$ for the skew-hermitian $(O_1, \bar{\text{ }}, O_2)$-bilattice $(\M_0, \langle~,~\rangle_{0, \gamma})$.

\begin{prop}\label{prop:self-dual-B-1-split}
    Keep the assumption that $B_1$ is split and $B_2$ is division. Then
    for any even integer $n\in 2\Z_{>0}$, there exists a unique self-dual hermitian $(O_1, *, O_2)$-bilattice of $O_2$-rank $n$ up to isometry.
\end{prop}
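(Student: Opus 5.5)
Existence is immediate: $\M_0^{\boxplus n/2}$ with the block-diagonal sum of the pairings $\langle~,~\rangle_{0,\gamma}$ is a self-dual skew-hermitian $(O_1,\bar{\ },O_2)$-bilattice of $O_2$-rank $n$. Translating back via $\langle x,y\rangle=\langle x,\gamma^{-1}y\rangle_\gamma$ (with $\gamma\in\GL_2(O_F)$) gives a self-dual hermitian $(O_1,*,O_2)$-bilattice. The real content is uniqueness, and I would reduce it to the classical unimodular classification over $O_2$ by Morita equivalence, following the route of Section~\ref{subsec:self-dual-latt-odd}.

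Let $(L,\langle~,~\rangle)$ be self-dual of $O_2$-rank $n$. By Lemma~\ref{lem:bilatt-rank-even}, $L\simeq \M_0^{\oplus n/2}$. Let $e_{11},e_{12},e_{21},e_{22}$ be the matrix units of $O_1=\Mat_2(O_F)$, and put $L_1\coloneqq e_{11}L$, which is a free right $O_2$-module of rank $n/2$. Then $L=L_1\oplus e_{21}L_1$. Since $\ol{e_{11}}=e_{22}$ and $\ol{e_{12}}=-e_{12}$, relation \eqref{eq:skew-herm-bimod-eq} gives
\[\langle e_{11}x,e_{11}y\rangle_\gamma=\langle x,e_{22}e_{11}y\rangle_\gamma=0,\]
so $L_1$ is totally isotropic, and likewise $e_{21}L_1=e_{22}L$. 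Define a pairing on $L_1$ by $h(x,y)\coloneqq\langle x,e_{21}y\rangle_\gamma$. Using $\ol{e_{21}}=-e_{21}$ and the skew-hermitian property of $\langle~,~\rangle_\gamma$, one checks that $h$ is \emph{hermitian} on $L_1$:
\[h(y,x)=\langle y,e_{21}x\rangle_\gamma=\langle -e_{21}y,x\rangle_\gamma=\overline{\langle x,e_{21}y\rangle_\gamma}.\]
Relative to an $O_2$-basis of $L_1$ and its image under $e_{21}$, the Gram matrix of $\langle~,~\rangle_\gamma$ has the form $\begin{bmatrix}0&H\\-\ol{H}^t&0\end{bmatrix}$, where $H$ is the Gram matrix of $h$. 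Hence self-duality of $L$ is equivalent to $H\in\GL_{n/2}(O_2)$, that is, $(L_1,h)$ is a unimodular hermitian $O_2$-lattice. Conversely, $(L_1,h)$ determines $(L,\langle~,~\rangle_\gamma)$ up to isometry: any isometry $L_1\to L_1'$ extends $O_1$-linearly to an isometry of bilattices.

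It therefore remains to show that a unimodular hermitian lattice over the maximal order $O_2$ of the division algebra $B_2$, with respect to the canonical involution, is isometric to the standard form $\langle 1\rangle^{\perp n/2}$. I would argue directly. For $x\in L_1$, $h(x,x)\in O_F$. Reducing modulo $\Pi$ gives a nondegenerate hermitian form over $\F_{q^2}$ with respect to $\sigma$. Such a form has an anisotropic vector $\bar x$ whose value is a unit of $\F_q$; lifting it, $h(x,x)\in O_F^\times$. The norm map $\Nrd:O_2^\times\to O_F^\times$ is surjective, so we may rescale $x$ to get $h(x,x)=1$. Lemma~\ref{lem:orth-compl-direct-summ} then splits off $xO_2$, and induction finishes.

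I expect the main obstacle to be the sign and involution bookkeeping in the Morita step: checking that $h$ is hermitian rather than skew-hermitian, and that it is compatible with the right $O_2$-action. Once that is settled, the diagonalization is routine.
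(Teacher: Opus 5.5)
Your proposal is correct and is essentially the paper's proof. The paper also splits $L=e_{11}L\oplus e_{22}L$, shows both pieces are totally isotropic for $\langle~,~\rangle_\gamma$, and passes to the perfect hermitian form $\psi(x,y)=\langle x,e_{21}y\rangle_\gamma$ on $e_{11}L$; it then takes a basis $\{x_i\}$ with identity Gram matrix and sets $y_i=e_{21}x_i$. The only difference is that the paper cites \cite[Proposition~6.1]{Jacobowitz-HermForm} or \cite[Lemma~2.5]{terakado-xue-yu:2023} for that orthonormal basis, whereas you prove it directly via reduction modulo $\Pi$, an anisotropic vector, surjectivity of $\Nrd$ on $O_2^\times$, and Lemma~\ref{lem:orth-compl-direct-summ}, which is a valid argument.
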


\begin{proof}
Let $(L, \langle~,~\rangle)$ be a self-dual hermitian $(O_1, *, O_2)$-bilattice. It is enough to show that $(L, \langle~,~\rangle_{\gamma})$ is isometric to $\M_0^{\boxplus \frac{n}{2}}$.
Let $\{e_{ij}\mid 1\leq i, j\leq 2\}$ be the standard basis of $B_1=\Mat_2(F)$, where $e_{ij}$ denotes the matrix whose $(i, j)$-th entry is $1$ and all other entries are $0$. Then $L=L^1\oplus L^2$, where $L^1\coloneqq e_{11}L$ and $L^2\coloneqq e_{22}L$ are right $O_2$-sublattices of $L$. For any $x, y\in L^1$, since $e_{11}x=x$, $e_{11}y=y$ and $\overline{e}_{11}=e_{22}$, we have
    \[ 
       \langle x, y\rangle_{\gamma}=\langle e_{11}x, e_{11}y\rangle_{\gamma}=\langle x, e_{22}e_{11}y\rangle_{\gamma}=0.                
     \]
Similarly, $\langle x, y\rangle_{\gamma}=0$ for all $x, y\in L^2$. Define a pairing $\psi:L^1\times L^1\to O_2$ on $L^1$ by the formula
\[
         \psi(x, y)\coloneqq\langle x, e_{21}y\rangle_{\gamma},\quad\forall\, x, y\in L^1.
\]
Then $\psi$ is a hermitian form on the right $O_2$-lattice $L^1$ by \eqref{eq:skew-herm-bimod-eq}.
The map $L^1\to L^2$ sending $x$ to $e_{21}x$ defines an isomorphism of right $O_2$-lattices with the inverse $L^2\to L^1$ given by  $y\mapsto e_{12}y$. Since $\langle~,~\rangle$ is perfect and $\langle L^1, L^1\rangle_{\gamma}=\langle L^2, L^2\rangle_{\gamma}=0$, we find that $\psi$ is perfect as well. Then by \cite[Lemma~2.5]{terakado-xue-yu:2023} or \cite[Proposition~6.1]{Jacobowitz-HermForm}, there exists a right $O_2$-basis $\{x_1, \cdots, x_{n/2}\}$ of $L^1$ whose Gram matrix under $\psi$ is the identity matrix $I_{\frac{n}{2}}$. We put  $y_i\coloneqq e_{21}x_i$ for all $1\leq i \leq n/2$ so that $\{x_1, y_1, \cdots, x_{n/2}, y_{n/2}\}$ forms a right $O_2$-basis of $L$. Then for all $1\leq i, j\leq n/2$,
\[
    \langle x_i, x_j\rangle_{\gamma}=\langle y_i, y_j\rangle_{\gamma}=0,\qquad 
    \langle x_i, y_j\rangle_{\gamma}=\psi(x_i, x_j)=\delta_{ij},
\]
where $\delta_{ij}$ is the Kronecker symbol.
For each $1\leq i\leq n/2$, if we denote $N_i\coloneqq x_iO_2\oplus y_iO_2$,  then we have
\[
       (L, \langle~,~\rangle_\gamma)=(N_1, \langle~,~\rangle_\gamma|_{N_1})\boxplus
       \cdots\boxplus (N_{\frac{n}{2}}, \langle~,~\rangle_\gamma|_{N_{\frac{n}{2}}}),\quad \text{with}\quad (N_i, \langle~,~\rangle_\gamma|_{N_i})\simeq \M_0, \quad \forall 1\leq i \leq n/2. 
\]
 This shows that $(L, \langle~,~\rangle_{\gamma})$ is isometric to $\M_0^{\boxplus \frac{n}{2}}$ as desired.
\end{proof}

\subsection{Self-dual hermitian $(O_1, *, O_2)$-bilattices when $B_2$ is split}
\label{subsec:self-dual-latt-split}
Finally, we treat cases (iv) and (v), where $B_2$ is split. Both of these cases can be reduced to the situations studied in \cite[\S 2]{terakado-xue-yu:2023}. With applications to the global case in mind, we proceed under a slightly more general assumption on $B_1$.
Suppose that $F_1$ is a finite separable extension of $F$ of degree $d\coloneqq[F_1:F]$ with ring of integers $O_{F_1}$. Let $B_1$ be a quaternion $F_1$-algebra and $O_1$ be a maximal $O_{F_1}$-order in $B_1$. Let $*$ be an orthogonal involution on $B_1$ that stabilizes $O_1$. Then by \cite[Proposition~2.21]{book-of-involution} there exists $\gamma\in B_1^\times$ such that \eqref{eq:invol-star} holds. 

\begin{lemma}\label{lem:B-2-rank-divisible-by-d}
  Let $V$ be a finitely generated $(B_1, B_2)$-bimodule. 
  Then $V$ is necessarily free as a right $B_2$-module with $d\mid \rank_{B_2}(V)$. If further $B_1$ is division, then $(2d)\mid \rank_{B_2}(V)$.
\end{lemma}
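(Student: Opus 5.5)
Freeness over $B_2$ comes first: $B_2$ is a quaternion algebra over $F$ (in this subsection in fact split, $B_2\simeq\Mat_2(F)$), so every finitely generated right $B_2$-module is semisimple. Its simple module is unique up to isomorphism, so $V$ is a finite direct sum of copies of it. Because we need $V$ to be free rather than merely a sum of simple modules, we work with $\dim_F V$. Every finitely generated right $B_2$-module is a direct sum of copies of the unique simple right module $S$. When $B_2=\Mat_2(F)$ we have $\dim_F S=2$, so $V$ is free exactly when the number of copies of $S$ is even, i.e.\ when $4\mid\dim_F V$. (In the division case $S=B_2$ and freeness is automatic.) Freeness therefore reduces to the divisibility statements below, and we prove those directly in terms of $\dim_F V$.

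The scalar field $F_1$ sits in the center of $B_1$ and acts on $V$ by $B_2$-linear endomorphisms. Since $V$ is an $(F_1\otimes_F B_2)$-module and $F_1\otimes_F B_2\simeq B_2\otimes_F F_1$ is a quaternion $F_1$-algebra $B_2'$, we may view $V$ as a right $B_2'$-module. Hence $V$ is a $(B_1,B_2')$-bimodule with both algebras central over $F_1$, which is the setting of Remark~\ref{rem:quater-bimod-rank} and Lemma~\ref{lem:emb-exist-neces-cond} with $F_1$ as base field. Those results show that $V$ is free over $B_2'$ of some rank $m$, with $m$ even when $B_1\not\simeq B_2'$. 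For the applications $B_2'$ is split, and then $B_1$ division gives $B_1\not\simeq B_2'$. A direct check also works: a $(B_1,\Mat_2(F_1))$-bimodule is, by Morita equivalence, $W\otimes_{F_1}F_1^2$ for a left $B_1$-module $W$, and $\dim_{F_1}W$ is divisible by $4$ when $B_1$ is division and by $2$ in general.

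Computing dimensions then gives $\dim_F V=4dm$. Consequently $\dim_F V$ is divisible by $4$ (giving freeness over $B_2$ by the first paragraph) and $\rank_{B_2}V=dm$. Thus $d\mid\rank_{B_2}(V)$, and when $B_1$ is division, $m$ is even, so $2d\mid\rank_{B_2}(V)$.

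The step most likely to cause trouble is the freeness over $B_2$ in the split case, since there semisimplicity alone gives only a sum of simple modules. The dimension count above resolves it once we know $4\mid\dim_F V$, which in turn follows from the $B_2'$-module structure.
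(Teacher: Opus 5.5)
Your proof is correct and follows the paper's route. Like the paper, you regard $V$ as a $(B_1, B_2\otimes_F F_1)$-bimodule over $F_1$, invoke Remark~\ref{rem:quater-bimod-rank}, and conclude by the dimension count $\dim_F V = 4d\cdot\rank_{B_2\otimes_F F_1}V$. Your explicit treatment of freeness over the split $B_2$ via $4\mid\dim_F V$, and your note that the $2d$ claim uses that $B_2$ (hence $B_2\otimes_F F_1$) is split, just spell out what the paper calls an ``easy rank consideration''.
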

\begin{proof}
      Note that $V$ is canonically a $(B_1, B_2\otimes_{F}F_1)$-bimodule.  The lemma follows directly from Remark~\ref{rem:quater-bimod-rank} by easy rank consideration.
\end{proof}

From Lemma~\ref{lem:B-2-rank-divisible-by-d}, an $(O_1, O_2)$-bilattice is necessarily a free right $O_2$-module and its $O_2$-rank is divisible by $d$. If further $B_1$ is division, then its $O_2$-rank is divisible by $2d$.
We study self-dual hermitian $(O_1, *, O_2)$-bilattices in the following cases:
\begin{itemize}
    \item[(iv)] $B_1$ is division, and $B_2$ is split;
    \item[(v)]  $B_1$ and $B_2$ are split, and the $O_2$-rank of the underlying $(O_1, O_2)$-bilattice is divisible by $2d$.
\end{itemize}
As indicated before, one reason for the requirement on the $O_2$-rank in case (v) comes from global consideration as in Lemma~\ref{lem:emb-exist-neces-cond-2}. In fact, the $B_2$-rank of a (non-degenerate) hermitian $(B_1, *, B_2)$-bimodule is necessarily divisible by $2d$, which essentially stems from the fact that the dimension of a non-degenerate symplectic space is necessarily even by  the arguments of \cite[\S2]{terakado-xue-yu:2023}. However, we will not use this fact elsewhere, so we omit details.

We now apply the Morita equivalence  to reduce the question of hermitian modules over split quaternion algebras to that of symplectic spaces as in \cite[\S2.4--2.5]{Shimura1963-AltHermForms}.
Fix an identification of $B_2$ with $\Mat_2(F)$ such that $O_2$ is identified with $\Mat_2(O_F)$. Let $\{e_{ij}\mid 1\leq i, j\leq 2\}$ be the standard basis of $B_2=\Mat_2(F)$ as in Section~\ref{subsec:self-dual-latt-B-1-split}.
Let $(V, \langle~,~\rangle)$ be a hermitian $(B_1, *, B_2)$-bimodule, whose $B_2$-rank is divisible by $2d$. 
Let $W\coloneqq Ve_{11}$. Then for any  $x, y\in W$, we have
 \[
     \langle x, y\rangle=\langle xe_{11}, ye_{11}\rangle=e_{22}\langle x, y\rangle e_{11}\in F e_{21}.
 \]
Let $\psi: W\times W\to F$ be the map such that for all $x,y\in W$,
\[        \langle x, y \rangle=\psi(x, y)e_{21}.              \]
By \cite[Proposition 2.9]{Shimura1963-AltHermForms}, $\psi$ is a non-degenerate alternating form on the $F$-vector space $W$. Since the $F$-dimension of $W$ is $2n$, it is easy to see that $W$ is a free left $B_1$-module of rank $n/2d$.
Thus $(W, \psi)$ forms an $F$-valued skew-hermitian $(B_1, *)$-module in the sense of \cite[Definition~2.7]{terakado-xue-yu:2023}, which is unique up to isometry by \cite[Propositions~2.8 and 2.9]{terakado-xue-yu:2023}.
\begin{prop}\label{prop:self-dual-split-case}
Keep the assumption  that $B_2$ is split. For any $n\in\Z_{>0}$ divisible by $2d$, there exists a unique hermitian $(B_1, *, B_2)$-bimodule $(V, \langle~,~\rangle)$ of $B_2$-rank $n$ up to isometry.
    \begin{enumerate}[(1)]
        \item If $B_1$ is split, then $(V,\langle~,~\rangle)$ contains a unique isometry class of self-dual hermitian $(O_1, * ,O_2)$-bilattices.
        \item If $B_1$ is division, then the following statements are equivalent:
        \begin{enumerate}[(i)]
            \item $(V,\langle~,~\rangle)$ contains a self-dual hermitian $(O_1, *,O_2)$-bilattice;
            \item $n/2d$ is even or $\ord_{B_1}(\gamma)$ is odd.
        \end{enumerate}
         Moreover, such a self-dual hermitian bilattice is unique up to isometry if it exists.
    \end{enumerate}
\end{prop}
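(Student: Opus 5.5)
The plan is to transport everything through the Morita correspondence $V\mapsto W\coloneqq Ve_{11}$ set up just before the statement. This reduces the problem to $F$-valued skew-hermitian $(B_1,*)$-modules in the sense of \cite[\S2]{terakado-xue-yu:2023}.

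\emph{Rational uniqueness.} The assignment $(V,\langle~,~\rangle)\mapsto (W,\psi)$ is an equivalence of categories, with quasi-inverse $W\mapsto W\otimes_F F^{2}$ (row vectors), i.e.\ $V\simeq W\oplus We_{12}$-style reconstruction. I would first check that it sends isometries to isometries and back. The inverse recovers $\langle x,y\rangle$ from $\psi$ by the formula $\langle x,y\rangle_{ij}$ expressed via $\psi(xe_{1i}, ye_{1j})$, using $\langle xe_{ij}, y\rangle=\ol{e_{ij}}\langle x,y\rangle$. I would also check that the hermitian conditions \eqref{eq:herm-form-defn} and \eqref{eq:herm-bimod-eq} translate into $\psi$ being alternating with $\psi(\alpha x,y)=\psi(x,\alpha^*y)$. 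Once this is in place, existence and uniqueness of $(V,\langle~,~\rangle)$ of $B_2$-rank $n$ follow from existence and uniqueness of $(W,\psi)$ with $W$ free of $B_1$-rank $n/2d$, by \cite[Propositions~2.8 and 2.9]{terakado-xue-yu:2023}.

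\emph{Integral step.} Under the same correspondence, $(O_1,O_2)$-bilattices $L\subset V$ correspond to $O_1$-lattices $M\coloneqq Le_{11}\subset W$, with $L=M\oplus Me_{12}$, since $O_2=\Mat_2(O_F)$ and Morita equivalence holds over $O_F$. I would then show that $L$ is self-dual for $\langle~,~\rangle$ if and only if $M$ is self-dual for $\psi$, meaning $M=\{x\in W:\psi(x,M)\subseteq O_F\}$. To do this, write $L^\vee$ entrywise: $\langle x,y\rangle\in \Mat_2(O_F)$ for all $y\in L$ if and only if each entry $\psi(xe_{1i},ye_{1j})\in O_F$. This identifies $L^\vee e_{11}$ with $M^\perp$. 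Isometries of bilattices likewise correspond to isometries of the self-dual $O_1$-lattices. The proposition is thus reduced to classifying self-dual $O_1$-lattices in the unique skew-hermitian $(B_1,*)$-module $(W,\psi)$ of rank $m\coloneqq n/2d$.

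\emph{Classification of self-dual $O_1$-lattices.} For the results in \cite[\S2]{terakado-xue-yu:2023}, I would rely on the existence and uniqueness criteria for self-dual lattices there, which I take as available under the same hypotheses. If $B_1$ is split, a further Morita reduction via $e_{11}\in O_1$ turns the problem into self-dual $O_{F_1}$-lattices in a symplectic $F_1$-space, twisted by a trace form with the inverse different. That case is unique by the symplectic basis theorem. If $B_1$ is division, I would normalize $\gamma$ so that $\ord_{B_1}(\gamma)\in\{0,1\}$ and pass to the $O_{F_1}$-valued form $\psi(x,\gamma y)$, or its trace-dual version, on the free $O_1$-module.

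When $\ord(\gamma)$ is odd, I would decompose into rank-one self-dual pieces and prove uniqueness via surjectivity of norms from $O_1^\times$. When $\ord(\gamma)$ is even, the form becomes skew-hermitian for the canonical involution. I would then argue as in the proof of Theorem~\ref{thm:self-dual-latt-even}, reducing modulo $\Pi$: the form is forced to be alternating over the residue field, giving a hyperbolic plane decomposition and forcing $m$ even. For the converse existence, when $m$ is even one uses hyperbolic planes, and when $\ord\gamma$ is odd one uses the rank-one lattice $O_1$.

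I expect the main obstacle to be exactly this division case: pinning down the parity criterion (ii) together with uniqueness, including correct bookkeeping of the different of $F_1/F$ when converting $F$-valued $\psi$ into an $O_{F_1}$-valued form. My first attempt would be to cite \cite[Theorem/Prop.~2.10ff]{terakado-xue-yu:2023} directly. Failing that, I would reprove it by the truncation-mod-$\Pi$ argument of Section~\ref{subsec:self-dual-latt-even} together with Lemma~\ref{lem:orth-compl-direct-summ}.
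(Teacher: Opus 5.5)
Your proposal follows the paper's proof: the Morita reduction $L\mapsto Le_{11}$ to left $O_1$-lattices in the $F$-valued skew-hermitian $(B_1,*)$-module $(W,\psi)$, the check that self-duality is preserved (which you verify entrywise and the paper cites as \cite[Proposition~2.10]{Shimura1963-AltHermForms}), and then \cite[Propositions~2.8 and 2.9]{terakado-xue-yu:2023}. Your fallback sketches for the division case are not needed, and there is one index slip: the entries of $\langle x,y\rangle$ are recovered from $\psi(xe_{i1},ye_{j1})$, not from $\psi(xe_{1i},ye_{1j})$, because $xe_{12}$ lies in $Ve_{22}$ and hence not in $W=Ve_{11}$.
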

\begin{proof}
Keep $(W, \psi)$ as before the proposition.  Let $L$ be an $(O_1, O_2)$-bilattice in $V$. Then $L e_{11}=L\cap W$ is a (full) left $O_1$-lattice in $W$. By the Morita equivalence, the map $L\mapsto L e_{11}$ defines a one-to-one correspondence between the set of $(O_1, O_2)$-bilattices in $(V,\langle~,~\rangle)$ and the set of left $O_1$-lattices in $(W, \psi)$. Moreover, by \cite[Proposition~2.10]{Shimura1963-AltHermForms}, $L$ is self-dual in $(V,\langle~,~\rangle)$ if and only if $L e_{11}$ is self-dual in $(W, \psi)$. Now our proposition follows immediately from \cite[Propositions~2.8 and 2.9]{terakado-xue-yu:2023}.
\end{proof}

\section{Principally polarized superspecial \abs with quaternion action}\label{sec:QM}
In this section, we apply the classification of self-dual hermitian local quaternion bilattices to the study of principally polarized superspecial \abs with quaternion action. We  obtain the main result of the paper as previously introduced in Theorem~\ref{thm:nec-suff-cond-intro} and reproduced and proved here in Theorem~\ref{thm:genus-char}.
Throughout this section, we fix the following notation and  assumptions. Let $F$ be a totally real number field, and $O_F$ be its ring of integers. Let $B$ be a totally indefinite quaternion $F$-algebra with canonical involution $\alpha\mapsto \ol{\alpha}$. Suppose that $B$ is equipped with a positive involution $*$, that is, an involution such that $\Tr_{B/\Q}(\alpha\alpha^*)>0$ for all nonzero $\alpha\in B$. 
Let $\calO$ be a maximal $O_F$-order in $B$ stable under $*$.

As in Section~\ref{subsec:ssp-O-1-ab-var}, we fix a supersingular elliptic curve $E$ over an algebraically closed field $k$ of characteristic $p>0$. Then $D\coloneqq \End^{0}(E)=\End(E)\otimes\Q$ is the (unique) quaternion algebra over $\Q$ ramified exactly at $\{p, \infty\}$ and $\calR\coloneqq\End(E)$ is a maximal order in $D$.

Following \cite[Definition~5.5]{xue-yu:counting-av}, a $\Q$-polarization on an abelian $k$-variety $X$ is an element $\lambda$ of $\Hom(X, X^{\vee})\otimes_{\Z}\Q$ such that $N\lambda$ is a polarization for some positive integer $N$.
\begin{defn}\label{defn:QM}
   A $\Q$-polarized (resp.~polarized, resp.~principally polarized) $\calO$-\ab over $k$ is a triple $(X,\lambda,\iota)$, where $(X, \iota)$ is an $\calO$-\ab over $k$ as in Definition~\ref{defn:O-1-ab-var}, and $\lambda:X\to X^{\vee}$ is a $\Q$-polarization (resp.~polarization, resp.~principal polarization) of $X$ such that
    \begin{equation}\label{eq:QM-eq}
        \lambda\circ\iota(\alpha^*)={\iota(\alpha)}^{\vee}\circ\lambda,\quad\forall\alpha
        \in \calO. 
    \end{equation}
  A $\Q$-polarization $\lambda$ on an $\calO$-abelian variety $(X, \iota)$ is a $\Q$-polarization on $X$ satisfying \eqref{eq:QM-eq} (provided that $*$ is clear from the context). 
\end{defn}

\subsection{Self-dual hermitian $(\calO, *, \calR)$-bilattices}
\label{subsec:rel-with-latt}
We reduce the study of principally polarized superspecial $\calO$-\abs over $k$ to the study of self-dual positive definite hermitian $(\calO, *, \calR)$-bilattices.  Let $(X, \iota)$ be a superspecial $\calO$-abelian variety over $k$, and $L\coloneqq\Hom(E, X)$ be the corresponding $(\calO, \calR)$-bilattice as in Lemma~\ref{lem:emb-exist-neces-cond-2}. We show that each polarization $\lambda$ on $(X,\iota)$ induces a positive definite  $\calR$-valued hermitian form $\langle~,~\rangle_\lambda: L\times L\to \calR$ satisfying \eqref{eq:herm-bimod-eq}, and vice versa. 
 Let $\phi_E$ be the canonical principal polarization on $E$.  Following \cite[(4.6)]{Ibukiyama-Karemaker-Yu-2025},  we attach a pairing $\langle~,~\rangle_\lambda: L\times L\to \calR$ on $L$ to each polarization $\lambda$ on $X$ defined as follows:
\begin{equation}\label{eq:defn-associ-pair-of-pol}
    \langle f_1, f_2\rangle_\lambda\coloneqq \phi_E^{-1} f_1^{\vee}\lambda f_2,
        \quad \forall f_1, f_2\in L.
\end{equation}
From \cite[Lemma~4.4 (1)]{Ibukiyama-Karemaker-Yu-2025}, $\langle~,~\rangle_\lambda$ is a positive definite hermitian form on the right $\calR$-lattice $L$. Now condition \eqref{eq:QM-eq} is easily seen to be equivalent to 
\begin{equation}\label{eq:herm-form-compa-iota}
     \langle \alpha x, y\rangle_\lambda=\langle x, \alpha^*y\rangle_\lambda,\quad\forall x, y\in L,\, \forall \alpha\in \calO.
\end{equation} 

\begin{lemma}\label{lem:pol-herm-pair}
    The map $\lambda\mapsto\langle~,~\rangle_\lambda$ defines a bijection from the set of  polarizations on $(X, \iota)$ to the set of positive definite  $\calR$-valued hermitian $(\calO, *, \calR)$-forms on $L$, under which $\lambda$ is principal if and only if $\langle~,~\rangle_\lambda$ is perfect.
\end{lemma}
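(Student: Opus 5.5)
The plan is to rely on the Ibukiyama--Katsura--Oort correspondence between polarizations on $X\simeq E^n$ and positive definite hermitian forms on $L=\Hom(E,X)$, in the refined form of \cite[Lemma~4.4]{Ibukiyama-Karemaker-Yu-2025}. After that, it only remains to check that the extra compatibility conditions correspond on both sides. Concretely, I first fix an isomorphism $X\simeq E^n$. This identifies $L$ with $\calR^{\oplus n}$, $\Hom(X,X^\vee)$ with $\Mat_n(\calR)$ via $\phi_E$, and a polarization $\lambda$ with a matrix $H_\lambda=\phi_E^{-1}\lambda\in\Mat_n(\calR)$ (after the identification $X^\vee\simeq (E^\vee)^n$ and applying $\phi_E^{-1}$ componentwise).

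Under this identification, the Rosati-type relation $f^\vee\phi_E=\phi_E\bar f$ for $f\in\calR=\End(E)$ gives $\langle f_1,f_2\rangle_\lambda=\ol{f_1}^{\,t}H_\lambda f_2$ for column vectors $f_i\in\calR^{\oplus n}$. Then:
\begin{itemize}
\item the hermitian property corresponds to $\lambda^\vee=\lambda$ (symmetry of $\lambda$);
\item positive definiteness corresponds to $\lambda$ being a polarization, i.e.\ the symmetric homomorphism coming from an ample line bundle.
\end{itemize}
Both facts are precisely \cite[Proposition~2.8]{Ibukiyama-Katsura-Oort-1986} and \cite[Lemma~4.4]{Ibukiyama-Karemaker-Yu-2025}, which I will quote. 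This gives a bijection between polarizations on $X$ and positive definite $\calR$-valued hermitian forms on $L$. Injectivity is clear because $H_\lambda$ is the Gram matrix in the standard basis. For surjectivity, every positive definite hermitian $H\in\Mat_n(\calR)$ arises from some polarization.

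Next I show that \eqref{eq:QM-eq} holds if and only if \eqref{eq:herm-form-compa-iota} holds. Writing $\iota(\alpha)\in\Mat_n(\calR)$, we have $\langle \alpha x,y\rangle_\lambda=\phi_E^{-1}x^\vee\iota(\alpha)^\vee\lambda y$ and $\langle x,\alpha^*y\rangle_\lambda=\phi_E^{-1}x^\vee\lambda\iota(\alpha^*)y$. Thus \eqref{eq:QM-eq} implies \eqref{eq:herm-form-compa-iota} directly. For the converse, I use that $\Hom(E,X)$ separates homomorphisms out of $X^\vee$. Since $X\simeq E^n$, two elements $\psi_1,\psi_2\in\Hom(X,X^\vee)$ with $x^\vee\psi_1 y=x^\vee\psi_2 y$ for all $x,y\in L$ coincide: take $x,y$ to be the coordinate inclusions, which recover all matrix entries. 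Hence the two sides of \eqref{eq:QM-eq} agree.

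Finally, $\lambda$ is principal iff $\deg\lambda=1$, iff $H_\lambda\in\GL_n(\calR)$, iff the Gram matrix is invertible over $\calR$, which is exactly perfectness of $\langle~,~\rangle_\lambda$. The step needing the most care is the surjectivity/positivity statement, namely that every positive definite hermitian matrix comes from an actual polarization rather than merely a symmetric homomorphism. Here I rely entirely on the cited results and do not reprove them.
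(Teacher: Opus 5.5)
Your proposal is correct and follows essentially the same route as the paper: reduce to $X=E^n$, use the Ibukiyama--Katsura--Oort bijection $\lambda\mapsto \phi_X^{-1}\lambda$ together with \cite[Lemma~4.4]{Ibukiyama-Karemaker-Yu-2025} to identify this matrix with the Gram matrix of $\langle~,~\rangle_\lambda$, note that $\lambda$ is principal iff that matrix lies in $\GL_n(\calR)$, and then restrict via the equivalence of \eqref{eq:QM-eq} and \eqref{eq:herm-form-compa-iota}. Your coordinate-inclusion argument for the converse direction of that equivalence fills in a step the paper only calls easy. The one thing you leave implicit is that fixing $X\simeq E^n$ needs $n\geq 2$ (Deligne--Ogus--Shioda), which the paper justifies by noting that $2d\mid n$.
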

\begin{proof}
    Since $B$ is not isomorphic to  $D_F\coloneqq D\otimes_\Q F$, the dimension $n\coloneqq \dim X$ is necessarily divisible by $2[F:\Q]$ according to Lemma~\ref{lem:emb-exist-neces-cond-2}. In particular, $n>1$.  By the theorem of Deligne, Ogus and Shioda, we may assume without loss of generality that $X=E^n$ so that $L\coloneqq\Hom(E, X)=(\calR^{\oplus n}, \iota)$. Identifying $X^{\vee}$ with $(E^{\vee})^n$ in the canonical way, we have a canonical principal polarization on $X$ given by $\phi_X\coloneqq\phi_E\times\cdots\times\phi_E$ ($n$ copies). 
    Let $\mathscr{P}(X)$ denote the set of polarizations on $X$ and $\mathscr{H}(\calR)$ denote the set of positive definite hermitian matrices in $\Mat_n(\calR)$. Then by \cite[\S2.2]{Ibukiyama-Katsura-Oort-1986} we have a bijection
       \begin{equation}\label{eq:pol-her-mat}
    \mathscr{P}(X)\xrightarrow{1-1}\mathscr{H}(\calR),\quad\lambda\mapsto g_\lambda\coloneqq \phi_X^{-1}\circ\lambda.
        \end{equation}
From \cite[Lemma~4.4 (2)]{Ibukiyama-Karemaker-Yu-2025}, the Gram matrix of $\langle~,~\rangle_\lambda$ under the  standard right $\calR$-basis of $L$ is exactly given by $g_\lambda$, that is,
\[
           \langle x, y\rangle_\lambda=\overline{x}^t g_\lambda y, \quad\forall x, y\in L=\calR^{\oplus n}.
\]
Moreover, $\lambda$ is principal if and only if  $g_\lambda\in\GL_n(\calR)$. Therefore, the bijection \eqref{eq:pol-her-mat} shows that the map $\lambda\mapsto\langle~,~\rangle_\lambda$ is a bijection from the set of polarizations on $X$ to the set of positive definite $\calR$-valued  hermitian forms on the right $\calR$-lattice $L$, under which $\lambda$ is principal if and only if $\langle~,~\rangle_\lambda$ is perfect. From the equivalence between conditions \eqref{eq:QM-eq} and \eqref{eq:herm-form-compa-iota}, 
the desired bijection in our lemma is just the restriction of this more general bijection.
\end{proof}

Combining Lemma~\ref{lem:pol-herm-pair} with Lemma~\ref{lem:emb-exist-neces-cond-2}, we immediately obtain the following result. 
\begin{lemma}\label{lem:relat-with-self-dual-latt}
    The assignment $(X, \lambda, \iota)\mapsto (L, \langle~,~\rangle_\lambda)$ with $L\coloneqq\Hom(E, X)$ and $\langle~,~\rangle_\lambda$ defined as \eqref{eq:defn-associ-pair-of-pol} induces a bijection from the set of isomorphism classes of (principally) polarized superspecial $\calO$-abelian $k$-varieties of dimension $n$ to the set of isometry classes of (self-dual) positive definite hermitian $(\calO, *, \calR)$-bilattices of $\calR$-rank $n$. 
\end{lemma}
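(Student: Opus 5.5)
This is a direct combination of Lemma~\ref{lem:emb-exist-neces-cond-2} and Lemma~\ref{lem:pol-herm-pair}, and the plan is simply to compose the two bijections and check that they respect the relevant equivalence relations.

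First, I would define the map on objects. Given a (principally) polarized superspecial $\calO$-abelian variety $(X,\lambda,\iota)$ of dimension $n$, set $L\coloneqq\Hom(E,X)$. By Lemma~\ref{lem:emb-exist-neces-cond-2}, $L$ is an $(\calO,\calR)$-bilattice of $\calR$-rank $n$. By Lemma~\ref{lem:pol-herm-pair}, $\langle~,~\rangle_\lambda$ is a positive definite $\calR$-valued hermitian $(\calO,*,\calR)$-form on $L$, and it is perfect exactly when $\lambda$ is principal.

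Second, I would check compatibility with isomorphisms. Let $\phi:(X,\lambda,\iota)\to(X',\lambda',\iota')$ be an isomorphism, so $\lambda=\phi^\vee\lambda'\phi$. Then $\phi_*\colon L\to L'$, $f\mapsto \phi f$, is a bimodule isomorphism, and
\[
\langle \phi f_1,\phi f_2\rangle_{\lambda'}=\phi_E^{-1}f_1^\vee\phi^\vee\lambda'\phi f_2=\langle f_1,f_2\rangle_\lambda .
\]
So $\phi_*$ is an isometry, and the assignment descends to isomorphism classes.

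Third, I would prove the map on classes is bijective.
\begin{itemize}
\item[] \emph{Surjectivity.} Given a (self-dual) positive definite bilattice $(L,\langle~,~\rangle)$, Lemma~\ref{lem:emb-exist-neces-cond-2} supplies $(X,\iota)$ with $\Hom(E,X)\simeq L$. Transport the form along this isomorphism, then apply Lemma~\ref{lem:pol-herm-pair} to get $\lambda$ with $\langle~,~\rangle_\lambda$ equal to the transported form.
\item[] \emph{Injectivity.} Given an isometry $L\to L'$, the full faithfulness implicit in Lemma~\ref{lem:emb-exist-neces-cond-2} gives an isomorphism $\phi\colon(X,\iota)\to(X',\iota')$ inducing it, since $\Hom(E^n,E^n)=\Mat_n(\calR)=\End_\calR(\calR^{\oplus n})$. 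Then $\phi^\vee\lambda'\phi$ and $\lambda$ induce the same form on $L$, so they are equal by the injectivity in Lemma~\ref{lem:pol-herm-pair}.
\end{itemize}

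I expect the main obstacle to be the injectivity step: it needs the functor $X\mapsto\Hom(E,X)$ to be full and faithful on superspecial varieties, and not merely to give a bijection on isomorphism classes. I would handle this by reducing to $X=E^n$, where the functor is literally the identification $\End(E^n)=\Mat_n(\calR)$ with $\calR$-module endomorphisms of $\calR^{\oplus n}$.
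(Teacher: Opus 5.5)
Your proposal is correct and follows the paper's own route: the paper states this lemma as an immediate consequence of Lemma~\ref{lem:pol-herm-pair} and Lemma~\ref{lem:emb-exist-neces-cond-2} and gives no further argument. You check explicitly that isomorphisms induce isometries, and you observe that injectivity needs $X\mapsto\Hom(E,X)$ to be fully faithful (via $\End(E^n)=\Mat_n(\calR)$), not merely a bijection on isomorphism classes; both points correctly fill in what the paper leaves implicit.
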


We sketch a rational version of Lemma~\ref{lem:relat-with-self-dual-latt} and omit the routine verification. Let $(X, \iota)$ be a supersingular $\calO$-abelian variety over $k$. To a $\Q$-polarization $\lambda$ on $(X, \iota)$,  we attach a pairing $\langle~,~\rangle_\lambda: V\times V\to D$ on the $(B, D)$-bimodule $V\coloneqq\Hom(E, X)\otimes\Q$ defined as in \eqref{eq:defn-associ-pair-of-pol} except that ``$f_1, f_2\in L$'' is replaced by ``$f_1, f_2\in V$''.  Then $(V, \langle~,~\rangle_\lambda)$ forms a positive definite hermitian $(B, *, D)$-bimodule. Moreover, the assignment $(X, \lambda, \iota)\mapsto (V, \langle~,~\rangle_\lambda)$ induces a one-to-one correspondence between  the  isogenous classes of $\Q$-polarized supersingular $\calO$-abelian $k$-varieties of dimension $n$ and  the isometry classes of positive definite hermitian $(B, *, D)$-bimodules of $D$-rank $n$. In fact, we shall show in Proposition~\ref{prop:Rat-herm-bimod-unique} that  for each $n$ divisible by $2d$, there is exactly one isometry class of  positive definite hermitian $(B, *, D)$-bimodules of $D$-rank $n$. 
This in turn allows us to conclude in Corollary~\ref{cor:unique-isog-cls} that there is a unique isogenous class of $\Q$-polarized supersingular $\calO$-abelian varieties over $k$ of dimension $n$ for each such  $n$.   We start the proof of these results by the following simple lemma. 
Let $d\coloneqq [F:\Q]$ be the degree of $F$ over $\Q$.

\begin{lemma}\label{lem:inf-pl-ext}
   Let $V$ be a $(B, D)$-bimodule of $D$-rank $n$. Then up to isometry, there exists a unique positive definite hermitian $(B_{\R}, *, D_{\R})$-form on the $(B_{\R}, D_{\R})$-bimodule $V_{\R}$, where $B_\R\coloneqq B\otimes_{\Q}\R$, $D_{\R}\coloneqq D\otimes_{\Q}\R$ and $V_{\R}\coloneqq V\otimes_{\Q}\R$.
 \end{lemma}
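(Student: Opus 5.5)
Since $D_\R\simeq\mathbb{H}$ and $F\otimes_\Q\R\simeq\R^d$, I will first decompose everything along the real places of $F$. We have $B_\R=\prod_{w}B_w$ with $w$ running over the $d$ real places of $F$, and each $B_w\simeq\Mat_2(\R)$ because $B$ is totally indefinite. Correspondingly, $V_\R=\bigoplus_w V_w$ with $V_w:=V\otimes_F F_w$. Any hermitian $(B_\R,*,D_\R)$-form $\langle~,~\rangle$ makes this decomposition orthogonal: for the central idempotents $e_w\in F\otimes\R$ we have $e_w^*=e_w$ (since $*$ is $F$-linear up to the positivity, and positive involutions of the first kind on $B$ fix $F$), so $\langle e_wx,e_{w'}y\rangle=\langle x,e_we_{w'}y\rangle=0$ for $w\ne w'$. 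Hence it suffices to prove existence and uniqueness of a positive definite hermitian $(\Mat_2(\R),*,\mathbb{H})$-form on each $V_w$. By Lemma~\ref{lem:B-D-bimod}, $V_w$ is the unique $(\Mat_2(\R),\mathbb{H})$-bimodule of $\mathbb{H}$-rank $n/d$, which is even.

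On a fixed $V_w$, I will apply the Morita/idempotent trick of Section~\ref{subsec:self-dual-latt-B-1-split}, now over $\R$ with $O_2$ replaced by $\mathbb{H}$. Write $\alpha^*=\gamma\bar\alpha\gamma^{-1}$ with $\gamma\in\Mat_2(\R)$, $\gamma^2=-c<0$. Since $\gamma$ is determined up to $\R^\times$ and $\Mat_2(\R)^\times$-conjugation is realised by bimodule automorphisms of $V_w$, I normalise $\gamma=\begin{bmatrix}0&1\\-1&0\end{bmatrix}$, so that $*$ becomes transposition. Passing to $\langle x,y\rangle_\gamma:=\langle x,\gamma y\rangle$, I get a skew-hermitian $(\Mat_2(\R),\bar{~},\mathbb{H})$-form. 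As in Proposition~\ref{prop:self-dual-B-1-split}, $V_w=e_{11}V_w\oplus e_{22}V_w$ with both pieces isotropic for $\langle~,~\rangle_\gamma$. The rule $\psi(x,y):=\langle x,e_{21}y\rangle_\gamma$ then gives a bijection between such forms and nondegenerate $\mathbb{H}$-hermitian forms $\psi$ on $W:=e_{11}V_w\simeq\mathbb{H}^{n/(2d)}$.

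Next I translate positivity. With $*$ equal to transposition, $e_{11}^*=e_{11}$, and the positivity of $\langle~,~\rangle$ reduces to the positivity of its restriction to $e_{11}V_w$ and to $e_{22}V_w$, since these are orthogonal for $\langle~,~\rangle$. On $e_{11}V_w$ one has $\langle x,y\rangle=\pm\langle x,\gamma^{-1}\cdot\rangle$, which expresses $\langle~,~\rangle$ in terms of $\psi$ up to a fixed sign determined by the normalisation. So $\langle~,~\rangle$ is positive definite if and only if $\pm\psi$ is positive definite. I expect this sign bookkeeping to be the main obstacle: one has to check that the positive definite $\langle~,~\rangle$ corresponds to a definite, not indefinite, $\psi$, and that the restrictions to $e_{11}V_w$ and $e_{22}V_w$ impose the same condition. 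If the direct idempotent computation becomes messy, I would first try the explicit model $V_w=\Mat_2(\R)\otimes_\R\mathbb{H}^{m}$ directly.

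Once this is settled, existence and uniqueness follow from the classical fact that a positive definite quaternionic hermitian space over $\mathbb{H}$ of given rank is unique, namely $\mathbb{H}^m$ with the identity Gram matrix by Gram–Schmidt. Transporting back gives a unique positive definite form on each $V_w$, and the orthogonal sum over $w$ gives the form on $V_\R$.
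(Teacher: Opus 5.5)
Your proposal is correct and is essentially the paper's argument: you split $V_\R$ orthogonally along the real places of $F$, identify $(B_w,*)$ with $(\Mat_2(\R),{}^t)$, and use the idempotents $e_{11},e_{22}$ to reduce to the uniqueness of positive definite hermitian $\mathbb{H}$-modules of rank $n/(2d)$. The sign issue you flag does not arise. With $\gamma=e_{12}-e_{21}$ one has $\gamma^{-1}y=e_{21}y$ for $y\in e_{11}V_w$, so $\psi$ is literally $\langle~,~\rangle|_{e_{11}V_w}$, and $x\mapsto e_{21}x$ is an isometry $e_{11}V_w\to e_{22}V_w$ because $e_{21}^*e_{21}=e_{12}e_{21}=e_{11}$. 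This is why the paper bypasses the skew-hermitian form and works with $\langle~,~\rangle$ on $e_{11}V_w$ directly.
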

 \begin{proof}
   From Lemma~\ref{lem:B-D-bimod}, $n$ is necessarily divisible by $2d$.
    Since $F$ is totally real, the infinite place $\infty$ of $\Q$ splits into $d$ real places $\infty_1, \cdots, \infty_d$ of $F$. Then
     \begin{equation}  \label{eq:decomp-B-1-infty}
     B_\R=B\otimes_{F}F\otimes_{\Q}\R=B\otimes_{F}(F_{\infty_1}\times\cdots\times F_{\infty_d})=B_{\infty_1}\times\cdots\times B_{\infty_d}.
    \end{equation}
    Consequently, it induces a decomposition $V_{\R}=V_{\infty_1}\oplus\cdots\oplus V_{\infty_d}$ with each $V_{\infty_i}\coloneqq V\otimes_F F_{\infty_i}$ a $(B_{\infty_i}, D_\R)$-bimodule of $D_\R$-rank $m\coloneqq n/d$. 
    For each $1\leq i\leq d$,  we have $V_{\infty_i}=e_i V_\R$, where $e_i\in B_{\infty_i}$ is the $i$-th primitive central idempotent $(0, \cdots, 0, 1, 0, \cdots, 0)$ of $B_\R$. 
   Let $\langle~,~\rangle$ be a hermitian $(B_\R, *, D_\R)$-form on $V_\R$. Since $e_i\in F_{\infty_i}$, we have $e_i^*=e_i$, and hence $\langle e_i V_\R, e_j V_\R \rangle=\langle V_\R, e_ie_j V_\R \rangle=0$ whenever $i\neq j$, that is, the decomposition~\eqref{eq:decomp-B-1-infty} induces an orthogonal decomposition
    \[
           V_\R= V_{\infty_1}\boxplus\cdots\boxplus V_{\infty_d}. 
    \]
    Therefore, the lemma would be proved if we show that $V_{\infty_i}$ admits a unique positive definite hermitian $(B_{\infty_i}, *, D_\R)$-form up to isometry for each $1\leq i\leq d$. 
    
    Since $(B, *)$ consists of a totally indefinite quaternion $F$-algebra $B$ together with a positive involution $*$, from \cite[\S21]{mumford:av} there exists an isomorphism $B_{\infty_i}\simeq\Mat_2(\R)$ carrying $*$ to the transpose $^t$ on matrices,  for  each $1\leq i\leq d$.
On the other hand,  $D_\R$ is isomorphic to the classical Hamilton quaternion $\R$-algebra $\mathbb{H}$.  
    Thus, it is enough to prove that there exists a unique positive definite hermitian $(\Mat_2(\R),\phantom{a}^t, \mathbb{H})$-bimodule of $\mathbb{H}$-rank $m\coloneqq n/d$ up to isometry.
  Let $\{e_{ij}\mid 1\leq i, j\leq 2\}$ be the standard basis of $\Mat_2(\R)$.
   Let $(W, \langle~,~\rangle)$ be a positive definite hermitian $(\Mat_2(\R), \phantom{a}^t, \mathbb{H})$-bimodule of $\mathbb{H}$-rank $m$.  Then $W=W_1\oplus W_2$, where $W_1\coloneqq e_{11}W$ and $W_2\coloneqq e_{22}W$. For any $x\in W_1$ and $y\in W_2$, since $e_{11}x=x$, $e_{22}y=y$, we have
   \[
            \langle x, y\rangle=\langle e_{11}x, e_{22}y\rangle=\langle x, e_{11}^t e_{22}y\rangle=0,
   \]
   that is, $W=W_1\boxplus W_2$. It is easy to see that the map $W_1\to W_2$ given by $x\mapsto e_{21}x$ is an isometry of hermitian right $\mathbb{H}$-modules.
  Now $(W_1, \langle~,~\rangle|_{W_1})$ is a positive definite hermitian right $\mathbb{H}$-module of rank $m/2$, which is well-known to be unique up to isometry \cite[\S4.2]{Shimura1963-AltHermForms}. Then the desired result follows from the Morita equivalence. 
 \end{proof}

\begin{lemma}
  Let $(V, \langle~,~\rangle)$ be a hermitian $(B, *, D)$-bimodule of $D$-rank $n$. Let $G$ be the unitary group of $(V, \langle~,~\rangle)$, that is, the linear algebraic group over $\Q$ such that
\begin{equation}\label{eq:defn-U}
    G(\Q)=\{ g\in\Aut_{(B, D)}(V)\mid \langle gx, gy \rangle=\langle x, y \rangle, \forall\, x, y\in V \}.
\end{equation}
Then we have the following isomorphism of algebraic $\overline{\Q}$-groups:
    \begin{equation}\label{eq:U-otimes-bar-Q}
        G\otimes\overline{\Q}\simeq \Sp_{n/d, \overline{\Q}}\times\cdots\times\Sp_{n/d, \overline{\Q}}
      \quad (d\coloneqq [F:\Q]\text{ copies}).
    \end{equation}
    In particular, $G$ is a simply connected semisimple algebraic $\Q$-group.
\end{lemma}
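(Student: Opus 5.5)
We will compute $G\otimes\overline{\Q}$ by base change. Since $G$ is defined by $(B,D)$-linear automorphisms preserving $\langle~,~\rangle$, base change commutes with forming $G$. We first extend scalars from $\Q$ to $\overline{\Q}$ through $F$. Put $V_{\overline{\Q}}\coloneqq V\otimes_\Q\overline{\Q}$. Using $F\otimes_\Q\overline{\Q}\simeq\prod_{\rho:F\hookrightarrow\overline{\Q}}\overline{\Q}$, the central idempotents $e_\rho$ decompose $B\otimes_\Q\overline{\Q}=\prod_\rho B\otimes_{F,\rho}\overline{\Q}\simeq\prod_\rho\Mat_2(\overline{\Q})$, and hence $V_{\overline{\Q}}=\bigoplus_\rho e_\rho V_{\overline{\Q}}$.

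Because $*$ is $F$-linear, we have $e_\rho^*=e_\rho$. Exactly as in the proof of Lemma~\ref{lem:inf-pl-ext}, it follows that the summands are mutually orthogonal. Any $(B,D)$-linear $g$ commutes with the $e_\rho$. Therefore $G\otimes\overline{\Q}\simeq\prod_\rho G_\rho$, where $G_\rho$ is the unitary group of the $(\Mat_2(\overline{\Q}),*,\Mat_2(\overline{\Q}))$-hermitian bimodule $V_\rho\coloneqq e_\rho V_{\overline{\Q}}$, taken with $D\otimes\overline{\Q}\simeq\Mat_2(\overline{\Q})$. By Lemma~\ref{lem:B-D-bimod}, $V_\rho$ has $D$-rank $n/d$, i.e.\ $\overline{\Q}$-dimension $4n/d$. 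Non-degeneracy on each $V_\rho$ follows from orthogonality of the decomposition.

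Next we reduce each $G_\rho$ to a symplectic group by Morita equivalence twice, following the proofs of Propositions~\ref{prop:self-dual-latt-split} and \ref{prop:self-dual-B-1-split}. On the right we pass to $W_\rho\coloneqq V_\rho e_{11}$, which carries the alternating form $\psi$ defined by $\langle x,y\rangle=\psi(x,y)e_{21}$. By \cite[Prop.~2.9]{Shimura1963-AltHermForms}, $W_\rho$ is then a non-degenerate $\overline{\Q}$-valued skew-hermitian $(\Mat_2(\overline{\Q}),*)$-module of dimension $2n/d$, and $G_\rho$ is identified with the automorphisms of $W_\rho$ commuting with $\Mat_2(\overline{\Q})$ and preserving $\psi$. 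On the left, over $\overline{\Q}$ the involution $*$ becomes $\alpha\mapsto\gamma\bar\alpha\gamma^{-1}$, which is an orthogonal involution. After conjugating, we may take it to be transposition, composed with $\gamma$ to match the paper's normalization.

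We then split $W_\rho=e_{11}W_\rho\oplus e_{22}W_\rho$. We check, as in Lemma~\ref{lem:inf-pl-ext}, that with the transpose-type involution the two summands are orthogonal and are identified by $e_{21}$. Consequently the restriction $\psi_1$ of $\psi$ to $U\coloneqq e_{11}W_\rho$ is a non-degenerate alternating form on a space of dimension $n/d$. The centralizer of $\Mat_2$ in $\GL(W_\rho)$ is $\GL(U)$ acting diagonally, and preserving $\psi$ amounts to preserving $\psi_1$, so $G_\rho\simeq\Sp(U,\psi_1)\simeq\Sp_{n/d}$. Here $n/d$ is even, consistent with $2d\mid n$.

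The main obstacle is this last bookkeeping step: one must confirm that the orthogonal involution really makes $e_{11}W$ and $e_{22}W$ orthogonal and isometric, rather than totally isotropic and paired. In the case of a symplectic involution the pairing would instead be between the two halves, and the group would come out orthogonal. Here the right-side Morita step converts the hermitian form into an alternating one, while the left side is orthogonal, and this combination yields $\Sp$. Finally, $\Sp_{2m}$ is simply connected and semisimple, and a $\Q$-form of a product of such groups is again simply connected semisimple, which gives the last assertion.
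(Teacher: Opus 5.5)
Your proposal is correct and follows the paper's proof. You decompose $V\otimes_{\Q}\overline{\Q}$ orthogonally along the idempotents of $F\otimes_{\Q}\overline{\Q}$, normalize $*$ to the transpose on each $\Mat_2(\overline{\Q})$-factor, pass to $W=Ve_{11}$ with its alternating form $\psi$, and then to the symplectic space $e_{11}W$ of dimension $n/d$. The only differences are that you check the last Morita step by hand (orthogonality of $e_{11}W$ and $e_{22}W$, the isometry via $e_{21}$, and the centralizer $\GL(U)$), where the paper cites \cite[\S2.3]{terakado-xue-yu:2023}, and that you obtain the transpose normalization by conjugation instead of citing \cite[Proposition~8.3]{MR2192012}.
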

\begin{proof}
    We fix isomorphisms $D\otimes_{\Q}\overline{\Q}\simeq\Mat_2(\Q)$ and 
\begin{equation}\label{eq:B-1-otimes-bar-Q}
B\otimes_{\Q}\overline{\Q}=B\otimes_{F}F\otimes_{\Q}\overline{\Q}\simeq\Mat_2({\overline{\Q}})\times\cdots\times\Mat_2(\overline{\Q})\quad (d\text{ copies})
\end{equation}
such that the involution on every factor $\Mat_2(\overline{\Q})$ induced by $*$ is exactly the transpose $^t$ for matrices (see \cite[Proposition~8.3]{MR2192012}).  
Similar as in the proof of Lemma~\ref{lem:inf-pl-ext}, the decomposition \eqref{eq:B-1-otimes-bar-Q} induces an orthogonal decomposition of $V\otimes_{\Q}\overline{\Q}$ as follows
\[
       V\otimes_{\Q}\overline{\Q}=V_1\boxplus\cdots\boxplus V_d,
\]
where each $(V_i, \langle~,~\rangle|_{V_i})$ is a hermitian $(\Mat_2(\overline{\Q}),\, ^t, \Mat_2(\overline{\Q}))$-bimodule of $\Mat_2(\overline{\Q})$-rank $n/d$. Denote the unitary $\overline{\Q}$-group of $(V_i, \langle~,~\rangle|_{V_i})$ by $G_i$ for each $1\leq i\leq d$. Then
\[
         G\otimes\overline{\Q}=G_1\times\cdots\times G_d.
\] 
Let  $\{e_{ij}\mid 1\leq i, j\leq 2\}$ be the standard basis of $\Mat_2(\overline{\Q})$.
For each $1\leq i\leq d$, we put $W_i\coloneqq V_ie_{11}$. 
As in Section~\ref{subsec:self-dual-latt-split}, there exists a non-degenerate alternating form $\psi_i: W_i\times W_i\to \overline{\Q}$ on the $\overline{\Q}$-vector space $W_i$ such that
\[
        \langle x, y\rangle=\psi_i(x, y)e_{21}, \quad\forall x, y\in W_i.
\]
Then $(W_i, \psi_i)$ is a $\overline{\Q}$-valued skew-hermitian $(\Mat_2(\overline{\Q}),\, ^t)$-module of $\Mat_2(\overline{\Q})$-rank $n/2d$ in the sense of \cite[Definition~2.7]{terakado-xue-yu:2023}. Further, by \cite[\S2.3]{terakado-xue-yu:2023},  for each $1\leq i\leq d$, $(e_{11}W_i, \psi_i|_{e_{11}W_i})$ is a symplectic $\overline{\Q}$-space of dimension $n/d$, and we have an isomorphism $G_i\simeq \Sp_{n/d, \overline{\Q}}$ by the Morita equivalence. Then the desired isomorphism \eqref{eq:U-otimes-bar-Q} follows.
\end{proof}

 \begin{prop}\label{prop:Rat-herm-bimod-unique}
    For any positive integer $n$ divisible by $2d$, there exists a unique positive definite hermitian $(B, *, D)$-bimodule of $D$-rank $n$ up to isometry.
\end{prop}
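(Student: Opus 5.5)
Existence and uniqueness will be handled separately, with uniqueness reduced to Galois cohomology of the unitary group $G$ from the preceding lemma.

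\emph{Existence.} By Lemma~\ref{lem:B-D-bimod} there is a unique $(B, D)$-bimodule $V$ of $D$-rank $n$ up to isomorphism. To produce a positive definite hermitian form on it, we proceed geometrically. Take any superspecial $\calO$-abelian variety $(X, \iota)$ of dimension $n$, which exists by Lemma~\ref{lem:emb-exist-neces-cond-2}. Choose any polarization $\mu$ on $X$ and average it over $\calO^\times$ to obtain a $*$-compatible polarization; this is the standard argument for a positive involution. Concretely, put $\lambda\coloneqq\sum_j \iota(\alpha_j)^\vee\mu\,\iota(\alpha_j)$ for a $\Z$-basis $\{\alpha_j\}$ of $\calO$. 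This is a polarization satisfying \eqref{eq:QM-eq} because $*$ is positive (cf.\ \cite[\S21]{mumford:av}). Its associated form $\langle~,~\rangle_\lambda$ on $V=\Hom(E,X)\otimes\Q$ is then positive definite and hermitian.

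If one prefers to avoid geometry, the same averaging can be done purely algebraically. Start from the positive definite form $\overline{x}^ty$ on $D^n$ and set $\langle x,y\rangle\coloneqq\sum_j(\alpha_j^*x,\alpha_j^*y)$, where $\{\alpha_j\}$ is now an orthonormal basis for the trace form $\Tr(\alpha\beta^*)$. I would check compatibility with \eqref{eq:herm-bimod-eq} exactly as in the classical Rosati argument.

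\emph{Uniqueness.} Fix one positive definite hermitian $(B,*,D)$-bimodule $(V_0,\langle~,~\rangle_0)$ with unitary group $G$. Any other non-degenerate hermitian form on the same bimodule $V$ becomes isometric to $\langle~,~\rangle_0$ over $\overline{\Q}$. Indeed, after the decomposition in the proof of the previous lemma, both reduce via Morita equivalence to symplectic spaces of dimension $n/d$, and all such spaces are isometric. Therefore the isometry classes of hermitian $(B,*,D)$-bimodules of $D$-rank $n$ inject into $H^1(\Q, G)$, by the usual twisting argument: $\Aut$ of the form is $G$, and forms become isomorphic over $\overline{\Q}$.

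Since $G$ is simply connected semisimple by the preceding lemma, the Hasse principle of Kneser--Harder--Chernousov applies. For $p$-adic fields Kneser's theorem gives $H^1(\Q_\ell,G)=0$, so the localization map $H^1(\Q,G)\to H^1(\R,G)$ is a bijection. Hence a hermitian form is determined up to isometry by its real completion. By Lemma~\ref{lem:inf-pl-ext}, the positive definite forms on $V_\R$ constitute a single isometry class. It follows that any two positive definite hermitian $(B,*,D)$-bimodules of $D$-rank $n$ have the same image in $H^1(\R,G)$, and are therefore isometric.

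The main obstacle is justifying the twisting step carefully: one must show that forms over $\overline{\Q}$ compatible with the bimodule structure are all isometric, and that the automorphism group scheme of the pair (bimodule structure, form) is exactly $G$. I expect this to follow from the $\overline{\Q}$-decomposition used in the previous lemma together with \cite[\S2.3]{terakado-xue-yu:2023}. Existence is routine by comparison.
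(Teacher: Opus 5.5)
Your uniqueness half is the paper's argument: Kneser's $H^1(\Q_\ell,G)=1$, injectivity of $H^1(\Q,G)\to H^1(\R,G)$ for the simply connected group $G$, and Lemma~\ref{lem:inf-pl-ext} over $\R$ (only injectivity is needed).

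The existence half does not work as written. Checking \eqref{eq:QM-eq} for $\lambda=\sum_j\iota(\alpha_j)^\vee\mu\,\iota(\alpha_j)$ comes down to the identity $\sum_j\alpha_j\alpha\otimes\alpha_j=\sum_j\alpha_j\otimes\alpha_j\alpha^*$ in $B\otimes_{\Q}B$. The same holds for \eqref{eq:herm-bimod-eq} in your algebraic average, with $\alpha_j$ replaced by $\alpha_j^*$. Use the isomorphism $B\otimes_\Q B\simeq\End_\Q(B)$, $u\otimes v\mapsto\bigl(w\mapsto\Tr_{B/\Q}(wv^*)u\bigr)$, and put $S(w)\coloneqq\sum_j\Tr_{B/\Q}(w\alpha_j^*)\alpha_j$. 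Then the two sides become $w\mapsto S(w)\alpha$ and $w\mapsto S(w\alpha)$. So you need $S$ to be right $B$-linear. Nothing forces this for an arbitrary $\Z$-basis of $\calO$, and positivity of $*$ does not help. Nor is there a group to average over, since $\calO^\times$ is infinite.

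Your algebraic variant asks for an orthonormal basis, which would give $S=\id$. But an orthonormal $\Q$-basis exists only if the positive definite form $\Tr_{B/\Q}(\alpha\alpha^*)$ is $\Q$-isometric to a sum of $4d$ squares, and this fails in general. For example, take $F=\Q$, $B=(-1,3)_\Q$ with $i^2=-1$, $j^2=3$, and $\gamma=i$. Then $\Tr_{B/\Q}(\alpha\alpha^*)\simeq\langle 1,1,3,3\rangle$, whose Hasse invariant at $3$ is $(3,3)_3=-1$.

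The repair is easy. Choose a $\Q$-basis $\{\alpha_j\}$ of $B$ that is orthogonal for $(\alpha,\beta)\mapsto\Tr_{B/\Q}(\alpha\beta^*)$, and set $c_j\coloneqq\Tr_{B/\Q}(\alpha_j\alpha_j^*)\in\Q_{>0}$. Define $\langle x,y\rangle\coloneqq\sum_j c_j^{-1}(\alpha_j x,\alpha_j y)$. The weighted operator $w\mapsto\sum_j c_j^{-1}\Tr_{B/\Q}(w\alpha_j^*)\alpha_j$ is now the identity. Hence $\sum_j c_j^{-1}\alpha_j\alpha\otimes\alpha_j=\sum_j c_j^{-1}\alpha_j\otimes\alpha_j\alpha^*$, and applying $(u,v)\mapsto(ux,vy)$ gives compatibility. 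Positivity holds because $x\neq 0$ forces $\alpha_j x\neq0$ for some $j$.

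With this correction you have a genuinely different existence proof from the paper's. It is explicit and uses only positivity of $*$. The paper instead takes the $\Q$-space $\scrH$ of matrices satisfying \eqref{eq:e-511}. By Lemma~\ref{lem:inf-pl-ext}, its positive definite locus is a nonempty open subset of $\scrH_\R$, and density of $\scrH$ in $\scrH_\R$ finishes. That route reuses the lemma already needed for uniqueness. Combined with weak approximation, it is also exactly what Step~1 of Theorem~\ref{thm:genus-char-2} uses to impose self-duality at $p$ at the same time, which averaging does not provide.
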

\begin{proof}
 We first prove the existence. Fix a $(B, D)$-bimodule $V\coloneqq (D^{\oplus n}, \varphi)$ of $D$-rank $n$ given by an embedding $\varphi: B\hookrightarrow\Mat_n(D)$. With respect to the standard right $D$-basis of $V$,  a possibly degenerate hermitian $(B, *, D)$-form on $V$ is defined by a matrix $g\in \Mat_n(D)$ satisfying
    \begin{equation}\label{eq:e-511}
        g=\overline{g}^t,\quad \text{and}\quad {\overline{\varphi(\alpha)}}^tg=g\varphi(\alpha^*),\quad \forall \alpha\in B.
    \end{equation}
    In particular, the set of matrices $g$ satisfying \eqref{eq:e-511} forms a $\Q$-subspace $\scrH$ of $\Mat_n(D)$, which can also be interpreted as the solution space of a suitable homogeneous  $\Q$-linear system reformulated from \eqref{eq:e-511} by fixing $\Q$-bases of $B$ and $D$ (and in turn of $\Mat_n(D)$) respectively.  Extending the base field to $\R$, we immediately see that  $\scrH_\R\coloneqq \scrH\otimes_\Q\R$ consists precisely of all matrices $g\in\Mat_n(D_{\R})$ satisfying condition \eqref{eq:e-511}, or equivalently, satisfying the same linear system of equations.  Now Lemma~\ref{lem:inf-pl-ext} implies that $\scrH_\R^+\coloneqq\{g\in \scrH_\R\mid g>0\}$ forms a \emph{nonempty open subset} of $\scrH_\R$ with respect to the usual topology, where  $g>0$ stands for $g$ being positive definite. Since $\scrH$ is dense in $\scrH_\R$, there exists a matrix $g\in \scrH\cap \scrH_\R^+$, which defines a positive definite hermitian $(B, *, D)$-form on $V$.    
    
     For the uniqueness, let $(V, \langle~,~\rangle)$ be a positive definite hermitian $(B, *, D)$-bimodule of $D$-rank $n$ with unitary $\Q$-group $G$.
    From \cite[Theorems~6.4 and 6.6]{Platonov-Rapinchuk}, $H^1(\Q_{\ell}, G)=\{1\}$ for every prime $\ell$, and the map
    \[
           H^1(\Q, G)\to  \prod_{v\leq \infty} H^1(\Q_{v}, G)=H^1(\R, G)
    \]
    is injective. Therefore, by the local-global principle \cite[\S 6.6, p.~347]{Platonov-Rapinchuk}, we are reduced to proving that a positive definite hermitian $(B_{\R}, *, D_{\R})$-form on $V_\R\coloneqq V\otimes_{\Q}\R$ is unique up to isometry, which has already been done in Lemma~\ref{lem:inf-pl-ext}.
\end{proof}

\begin{cor}\label{cor:unique-isog-cls}
   For any positive integer $n$ divisible by $2d$, there is exactly one isogenous class of $\Q$-polarized supersingular $\calO$-abelian varieties over $k$ of dimension $n$.
\end{cor}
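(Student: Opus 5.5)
The plan is to combine Proposition~\ref{prop:Rat-herm-bimod-unique} with the rational correspondence sketched before Lemma~\ref{lem:inf-pl-ext}. That correspondence matches isogeny classes of $\Q$-polarized supersingular $\calO$-abelian $k$-varieties of dimension $n$ with isometry classes of positive definite hermitian $(B, *, D)$-bimodules of $D$-rank $n$. I will make both directions of this matching explicit and then read off the corollary.

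\emph{Existence.} By Lemma~\ref{lem:emb-exist-neces-cond-2}, for $(2d)\mid n$ there is a superspecial $\calO$-abelian variety $(X,\iota)$ of dimension $n$. Let $L=\Hom(E,X)$ and $V=L\otimes\Q$. Proposition~\ref{prop:Rat-herm-bimod-unique} gives a positive definite hermitian $(B,*,D)$-form on $V$. Scaling by a positive integer $N$ makes it $\calR$-valued on $L$. Lemma~\ref{lem:pol-herm-pair} then turns it into a polarization $\lambda$ on $(X,\iota)$. So a $\Q$-polarized object exists.

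\emph{Uniqueness.} Let $(X_1,\lambda_1,\iota_1)$ and $(X_2,\lambda_2,\iota_2)$ be two $\Q$-polarized supersingular $\calO$-abelian varieties of dimension $n$.
\begin{enumerate}
\item By Li--Oort minimal isogeny, each $X_i$ is $\calO$-equivariantly isogenous to a superspecial one. The $\calO$-action transports via the $\Q$-isogeny because $\End^0$ is preserved, and it can then be made integral on a suitable quotient by a finite $\calO$-stable subgroup. So we may assume both are superspecial, hence $X_i\simeq E^n$.
\item Put $V_i=\Hom(E,X_i)\otimes\Q$ with the forms $\langle~,~\rangle_{\lambda_i}$. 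By Proposition~\ref{prop:Rat-herm-bimod-unique} there is an isometry $g\colon V_1\to V_2$ of hermitian $(B,*,D)$-bimodules.
\item Choose $N$ with $Ng(L_1)\subseteq L_2$. Since $\Hom(E^n,E^n)=\Mat_n(\calR)=\Hom_\calR(\calR^n,\calR^n)$, the map $Ng$ is induced by an isogeny $\phi\colon X_1\to X_2$. This $\phi$ is $\calO$-linear because $g$ is left $B$-linear.
\item It remains to check that $\phi^\vee\lambda_2\phi=N^2\lambda_1$. Definition~\eqref{eq:defn-associ-pair-of-pol} gives $\langle f_1,f_2\rangle_{\phi^\vee\lambda_2\phi}=\langle\phi f_1,\phi f_2\rangle_{\lambda_2}$. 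Because $g$ is an isometry, this equals $N^2\langle f_1,f_2\rangle_{\lambda_1}$. The bijection \eqref{eq:pol-her-mat}, which is injective on $\Q$-polarizations as well, then gives the equality. Hence $\phi$ is a $\Q$-polarized isogeny.
\end{enumerate}

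The main obstacle is step~1, reducing supersingular to superspecial while keeping the $\calO$-action. I would handle it by working in the isogeny category, $\End^0(X)\simeq\End^0(E^n)$, and then taking the quotient of $E^n$ by $\ker$ of a suitable integer multiple to make $\calO$ act integrally. Alternatively, since the statement concerns only isogeny classes, I would simply invoke the rational correspondence stated before Lemma~\ref{lem:inf-pl-ext}, whose routine verification is exactly steps 2--4.
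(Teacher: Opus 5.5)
Your proposal is correct and follows the paper's route. The paper obtains the corollary by combining Proposition~\ref{prop:Rat-herm-bimod-unique} with the rational correspondence between isogeny classes of $\Q$-polarized supersingular $\calO$-abelian varieties and isometry classes of positive definite hermitian $(B,*,D)$-bimodules. It omits the verification of that correspondence as routine, and your steps 2--4 supply it.

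Step~1 is not actually needed. Since $\Hom(X_1,X_2)\otimes\Q\cong\Hom_D(V_1,V_2)$ already holds for supersingular $X_i$, you can produce the $\calO$-linear quasi-isogeny directly in the isogeny category. If you keep step~1, rely on the universal property of the Li--Oort minimal isogeny rather than a quotient by $E^n[N]$, since that quotient just returns $E^n$.
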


\subsection{Principally polarized superspecial $\calO$-abelian varieties} \label{subsec:p-ssp-O-1-ab-var}
 In this subsection, we (re)state and  prove the main theorem of this paper. In light of Lemma~\ref{lem:relat-with-self-dual-latt}, we proceed in the language of hermitian $(\calO, *, \calR)$-bilattices and then translate the results into the context of principally polarized superspecial $\calO$-abelian varieties.  As usual, the classification of global positive definite 
hermitian $(\calO, *, \calR)$-bilattices starts with that of the local ones, that is, the corresponding $\ell$-adic completions at all primes $\ell$, as the completion at the archimedean place has already been treated in  Lemma~\ref{lem:inf-pl-ext}. 

\begin{lemma}\label{lem:rank-of-summand}
Let $L$ be an $(\calO, \calR)$-bilattice of $\calR$-rank $n$, and $w$ be a finite place of $F$ lying above a prime number $\ell\in \mathbb{N}$. Then  the $w$-adic completion $L_{w}\coloneqq L\otimes_{O_F}O_{F_w}$ is an $(\calO_w, \calR_\ell)$-bilattice
that is both finite free over $\calO_{w}$ of rank $n/d$ and also finite free over $\calR_{\ell}$ of rank $\frac{n}{d} [F_{w}:\Q_{\ell}]$. 
\end{lemma}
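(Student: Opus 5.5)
Since $L$ is a finitely generated torsion-free $O_F$-module and $O_{F_w}$ is flat over $O_F$, the completion $L_w$ is torsion-free over $O_{F_w}$. The actions of $\calO$ and $\calR$ extend to $\calO_w=\calO\otimes_{O_F}O_{F_w}$ and $\calR_\ell=\calR\otimes_\Z\Z_\ell$, because $O_{F_w}$ is a $\Z_\ell$-algebra and the left and right actions commute with $O_F$. So $L_w$ is an $(\calO_w,\calR_\ell)$-bilattice. The task is then to compute its two ranks.

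First, I will use the decomposition $O_F\otimes_\Z\Z_\ell=\prod_{w'\mid\ell}O_{F_{w'}}$. This gives $L_\ell\coloneqq L\otimes_\Z\Z_\ell=\bigoplus_{w'\mid\ell}L_{w'}$, where each summand is cut out by a central idempotent of $O_F\otimes\Z_\ell$. These idempotents act on $L$ through $\calO$ on the left, equivalently through the $O_F$-structure, and they commute with $\calR_\ell$. Each $L_{w'}$ is therefore a right $\calR_\ell$-direct summand of $L_\ell$, and $L_\ell\simeq\calR_\ell^{\oplus n}$ is free, since $L$ is free of rank $n$ over $\calR$ (Lemma~\ref{lem:B-D-bimod}).

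Second, I will pass to the rational level to compute ranks. Put $V_{w}\coloneqq L_w\otimes\Q_\ell$. This is a $(B_w,D_\ell)$-bimodule, and since $F_w$ is central in $B_w$ and commutes with $D_\ell$, it is also a $(B_w,D_\ell\otimes_{\Q_\ell}F_w)$-bimodule. By Remark~\ref{rem:quater-bimod-rank}, which applies whether or not $B_w$ and $D_\ell\otimes F_w$ are isomorphic, $V_w$ is free over $B_w$ and over $D_\ell\otimes_{\Q_\ell}F_w$ of the same rank, say $m_w$. Counting $\Q_\ell$-dimensions gives $\dim_{\Q_\ell}V_w=4m_w[F_w:\Q_\ell]$. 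I will then determine $m_w$ independently of $w$. The ambient $V=L\otimes\Q$ is a $(B,D_F)$-bimodule of $D_F$-rank $n/d$, since $\dim_\Q V=4n$ and $\dim_\Q D_F=4d$. Hence $V\otimes_F F_w$ has rank $n/d$ over $D_F\otimes_F F_w=D_\ell\otimes_{\Q_\ell}F_w$, which gives $m_w=n/d$. It follows that $\dim_{\Q_\ell}V_w=4\frac nd[F_w:\Q_\ell]$, so $L_w$ has $\calR_\ell$-rank $\frac nd[F_w:\Q_\ell]$ and $\calO_w$-rank $n/d$, provided freeness holds integrally.

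Third, freeness at the integral level. For $\calO_w$ I will use that $\calO_w$ is a maximal order in a local quaternion algebra. If $B_w$ is division, every lattice over $\calO_w$ is free. If $B_w\simeq\Mat_2(F_w)$, so that $\calO_w\simeq\Mat_2(O_{F_w})$, Morita equivalence reduces the question to $O_{F_w}$-lattices, which are free, and a lattice is then free over $\calO_w$ exactly when its rational rank is integral; that holds here by the previous step. For $\calR_\ell$ the same argument works: $\calR_\ell$ is a local maximal order (division if $\ell=p$, $\Mat_2(\Z_\ell)$ otherwise), so every $\calR_\ell$-lattice is free once its rational rank is integral. Alternatively, one can note that $L_w$ is a direct summand of the free module $L_\ell$, so it is projective, and a projective module over a local maximal order is free of the rank given by $V_w$.

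The only step needing care is the split case, where freeness requires the rational rank to be a whole number of copies of the projective indecomposable. This is exactly what the rank equality $m_w=n/d$ from Remark~\ref{rem:quater-bimod-rank} provides; the rest is bookkeeping.
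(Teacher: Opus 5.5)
Your proposal is correct and follows essentially the same route as the paper. Both arguments show that the rational bimodule is free of rank $n/d$ over $B_w$ using the equal-rank property of quaternion bimodules (the paper applies Lemma~\ref{lem:emb-exist-neces-cond} globally before localizing, while you apply Remark~\ref{rem:quater-bimod-rank} after localizing), then use that lattices over local maximal orders with free ambient module are free, and count $\Q_\ell$-dimensions to get the $\calR_\ell$-rank $\frac{n}{d}[F_w:\Q_\ell]$; your first step (decomposing $L_\ell$ and invoking freeness of $L$ over $\calR$) is never used and can be dropped.
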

\begin{proof}
      Clearly, the ambient space $L\otimes_{\Z}\Q$ of $L$ is a $(B, D)$-bimodule, or equivalently, a $(B, D\otimes_{\Q}F)$-bimodule. From Lemma~\ref{lem:emb-exist-neces-cond},  $L\otimes_\Z\Q$ is a free left  $B$-module of rank $n/d$, which implies that $L_w$ is free of the same rank over the maximal $O_{F_w}$-order $\calO_w$ in $B_w$. From the maximality of $\calR_\ell$ and an easy rank calculation, it follows that $L_w$ is a free right $\calR_\ell$-module of rank $\frac{n}{d} [F_{w}:\Q_{\ell}]$. 
  \end{proof}    

From \cite[\S 21]{mumford:av}, since $*$ is a positive involution on the totally indefinite quaternion $F$-algebra $B$, there exists $\gamma\in B^\times$ such that 
\begin{equation}\label{eq:positive-involution}
    \gamma^2\in F_{<0}^\times \quad\text{and}\quad
    \alpha^*=\gamma \ol{\alpha} \gamma^{-1},\quad\forall \alpha\in B.
\end{equation}
Here $F_{<0}^\times$ denotes the subset of totally negative elements of $F$. In particular, $*$ is an orthogonal involution on $B$.
 
\begin{lemma}\label{lem:self-dual-ell-neq-p}
     Let $\ell\neq p$ be a prime number.
     For an $(\calO, \calR)$-bilattice $L$ of $\calR$-rank $n$, the following statements are equivalent:
     \begin{enumerate}[(i)]
         \item there exists a perfect hermitian $(\calO_{\ell}, *, \calR_{\ell})$-pairing $\langle~,~\rangle_\ell: L_\ell\times L_\ell\to \calR_\ell$;
         \item $n/2d$ is even or $\ord_{B_{w}}(\gamma)$ is odd for every finite place $w$ above $\ell$ ramified in $B$.
     \end{enumerate}
      Such a perfect pairing $\langle~,~\rangle_\ell$ on $L_{\ell}$ is unique up to isometry if it exists.
\end{lemma}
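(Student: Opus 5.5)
The plan is to decompose $L_\ell$ along the places of $F$ above $\ell$ and reduce each factor to Proposition~\ref{prop:self-dual-split-case}. Since $O_F\otimes_\Z\Z_\ell=\prod_{w\mid\ell}O_{F_w}$, we have $\calO_\ell=\prod_{w\mid \ell}\calO_w$ and $L_\ell=\bigoplus_{w\mid\ell}L_w$, where $L_w=e_wL_\ell$ for the primitive central idempotents $e_w\in O_F\otimes\Z_\ell$. These idempotents satisfy $e_w^*=e_w$ because $*$ is $F$-linear on the center. For any hermitian $(\calO_\ell,*,\calR_\ell)$-pairing we then get $\langle e_wx,e_{w'}y\rangle=\langle x,e_we_{w'}y\rangle=0$ for $w\neq w'$, exactly as in the proof of Lemma~\ref{lem:inf-pl-ext}. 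So the decomposition is orthogonal, and a perfect pairing on $L_\ell$ is the same as a family of perfect pairings on the $L_w$. It therefore suffices, for each $w\mid\ell$, to prove existence and uniqueness of a perfect hermitian $(\calO_w,*,\calR_\ell)$-pairing on $L_w$ under the corresponding local condition.

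Fix $w\mid\ell$. Because $\ell\neq p$, $\calR_\ell\simeq\Mat_2(\Z_\ell)$ is split, so we are in the setting of Section~\ref{subsec:self-dual-latt-split} with base field $\Q_\ell$, $F_1=F_w$, $B_1=B_w$, $O_1=\calO_w$, $B_2=D_\ell$ and $O_2=\calR_\ell$. The involution $*$ restricts to an orthogonal involution on $B_w$ stabilizing $\calO_w$, given by the same $\gamma$. By Lemma~\ref{lem:rank-of-summand}, $L_w$ has $\calR_\ell$-rank $n_w:=\frac nd[F_w:\Q_\ell]$. This rank is divisible by $2d_w$ with $d_w:=[F_w:\Q_\ell]$, since $2d\mid n$, and $n_w/(2d_w)=n/(2d)$. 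The underlying $(B_w,D_\ell)$-bimodule $L_w\otimes\Q_\ell$ carries a unique hermitian structure up to isometry by Proposition~\ref{prop:self-dual-split-case}, and the lattice $L_w$ inside it is unique up to isomorphism: for $B_w$ split, $\calO_w\otimes\calR_\ell^{\opp}$ is maximal, and for $B_w$ division, Lemma~\ref{lem:bilatt-rank-even} applies with the roles adapted (or one uses Morita reduction to left $\calO_w$-lattices, which are free). So the question of whether $L_w$ admits a perfect pairing is the same as whether that hermitian space contains a self-dual bilattice.

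Now apply Proposition~\ref{prop:self-dual-split-case}. If $w$ splits in $B$, part (1) gives existence and uniqueness unconditionally. If $w$ ramifies in $B$, part (2) gives existence if and only if $n_w/(2d_w)=n/(2d)$ is even or $\ord_{B_w}(\gamma)$ is odd, and uniqueness when the condition holds. Taking the conjunction over all $w\mid\ell$ yields the equivalence (i)$\Leftrightarrow$(ii). Uniqueness up to isometry of $\langle~,~\rangle_\ell$ follows by taking the orthogonal sum of the local isometries.

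The main obstacle I expect is the bookkeeping of identifications. One has to check that Proposition~\ref{prop:self-dual-split-case}, stated over a base local field $F$ with $O_2$ central over $O_F$, applies verbatim here, with $\Q_\ell$ as the base, $F_w$ as $F_1$, and the degree $d_w$ rather than the global $d$. One also has to confirm that the parity condition does not depend on the choice of $\gamma$. Both points look routine.
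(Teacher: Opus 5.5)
Your proposal is correct and follows the paper's proof. Like the paper, you decompose $L_\ell=\bigoplus_{w\mid\ell}L_w$ orthogonally as in Lemma~\ref{lem:inf-pl-ext}, read off the ranks from Lemma~\ref{lem:rank-of-summand}, and apply Proposition~\ref{prop:self-dual-split-case} with $\calR_\ell\simeq\Mat_2(\Z_\ell)$. You also supply a step the paper leaves implicit, and you handle it correctly: $(\calO_w,\calR_\ell)$-bilattices are determined by rank because $\calO_w\otimes_{\Z_\ell}\calR_\ell^{\opp}\simeq\Mat_2(\calO_w)$ is maximal, so ``the unique hermitian space contains a self-dual bilattice'' indeed translates into ``the given $L_w$ admits a perfect pairing.''
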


\begin{proof}
   Let $w_1, \cdots, w_{s}$ be the finite places of $F$ above $\ell$.
   Then the decomposition
          \begin{equation*}
\calO_{\ell}\coloneqq\calO\otimes_{\Z}\Z_{\ell}=\calO\otimes_{O_F}O_F\otimes_{\Z}\Z_{\ell}
       =\calO\otimes_{O_F} (O_{F_{w_1}}\times\cdots\times O_{F_{w_{s}}})
       =\calO_{w_1}\times\cdots\times \calO_{w_{s}}
          \end{equation*}
 induces the following decomposition of the $(\calO_\ell, \calR_\ell)$-bilattice $L_\ell$:
\begin{equation*}
    L_{\ell}\coloneqq L\otimes_{\Z}\Z_{\ell}=L\otimes_{O_F}O_F\otimes_{\Z}\Z_{\ell}=L\otimes_{O_F}(O_{F_{w_1}}\times\cdots\times O_{F_{w_{s}}})=L_{w_1}\oplus
    \cdots\oplus L_{w_s},
\end{equation*}
where  $L_{w_i}\coloneqq L\otimes_{O_F}O_{F_{w_i}}$ is an $(\calO_{w_i}, \calR_{\ell})$-bilattice for each $1\leq i\leq s$.
As in the proof of Lemma~\ref{lem:inf-pl-ext}, to give a perfect hermitian $(\calO_{\ell}, *, \calR_{\ell})$-pairing on the $(\calO_{\ell}, \calR_{\ell})$-bilattice $L_{\ell}$, it is the same as giving a perfect hermitian $(\calO_{w_i}, *, \calR_{\ell})$-pairing on the $(\calO_{w_i}, \calR_{\ell})$-bilattice $L_{w_i}$ for each $1\leq i\leq s$. 
Now $\calR_\ell\cong\Mat_2(\Z_\ell)$ since $\ell\neq p$,  and the $\calR_\ell$-rank of each $L_{w}$ with $w\in\{w_1, \cdots, w_s\}$ is given by Lemma~\ref{lem:rank-of-summand}.  We apply Proposition~\ref{prop:self-dual-split-case} to obtain our lemma.
\end{proof}

      For the remainder of this section, we assume that $\calR_F\coloneqq\calR\otimes_{\Z}O_F$ is a maximal order in $D_F\coloneqq D\otimes_{\Q}F$, or equivalently, every place $v$ of $F$ above $p$ is unramified with odd residue degree; see Lemma~\ref{lem:max-order-under-base-change}. In particular, this assumption implies that the quaternion $F$-algebra $D_F$ is ramified at every place $v$ of $F$ above $p$, and the unique maximal order of its $v$-adic completion $D_p\otimes_{\Q_p} F_v$ coincides with $\calR_p\otimes_{\Z_p}O_{F_v}$ for every such $v$.    
Let $V$ be a $(B, D)$-bimodule, and $V_v\coloneqq V\otimes_{F}F_v$ be its $v$-adic completion, which is    naturally a $(B_v, D_p)$-bimodule, or equivalently,   a $(B_v,  D_p\otimes_{\Q_p}F_v)$-bimodule. We show that every hermitian $(B_v, *, D_p)$-form on $V_v$ lifts to a hermitian $(B_v, *, D_p\otimes_{\Q_p} F_{v})$-form. For this purpose, observe that 
    the trace map $\Tr_{F_v/\Q_p}: F_v\to \Q_p$ induces a $(D_p, D_p)$-bilinear map
\begin{equation*}
    \Tr\coloneqq\id\otimes\Tr_{F_v/\Q_p}: D_p\otimes_{\Q_p}F_v \to D_p.
\end{equation*}
Clearly, $\overline{\Tr(\alpha)}=\Tr(\overline{\alpha})$ for every $\alpha\in D_p\otimes_{\Q_p}F_v$, but it should be cautioned that 
$\Tr(\alpha\beta)\neq\Tr(\beta\alpha)$ in general. 
   
   \begin{lemma}\label{lem:lift-pairing}
  Keep the notation and assumption as above and fix a place $v$ of $F$ above $p$.  For every hermitian $(B_v, *, D_p)$-form $\langle~,~\rangle_v: V_v\times V_v\to D_p$ on $V_v$, there exists  a hermitian $(B_v, *, D_p\otimes_{\Q_p} F_{v})$-form $(~,~)_v:V_v\times V_v\to  D_p\otimes_{\Q_p} F_{v}$ such that
          \begin{equation*}
             \langle x, y\rangle_v=\Tr(x, y)_v,\quad\forall\, x, y\in V_v.
          \end{equation*}
      Moreover, an $(\calO_v, \calR_p)$-bilattice $L_v\subset V_v$ is self-dual with respect to $\langle~,~\rangle_v$ if and only if it is self-dual with respect to $(~,~)_v$ as an $(\calO_v, \calR_p\otimes_{\Z_p}O_{F_v})$-bilattice.
    \end{lemma}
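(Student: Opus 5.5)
The plan is to use the nondegeneracy of the trace form of the unramified extension $F_v/\Q_p$. Write $D_{p,v}\coloneqq D_p\otimes_{\Q_p}F_v$. The $\Q_p$-bilinear pairing $D_{p,v}\times D_{p,v}\to\Q_p$, $(\alpha,\beta)\mapsto \Tr_{D_{p,v}/\Q_p}$-type traces, is nondegenerate. More concretely, $\Tr\colon D_{p,v}\to D_p$ identifies $D_{p,v}$ with $\Hom_{D_p}(D_{p,v},D_p)$ (as left or right $D_p$-modules) via $\alpha\mapsto(\beta\mapsto\Tr(\alpha\beta))$. This holds because $D_{p,v}=D_p\otimes F_v$ and $\Tr_{F_v/\Q_p}$ gives $F_v\simeq\Hom_{\Q_p}(F_v,\Q_p)$.

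For fixed $x,y\in V_v$, I would define $(x,y)_v\in D_{p,v}$ as the unique element such that $\Tr\big((x,y)_v\,c\big)=\langle x,yc\rangle_v$ for all $c\in F_v$. Such an element exists because $c\mapsto\langle x,yc\rangle_v$ is $\Q_p$-linear from $F_v$ to $D_p$, and $\Hom_{\Q_p}(F_v,D_p)=D_p\otimes\Hom_{\Q_p}(F_v,\Q_p)\simeq D_{p,v}$ via the trace. Taking $c=1$ gives $\Tr(x,y)_v=\langle x,y\rangle_v$.

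Next I would verify the required identities by uniqueness, testing against all $c\in F_v$:
\begin{itemize}
\item right $D_{p,v}$-sesquilinearity: $(x,y\beta)_v=(x,y)_v\beta$ for $\beta\in D_p$ and for $\beta\in F_v$, and $(x\beta,y)_v=\bar\beta(x,y)_v$;
\item hermitian symmetry $(x,y)_v=\overline{(y,x)_v}$, using $\overline{\Tr(\alpha)}=\Tr(\bar\alpha)$, the centrality of $F_v$, and $\langle xc,y\rangle_v=\langle x,yc\rangle_v$ for $c\in F_v$ (which holds since $c\in F\subset B$ acts centrally and $c^*=c$);
\item compatibility $(\alpha x,y)_v=(x,\alpha^*y)_v$, which follows because $\alpha$ commutes with the $F_v$-action.
\end{itemize}
Nondegeneracy of $(~,~)_v$ follows from that of $\langle~,~\rangle_v$.

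For the self-duality claim, the key local fact is that the inverse different of $O_{F_v}/\Z_p$ is trivial, since $v$ is unramified. Hence $\Tr(\alpha\,\calR_p\otimes O_{F_v})\subseteq\calR_p$ if and only if $\alpha\in\calR_p\otimes O_{F_v}$; this is checked by expanding $\alpha$ in a $\Z_p$-basis of $O_{F_v}$ and using the perfectness of $\Tr_{F_v/\Q_p}$ on $O_{F_v}$. It follows that the dual of $L_v$ with respect to $\langle~,~\rangle_v$ (values in $\calR_p$) equals its dual with respect to $(~,~)_v$ (values in $\calR_p\otimes O_{F_v}$), since $L_v$ is stable under $O_{F_v}$. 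In particular the two self-duality conditions coincide.

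The main point needing care is this integrality step, since it is where the unramifiedness hypothesis from Lemma~\ref{lem:max-order-under-base-change} is actually used. The algebraic verifications are routine.
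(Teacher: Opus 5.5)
Your proposal is correct and follows essentially the same route as the paper. You define $(x,y)_v$ through the trace-duality isomorphism $D_p\otimes_{\Q_p}F_v\simeq \Hom_{\mathrm{Mod}-D_p}(D_p\otimes_{\Q_p}F_v, D_p)$; testing only against $c\in F_v$ is equivalent to the paper's testing against all $\beta$, by right $D_p$-linearity. You then check the hermitian properties by uniqueness, and use that the different of the unramified extension $F_v/\Q_p$ is trivial to show that the two duals of $L_v$ coincide.
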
   
  
   \begin{proof}
  For brevity, let us put $H\coloneqq D_F$ so that  $H_v=D_p\otimes_{\Q_p} F_{v}$.
The trace map $\Tr_{F_v/\Q_p}: F_v\to \Q_p$ induces an isomorphism
\begin{equation}\label{eq:iso-induced-by-Tr-F-Q}
    F_v\to \Hom_{\Q_p}(F_v, \Q_p), \quad a\mapsto (b\mapsto \Tr_{F_v/\Q_p}(ab)).
\end{equation}
Let $\Hom_{\mathrm{Mod}-D_p}(H_v, D_p)$ denote the set of right $D_p$-linear maps $H_v\to D_p$, which has a natural $(D_p, D_p)$-bimodule structure induced by the left $D_p$-module structures of $D_p$ and $H_v$. Applying the  tensor functor $D_p\otimes_{\Q_p}\negmedspace-$ to the map in \eqref{eq:iso-induced-by-Tr-F-Q}  produces the following isomorphism of $(D_p, D_p)$-modules:
\begin{equation}\label{eq:iso-induced-by-Tr}
    H_v\to \Hom_{\mathrm{Mod}-D_p}(H_v, D_p), \quad \alpha\mapsto (\beta\mapsto \Tr(\alpha\beta)).
\end{equation}
For fixed $x, y\in V$, consider the right $D_p$-linear map
    \begin{equation*}
        H_v\to D_p,\quad \beta\mapsto \langle x, y\beta\rangle_v.
    \end{equation*}
From the isomorphism \eqref{eq:iso-induced-by-Tr}, there exists a unique $(x, y)_v\in H_v$ such that
    \begin{equation*}
        \langle x, y\beta\rangle_v=\Tr( (x, y)_v\beta), \quad\forall\, \beta\in H_v.
    \end{equation*}
 It is routine to show that $(~,~)_v$ defines a hermitian $(B_v, *, H_v)$-form on $V_v$. We only check $( x, y)_v=\overline{( y, x)}_v$ for all $x, y\in V_v$, and leave the other properties to the interested reader. Take any $b\in F_v$. Since $xb=bx$, $yb=by$ and $b^*=b$, we have
    \[     \langle x, yb\rangle_v=\overline{\langle yb, x\rangle}_v
                =\overline{\langle by, x\rangle}_v
                =\overline{\langle y, bx\rangle}_v
                =\overline{\langle y, xb\rangle}_v.
     \]
 On the other hand, by the definition of $(~,~)_v$, we have $\langle x, yb\rangle_v=\Tr((x, y)_vb)$, and hence
     \[
         \overline{\langle y, xb\rangle}_v=\overline{\Tr(( y, x)_vb)}
         =\Tr(\overline{ (y, x)}_vb).
     \]
It follows that $\Tr((x, y)_vb)=\Tr(\overline{( y, x)}_vb)$ for all $b\in F_v$. Since $\Tr: H_v\to D_p$ is right $D_p$-linear, this further implies that $\Tr((x, y)_v\beta)=\Tr(\overline{( y, x)}_v\beta)$ for all $\beta\in H_v$. From the isomorphism~\eqref{eq:iso-induced-by-Tr}, we conclude that $( x, y)_v=\overline{( y, x)}_v$ as desired. 

Lastly, by our assumption,  $F_v/\Q_p$ is unramified so that its different ideal equals $O_{F_v}$. Hence given $\alpha\in H_v=D_p\otimes_{\Q_p}F_v$, we have $\Tr(\alpha O_{F_v})\subseteq \calR_p$ if and only if  $\alpha\in \calR_p\otimes_{\Z_p}O_{F_v}$. In particular, for any $x\in V_v$, the following two conditions are equivalent: 
\[\langle x, L_v\rangle_v=\Tr( x, L_v)_v\subseteq \calR_p \quad \Longleftrightarrow\quad ( x, L_v)_v\subseteq \calR_p\otimes_{\Z_p}O_{F_v}.\]
This proves the second statement of our lemma; see~\eqref{eq:defn-of-dual-bilatt}.
   \end{proof}

     As in Section~\ref{subsec:ssp-O-1-ab-var}, let $\Sigma$ be the set of places of $F$ above $p$ ramified in $B$. Equivalently, $\Sigma$ is exactly the set of places of $F$ ramified in both $B$ and $D_F$.
     For each $v\in\Sigma$, we fix an identification $ \calR\otimes_{\Z}O_{F_v}\simeq\calO_v$ once and for all so that the structural invariant $\ulm^{(v)}(L)$ at $v$ of every $(\calO, \calR)$-bilattice $L$ is defined; see Remark~\ref{rem:distinct-indentify}. If $\ulm^{(v)}(L)=(r_1^{(v)}, r_2^{(v)}, t_1^{(v)}, t_2^{(v)})\in\Z_{\geq 0}^4$ and $\rank_{\calR}L=n$, then by~\eqref{eq:rank-comp} or \eqref{eq:rank-comp-1} we have 
     \begin{equation}\label{eq:e-521}
         r_1^{(v)}+r_2^{(v)}+2t_1^{(v)}+2t_2^{(v)}=n/d.
     \end{equation}

     \begin{lemma}\label{lem:self-dual-p}
     Let $\gamma\in B^\times$ be an element defining the positive involution $*$ on $B$ as in \eqref{eq:positive-involution}. 
    Let $L$ be an $(\calO, \calR)$-bilattice with structural invariants      
     \[\underline{m}^{(v)}(L)\coloneqq (r_1^{(v)}, r_2^{(v)}, t_1^{(v)}, t_2^{(v)}),\qquad\forall\, v\in\Sigma.\] 
    Then there exists a perfect hermitian $(\calO_{p}, *, \calR_{p})$-pairing on $L_p$ if and only if the following condition holds:
        \begin{equation}\label{eq:cond-for-quadruple}
\forall v\in \Sigma, \quad  \begin{dcases*}
        r_1^{(v)}=r_2^{(v)},  &  if $\ord_{B_{v}}(\gamma)$  is odd;\\
        2|r_1^{(v)}, 2|r_2^{(v)}, \text{ and } t_1^{(v)}=t_2^{(v)}, & if   $\ord_{B_{v}}(\gamma)$ is even. 
    \end{dcases*} 
    \end{equation}
     Such a perfect pairing on $L_p$ is unique up to isometry if it exists.
 \end{lemma}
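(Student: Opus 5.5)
We reduce Lemma~\ref{lem:self-dual-p} to the local classification theorems of Section~\ref{sec:self-dual-local-latt}, place by place over $p$. The decomposition $O_F\otimes_\Z\Z_p=\prod_{v\mid p}O_{F_v}$ splits $\calO_p=\prod_{v\mid p}\calO_v$ and $L_p=\bigoplus_{v\mid p}L_v$. As in the proofs of Lemmas~\ref{lem:inf-pl-ext} and \ref{lem:self-dual-ell-neq-p}, the central idempotents are fixed by $*$, so any hermitian $(\calO_p,*,\calR_p)$-pairing makes this decomposition orthogonal. Hence a perfect pairing on $L_p$ is the same as a family of perfect hermitian $(\calO_v,*,\calR_p)$-pairings on the $L_v$, and isometry classes decompose accordingly. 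It therefore suffices to treat each $v\mid p$ separately.

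For a fixed $v$ we use Lemma~\ref{lem:lift-pairing}, which lifts perfect $(\calO_v,*,\calR_p)$-pairings to perfect $(\calO_v,*,\calR_p\otimes_{\Z_p}O_{F_v})$-pairings, with self-duality preserved in both directions. We also need compatibility with isometries. The correspondence $\langle~,~\rangle_v\leftrightarrow(~,~)_v$ is a bijection: $(~,~)_v$ is uniquely determined by $\langle~,~\rangle_v$, and conversely $\Tr\circ(~,~)_v$ recovers it. An $(\calO_v,\calR_p)$-bimodule isomorphism is automatically $O_{F_v}$-linear on both sides, since $O_{F_v}$ acts through $\calO_v$, so it is $(\calO_v,\calR_p\otimes O_{F_v})$-linear; an isometry for one form is then an isometry for the other. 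This reduces the question to self-dual hermitian $(\calO_v,*,\calO_v')$-bilattices with $\calO_v'\coloneqq\calR_p\otimes_{\Z_p}O_{F_v}$ maximal, which holds by the standing assumption via Lemma~\ref{lem:max-order-under-base-change}.

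There are now two cases. If $v\notin\Sigma$, then $B_v$ is split and $D_p\otimes F_v$ is division, which is case (iii). Proposition~\ref{prop:self-dual-B-1-split} gives existence and uniqueness with no condition, once the $O_2$-rank is even; evenness is automatic by Lemma~\ref{lem:bilatt-rank-even}. If $v\in\Sigma$, we use the fixed identification $\calO_v'\simeq\calO_v$, so that $L_v$ has structural invariant $\ulm^{(v)}(L)$. Theorem~\ref{thm:self-dual-latt-odd} (for $\ord_{B_v}(\gamma)$ odd) and Theorem~\ref{thm:self-dual-latt-even} (for even; note $\fchar F_v=0\neq2$) give exactly the condition in \eqref{eq:cond-for-quadruple}, together with uniqueness up to isometry. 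We should check that $\gamma$ in \eqref{eq:positive-involution} satisfies \eqref{eq:invol-star}: from $\gamma^2\in F^\times$ and $\gamma\notin F$ (otherwise $*$ would be the canonical involution, which is symplectic), we get $\bar\gamma=-\gamma$. Assembling over $v$ gives both the equivalence and the uniqueness statement.

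The main obstacle is the bookkeeping in the second step: making sure that uniqueness up to isometry transfers through the trace lift. If the bijection argument above proves awkward, the fallback is to transport a given isometry of the lifted forms back by applying $\Tr$ directly.
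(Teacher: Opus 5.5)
Your proposal is correct and follows the paper's proof: split $L_p=\bigoplus_{v\mid p}L_v$ orthogonally, pass to $(\calO_v,*,\calR_p\otimes_{\Z_p}O_{F_v})$-pairings via Lemma~\ref{lem:lift-pairing}, and then apply Proposition~\ref{prop:self-dual-B-1-split} for $v\notin\Sigma$ and Theorems~\ref{thm:self-dual-latt-odd} and~\ref{thm:self-dual-latt-even} for $v\in\Sigma$. Your explicit checks fill in details the paper leaves implicit: that the trace correspondence is a bijection on forms that preserves self-duality and isometries in both directions, and that $\bar\gamma=-\gamma$.
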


\begin{remark}\label{rem:indenp}
    From Remark~\ref{rem:distinct-indentify}, condition~\eqref{eq:cond-for-quadruple} is in fact independent of  the choice of identifications $\calR\otimes_{\Z}O_{F_{v}}\simeq\calO_v$ for all $v\in\Sigma$.
\end{remark}

\begin{proof}
    As in the proof of Lemma~\ref{lem:self-dual-ell-neq-p}, to give a perfect hermitian $(\calO_{p}, *, \calR_{p})$-pairing on $L_p$, it is the same as  giving a perfect hermitian $(\calO_{v}, *, \calR_{p})$-pairing on $L_v$ for each place $v$ of $F$ above $p$.  In light of Lemma~\ref{lem:lift-pairing}, the current lemma follows immediately from Theorems~\ref{thm:self-dual-latt-odd}, \ref{thm:self-dual-latt-even} and Proposition~\ref{prop:self-dual-B-1-split}.
\end{proof}
 
The main theorem of this section is as follows.

\begin{thm}\label{thm:genus-char-2} 
    Keep the assumption that $\calR_F=\calR\otimes_{\Z}O_F$ is a maximal order in $D_F= D\otimes_\Q F$.
    Let $\Sigma$ be the set of places of $F$ above $p$ ramified in $B$, and $\Xi$ be the set of finite places of $F$ coprime to $p$ and ramified in $B$.
    Then the following statements hold true.
    \begin{enumerate}[(i)]
        \item If there exists a self-dual hermitian $(\calO, *, \calR)$-bilattice of $\calR$-rank $n$, then the following condition necessarily holds:
    \begin{equation}\label{eq:cond-for-S-1}
    n/2d\text{ is even or }\ord_{B_{w}}(\gamma)\text{ is odd for every }w\in \Xi.
    \end{equation}
        \item Conversely, let $n$ be a fixed positive integer divisible by $2d$. Suppose that condition \eqref{eq:cond-for-S-1} holds. For each $v\in\Sigma$, fix a quadruple
\[
      (r_1^{(v)}, r_2^{(v)}, t_1^{(v)}, t_2^{(v)})\in\Z_{\geq 0}^4 
    \]
    satisfying~\eqref{eq:e-521}.
    Then there exists a self-dual positive definite hermitian $(\calO, *, \calR)$-bilattice $(L, \langle~,~\rangle)$ of $\calR$-rank $n$ such that $\underline{m}^{(v)}(L)$ coincides with $(r_1^{(v)}, r_2^{(v)}, t_1^{(v)}, t_2^{(v)})$ for every $v\in\Sigma$
    if and only if condition \eqref{eq:cond-for-quadruple}  holds.
    \item Fix both $n$ and  the quadruples $(r_1^{(v)}, r_2^{(v)}, t_1^{(v)}, t_2^{(v)})$ for all $v\in\Sigma$ as in part (ii) and 
    suppose that both conditions \eqref{eq:cond-for-quadruple} and \eqref{eq:cond-for-S-1}  hold. Denote the narrow class number of $F$ by $h^{+}(F)$. 
    If further $n>2d$, then there are  exactly $h^{+}(F)$ isomorphism classes of $(\calO, \calR)$-bilattices $L'$ of $\calR$-rank $n$ with $\ulm^{(v)}(L')=(r_1^{(v)}, r_2^{(v)}, t_1^{(v)}, t_2^{(v)})$ for all $v\in\Sigma$, among which exactly one of them admits a perfect positive definite hermitian $(\calO, *, \calR)$-pairing. 
    \end{enumerate}
    
\end{thm}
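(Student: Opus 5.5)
Part (i) is a local-to-global deduction. If $(L,\langle~,~\rangle)$ is self-dual of $\calR$-rank $n$, then $L_\ell$ is self-dual for every prime $\ell\neq p$. By Lemma~\ref{lem:self-dual-ell-neq-p}, for each such $\ell$ either $n/2d$ is even or $\ord_{B_w}(\gamma)$ is odd for all $w\mid\ell$ ramified in $B$. Collecting these conditions over all $\ell\neq p$ gives \eqref{eq:cond-for-S-1}, since $\Xi$ is exactly the set of finite ramified places of $B$ away from $p$.

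For part (ii), the ``only if'' direction follows directly from Lemma~\ref{lem:self-dual-p} applied to $L_p$. For the ``if'' direction I will use the lattice-theoretic local--global method.

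First, Proposition~\ref{prop:Rat-herm-bimod-unique} gives a positive definite hermitian $(B,*,D)$-bimodule $(V,\langle~,~\rangle)$ of $D$-rank $n$. In $V$ I fix any $(\calO,\calR)$-bilattice $L_0$. For almost all $\ell$, $L_{0,\ell}$ is already self-dual.

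Second, I choose self-dual local bilattices everywhere else.
\begin{itemize}
\item At $\ell\neq p$: Lemma~\ref{lem:self-dual-ell-neq-p}, together with \eqref{eq:cond-for-S-1}, gives a perfect pairing on an abstract $(\calO_\ell,\calR_\ell)$-bilattice of the right rank. By Proposition~\ref{prop:self-dual-split-case}, the rational hermitian $(B_\ell,*,D_\ell)$-bimodule of that rank is unique up to isometry, so this self-dual bilattice sits inside $V_\ell$.
\item At $p$: I first use Lemma~\ref{lem:lift-pairing} to pass to $(\calO_v,*,\calR_p\otimes O_{F_v})$-pairings. Theorems~\ref{thm:self-dual-latt-odd} and \ref{thm:self-dual-latt-even}, under \eqref{eq:cond-for-quadruple}, produce a self-dual bilattice with the prescribed $\ulm^{(v)}$ for $v\in\Sigma$. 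For $v\mid p$ not in $\Sigma$, Proposition~\ref{prop:self-dual-B-1-split} supplies one. Each of these local hermitian spaces embeds isometrically in $V_v$, again because the local rational hermitian bimodule is unique. At $v\in\Sigma$ this needs uniqueness of the $(B_v,*,D_v)$-hermitian space of given rank; I expect to get it from the local $H^1$ vanishing for the simply connected unitary group $G$, as in the proof of Proposition~\ref{prop:Rat-herm-bimod-unique}.
\end{itemize}
Third, the chosen local bilattices $M_\ell\subset V_\ell$ agree with $L_{0,\ell}$ for almost all $\ell$. The standard correspondence between lattices and families of local lattices therefore produces a global $(\calO,\calR)$-bilattice $L\subset V$ with $L_\ell=M_\ell$ for all $\ell$. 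This $L$ is self-dual, positive definite, and has the prescribed structural invariants.

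For part (iii), the count of unpolarized classes comes from Example~\ref{eg:F} together with Lemma~\ref{lem:bilatt-genus}. The ray class number there is $h^{\Omega}(F)$, where $\Omega$ is the set of real places split in $B$. Since $B$ is totally indefinite, $\Omega$ consists of all real places, so $h^\Omega(F)=h^+(F)$.

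Existence of at least one polarizable class follows from (ii). Uniqueness is the main obstacle: I must show that any two self-dual positive definite $(L,\langle~,~\rangle)$ and $(L',\langle~,~\rangle')$ in this genus have isomorphic underlying bilattices.
\begin{itemize}
\item By Proposition~\ref{prop:Rat-herm-bimod-unique}, both are lattices in the same $(V,\langle~,~\rangle)$.
\item By the uniqueness statements of Lemmas~\ref{lem:self-dual-ell-neq-p} and \ref{lem:self-dual-p}, they are locally isometric at every $\ell$. So $L'=gL$ for some $g\in G(\widehat{\Q})$, where $G$ is the unitary group.
\item The image of $g$ in $A^\times\backslash\widehat A^\times/\widehat\Lambda^\times$ has reduced norm $1$, since elements of $G$ have trivial reduced norm in $A$ ($G$ is semisimple simply connected, isomorphic over $\overline{\Q}$ to $\Sp^d$ inside $\GL$).
\item Via the bijection \eqref{eq:3.1-nrd}, $g$ therefore gives the trivial class. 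Hence $L'\simeq L$ as bilattices.
\end{itemize}
Alternatively, strong approximation for $G$, which applies since $G(\R)$ is compact only in the definite case, is avoided by this norm argument. The delicate point is making precise that $G(\widehat\Q)\subset\widehat A^\times$ lands in norm-one elements, which I expect to verify directly from \eqref{eq:U-otimes-bar-Q}.
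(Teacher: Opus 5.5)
Your argument is correct, but in (ii) and (iii) it follows a different route from the paper's.

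For the `if' part of (ii), the paper first produces an abstract bilattice $M$ with the prescribed $\ulm^{(v)}$. It then uses weak approximation in the $\Q$-space $\scrH$ of hermitian matrices, where $\scrH_\R^+$ and $\scrH_p^+$ are open and nonempty, to choose one global form that is simultaneously positive definite and perfect on $M_p$. Only afterwards does it modify $M$, and only at the finitely many $\ell\neq p$ where the form fails to be perfect, via Proposition~\ref{prop:self-dual-split-case}. You instead fix an arbitrary positive definite form and modify the lattice at $p$ as well. This needs uniqueness of the local hermitian $(B_v,*,D_p)$-spaces of given rank for $v\mid p$. The paper never states this separately, but it does follow, as you expect, from $H^1(\Q_p,G)=1$ (already used in Proposition~\ref{prop:Rat-herm-bimod-unique}), because all such spaces reduce to symplectic spaces and so become isometric over $\overline{\Q}_p$. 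The weak-approximation trick is what lets the paper avoid this input at $p$; your version instead treats $p$ uniformly with the other primes.

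For uniqueness in (iii), the paper takes an \emph{arbitrary} $\widehat{g}\in\widehat{A}^\times$ with $L'=\widehat{g}L$ and some $\alpha\in A^\times$ with $\alpha L'$ self-dual. It computes $(\alpha L')^\vee=((\alpha\widehat{g})^\dagger)^{-1}L$ and deduces $\Nrd(\alpha\widehat{g})^2\in\widehat{O}_F^\times$ from $\Nrd(x^\dagger)=\Nrd(x)$. This needs no local uniqueness of self-dual bilattices. You instead use the uniqueness clauses of Lemmas~\ref{lem:self-dual-ell-neq-p} and \ref{lem:self-dual-p} to choose $\widehat{g}$ in the unitary group. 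That costs the full local classification, but gives the slightly stronger fact that $L$ and $L'$ differ by an adelic isometry. Your ``delicate point'' is settled by the same identity: $g^\dagger g=1$ forces $\Nrd(g)=\pm1\in\widehat{O}_F^\times$. Alternatively, a connected semisimple group has no nontrivial characters.

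One aside in your proposal is wrong: here $G(\R)$ \emph{is} compact because the form is positive definite, so strong approximation for $G$ fails rather than applies. Your argument never uses it, so this is harmless.
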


 \begin{proof}
(i) (ii) Statement (i) and the `only if' part of statement (ii) follow immediately from Lemmas~\ref{lem:self-dual-ell-neq-p} and~\ref{lem:self-dual-p}. Indeed, let $(L, \langle~,~\rangle)$ be a self-dual hermitian $(\calO, *, \calR)$-bilattice of $\calR$-rank $n$. Then for each prime $\ell$ (including $\ell=p$), the $\ell$-adic completion $(L_\ell, \langle~,~\rangle_\ell)$ of $(L, \langle~,~\rangle)$ is a self-dual hermitian $(\calO_{\ell}, *, \calR_{\ell})$-bilattice, so Lemmas~\ref{lem:self-dual-ell-neq-p} and \ref{lem:self-dual-p} apply.

 We prove the `if' part of statement (ii).
 First, by the local-global principle for lattices \cite[Theorem~9.4.9]{voight-quat-book}, there exists an $(\calO, \calR)$-bilattice $M$ of $\calR$-rank $n$ such that $\underline{m}^{(v)}(M)=(r_1^{(v)}, r_2^{(v)}, t_1^{(v)}, t_2^{(v)})$ for all $v\in\Sigma$.
 In view of Lemma~\ref{lem:B-D-bimod}, we may assume that $M=(\calR^{\oplus n},\varphi)$ for some embedding $\varphi:\calO\hookrightarrow\Mat_n(\calR)$. Let $V\coloneqq M\otimes_{\Z}\Q$ be the ambient $(B, D)$-bimodule of $M$. The proof is divided into two steps. In the first step, we construct a positive definite hermitian $(B, *, D)$-form $\langle~,~\rangle: V\times V\to D$ with certain local property at $p$. In the second step, we modify the bilattice $M$ in $V$ to obtain the desired $(\calO, \calR)$-bilattice $L$ such that $L$ is  self-dual in $(V,\langle~,~\rangle)$ with $\underline{m}^{(v)}(L)=(r_1^{(v)}, r_2^{(v)}, t_1^{(v)}, t_2^{(v)})$ for all $v\in\Sigma$.

     \emph{Step 1.} We show that there exists a matrix $g\in\Mat_n(D)$ satisfying both of the following conditions: 
     \begin{align}
        g&=\overline{g}^t,\quad \text{and}\quad {\overline{\varphi(\alpha)}}^tg=g\varphi(\alpha^*),\quad \forall \alpha\in B;\label{eq:herm-bimod-cond}\\
        g&>0,\quad\text{ and}\quad g\in \GL_n(\calR_{p})\label{eq:add-cond},
     \end{align} 
    where we write $g>0$ to indicate that $g$ is positive definite. 
     As in the proof of Proposition~\ref{prop:Rat-herm-bimod-unique},
     let $\scrH\subseteq\Mat_n(D)$ be the $\Q$-vector space consisting of all matrices $g\in\Mat_n(D)$ satisfying  \eqref{eq:herm-bimod-cond}.
     Extending the base field to $\R$ and $\Q_p$ respectively, we make the following two observations:
     \begin{enumerate}[label=\arabic*)]
          \item The $\R$-vector space $\scrH_\R\coloneqq\scrH\otimes_{\Q}\R\subseteq\Mat_n(D_{\R})$ consists of all matrices $g\in\Mat_n(D_{\R})$ satisfying  \eqref{eq:herm-bimod-cond}, and $\scrH_{\R}^{+}\coloneqq\{g\in \scrH_\R\mid g>0 \}$ is an open subset of $\scrH_\R$ with respect to the usual topology.
          \item The $\Q_p$-vector space $\scrH_p\coloneqq \scrH\otimes_{\Q}\Q_p\subseteq\Mat_n(D_{p})$ consists of all matrices $g\in\Mat_n(D_{p})$ satisfying  \eqref{eq:herm-bimod-cond}, and $\scrH_{p}^{+}\coloneqq\{g\in \scrH_p\mid g\in\GL_n(\calR_{p}) \}$ is an open subset of $\scrH_p$ with respect to the $p$-adic topology.
     \end{enumerate}  
    As indicated in the proof of Proposition~\ref{prop:Rat-herm-bimod-unique}, we know from Lemma~\ref{lem:inf-pl-ext} that $\scrH_\R^{+}$ is nonempty.
    On the other hand, by Lemma~\ref{lem:self-dual-p} and our assumption on $\ulm^{(v)}(M)$ for all $v\in \Sigma$, there exists a perfect hermitian $(\calO_{p}, *, \calR_{p})$-pairing on $M_p$, which implies that $\scrH_p^{+}$ is nonempty.
     From the weak approximation theorem, the natural map $\scrH\hookrightarrow \scrH_\R\times \scrH_p$
     has dense image in the codomain, so there exists a matrix $g\in \scrH$ whose image lies in $\scrH_\R^{+}\times \scrH_p^{+}$. By our construction, the matrix $g$ satisfies both  conditions \eqref{eq:herm-bimod-cond} and \eqref{eq:add-cond}, and hence defines a positive definite hermitian $(B, *, D)$-form $\langle~,~\rangle$ on $V$ such that $M_p$ is a self-dual $(\calO_{p}, \calR_{p})$-bilattice in $(V_p,\langle~,~\rangle_p)$.

     \emph{Step 2.} Let $g\in\Mat_n(D)$ be a matrix satisfying both conditions~\eqref{eq:herm-bimod-cond} and \eqref{eq:add-cond}, and let $\langle~,~\rangle$ be its associated hermitian form on $V$. Note that $g\in\GL_n(\calR_{\ell})$ for almost all primes $\ell$. Let $T$ be the (finite) set of primes $\ell$ such that $g\notin\GL_n(\calR_{\ell})$. Since condition \eqref{eq:cond-for-S-1} holds by hypothesis, it follows from Proposition~\ref{prop:self-dual-split-case} that for each prime $\ell\in T$ there exists a self-dual hermitian $(\calO_{\ell},\calR_{\ell})$-bilattice $N_{\ell}$ in $(V_{\ell},\langle~,~\rangle_{\ell})$ (which is unique up to isometry). 
     By the local-global principle for lattices, there exists a unique $(\calO,\calR)$-bilattice $L$ in $V$ such that its $\ell$-adic completions satisfy
    \[
         L_{\ell}=\begin{cases}
             M_{\ell}, & \text{if } \ell\notin T, \\
             N_{\ell}, & \text{if } \ell\in T.
         \end{cases}
     \]
     By construction, the bilattice $L$ is self-dual with respect to $\langle~,~\rangle$. Moreover, $p\notin T$, so the $p$-adic completion of $L$ coincides with $M_p$, which shows that $\ulm^{(v)}(L)=\ulm^{(v)}(M)=(r_1^{(v)}, r_2^{(v)}, t_1^{(v)}, t_2^{(v)})$ for all $v\in\Sigma$ as desired.

     (iii) First, from the `if' part of (ii) proved above, there exists a self-dual positive definite hermitian $(\calO, *, \calR)$-bilattice $(L, \langle~,~\rangle)$ of $\calR$-rank $n$ with $\ulm^{(v)}(L)=(r_1^{(v)}, r_2^{(v)}, t_1^{(v)}, t_2^{(v)})$ for all $v\in\Sigma$. Let $L'$ be another $(\calO, \calR)$-bilattice of $\calR$-rank $n$ with the same structural invariants at all $v\in\Sigma$ so that $L'$ lies in the genus of $L$ by Lemma~\ref{lem:bilatt-genus}.
     We have to show that if $L'$ admits a perfect positive definite hermitian $(\calO, *, \calR)$-pairing, then it is isomorphic to $L$. In light of Lemma~\ref{lem:B-D-bimod}, we may assume that $L'$ is contained in $V\coloneqq L\otimes_{\Z}\Q$. Put 
     \[
               A\coloneqq\End_{(B, D)}(V)=\End_{(B, D_F)}(V),\quad \Lambda\coloneqq\End_{(\calO, \calR)}(L)=\End_{(\calO, \calR_F)}(L),
     \]
     where $D_F\coloneqq D\otimes_{\Q}F$ and $\calR_F\coloneqq\calR\otimes_{\Z}O_F$. 
     According to \cite[Theorem~31.18]{curtis-reiner:1}, there is a one-to-one correspondence between the set of isomorphism classes of $(\calO, \calR)$-bilattices in the genus of $L$ and the double coset space $ A^\times\backslash \widehat{A}^\times/ \widehat{\Lambda}^\times$; also see Section~\ref{subsec:glob-latt}. In particular, $L'=\widehat{g}L$ for some $\widehat{g}\in\widehat{A}^\times$. More precisely, if $\widehat{g}=(g_{w})_w$ with $w$ running through all finite places of $F$, then $L'$ is the unique $(\calO, \calR)$-bilattice in $V$ such that $L_{w}'=g_{w}L_w$ for every  $w$. Since $n/d\geq 4$ by hypothesis, it follows from Proposition~\ref{prop:isom-cls-latt} that there is a bijection
     \begin{equation}\label{eq:reduced-norm-map-1}
      \Nrd: A^\times\backslash \widehat{A}^\times/ \widehat{\Lambda}^\times\to F_{>0}^{\times}\backslash\widehat{F}^\times/\widehat{O}_F^{\times},
     \end{equation}
     where $F_{>0}^{\times}$ denotes the group of totally positive  elements of $F$. This shows that there are exactly $h^{+}(F)$  isomorphism classes of $(\calO, \calR)$-bilattices $L'$ of $\calR$-rank $n$ with the same structural invariants as $L$; see also Example~\ref{eg:F}. 
     To prove $L'$ is isomorphic to $L$ when it admits a  perfect positive definite hermitian $(\calO, *, \calR)$-pairing, it is enough to prove that $\Nrd(\widehat{g})$ represents the identity element of the narrow class group $F_{>0}^{\times}\backslash\widehat{F}^\times/\widehat{O}_F^{\times}$ of $F$ in this case. 
     
     In view of Proposition~\ref{prop:Rat-herm-bimod-unique}, the existence of perfect pairing on $L'$ is equivalent to the existence of $\alpha\in A^\times$ such that the $(\calO, \calR)$-bilattice $\alpha L'$ is self-dual with respect to $\langle~,~\rangle$.  By \cite[Proposition~4.1]{book-of-involution}, there exists a unique involution $\dagger$ on $A$, called the \emph{adjoint involution} of $\langle~,~\rangle$, such that 
     \[
        \langle gx, y\rangle=\langle x, g^{\dagger}y\rangle,\quad\forall x, y\in V,\,\forall g\in A.
     \]
     Moreover, $\Nrd(g^\dagger)=\Nrd(g)$ for all $g\in A$ since the involution  $g\mapsto g^\dagger$ is an anti-automorphism of the central simple $F$-algebra $A$; see~\eqref{eq:compute-C}. Let $\widehat{L}\coloneqq L\otimes_{O_F}\widehat{O}_F$ be the profinite completion of $L$.
     We compute the dual lattice of $\alpha L'$ as follows 
     \begin{align*}
         (\alpha L')^{\vee}=(\alpha\widehat{g}L)^{\vee}&=\{x\in V\mid \langle x, \alpha\widehat{g}L\rangle\subseteq  \calR\}\\
               &=\{x\in V\mid \langle (\alpha\widehat{g})^{\dagger}x, \widehat{L}\,\rangle\subseteq \widehat{\calR}\,\}\\
               &=\{x\in V\mid (\alpha\widehat{g})^{\dagger}x\in \widehat{L}^{\vee}=\widehat{L}\, \}\\
               &=((\alpha\widehat{g})^{\dagger})^{-1} L.
     \end{align*}
     Then $\alpha L'$ is self-dual in $(V, \langle~,~\rangle)$ if and only if $(\alpha\widehat{g})^{\dagger}\alpha\widehat{g}\in\widehat{\Lambda}^\times$.  Suppose that this is the case. 
     Since $\Nrd((\alpha\widehat{g})^{\dagger})=\Nrd(\alpha\widehat{g})$, we have $\Nrd(\alpha\widehat{g})^2\in\Nrd(\widehat{\Lambda}^\times)\subseteq\widehat{O}_F^{\times}$, so $\Nrd(\alpha\widehat{g})\in \widehat{O}_F^{\times}$. Note that $\Nrd(\widehat{g})=\Nrd(\alpha^{-1})\Nrd(\alpha\widehat{g})\in F_{>0}^\times\widehat{O}_F^{\times}$, so $\Nrd(\widehat{g})$ represents the identity element of $F_{>0}^{\times}\backslash\widehat{F}^\times/\widehat{O}_F^{\times}$ as desired. This finishes the proof of part (iii) and completes the proof of the theorem. 
     \end{proof}

By virtue of Lemmas~\ref{lem:emb-exist-neces-cond-2}, \ref{lem:pol-herm-pair} and \ref{lem:relat-with-self-dual-latt}, we apply Theorem~\ref{thm:genus-char-2} to obtain the following theorem on superspecial $\calO$-abelian varieties.

\begin{thm}\label{thm:genus-char}
     Keep the assumptions in Theorem~\ref{thm:genus-char-2}. Then  the following holds true.
     \begin{enumerate}[(i)]
         \item If there exists a principally polarized superspecial $\calO$-abelian variety over $k$ of dimension $n$, then condition \eqref{eq:cond-for-S-1} necessarily holds.
         \item Conversely, let $n$ be a fixed positive integer divisible by $2d$ and suppose that condition \eqref{eq:cond-for-S-1} holds for $n$. For each $v\in\Sigma$, fix a quadruple
    \[
         (r_1^{(v)}, r_2^{(v)}, t_1^{(v)}, t_2^{(v)})\in\Z_{\geq 0}^4 
    \]
      satisfying~\eqref{eq:e-521}.
     Then there exists a principally polarized $\calO$-abelian variety $(X,\lambda,\iota)$ over $k$ of dimension $n$ with $\ulm^{(v)}(X, \iota)=(r_1^{(v)}, r_2^{(v)}, t_1^{(v)}, t_2^{(v)})$ for all $v\in\Sigma$ if and only if condition \eqref{eq:cond-for-quadruple} holds.
    \item  Fix both $n$ and  the quadruples $(r_1^{(v)}, r_2^{(v)}, t_1^{(v)}, t_2^{(v)})$ for all $v\in\Sigma$ as in part (ii) and 
    suppose that both conditions \eqref{eq:cond-for-quadruple} and \eqref{eq:cond-for-S-1}  hold.
     If further $n>2d$, then there are exactly $h^+(F)$  isomorphism classes of (unpolarized) superspecial $\calO$-abelian $k$-varieties $(X', \iota')$ of dimension $n$ with $\ulm^{(v)}(X', \iota')=(r_1^{(v)}, r_2^{(v)}, t_1^{(v)}, t_2^{(v)})$ for all $v\in\Sigma$, among which exactly one of them is principally polarizable. 
     \end{enumerate}
\end{thm}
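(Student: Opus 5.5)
This is a direct translation of Theorem~\ref{thm:genus-char-2} into geometric language, so the plan is to set up a dictionary and then read off each part. The dictionary comes from two results. Lemma~\ref{lem:emb-exist-neces-cond-2}(2) identifies isomorphism classes of $n$-dimensional superspecial $\calO$-abelian varieties $(X,\iota)$ with isomorphism classes of $(\calO,\calR)$-bilattices of $\calR$-rank $n$ via $L=\Hom(E,X)$. By definition \eqref{eq:defn-str-inv-of-ab-var}, this identification preserves the structural invariants $\ulm^{(v)}$ for $v\in\Sigma$. Lemma~\ref{lem:relat-with-self-dual-latt} extends the correspondence to principally polarized triples $(X,\lambda,\iota)$ and isometry classes of self-dual positive definite hermitian $(\calO,*,\calR)$-bilattices. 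By Lemma~\ref{lem:pol-herm-pair}, a given $(X,\iota)$ is principally polarizable exactly when its bilattice $L$ carries a perfect positive definite hermitian $(\calO,*,\calR)$-pairing.

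For (i), I will take a principally polarized $(X,\lambda,\iota)$ of dimension $n$ and pass to $(\Hom(E,X),\langle~,~\rangle_\lambda)$. This is a self-dual hermitian $(\calO,*,\calR)$-bilattice of $\calR$-rank $n$, so Theorem~\ref{thm:genus-char-2}(i) gives condition \eqref{eq:cond-for-S-1}.

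For (ii), I will handle the two directions separately. For the `only if' direction, the bilattice attached to $(X,\lambda,\iota)$ is self-dual and has the prescribed invariants, so Theorem~\ref{thm:genus-char-2}(ii) forces condition \eqref{eq:cond-for-quadruple}. For the `if' direction, Theorem~\ref{thm:genus-char-2}(ii) produces a self-dual positive definite $(L,\langle~,~\rangle)$ with the prescribed $\ulm^{(v)}$. Lemma~\ref{lem:relat-with-self-dual-latt} then returns a principally polarized superspecial $(X,\lambda,\iota)$ with $\ulm^{(v)}(X,\iota)=\ulm^{(v)}(L)$.

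For (iii), the bijection of Lemma~\ref{lem:emb-exist-neces-cond-2}(2), restricted to fixed invariants, turns the count of $h^+(F)$ bilattice classes in Theorem~\ref{thm:genus-char-2}(iii) into the same count of classes $(X',\iota')$. The unique bilattice class admitting a perfect positive definite pairing corresponds, by Lemma~\ref{lem:pol-herm-pair}, to the unique principally polarizable class.

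The only point needing real care is compatibility of the dictionary. The bilattice structural invariants must be computed with the same fixed identifications $\calR\otimes_\Z O_{F_v}\simeq\calO_v$ used for the varieties, and this holds by definition. Apart from that the argument is bookkeeping, so I do not expect any step to be an obstacle.
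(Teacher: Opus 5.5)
Your proposal is correct and follows the paper's own route: the paper obtains this theorem from Theorem~\ref{thm:genus-char-2} via the dictionary of Lemmas~\ref{lem:emb-exist-neces-cond-2}, \ref{lem:pol-herm-pair} and \ref{lem:relat-with-self-dual-latt}, which is exactly the dictionary you set up. Your part-by-part bookkeeping, including the observation that the structural invariants agree by definition \eqref{eq:defn-str-inv-of-ab-var}, is all that is needed.
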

 
\begin{cor}\label{prop:ext-pol}
   Let $(X, \iota)$ be a superspecial $\calO$-abelian $k$-variety of dimension $n>2d$ with structural invariants 
   \[\underline{m}^{(v)}(X, \iota)=(r_1^{(v)}, r_2^{(v)}, t_1^{(v)}, t_2^{(v)}),\quad\forall\, v\in\Sigma.\]
   If $h^{+}(F)=1$, then $(X, \iota)$ is principally polarizable if and only if both conditions \eqref{eq:cond-for-S-1} and \eqref{eq:cond-for-quadruple} hold. 
\end{cor}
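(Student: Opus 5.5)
The plan is to deduce the corollary directly from Theorem~\ref{thm:genus-char} (equivalently Theorem~\ref{thm:genus-char-2} via Lemma~\ref{lem:relat-with-self-dual-latt}), specialized to $h^+(F)=1$.

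\emph{Necessity.} Suppose $(X,\iota)$ carries a principal polarization $\lambda$. Then $(X,\lambda,\iota)$ is a principally polarized superspecial $\calO$-abelian variety of dimension $n$, so condition~\eqref{eq:cond-for-S-1} holds by Theorem~\ref{thm:genus-char}(i). By Lemma~\ref{lem:rank-of-summand} and Lemma~\ref{lem:B-D-bimod}, $n$ is divisible by $2d$, and the structural invariants satisfy~\eqref{eq:e-521}. The `only if' direction of Theorem~\ref{thm:genus-char}(ii) then gives~\eqref{eq:cond-for-quadruple}. A more local route is also available: the perfect pairing $\langle~,~\rangle_\lambda$ from Lemma~\ref{lem:pol-herm-pair} completes at $p$ to a perfect hermitian $(\calO_p,*,\calR_p)$-pairing, and Lemma~\ref{lem:self-dual-p} applies.

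\emph{Sufficiency.} Assume both conditions hold. Since $n>2d$, Theorem~\ref{thm:genus-char}(iii) applies with the quadruples $\ulm^{(v)}(X,\iota)$. There are exactly $h^+(F)=1$ isomorphism classes of superspecial $\calO$-abelian varieties of dimension $n$ with these structural invariants, and exactly one of them is principally polarizable. Because $(X,\iota)$ lies in this single class, it is isomorphic to the polarizable one. Transporting a principal polarization along the $\calO$-linear isomorphism preserves the compatibility~\eqref{eq:QM-eq}, so $(X,\iota)$ is principally polarizable.

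I expect no genuine obstacle. The only point to check is that the hypotheses of Theorem~\ref{thm:genus-char}(iii) are met: $n$ must be divisible by $2d$, which is automatic by Lemma~\ref{lem:emb-exist-neces-cond-2}, and $n>2d$ is assumed. It also helps to note, via Remark~\ref{rem:indenp}, that condition~\eqref{eq:cond-for-quadruple} does not depend on the chosen identifications $\calR\otimes_\Z O_{F_v}\simeq \calO_v$.
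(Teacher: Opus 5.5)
Your proposal is correct and follows the paper's route: the corollary is stated as an immediate consequence of Theorem~\ref{thm:genus-char}, with no separate proof. Necessity comes from part (i) together with the necessity half of part (ii). Sufficiency comes from part (iii): when $h^{+}(F)=1$, the single isomorphism class with the given structural invariants must be the principally polarizable one.
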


\section*{Acknowledgments}
The authors express their gratitude to Yiping Chen,  Yasuhiro Terakado and Chia-Fu Yu for stimulating discussions.  Particular thanks go to Terakado for bringing Ribet's work to our attention. 
Xue is partially supported by the National Natural Science Foundation of China grants No.~12331002 and No.~12271410.

\def\cprime{$'$}


\begin{thebibliography}{10}

\bibitem{MR1314422}
Maurice Auslander, Idun Reiten, and Sverre~O. Smal\o.
\newblock {\em Representation theory of {A}rtin algebras}, volume~36 of {\em
  Cambridge Studies in Advanced Mathematics}.
\newblock Cambridge University Press, Cambridge, 1995.

\bibitem{MR491773}
Ricardo Baeza.
\newblock {\em Quadratic forms over semilocal rings}, volume Vol. 655 of {\em
  Lecture Notes in Mathematics}.
\newblock Springer-Verlag, Berlin-New York, 1978.

\bibitem{centeleghe-stix-I}
Tommaso~Giorgio Centeleghe and Jakob Stix.
\newblock Categories of abelian varieties over finite fields, {I}: {A}belian
  varieties over {$\Bbb{F}_p$}.
\newblock {\em Algebra Number Theory}, 9(1):225--265, 2015.

\bibitem{Centeleghe-Stix-II}
Tommaso~Giorgio Centeleghe and Jakob Stix.
\newblock Categories of abelian varieties over finite fields {II}: abelian
  varieties over {$\Bbb F_q$} and {M}orita equivalence.
\newblock {\em Israel J. Math.}, 257(1):103--170, 2023.

\bibitem{curtis-reiner:2}
Charles~W. Curtis and Irving Reiner.
\newblock {\em Methods of representation theory. {V}ol. {II}}.
\newblock Pure and Applied Mathematics (New York). John Wiley \& Sons, Inc.,
  New York, 1987.
\newblock With applications to finite groups and orders, A Wiley-Interscience
  Publication.

\bibitem{curtis-reiner:1}
Charles~W. Curtis and Irving Reiner.
\newblock {\em Methods of representation theory. {V}ol. {I}}.
\newblock Wiley Classics Library. John Wiley \& Sons, Inc., New York, 1990.
\newblock With applications to finite groups and orders, Reprint of the 1981
  original, A Wiley-Interscience Publication.

\bibitem{deligne:ord}
Pierre Deligne.
\newblock Vari\'et\'es ab\'eliennes ordinaires sur un corps fini.
\newblock {\em Invent. Math.}, 8:238--243, 1969.

\bibitem{MR422290}
V.~G. Drinfeld.
\newblock Coverings of {$p$}-adic symmetric domains.
\newblock {\em Funkcional. Anal. i Prilo\v zen.}, 10(2):29--40, 1976.

\bibitem{Drozd-Kirichenko-Roiter-1967}
Ju.~A. Drozd, V.~V. Kiri\v{c}enko, and A.~V. Ro\u{\i}ter.
\newblock Hereditary and {B}ass orders.
\newblock {\em Izv. Akad. Nauk SSSR Ser. Mat.}, 31:1415--1436, 1967.

\bibitem{MR1648352}
Hiroaki Hijikata and Kenji Nishida.
\newblock When is {$\Lambda_1\otimes\Lambda_2$} hereditary?
\newblock {\em Osaka J. Math.}, 35(3):493--500, 1998.

\bibitem{Ibukiyama-Karemaker-Yu-2025}
Tomoyoshi Ibukiyama, Valentijn Karemaker, and Chia-Fu Yu.
\newblock When is a polarised abelian variety determined by its {$p$}-divisible
  group?
\newblock {\em Trans. Amer. Math. Soc. Ser. B}, 12:65--111, 2025.

\bibitem{Ibukiyama-Katsura-Oort-1986}
Tomoyoshi Ibukiyama, Toshiyuki Katsura, and Frans Oort.
\newblock Supersingular curves of genus two and class numbers.
\newblock {\em Compositio Math.}, 57(2):127--152, 1986.

\bibitem{Jacobowitz-HermForm}
Ronald Jacobowitz.
\newblock Hermitian forms over local fields.
\newblock {\em Amer. J. Math.}, 84:441--465, 1962.

\bibitem{Janusz-1979-JLMS}
Gerald~J. Janusz.
\newblock Tensor products of orders.
\newblock {\em J. London Math. Soc. (2)}, 20(2):186--192, 1979.

\bibitem{Poonen-et:av}
Bruce~W. Jordan, Allan~G. Keeton, Bjorn Poonen, Eric~M. Rains, Nicholas
  Shepherd-Barron, and John~T. Tate.
\newblock Abelian varieties isogenous to a power of an elliptic curve.
\newblock {\em Compos. Math.}, 154(5):934--959, 2018.

\bibitem{Knebusch-1977}
Manfred Knebusch.
\newblock Symmetric bilinear forms over algebraic varieties.
\newblock In {\em Conference on {Q}uadratic {F}orms---1976 ({P}roc. {C}onf.,
  {Q}ueen's {U}niv., {K}ingston, {O}nt., 1976)}, volume No. 46 of {\em Queen's
  Papers in Pure and Appl. Math.}, pages 103--283. Queen's Univ., Kingston, ON,
  1977.

\bibitem{book-of-involution}
Max-Albert Knus, Alexander Merkurjev, Markus Rost, and Jean-Pierre Tignol.
\newblock {\em The book of involutions}, volume~44 of {\em American
  Mathematical Society Colloquium Publications}.
\newblock American Mathematical Society, Providence, RI, 1998.
\newblock With a preface in French by J. Tits.

\bibitem{Lang-Algebra}
Serge Lang.
\newblock {\em Algebra}, volume 211 of {\em Graduate Texts in Mathematics}.
\newblock Springer-Verlag, New York, third edition, 2002.

\bibitem{li-oort}
Ke-Zheng Li and Frans Oort.
\newblock {\em Moduli of supersingular abelian varieties}, volume 1680 of {\em
  Lecture Notes in Mathematics}.
\newblock Springer-Verlag, Berlin, 1998.

\bibitem{li-xue-yu:unit-gp}
Qun Li, Jiangwei Xue, and Chia-Fu Yu.
\newblock Unit groups of maximal orders in totally definite quaternion algebras
  over real quadratic fields.
\newblock {\em Trans. Amer. Math. Soc.}, 374(8):5349--5403, 2021.

\bibitem{MR2192012}
James~S. Milne.
\newblock Introduction to {S}himura varieties.
\newblock In {\em Harmonic analysis, the trace formula, and {S}himura
  varieties}, volume~4 of {\em Clay Math. Proc.}, pages 265--378. Amer. Math.
  Soc., Providence, RI, 2005.

\bibitem{MR2931385}
Santiago Molina.
\newblock Ribet bimodules and the specialization of {H}eegner points.
\newblock {\em Israel J. Math.}, 189:1--38, 2012.

\bibitem{mumford:av}
David Mumford.
\newblock {\em Abelian varieties}, volume~5 of {\em Tata Institute of
  Fundamental Research Studies in Mathematics}.
\newblock Published for the Tata Institute of Fundamental Research, Bombay,
  2008.
\newblock With appendices by C. P. Ramanujam and Yuri Manin, Corrected reprint
  of the second (1974) edition.

\bibitem{MR674652}
Richard~S. Pierce.
\newblock {\em Associative algebras}, volume~9 of {\em Studies in the History
  of Modern Science}.
\newblock Springer-Verlag, New York-Berlin, 1982.
\newblock Graduate Texts in Mathematics, 88.

\bibitem{Platonov-Rapinchuk}
Vladimir Platonov and Andrei Rapinchuk.
\newblock {\em Algebraic groups and number theory}, volume 139 of {\em Pure and
  Applied Mathematics}.
\newblock Academic Press, Inc., Boston, MA, 1994.
\newblock Translated from the 1991 Russian original by Rachel Rowen.

\bibitem{reiner:mo}
I.~Reiner.
\newblock {\em Maximal orders}, volume~28 of {\em London Mathematical Society
  Monographs. New Series}.
\newblock The Clarendon Press Oxford University Press, Oxford, 2003.
\newblock Corrected reprint of the 1975 original, With a foreword by M. J.
  Taylor.

\bibitem{Ribet-bimod}
Kenneth~A. Ribet.
\newblock Bimodules and abelian surfaces.
\newblock In {\em Algebraic number theory}, volume~17 of {\em Adv. Stud. Pure
  Math.}, pages 359--407. Academic Press, Boston, MA, 1989.

\bibitem{Roggenkamp-Latt-II}
Klaus~W. Roggenkamp.
\newblock {\em Lattices over orders. {II}}.
\newblock Lecture Notes in Mathematics, Vol. 142. Springer-Verlag, Berlin-New
  York, 1970.

\bibitem{Shimura1963-AltHermForms}
Goro Shimura.
\newblock Arithmetic of alternating forms and quaternion hermitian forms.
\newblock {\em J. Math. Soc. Japan}, 15:33--65, 1963.

\bibitem{terakado-xue-yu:2023}
Yasuhiro {Terakado}, Jiangwei {Xue}, and Chia-Fu {Yu}.
\newblock {On the supersingular locus of Shimura varieties for quaternionic
  unitary groups}.
\newblock {\em arXiv e-prints}, page arXiv:2311.18354, November 2023.

\bibitem{terakado-yu-xue-2026}
Yasuhiro {Terakado}, Jiangwei {Xue}, and Chia-Fu {Yu}.
\newblock {Superspecial Points on Shimura Curves}.
\newblock {\em arXiv e-prints}, page arXiv:2608.16036, August 2026.



\bibitem{vigneras}
Marie-France Vign{\'e}ras.
\newblock {\em Arithm\'etique des alg\`ebres de quaternions}, volume 800 of
  {\em Lecture Notes in Mathematics}.
\newblock Springer, Berlin, 1980.

\bibitem{voight-quat-book}
John Voight.
\newblock {\em Quaternion algebras}, volume 288 of {\em Graduate Texts in
  Mathematics}.
\newblock Springer, Cham, [2021] \copyright 2021.
\newblock stable post-publication version (v.1.0.5, Janurary 10, 2024).

\bibitem{waterhouse:thesis}
William~C. Waterhouse.
\newblock Abelian varieties over finite fields.
\newblock {\em Ann. Sci. \'Ecole Norm. Sup. (4)}, 2:521--560, 1969.

\bibitem{xue-yu:counting-av}
Jiangwei Xue and Chiafu Yu.
\newblock On {C}ounting {C}ertain {A}belian {V}arieties {O}ver {F}inite
  {F}ields.
\newblock {\em Acta Math. Sin. (Engl. Ser.)}, 37(1):205--228, 2021.

\bibitem{Yu-CF:Bull-AS}
Chia-Fu Yu.
\newblock Notes on locally free class groups.
\newblock {\em Bull. Inst. Math. Acad. Sin. (N.S.)}, 12(2):125--139, 2017.

\bibitem{Yu-Grenoble-2021}
Chia-Fu Yu.
\newblock On reduction of moduli schemes of abelian varieties with definite
  quaternion multiplications.
\newblock {\em Ann. Inst. Fourier (Grenoble)}, 71(2):539--613, 2021.

\end{thebibliography}
\end{document}